\newtheorem{theorem}{Theorem}[section]
\newtheorem{lemma}[theorem]{Lemma}
\newtheorem{proposition}[theorem]{Proposition}
\newtheorem{corollary}[theorem]{Corollary}
\newtheorem{definition}[theorem]{Definition}
\theoremstyle{remark}
\newtheorem{remark}[theorem]{Remark}
\newtheorem{example}[theorem]{Example}
\def\underarrow#1{\mathop{\vtop{\m@th\ialign{##\crcr
$\hfil\displaystyle{#1}\hfil$\crcr
\noalign{\kern3pt\nointerlineskip}
\hfil$\uparrow$\hfil\crcr\noalign{\kern3pt}}}}\limits}
\def\lim{\mathop{\rm lim}\nolimits}
\def\rank{\mathop{\rm rank}\nolimits}
\def\colim{\mathop{\rm colim}\nolimits}
\def\Spec{\mathop{\rm Spec}}
\def\spec{\mathop{\rm spec}}
\def\Spa{\mathop{\rm Spa}}
\def\Hom{\mathop{\rm Hom}\nolimits}
\def\Sh{\mathop{\textit{Sh}}\nolimits}
\def\B+{{B^+_{{\rm dR}}}}
\def\BdR{{B_{{\rm dR}}}}
\newcommand\cInd{\mathop{\mbox{$c$-$\mathrm{Ind}$}}}
\newcommand{\GU}{\mathrm{GU}}
\newcommand{\Gr}{\mathrm{Gr}}
\newcommand{\pr}{\mathrm{pr}}
\newcommand{\G}{\mathrm{G}}
\newcommand{\T}{\mathrm{T}}
\newcommand{\rP}{\mathrm{P}}
\newcommand{\rQ}{\mathrm{Q}}
\newcommand{\rN}{\mathrm{N}}
\newcommand{\rF}{\mathrm{F}}
\newcommand{\rR}{\mathrm{R}}
\newcommand{\M}{\mathrm{M}}
\newcommand{\U}{\mathrm{U}}
\newcommand{\Qp}{\mathbb{Q}_p}
\newcommand{\Z}{\mathbb{Z}}
\newcommand{\J}{\mathrm{J}}
\newcommand{\R}{\mathbb{R}}
\newcommand{\C}{\mathbb{C}}
\newcommand{\bL}{\mathbb{L}}
\newcommand{\Ws}{\mathcal{W}_{\mathfrak{s}_{\phi}}}
\newcommand{\Wschi}{\mathcal{W}_{\mathfrak{s}_{\phi}(\chi)}}
\newcommand{\Wchi}{\mathcal{W}(\chi)}
\newcommand{\Res}{\mathrm{Res}}
\newcommand{\Irr}{\mathrm{Irr}}
\newcommand{\ind}{\mathrm{ind}}
\newcommand{\GL}{\mathrm{GL}}
\newcommand{\id}{\mathrm{id}}
\newcommand{\Id}{\mathrm{Id}}
\newcommand{\End}{\mathrm{End}}
\newcommand{\Gm}{{\mathbb{G}_m}}
\newcommand{\E}{\mathcal{E}}
\newcommand{\OO}{\mathcal{O}}
\newcommand{\mc}{\mathcal}
\newcommand{\mf}{\mathfrak}
\newcommand{\Q}{\mathbb{Q}}
\newcommand{\bb}{\mathbb}
\newcommand{\Ig}{\mathrm{Ig}}
\newcommand{\diag}{\mathrm{diag}}
\newcommand{\Lie}{\mathrm{Lie}}
\newcommand{\LL}{{}^L}
\newcommand{\Red}{\mathrm{Red}}
\newcommand{\Sym}{\mathrm{Sym}}
\newcommand{\dom}{\mathrm{dom}}
\newcommand{\Std}{\mathrm{Std}}
\newcommand{\ov}{\overline}
\newcommand{\Cent}{\mathrm{Cent}}
\newcommand{\A}{\mathbb{A}}
\newcommand{\Gal}{\mathrm{Gal}}
\newcommand{\temp}{\mathrm{temp}}
\newcommand{\unit}{\mathrm{unit}}
\newcommand{\SL}{\mathrm{SL}}
\newcommand{\GSp}{\mathrm{GSp}}
\newcommand{\Sp}{\mathrm{Sp}}
\newcommand{\SO}{\mathrm{SO}}
\newcommand{\I}{\mathrm{I}}
\newcommand{\PGL}{\mathrm{PGL}}
\newcommand{\N}{\mathbb{N}}
\newcommand{\std}{\mathrm{std}}
\newcommand{\Ind}{\mathrm{Ind}}
\newcommand{\Sht}{\mathrm{Sht}}
\newcommand{\Spd}{\mathrm{Spd}}
\newcommand{\tri}{\mathrm{tri}}
\newcommand{\Bun}{\mathrm{Bun}}
\newcommand{\BunG}{\mathrm{Bun_{\G}}}
\newcommand{\Bunn}{\mathrm{Bun_{n}}}
\newcommand{\Rep}{\mathrm{Rep}}
\newcommand{\Aut}{\mathrm{Aut}}
\newcommand{\Dc}{\mathrm{D}}
\newcommand{\Dlis}{\mathrm{D}_{\mathrm{lis}}}
\newcommand{\Perf}{\mathrm{Perf}}
\newcommand{\Sing}{\mathrm{Sing}}
\newcommand{\IndPerf}{\mathrm{IndPerf}}
\newcommand{\ad}{\mathrm{ad}}
\newcommand{\rB}{\mathrm{B}}
\newcommand{\mP}{\mathrm{P}}
\mathchardef\mhyphen="2D
\def\ol{\overline}
\def\ra{\rightarrow}
\def\Div{\mathrm{Div}}
\newcommand\numberthis{\addtocounter{equation}{1}\tag{\theequation}}
\title{On Categorical local Langlands program for $\GL_n$}
\author{Kieu Hieu Nguyen}
\email{kieu-hieu.nguyen@uvsq.fr}
\address{University of Versailles Saint-Quentin, France}
\thanks{
\small{MSC class:	11S37}
}
\thanks{
\small{
The author acknowledges support by the European Union ERC in form of Consolidator Grants : NewtonStrat grant number 770936 and RELANTRA, project number 101044930 and by Deutsche Forschungsgemeinschaft  (DFG) through the Collaborative Research Centre TRR 326 "Geometry and Arithmetic of Uniformized Structures", project number 444845124 as well as under Germany’s Excellence Strategy EXC 2044 390685587, Mathematics Münster: Dynamics–Geometry–Structure. Views and opinions expressed are however those of the author only and do not necessarily reflect those of the European Union or the European Research Council. Neither the European Union nor the granting authority can be held responsible for them.
 }
}
\begin{document}

\begin{abstract}
    We study various moduli spaces of local Shtukas in the setting of Fargues' program for $\GL_n$. In certain cases, this gives us an explicit description of the spectral action which was recently introduced by Fargues and Scholze. This description sheds light to the categorical local Langlands program for $\GL_n$ and allows us to construct Hecke eigensheaves associated to certain $\ell$-adic Weil representations of rank $n$ and to prove some parts of Fargues' conjecture. Moreover, by using this description, we can prove new cases of the Harris-Viehmann conjecture for non-basic Rapoport-Zink spaces and compute some parts of the cohomology of the Igusa varieties associated to $\GL_n$.       
\end{abstract}

\maketitle
\tableofcontents
\section{Introduction}

Let $ \ell \neq p$ be primes and fix an algebraic closure $\overline{\Q}_p$ of $\Q_p$ with associated Weil group $W_{\Q_p}$. In \cite{Fa}, Fargues formulated his program to geometrize the local Langlands correspondence for $\Q_p$ via the stack $\Bun_{\G}$ of $\G$-bundles on the Fargues-Fontaine curve, with $\G$ a reductive group over $\Q_p$.

Fargues’ program has been put forward in the seminal recent work \cite{FS}. The aims of Fargues' program is to relate sheaves on the stack of Langlands parameters, "the spectral side", to sheaves on $\Bun_{\G}$, "the geometric side". On the spectral side, one considers $\Perf^{\mathrm{qc}}( [Z^1(W_{\Q_p}, \widehat{\G})/\widehat{\G}] )$ (resp. $\Dc^{b,\mathrm{qc}}_{\mathrm{coh}}( [Z^1(W_{\Q_p}, \widehat{\G})/\widehat{\G}]) $), the derived category of perfect complexes on the stack of $L$-parameters $ [Z^1(W_{\Q_p}, \widehat{\G})/\widehat{\G}] $ with quasi-compact support (resp. bounded derived category of sheaves with coherent cohomology with quasi-compact support), as in \cite{DH,Zhu1} and \cite[Section~VIII.I]{FS}. On the geometric side, one considers $\Dlis(\Bun_{\G},\ol{\mathbb{Q}}_{\ell})$, the category of lisse-\'etale $\ol{\mathbb{Q}}_{\ell}$-sheaves and $\Dlis(\Bun_{\G},\ol{\mathbb{Q}}_{\ell})^{\omega}$ the sub-category of compact objects, as defined in \cite[Section~VII.7]{FS}. Fargues and Scholze constructed an action (called the spectral action) of the category $\Perf^{\mathrm{qc}}( [Z^1(W_{\Q_p}, \widehat{\G})/\widehat{\G}] )$ on the category $\Dlis(\Bun_{\G},\ol{\mathbb{Q}}_{\ell})$ and expect this action relates these categories in a precise way. Fix a Borel subgroup $\rB \subset \G$ with unipotent radical $\U$ and a generic character $\psi : \U(\Q_p) \longrightarrow \overline{\Q}^{\times}_{\ell} $. Let $\mathcal{W}_{\varphi}$ be the Whittaker sheaf, which is the sheaf concentrated on the stratum $\Bun_{\G}^1$ corresponding to the representation $\cInd^{\G}_{\U} \psi $ of $\G(\Q_p)$. One can rephrase their conjecture by saying that the "non-abelian Fourier transform"
\begin{align*}
  \Perf^{\mathrm{qc}}( [Z^1(W_{\Q_p}, \widehat{\G})/\widehat{\G}] &\longrightarrow \Dlis(\Bun_{\G},\ol{\mathbb{Q}}_{\ell})  \\
  \M &\longmapsto \M \star \mathcal{W}_{\psi}
\end{align*}
is fully faithful and extends to an equivalence of $\overline{\Q}_{\ell}$-linear small stable $\infty$-categories
\[
\Dc^{b,\mathrm{qc}}_{\mathrm{coh}}( [Z^1(W_{\Q_p}, \widehat{\G})/\widehat{\G}] \longrightarrow \Dlis(\Bun_{\G},\ol{\mathbb{Q}}_{\ell})^{\omega},
\]
where $\star$ denotes the spectral action and $\Dlis(\Bun_{\G},\ol{\mathbb{Q}}_{\ell})^{\omega}$ denotes the full subcategory of compact objects.

This program is expected to capture in a geometric way everything we know about the local Langlands conjectures such as the internal structure of $L$-packets, Jacquet-Langlands correspondences, functoriality transfers and even some form of the local-global compatibility. We consider the case $\G = \GL_n$, the main goal of the manuscript is to compute explicitly the action of $\Perf^{\mathrm{qc}}([Z^1(W_{\Q_p}, \widehat{\GL}_n)/\widehat{\GL}_n])$ on $\pi_b$ where $b \in B(\GL_n)$ and $\pi_b$ is a smooth irreducible representation of $\G_b(\Q_p)$ whose corresponding $L$-parameter satisfies some extra conditions. Besides many geometric and cohomological applications, these computations also shed light to the relation between the categorical and the usual local Langlands conjectures.

We consider an ($\widehat{\GL}_n$-conjugacy class of) $L$-parameter $\phi$ of $\GL_n$ which has a decomposition 
\[
\phi = \phi_1 \oplus \dotsc \oplus \phi_r,
\]
where the $\phi_i$'s are irreducible and for $1 \le i \neq j \le r$, there does not exist unramified character $\chi$ such that $\phi_i \simeq \phi_j \otimes \chi$. Let $\pi$ be the irreducible representation of $\GL_n(\Q_p)$ corresponding to $\phi$ via the local Langlands correspondence. In this case we know that $\pi$ is isomorphic to the normalized parabolic induction $\Ind_{\rP}^{\GL_n} (\pi_1 \otimes \dotsc \otimes \pi_r) $ where $\rP = \M \rN$ is some standard parabolic subgroup whose Levi factor is given by $\M = \GL_{n_1} \times \dotsc \times \GL_{n_r}$ and the $\pi_i$'s are supercuspidal representations. 

We have a map $\theta : [Z^1(W_{\Q_p}, \widehat{\GL}_n)/\widehat{\GL}_n] \longrightarrow Z^1(W_{\Q_p}, \widehat{\GL}_n)//\widehat{\GL}_n$ where $Z^1(W_{\Q_p}, \widehat{\GL}_n)//\widehat{\GL}_n$ is the categorical quotient. Since $\phi$ is semi-simple, its orbit in $Z^1(W_{\Q_p}, \widehat{\GL}_n)$ by the action of $\GL_n$ is closed. Let $x$ be the $\ov \Q_{\ell}$-point in $Z^1(W_{\Q_p}, \widehat{\GL}_n)//\widehat{\GL}_n$ corresponding to this orbit. We can show that $\theta^{-1}(x)$ is closed and consists of exactly the point in $[Z^1(W_{\Q_p}, \widehat{\GL}_n)/\widehat{\GL}_n]$ corresponding to $\phi$ (proposition \ref{itm : inverse has 1 points}). We denote by $ [C_{\phi}] $ the connected component of $[Z^1(W_{\Q_p}, \widehat{\GL}_n)/\widehat{\GL}_n]$ containing $\phi$. Recall that by the computation in \cite[pg. 83]{KMSW}, we have an explicit description of the centralizer $ \displaystyle S_{\phi} := \Cent (\phi) = \prod_{i = 1}^r \Gm $ and by the results in \cite{DH}, we can show (proposition \ref{itm : simple connected components}) that $[C_{\phi}] \simeq [\bb G_m^r / \bb G_m^r ]$, the stack quotient of $\bb G_m^r$ with respect to the trivial action of $\bb G_m^r$.

Let $C \in \Perf^{\mathrm{qc}}([Z^1(W_{\Q_p}, \widehat{\GL}_n)/\widehat{\GL}_n])$ be a perfect complex such that its support does not intersect with $ \theta^{-1}(x) $. We then (by \cite[lemma 3.8]{Ham}) know that $ C \star \mathcal{F}_{\pi} = 0 $ where $\mathcal{F}_{\pi}$ is the sheaf over $\Bunn$ concentrated on $\Bun^1_n$ associated with $\pi$. More generally, if the support of $C$ does not intersect $[C_{\phi}]$ and $\pi'$ is any finitely generated representation of $\G_b(\Q_p)$ regarded as sheaf on $\Bunn$ whose (semi-simplified) $L$-parameter of any irreducible constituent is given by an $L$-parameter in $[C_{\phi}]$ then $ C \star \mathcal{F}_{\pi'} = 0 $. The study of the spectral action on $\mathcal{F}_{\pi}$ is therefore reduced to the description of the complex $ C \star \mathcal{F}_{\pi} $ for $C$ perfect complexes supported on the connected component $ [C_{\phi}]$. We denote by 
$\Dlis^{[C_{\phi}]}(\Bunn,\ol{\mathbb{Q}}_{\ell})^{\omega}$ the full sub-category of $\Dlis(\Bunn,\ol{\mathbb{Q}}_{\ell})^{\omega}$
such that the Schur-irreducible objects in this subcategory all have Fargues-Scholze parameter given by an $L$-parameter in $[C_{\phi}]$.

\subsection{Hecke operators and cohomology of local Shimura varieties} 
\subsubsection{Hecke operators and Hecke eigensheaves} \textbf{}

We fix a maximal split torus $\T$ and a Borel subgroup $\rB$ of $\GL_n$ and fix an $L$-parameter $\phi$ satisfying some conditions as above. In particular we have $\phi = \phi_1 \oplus \dotsc \oplus \phi_r $ and $ S_{\phi} = \displaystyle \prod_{i=1}^r \Gm $. Suppose that $\dim \phi_i = n_i$, thus $ \displaystyle \sum_{i=1}^r n_i = n $.

In this manuscript, we compute explicitly the action of a perfect complex $\bL \in \Perf([Z^1(W_{\Q_p}, \widehat{\GL}_n)/\widehat{\GL}_n])^{\rm qc}$ on $\mathcal{F}_{\pi'}$. Since the category $\Perf([Z^1(W_{\Q_p}, \widehat{\GL}_n)/\widehat{\GL}_n])^{\rm qc}$ is generated by shifts and cones, retracts of Hecke operators; the first step is to compute the Hecke actions acting on $\mathcal{F}_{\pi'}$ where $\pi'$ is irreducible. Since we have an explicit description of $[C_{\phi}]$, the category $\Perf([C_{\phi}])$ is not too hard to understand. In particular, we have a monoidal functor
\[
\Rep_{\ov \Q_{\ell}}(S_{\phi}) \longrightarrow \Perf([C_{\phi}]),
\]
where the image of an irreducible character $\chi$ is the vector bundle on $[C_{\phi}]$ corresponding to the structural sheaf on $\bb G_m^r$ together with the $\bb G_m^r$-action defined by $\chi$. We denote by $C_{\chi}$ this perfect sheaf and we want to concretely describe the $ C_{\chi} \star \mathcal{F}_{\pi} $ where $\pi$ is the irreducible representation of $\GL_n(\Q_p)$ with $L$-parameter given by $\phi$.

Remark that $S_{\phi}$ is commutative then the set $\Irr(S_{\phi})$ of its characters forms a group under the tensor product operator. In this specific case that group is isomorphic to $ 
\displaystyle \prod_{i=1}^r \Z $. Then a set of generators of $\Irr(S_{\phi})$ is given by the characters $\chi_i$ corresponding to $(0, \dotsc, 0, 1, 0, \dotsc, 0) $ ($1$ is in the $i^{th}$-position), $1 \le i \le r$. 

Let us explain the first main result describing the action of the $C_{\chi}$'s and the action of the Hecke operators. For each character $\chi = (d_1, \dotsc, d_r) \in \displaystyle \prod_{i=1}^r \Z $, we define an element $b_{\chi} \in B(\GL_n)$, an irreducible representation $\pi_{\chi}$ of $\G_{b_{\chi}}(\Q_p)$ and a sheaf $\mathcal{F}_{\chi}$ on $\Bunn$ as follow: 
\begin{enumerate}
    \item[$\bullet$] $b_{\chi}$ is the unique element in $B(\GL_n)$ such that its corresponding vector bundle is isormorphic to $\OO(\lambda_1)^{m_1} \oplus \dotsc \oplus \OO(\lambda_r)^{m_r} $ where $\OO(\lambda_i)$ is the stable vector bundle of slope $ \lambda_i = d_i / n_i $ and $ m_i = \gcd (d_i, n_i) $.  
    \item[$\bullet$] Consider the group $\G_{b_{\chi}}$, it is an inner form of a standard Levi subgroups of $\GL_n$. Let $\G^*_{b_{\chi}}$ be the split inner form of $\G_{b_{\chi}}$. For each $i$, let $\G_i := \GL_{m_i}(D_{-\lambda_i})$ where $D_{-\lambda_i}$ is the division algebra whose invariant is $-\lambda_i$, thus its split inner form $\G_i^*$ is isomorphic to $\GL_{n_i}$. We have a map $\phi_i : W_{\Q_p} \longrightarrow \widehat{\G^*_i}(\overline{\Q}_{\ell})$ and the direct sum $ \phi_1 \oplus \dotsc \oplus \phi_r $ gives us a map $W_{\Q_p} \longrightarrow \displaystyle \prod_{i=1}^r \widehat{\G^*_i}(\overline{\Q}_{\ell})$ whose post-composition with the natural embeddings $ \displaystyle \prod_{i=1}^r \widehat{\G^*_i}(\overline{\Q}_{\ell}) \hookrightarrow \widehat{\G^*_{b_{\chi}}}(\overline{\Q}_{\ell}) $ defines an $L$-parameter $\phi_{\chi}$ of $\G_{b_{\chi}}$. Moreover, the post-composition of $\phi_{\chi}$ with $ \widehat{\G^*_{b_{\chi}}}(\overline{\Q}_{\ell}) \hookrightarrow \widehat{\GL}_n(\overline{\Q}_{\ell}) $ is the ($\widehat{\GL}_n$-conjugacy class of) $\phi$. Finally, $\pi_{\chi}$ is the representation of $\G_{b_{\chi}}(\Q_p)$ whose $L$-parameter is given by $\phi_{\chi}$ via the local Langlands correspondence for general linear groups.
    \item[$\bullet$] We can suppose that $\G_{b_{\chi}}^*$ is standard. By (\ref{decomposition of Kottwitz' set}), there exists a unique standard parabolic subgroup $\rP$ of $\GL_n$ with Levi factor given by $\G_{b_{\chi}}^*$ such that $\nu_{b_{\chi}} \in X_*(\rP)^+ $. Let $\delta_{\rP}$ be the modulus character with respect to $\rP$ and denote $\delta_{\rP |  \G_{b_{\chi}}^*}$ its restriction to $\G_{b_{\chi}}^*$. Then we denote by $\delta_{b_{\chi}}$ the character of $ \G_{b_{\chi}} $ whose $L$-parameter is the same as that of $\delta_{\rP |  \G_{b_{\chi}}^*}$. We consider the embedding $ i_{b_{\chi}} : \Bun^{b_{\chi}}_n \longrightarrow \Bunn $ and define $\mathcal{F}_{\chi} \displaystyle := i_{b_{\chi} !}(\delta^{-1/2}_{b_{\chi}} \otimes \pi_{\chi}) [- d_{\chi}] $ where $d_{\chi} = \langle 2\rho, \nu_{b_{\chi}} \rangle$ and $\rho$ is the half  sum of all positive roots of $\GL_n$. In particular, if we denote by $\Id$ the identity element in $\Irr(S_{\phi})$ then $\mathcal{F}_{\Id} \simeq \mathcal{F}_{\pi}$. We can also identify $\mathcal{F}_{\chi}$ with $i_{b_{\chi}!}^{\mathrm{ren}}(\pi_{\chi})$ where $i_{b_{\chi}!}^{\mathrm{ren}}$ is the renormalized functor defined in \cite[Definition 1.1.1]{Hansen}. 

    For avoiding confusion, we recall that if $ \rP = \M \rN $ where $\M$ is the Levi subgroup and $\rN$ is the unipotent radical then the modulus character is defined by $\delta_{\rP}(mn) := |\det(\ad(m);\Lie \rN)| $ where $|\cdot|$ denotes the normalized absolute value of the field $\ov \Q_{\ell}$.
\end{enumerate}
\begin{remark}
\begin{enumerate}
    \item Our characters $\delta_{b}$ is the same as the character $\delta_b$ defined by Hamann and Imai in \cite{HI}. We recall that for $b \in B(\GL_n)$, we have $\nu_{b} = w_0(- \nu_{\E_b})$ where $w_0$ is the longest element in the Weyl group of $\GL_n$. Then the parabolic subgroup of $\GL_n$ corresponding to the Harder-Narasimhan reduction of $\E_{b_{\chi}}$ is conjugated to the opposite of $\rP$ above.
    \item We will sometimes use the notation $\delta_{\chi}$ to denote the character $\delta_{b_{\chi}}$ by abuse of notation. 
\end{enumerate}    
\end{remark}

%Remark that the sheaf $\mathcal{F}_{\chi}$ is supported only on the stratum corresponding to $b_{\chi}$.
The following result is theorem \ref{itm : main theorem}.

\begin{theorem}
Let $\phi$ be an $L$-parameter satisfying the above conditions and let $\chi = (d_1, \dotsc, d_r)$ be an element in $\displaystyle \Irr(S_{\phi}) \simeq \prod_{i=1}^r \Z $. Then we have
\[
C_{\chi} \star \mathcal{F}_{\Id} = \mathcal{F}_{\chi}
\]
where $\Id$ is the identity element in $ \Irr(S_{\phi}) $.
\end{theorem}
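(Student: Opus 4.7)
The plan is to translate the spectral action of $C_\chi$ into Hecke actions for well-chosen algebraic representations $V$ of $\widehat{\GL}_n$, and then compute those Hecke actions geometrically on each stratum of $\Bunn$. The key link is the Fargues-Scholze compatibility between Hecke operators and the spectral action: on the connected component $[C_\phi] \simeq [\bb G_m^r/\bb G_m^r]$, the restriction of any $V \in \Rep(\widehat{\GL}_n)$ along $\phi$ decomposes as a direct sum of characters of $S_\phi$, so $T_V \mathcal{F}_{\Id}$ (restricted to sheaves with Fargues-Scholze parameter in $[C_\phi]$) is a sum $\bigoplus_\chi \bigl(C_\chi \star \mathcal{F}_{\Id}\bigr)^{\oplus m_\chi(V)}$ where $m_\chi(V)$ is the multiplicity of $\chi$ in $V|_{S_\phi}$. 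This reduces the computation of $C_\chi \star \mathcal{F}_{\Id}$ to extracting individual $\chi$-summands from suitable Hecke operators.

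First I would reduce to the generators $\chi_1, \dotsc, \chi_r$ of $\Irr(S_\phi) \simeq \prod_{i=1}^r \Z$. By monoidality of $\Rep(S_\phi) \hookrightarrow \Perf([C_\phi])$ and of the spectral action, one has $C_\chi \simeq \bigotimes_i C_{\chi_i}^{\otimes d_i}$ (with inverses for $d_i < 0$), so the general case follows by induction on $\sum |d_i|$ once the case of the $\chi_i^{\pm 1}$ is known. For the generator $\chi_i$, note that the standard representation $\Std$ of $\widehat{\GL}_n$ restricts along $\phi$ as $\bigoplus_i \chi_i^{\oplus n_i}$, so $T_{\Std} \mathcal{F}_{\Id}$ will contain the required summands $C_{\chi_i} \star \mathcal{F}_{\Id}$, each with multiplicity $n_i$; using other small representations (exterior powers, duals) one similarly accesses the $\chi_i^{-1}$.

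Second I would compute $T_V \mathcal{F}_{\Id}$ stratum by stratum on $\Bunn$. The Hecke correspondence for $V = \Std$ parametrizes modifications of the trivial bundle with image landing in strata $\Bunn^b$ for a controlled finite set of $b \in B(\GL_n)$. On the basic stratum this is the Lubin-Tate tower whose cohomology is governed by the Kottwitz conjecture and Fargues-Scholze's work, giving the expected representation with the correct twist. On the non-basic strata $\Bunn^{b_\chi}$ I would invoke the Harris-Viehmann conjecture to compute the cohomology as a normalized parabolic induction from the Levi $\G_{b_\chi}^*$; this is precisely where the modulus character $\delta_{b_\chi}^{-1/2}$ enters, while the shift $[-d_\chi]$ with $d_\chi = \langle 2\rho, \nu_{b_\chi} \rangle$ emerges from the relative dimension of the Shtuka moduli space over $\Bunn^{b_\chi}$. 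The hypothesis that no unramified twist relates distinct $\phi_i, \phi_j$ then ensures that every irreducible constituent of the Hecke output has Fargues-Scholze parameter inside $[C_\phi]$, so the $S_\phi$-isotypic decomposition separates cleanly and isolates each $C_{\chi_i} \star \mathcal{F}_{\Id}$.

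The main obstacle is the Harris-Viehmann step for the non-basic strata $\Bunn^{b_\chi}$, since the conjecture is not known in full generality and one needs it precisely in the form that matches a parabolic induction whose inducing data is $\pi_\chi$ (an inner form of $\Ind_\rP^{\GL_n}$). This requires compatibility of the Fargues-Scholze parameter with parabolic induction and Jacquet-Langlands transfer for the inner-form Levi $\G_{b_\chi} = \prod_i \GL_{m_i}(D_{-\lambda_i})$, and a careful tracking of normalizations (modulus character, cohomological shifts, signs in the semisimplification) so that the output matches $\mathcal{F}_\chi$ on the nose. A secondary obstacle is the separation argument: one must verify that the multiplicities predicted by $V|_{S_\phi}$ really match the multiplicities appearing in $T_V \mathcal{F}_{\Id}$ at the level of sheaves on $\Bunn$, which I expect to follow from the description of $\Perf([C_\phi])$ and the fact that $S_\phi$ is a torus acting faithfully on each isotypic component.
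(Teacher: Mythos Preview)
Your proposal has two genuine gaps that prevent it from going through.

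\textbf{The reduction to generators is circular.} You write that by monoidality $C_\chi \simeq \bigotimes_i C_{\chi_i}^{\otimes d_i}$, so the general case follows once you know $C_{\chi_i^{\pm 1}} \star \mathcal{F}_{\Id} = \mathcal{F}_{\chi_i^{\pm 1}}$. But monoidality only gives $C_\chi \star \mathcal{F}_{\Id} = C_{\chi_{j_s}} \star \bigl(\cdots \star (C_{\chi_{j_1}} \star \mathcal{F}_{\Id})\bigr)$; to evaluate the outer layers you must compute $C_{\chi_i} \star \mathcal{F}_\xi$ for intermediate $\xi \neq \Id$, i.e.\ for sheaves supported on \emph{non-basic} strata $b_\xi$. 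By Lemma~\ref{shimhecke} this amounts to the cohomology of $\Sht(\GL_n, b_\xi, b', \mu)$ with both $b_\xi$ and $b'$ possibly non-basic. Nothing in your proposal addresses these spaces; the paper's entire Section~5 (the generalized Boyer's trick, Proposition~\ref{generalized Boyer's trick}) is devoted to them, and the proof runs a double induction on $r$ and on $D = \sum d_i$ precisely because one cannot bootstrap from the generators alone.

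\textbf{Invoking Harris--Viehmann is not available, and the vanishing step is missing.} You propose to compute the non-basic contributions via the Harris--Viehmann conjecture, acknowledging this as ``the main obstacle''. It is fatal: only Hodge--Newton reducible cases of HV are known a priori, and the paper \emph{deduces} the new cases of HV (Theorem~\ref{itm : Harris-Viehmann conjecture}) as a corollary of the main theorem, not the other way around. Moreover, even granting your computation of $i_{b_\chi}^* C_\chi \star \mathcal{F}_{\Id}$, you never argue that the restriction to strata $b \neq b_\chi$ vanishes. The $S_\phi$-isotypic separation tells you which summand of $T_V \mathcal{F}_{\Id}$ is $C_\chi \star \mathcal{F}_{\Id}$, but not that this summand is supported on a single stratum. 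Proving this vanishing is the most technical part of the paper (Section~6.3): one must control which $b_\xi$ are reachable from $b_{\chi'}$ by a modification of type $(1,0^{(n-1)})$ via $\rP$-reductions and \cite[Lemma~2.6]{CFS}, then use excision and the semi-orthogonality of Lemma~\ref{itm : morphism between strata} together with the auto-equivalences $C_{\chi_j^{-1}}$ to derive contradictions. Your proposal contains no mechanism for this.
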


\begin{remark}
\begin{enumerate}
    \item Let $\G$ be an inner form of a Levi subgroup of $\GL_n$. Then $\G(\Q_p)$ has an irreducible representation with $L$-parameter given by $\phi$ if and only if $ \G(\Q_p) \simeq \G_{b_{\chi}}(\Q_p) $ for some $\chi \in \Irr(S_{\phi})$. Therefore the vector bundles $C_{\chi}$ realize all the possible transfers of $\pi$ to the inner forms of the Levi subgroups of $\GL_n$ by acting on $ \mathcal{F}_{\pi} \simeq \mathcal{F}_{\Id}$. It could be seen as geometric local Jacquet-Langlands and Langlands-Jacquet correspondences. Thus the categorical local Langlands program contains rich information about the classical Langlands correspondences as expected.  
    \item The modulus character $\delta^{-1/2}_{b_{\chi}}$ in the above formula is closely related to the twist appeared in \cite[Corollary IX.7.3]{FS}. More precisely, there are several Fargues-Scholze parameters associated to the $\G_{b_{\chi}}(\Q_p)$-representation $\pi_{\chi}$ : the one constructed by using excursion operators on $\Bun_n$ and the one constructed by using excursion operators on $\Bun_{\G_{b_{\chi}}}$. These parameters differ by a twist as explained in \cite[Corollary IX.7.3]{FS}. Thus, in fact $\delta^{-1/2}_{b_{\chi}} \otimes \pi_{\chi}$ has the same Fargues-Scholze parameter (with respect to $\Bun_n$) with $\pi_{\Id}$ and it is in line with \cite[Prop. 3.14]{Ham}.
    \item The case $r=1$ is treated in \cite[theorem 1.2]{AL}, \cite[theorem 1.5]{Han}. The above theorem could also be seen as 
 non-supercuspidal analogue of some of the results in \cite[\S 4]{BHN}. 
\end{enumerate}    
\end{remark}

Let us describe roughly the strategy of the proof. It is enough to prove the theorem for $\chi = (d_1, \dotsc, d_r)$ where the $d_i$'s are non-negative. We denote by $\Irr(S_{\phi})^+$ the set of all such characters and moreover denote by $\mu = (1, 0^{(n-1)})$ the simplest cocharacter of $\GL_n$. Then we have a Hecke operator $\T_{\mu}$ corresponding to $\mu$. Remark that we can compute the action of $C_{\chi_i}$ by using the Hecke operator $\T_{\mu}$ and the latter is closely related to the cohomology of Rapoport-Zink spaces \cite{RZ}, or more generally to the cohomology of Scholze's local Shimura varieties $\Sht(\GL_n, b_1, b_2, \mu)$ where $b_1, b_2$ are not necessarily basic \cite{SW}.

Indeed, we recall that $\Sht(\GL_n, b_1, b_2, \mu)$ denotes the moduli space of modifications of type $\mu$ from $\E_{b_1}$ to $\E_{b_2}$. As $ S_{\phi} \times W_{\Q_p} $-representations we have
\[
r_{\mu} \circ \phi = \bigoplus_{i = 1}^r  \chi_i \boxtimes \phi_i 
\]
where $r_{\mu}$ is the highest weight representation associated to $\mu$. In this case it is the standard representation. Thus by \cite[Corollary. VIII.4.3, Theorem. X.1.1]{FS}, we deduce that for any irreducible sheaf $\mathcal{F_{\pi'}}$ on $\Bunn$ whose (semi-simple) $L$-parameter is $\phi$ we have
\[
\T_{\mu} (\mathcal{F_{\pi'}}) = \bigoplus_{i = 1}^r C_{\chi_i} \star \mathcal{F_{\pi'}} \boxtimes \phi_i
\]
as sheaves on $\Bunn$ with $W_{\Q_p}$-action (see also proposition \ref{itm : fundamental decomposition of Hecke operator}). Thus by \cite[\S IX.3]{FS} the description of $R\Gamma_c(\Sht(\GL_n, b_1, b_2, \mu), \overline{\Q}_{\ell})$ as $W_{\Q_p} \times \G_{b_1}(\Q_p) \times \G_{b_2}(\Q_p) $-modules allows us to compute the action of $C_{\chi_i}$ by identifying the $\phi_i$-parts of the action of $W_{\Q_p}$ (see also lemma \ref{shimhecke}).

We proceed by induction on $r$. First we can deduce the case $r=1$ by \cite{Far20} \cite{Zou}, \cite[theorem 1.2]{AL}, \cite[theorem 1.5]{Han}. Suppose that the theorem is true for $1, \dotsc ,r-1$, we show that it is true for $r$. Now we proceed by induction on $D \displaystyle = \sum_{i = 1}^r d_i$. For $D = 1$, one could use known cases of the Harris-Viehmann's conjecture and \cite[theorem 1.2]{AL}, \cite[theorem 1.5]{Han} to compute $R\Gamma_c(\Sht(\GL_n, b_1, b_2, \mu), \overline{\Q}_{\ell})$ where $b_1 = 1$ and then get a description of $ C_{\chi_i} \star \mathcal{F}_{\Id} $ by identifying the $W_{\Q_p}$-action on both sides. 

In order to prove the induction step from $D = s-1$ to $D = s$, we will study the cohomology of the local Shimura varieties $\Sht(\GL_n, b_1, b_2, \mu)$ for various $b_1$ and $b_2$ such that $\kappa(b_1) = 1-s$ and $\kappa(b_2) = -s$. An important ingredient of this step is an analogue of Boyer's trick for $\Sht(\GL_n, b_1, b_2, \mu)$ when $b_1, b_2$ are non-basic (proposition \ref{generalized Boyer's trick}, corollary \ref{coro of generelized Boyer's trick} and lemmas \ref{itm : first computation}, \ref{itm : second computation}), generalizing classical Boyer's trick \cite{Boy}, \cite{HT}, \cite{Man08}, \cite{Shen14}, \cite{Han1}, \cite{Ho2}, \cite{GI}. This result allows us to relate $\Sht(\GL_n, b_1, b_2, \mu)$ with local Shimura varieties associated to $\GL_{m}$ where $m < n$ and we can then apply the induction hypothesis on $r$. Let us give more details on the strategy of this induction step.

Let $\chi = (d_1, \dotsc, d_r)$ be a character of $\Irr(S_{\phi})^+$ such that $\displaystyle \sum_{i=1}^r d_i = s$. We would like to compute $ C_{\chi} \star \mathcal{F}_{\Id} $. First we want to compute the restriction of the complex $ C_{\chi} \star \mathcal{F}_{\Id} $ to the stratum corresponding to $ b_{\chi} $. We can choose a character $\chi' = \chi \otimes \chi^{-1}_i$ for some well chosen $i$ such that $ \chi' $ belongs to $\Irr(S_{\phi})^+$ and the Newton polygons of $ b_{\chi} $ and $b_{\chi'}$ are sufficiently close so that we can use the analogue of Boyer's trick to the triple $(b_{\chi'}, b_{\chi}, \mu)$. In this case, by monoidal property of spectral action we have
\[
i^*_{b_{\chi}}C_{\chi} \star \mathcal{F}_{\Id} =  i^*_{b_{\chi}} C_{\chi_i} \star (C_{\chi'} \star \mathcal{F}_{\Id} ).
\]

However, by the induction hypothesis on $D = s-1$, we see that $C_{\chi'} \star \mathcal{F}_{\Id} = \mathcal{F}_{\chi'}$. Since the complex $\mathcal{F}_{\chi'}$ is supported only on the stratum corresponding to $b_{\chi'}$, we can use $ R\Gamma_c(\Sht(\GL_n, b_{\chi'}, b_{\chi}, \mu))[\pi_{\chi'}] $ to compute $i^*_{b_{\chi}} C_{\chi_i} \star \mathcal{F}_{\chi'}$ as above (by identifying the appropriate $W_{\Q_p}$-action). The analogue of Boyer's trick (proposition \ref{generalized Boyer's trick}, lemma \ref{itm : first computation}) applying to $ R\Gamma_c(\Sht(\GL_n, b_{\chi'}, b_{\chi}, \mu))[\pi_{\chi'}] $ allows us to use the induction hypothesis on $t < r$ to compute $ R\Gamma_c(\Sht(\GL_n, b_{\chi'}, b_{\chi}, \mu))[\pi_{\chi'}] $ and deduce that $i^*_{b_{\chi}}C_{\chi} \star \mathcal{F}_{\Id} = \mathcal{F}_{\chi}$. 

We remark that more generally for an arbitrary $b$, we can use $ R\Gamma_c(\Sht(\GL_n, b_{\chi'}, b, \mu))[\pi_{\chi'}] $ to compute the restriction $i^*_b C_{\chi_{i}} \star \mathcal{F}_{\chi'} $ to any stratum corresponding to $b$. In particular, we deduce that $ C_{\chi} \star \mathcal{F}_{\Id} = C_{\chi_i} \star \mathcal{F}_{\chi'} $ is only supported on the strata $b$ such that there exists a modification of type $\mu$ from $\E_{b_{\chi'}}$ to $\E_b$.  

The next step is to show that the restrictions of $C_{\chi} \star \mathcal{F}_{\Id}$ to the strata different from $b_{\chi}$ vanish. It is the most technical part of the manuscript. We know that the irreducible constituents appearing in the cohomology of $i^*_{b}C_{\chi} \star \mathcal{F}_{\Id}$ are irreducible representations of $ \G_{b}(\Q_p) $ whose (semi-simplified) $L$-parameters are given by $\phi$ after post-composing with the natural $L$-embedding $ \widehat{\G^*_b} \hookrightarrow \widehat{\GL}_n $. By proposition \ref{itm : shape of strata}, we deduce that if $i^*_bC_{\chi} \star \mathcal{F}_{\Id}$ does not vanish then $b$ is of the form $b_{\xi}$ for some $\xi \in \Irr(S_{\phi})$. It is a generalization of the more familiar fact that the support of an irreducible sheaf $\mathcal{F}$ on $\Bunn$ with cuspidal $L$-parameter is contained in the semi-stable locus. Some simpler forms of this property were already used in \cite{CS, CS1, Ko1} to study cohomology of Shimura varieties with torsion coefficients. This property relies ultimately on the classification of irreducible representations by $L$-parameters and the compatibility up to semi-simplification between Fargues-Scholze $L$-parameters and the usual $L$-parameters for inner forms of $\GL_n(\Q_p)$ \cite[IX.7.3]{FS} \cite[Theorem. 6.6.1]{HKW}.

Thus we need to describe the elements of the form $b_{\xi}$ that can be obtained by some modifications of type $\mu$ of $b_{\chi'}$ and then study the cohomology of the associated moduli spaces $ \Sht(\GL_n, b_{\chi'}, b_{\xi}, \mu) $. Remark that in general, for a non semi-stable vector bundle $\E$ of rank $n$, the set of elements $b$ in $B(\GL_n)$ such that there exists a modification $ \E \longrightarrow \E_b $ of type $\mu$ is unknown. In order to get some information on the $b_{\xi}$'s that could appear, we notice that the modification of type $\mu$
\[
f : \E_{b_{\chi'}| X \setminus \infty} \longrightarrow \E_{b_{\xi}| X \setminus \infty}
\]
 induces a bijection from the set of $\rP$-reductions of $\E_{b_{\chi'}}$ to the set of $\rP$-reductions of $\E_{b_{\xi}}$ where $\rP$ is a standard parabolic subgroup of $\GL_n$ (where $X$ denotes the Fargues-Fontaine curve associated to the algebraically closed perfectoid field $ C^{\flat} $ and $\infty$ is some Cartier divisor associated to some untilt $C$ of $ C^{\flat} $). Therefore, by choosing an appropriate $\rP$-reduction $\E^{\rP}_{b_{\chi'}}$ of $\E_{b_{\chi'}}$ corresponding to a $\rP$-reduction of $b_{\chi'}$, we get a $\rP$-reduction $\E^{\rP}_{b_{\xi}}$ of $\E_{b_{\xi}}$. In our case, a $\rP$-reduction is no other than a filtration of vector bundle. Then by the theory of extensions and of Harder-Narasimhan filtration of vector bundles on the Fargues-Fontaine curve, we see that the information on $\E^{\rP}_{b_{\xi}}$ yields some controls on $b_{\xi}$. One important fact is that we can use \cite[lemma 2.6]{CFS} and \cite[lemma 3.11]{Vie} to compute $\E^{\rP}_{b_{\xi}}$. More precisely, the modification $f$ above is encoded by an element $x$ in the Schubert cell $\Gr^{\mu}_n(C)$ associated to $\mu$ inside the $\B+$-Grassmannian $\Gr_n(C)$. If $\M$ denotes the Levi factor of $\rP$ then the restriction to $\M$ of $\E^{\rP}_{b_{\xi}}$ is the image of a modification $f^{\M}$ of the restriction to $\M$ of $\E^{\rP}_{b_{\chi'}}$. That modification $f^{\M}$ is encoded by the element $ y := \Pr_{\M}(x)$ in $\Gr_{\M}(C)$ by using the Iwasawa decomposition. In particular, the type of the modification $f^{\M}$ is closely related to $\mu$ and can be computed explicitly. Since $\mu = (1, 0^{(n-1)})$, the above information is traceable. We study the above process case by case depending on the combinatoric description of $\E^{\rP}_{b_{\chi'}}$. 

The upshot is that if $b_{\xi} \neq b_{\chi}$ then we can show that in all cases $b_{\xi}$ and $b_{\chi}$ are close enough so that we can apply the analogue of Boyer's trick to reduce the computation of $ R\Gamma_c(\Sht(\GL_n, b_{\chi'}, b_{\xi}, \mu))[\pi_{\chi'}] $ to some computations involving only local Shimura varieties associated to $\GL_m$ with $m < n$ and then use the induction hypothesis. We emphasis that we do not need to compute the whole cohomology $ R\Gamma_c(\Sht(\GL_n, b_{\chi'}, b_{\xi}, \mu))[\pi_{\chi'}]$; we only need to compute its $[\phi_i]$-isotypical part since we only need to determine the action of $C_{\chi_i}$. This makes the computation process simpler since $C_{\chi_i} \star ( C_{\chi_i^{-1}} \star (-)) = C_{\Id} \star (-)$ is the identity functor of $\Dlis^{[C_{\phi}]}(\Bunn,\ol{\mathbb{Q}}_{\ell})^{\omega}$. More precisely, it allows us to use the topological properties of $\Bun_n$ via the semi-orthogonal decomposition of $\Dlis(\Bun_n, \ov \Q_{\ell})$ into the $\Dlis(\Bun^b_n, \ov \Q_{\ell})$'s where $b$ varies in $B(\GL_n)$ (lemma \ref{itm : morphism between strata}) and to use various adjunction formulas.

The above theorem could lead to some interesting applications. First of all, since the irreducible representations of $S_{\phi}$ are all of dimension $1$, we deduce the following description of the regular representation $ \displaystyle V_{\text{reg}} = \bigoplus_{\chi \in \Irr(S_{\phi})} \chi $. Thus the sheaf $ \displaystyle \mathcal{G}_{\phi} := \bigoplus_{\chi \in \Irr(S_{\phi})} \mathcal{F}_{\chi} $ is non trivial and by using the above theorem, we can compute $\displaystyle \T_{V} (\mathcal{G}_{\phi})$ for every algebraic representation $V$ of $\GL_n$ and then prove the following result (theorem \ref{itm : Hecke eigensheaf}).
\begin{theorem}
The sheaf $ \displaystyle \mathcal{G}_{\phi} $ is a non trivial Hecke eigensheaf corresponding to the $L$-parameter $\phi$.
\end{theorem}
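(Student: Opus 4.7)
The plan is to derive the Hecke eigensheaf property directly from the main theorem (Theorem 1.1, giving $C_\chi \star \mathcal{F}_{\Id} = \mathcal{F}_\chi$) combined with the monoidality of the spectral action and the Fargues--Scholze compatibility between Hecke operators and the spectral action on $[Z^1(W_{\Q_p}, \widehat{\GL}_n)/\widehat{\GL}_n]$.

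First, I would translate each Hecke operator into spectral-action language. For any algebraic representation $V$ of $\widehat{\GL}_n$, the operator $\T_V$ agrees with the spectral action of the canonical $W_{\Q_p}$-equivariant perfect complex $\widetilde{V}$ on $[Z^1(W_{\Q_p}, \widehat{\GL}_n)/\widehat{\GL}_n]$ obtained by applying $r_V$ to the universal $L$-parameter (\cite[Cor.~VIII.4.3, Thm.~X.1.1]{FS}). Since every summand $\mathcal{F}_\chi$ of $\mathcal{G}_\phi$ has Fargues--Scholze parameter $\phi$, the vanishing result recalled just before Theorem~1.1 shows that any perfect complex whose support is disjoint from the connected component $[C_\phi]$ annihilates $\mathcal{G}_\phi$; it therefore suffices to describe the restriction $\widetilde{V}|_{[C_\phi]}$.

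Second, using the identification $[C_\phi] \simeq [\Gm^r/\Gm^r]$ and the fact that the fiber of the universal parameter at $\phi$ is $\phi$ itself equipped with its canonical $S_\phi$-action scaling the summands $\phi_i$, I would decompose the commutative $S_\phi$-action on $r_V \circ \phi$ into characters to obtain a $W_{\Q_p} \times S_\phi$-equivariant splitting
\begin{equation*}
r_V \circ \phi \,=\, \bigoplus_{\chi \in \Irr(S_\phi)} \chi \boxtimes W_{V,\chi},
\end{equation*}
where each $W_{V,\chi}$ is a finite-dimensional $W_{\Q_p}$-representation and $\bigoplus_\chi W_{V,\chi} = r_V \circ \phi$ as $W_{\Q_p}$-representations. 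This translates to $\widetilde{V}|_{[C_\phi]} \simeq \bigoplus_\chi C_\chi \boxtimes W_{V,\chi}$ in the category of $W_{\Q_p}$-equivariant perfect complexes on $[C_\phi]$.

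Third, Theorem~1.1 together with monoidality of the spectral action upgrades to $C_{\chi'} \star \mathcal{F}_\chi = C_{\chi'\chi} \star \mathcal{F}_{\Id} = \mathcal{F}_{\chi'\chi}$ for all $\chi, \chi' \in \Irr(S_\phi)$. Combining this with the decomposition above and reindexing the inner sum $\chi_0 \mapsto \chi^{-1}\chi_0$,
\begin{align*}
\T_V(\mathcal{G}_\phi) \,=\, \widetilde{V} \star \mathcal{G}_\phi
&\,=\, \bigoplus_{\chi,\,\chi_0} \bigl( C_\chi \star \mathcal{F}_{\chi_0} \bigr) \boxtimes W_{V,\chi} \\
&\,=\, \bigoplus_{\chi} \mathcal{G}_\phi \boxtimes W_{V,\chi} \\
&\,=\, \mathcal{G}_\phi \boxtimes (r_V \circ \phi).
\end{align*}
Compatibility of these isomorphisms as $V$ varies (for tensor products and duals) is automatic from the monoidality of both the spectral action and the functor $V \mapsto \widetilde{V}$, and non-triviality follows at once since $\mathcal{F}_{\Id} = i_{1!}\pi$ is a non-zero summand of $\mathcal{G}_\phi$. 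The only step demanding genuine care is the second one, where both the $W_{\Q_p}$- and $S_\phi$-equivariant structures on $\widetilde{V}|_{[C_\phi]}$ must be tracked simultaneously; once that bookkeeping is in place, the theorem is a clean formal consequence of the main theorem.
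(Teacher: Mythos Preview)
Your proposal is correct and follows essentially the same route as the paper: both arguments reduce everything to the identity $C_{\chi'}\star\mathcal{F}_\chi=\mathcal{F}_{\chi'\chi}$ from the main theorem, decompose $r_V\circ\phi$ into $S_\phi$-isotypic pieces, and then use translation-invariance of the index set $\Irr(S_\phi)$ to collapse the double sum. The one point you flag as needing care---the $W_{\Q_p}$-equivariant decomposition in your second step---is not literally true as a statement about $\widetilde{V}|_{[C_\phi]}$ globally (Frobenius varies over the unramified twists), but the paper's Proposition~\ref{itm : fundamental decomposition of Hecke operator} supplies exactly what you need: the decomposition $\T_V(\mathcal{F})=\bigoplus_\chi C_\chi\star\mathcal{F}\boxtimes\sigma_\chi$ holds after applying to any Schur-irreducible $\mathcal{F}$ with parameter $\phi$, and since each $\mathcal{F}_\chi$ is such a sheaf this suffices.
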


\subsubsection{Cohomology of local Shimura varieties and Harris-Viehmann's conjecture} \textbf{}

As we know, the theory of Rapoport-Zink spaces and local Shimura varieties play a crucial role in the Langlands program. There are two important conjectures in the study of the cohomology of these spaces.  
\begin{enumerate}
    \item The Kottwitz conjecture which concerns the discrete part of the cohomology of the basic local Shimura varieties.
    \item The Harris-Viehmann conjecture which gives an inductive formula of the cohomology of the non-basic local Shimura varieties.
\end{enumerate}

By combining these two conjectures, one should be able to \textit{describe} the cohomology of these spaces.

The Kottwitz conjecture was proven in many cases. The Lubin-Tate case was proven by \cite{Boy, HT}, the case of basic EL unramified spaces was proven in \cite{SWS, Far04}, the case of basic PEL odd-unitary groups was treated in \cite{KH1, BMN} and the case of basic PEL type associated with $\GSp_4$ was proven in \cite{Ham}. However, the Harris-Viehmann conjecture was only known in some cases involving the Hodge-Newton reducible conditions and classical Boyer's trick. 

In \cite[\S 4.1.2]{BHN} we prove the Kottwitz conjecture for non-minuscule cocharacters for unitary similitude groups by exploiting the explicit description of the spectral action on the supercuspidal $L$-packets. Now we know how the Hecke operatos act on the representations of $\GL_n(\Q_p)$ whose $L$-parameter satisfies our starting hypothesis. Thus by the same argument, we can \textit{compute} some parts of the cohomology of non-basic local Shtukas spaces and then deduce new cases of the Harris-Viehmann's conjecture for $\GL_n$. Let $b \in B(\GL_n)$ and suppose that $\E_b \simeq \E(\lambda_1) \oplus \dotsc \oplus \E(\lambda_k)$ for $\lambda_1 > \dotsc > \lambda_k$ and where $\E(\lambda_j)$ is a semi-stable vector bundle of slope $\lambda_j$. Let $\pi_b$ be a representation of $\G_b(\Q_p)$ with the corresponding $L$-parameter $\phi^b$. We denote by $\M$ the standard Levi subgroup of $\GL_n$ that is the split inner form of $\G_b$ and denote by $\rP$ the standard parabolic subgroup of $\GL_n$ with Levi factor $\M$. Then $\M = \GL_{m_1} \times \dotsc \times \GL_{m_k}$ where $m_j := \rank\E(\lambda_j)$ and the natural embedding $ ^{L}\G_b(\overline{\mathbb{Q}}_{\ell}) \longrightarrow \ ^{L}\GL_n(\overline{\mathbb{Q}}_{\ell}) $ induces a morphism of $L$-groups $ \eta : \ ^{L}\M(\overline{\mathbb{Q}}_{\ell}) \longrightarrow \ ^{L}\GL_n(\overline{\mathbb{Q}}_{\ell}) $. We define an $L$-parameter of $\GL_n$ by $ \phi := \eta \circ \phi^b $ and suppose that $\phi$ satisfies the conditions in theorem \ref{itm : main theorem}.

Let $\mu$ be an arbitrary cocharacter of $\GL_n$ and suppose that  as $S_{\phi} \times W_{\Q_p} $-modules we have
\[
 \displaystyle r_{\mu} \circ \phi = \bigoplus_{\chi \in \Irr(S_{\phi})} \chi \boxtimes \sigma_{\chi}. 
\]
Then we have (up to some shifts and twists)
\[
\T_{\mu} (i_{b!}\pi_b) \simeq \bigoplus_{\chi \in \Irr(S_{\phi})} C_{\chi} \star i_{b!}\pi_b  \boxtimes \sigma_{\chi}.
\]

Therefore theorem \ref{itm : main theorem} allows us to describe the Hecke action $\T_{\mu} (i_{b!}\pi_b)$ and compute the cohomology of moduli of local Shtukas. The Levi subgroup $\M$ is a product of general linear groups then the same arguments allow us to compute the cohomology of moduli of local Shtukas associated to $\M$. Therefore, a generalized form of the Harris-Viehmann conjecture follows from this description of the cohomologies. For simplicity, let us state the result up to some shifts and twists and only in the minuscule case where $\mu = (1^{(a)}, 0^{(n-a)})$ for some integer $1 \le a \le n$ (see theorem \ref{itm : Harris-Viehmann conjecture} for a more general version of the result).
\begin{theorem}
With the above notations, if $\mu$ is minuscule then we have the following identity (up to some shifts and some twists of $\pi_b$ by unramified characters) of $ \GL_n(\Q_p) \times W_{\Q_p} $-representations
\[
R\Gamma_{c}(\GL_n, b,\mu) [\pi_{b}] \simeq \Ind_{\rP}^{\GL_n} R\Gamma_{c}(\M, b_{\M},\mu_{\M})  [\pi_{b}].
\]
where $ b_{\M} $ is the reduction of $b$ to $\M$ and $\mu_{\M} = \mu_1 \times \dotsc \times \mu_k$ and where $\mu_j = (1^{(\deg\E(\lambda_j))}, 0^{(\rank\E(\lambda_j) - \deg\E(\lambda_j))})$. 
\end{theorem}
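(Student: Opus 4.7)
The plan is to extract both sides from the spectral decomposition of the Hecke operator $\T_\mu$, applying Theorem \ref{itm : main theorem} on the $\GL_n$-side and its $r=1$ specialization to each factor of the Levi $\M=\prod_{j=1}^k\GL_{m_j}$ on the other side.

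For the left-hand side I would start from the fundamental decomposition (Proposition \ref{itm : fundamental decomposition of Hecke operator})
\[
\T_{\mu}(i_{b!}\pi_b)\simeq\bigoplus_{\chi\in\Irr(S_{\phi})}C_{\chi}\star i_{b!}\pi_b\boxtimes\sigma_{\chi},
\]
where $\sigma_\chi$ is the $\chi$-isotypical component of $r_\mu\circ\phi$ under the $S_\phi$-action. Let $\chi_b\in\Irr(S_\phi)$ be the distinguished character for which $b=b_{\chi_b}$; by construction $\pi_b=\pi_{\chi_b}$, so $i_{b!}\pi_b\simeq\mathcal{F}_{\chi_b}$ modulo the shift $d_{\chi_b}$ and the modulus twist $\delta_b^{-1/2}$. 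Monoidality of the spectral action together with Theorem \ref{itm : main theorem} then gives $C_{\chi}\star i_{b!}\pi_b\simeq\mathcal{F}_{\chi\cdot\chi_b}$, which is concentrated on the stratum $\Bun_n^{b_{\chi\cdot\chi_b}}$. Since $R\Gamma_c(\GL_n,b,\mu)[\pi_b]$ is obtained (up to shifts and twists) from the stalk of $\T_\mu(i_{b!}\pi_b)$ at the basic stratum, cf.\ Lemma \ref{shimhecke} and \cite[\S IX.3]{FS}, only the summand $\chi=\chi_b^{-1}$ survives and we get
\[
R\Gamma_c(\GL_n,b,\mu)[\pi_b]\simeq\pi\otimes\sigma_{\chi_b^{-1}}
\]
up to shifts and twists, with $\pi$ the $\GL_n(\Q_p)$-representation of parameter $\phi$.

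For the right-hand side I apply the same recipe to each factor of $\M$. Each $b_{\M,j}$ is basic of slope $\lambda_j$ in $B(\GL_{m_j})$, so Theorem \ref{itm : main theorem} in the case $r=1$, established in \cite{Far20,Zou,AL,Han}, yields
\[
R\Gamma_c(\GL_{m_j},b_{\M,j},\mu_j)[\pi_{b,j}]\simeq\tau_j\otimes\sigma^{(j)}
\]
up to shifts and twists, where $\tau_j$ denotes the $\GL_{m_j}(\Q_p)$-representation whose $L$-parameter is the restriction of $\phi^b$ to the $j$-th factor of $\widehat{\M}$, i.e.\ the Jacquet-Langlands transfer of $\pi_{b,j}$. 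Taking the external tensor product and normalized parabolic induction, and using that the Weil-group factors commute with induction, yields
\[
\Ind_{\rP}^{\GL_n}R\Gamma_c(\M,b_{\M},\mu_{\M})[\pi_b]\simeq\Ind_{\rP}^{\GL_n}(\tau_1\otimes\dotsc\otimes\tau_k)\otimes\bigotimes_{j=1}^k\sigma^{(j)},
\]
and the standard compatibility of LLC for $\GL_n$ with parabolic induction identifies $\Ind_{\rP}^{\GL_n}(\tau_1\otimes\dotsc\otimes\tau_k)$ with $\pi$.

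Matching the two sides then reduces to the Weil-representation identity $\sigma_{\chi_b^{-1}}\simeq\bigotimes_{j=1}^k\sigma^{(j)}$, which is a branching statement for $r_\mu\circ\phi$ along $\eta:\widehat{\M}\hookrightarrow\widehat{\GL}_n$: for $\mu$ minuscule we have $r_\mu=\Lambda^a\Std$, and the Newton slopes $\lambda_j$ uniquely determine how the Hodge weights of $\mu$ distribute among the $\GL_{m_j}$-factors, so the $\chi_b^{-1}$-component of the $S_\phi\times W_{\Q_p}$-decomposition of $r_\mu\circ\phi$ coincides with the external tensor product $\bigotimes_j(r_{\mu_j}\circ\phi^{b,j})$ controlling the individual Levi Hecke operators. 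The chief technical obstacle I expect is the careful bookkeeping of the shifts $d_\chi=\langle 2\rho,\nu_{b_\chi}\rangle$ and of the modulus twists $\delta_{b_\chi}^{-1/2}$ produced by the main theorem, and their reconciliation with the $\delta_\rP^{1/2}$ implicit in normalized parabolic induction; since the statement is asserted only up to shifts and twists this does not affect the qualitative form of the identity, but it would have to be treated precisely in the complete proof.
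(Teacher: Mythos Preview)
Your approach coincides with the paper's: compute both sides by decomposing the relevant Hecke operator via Proposition \ref{itm : fundamental decomposition of Hecke operator}, use Theorem \ref{itm : main theorem} to see that only the $\chi_b^{-1}$-summand lands on the trivial stratum, and match the Galois pieces through the branching of $r_{\mu}$ along $\widehat{\M}\hookrightarrow\widehat{\GL}_n$. The paper packages the $\GL_n$-side as the closed formula
\[
R\Gamma_c(\GL_n,b,\mu)[\delta_b^{1/2}\otimes\pi_b]\;\simeq\;\pi_1\boxtimes\Hom_{S_\phi}(\chi_b^{-1},r_{\mu^{-1}}\circ\phi)[-d]
\]
and then simply reapplies this formula to $\M$; your factor-by-factor treatment of the Levi is the same computation unwound.

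There is however one genuine slip on the Levi side. You invoke ``Theorem \ref{itm : main theorem} in the case $r=1$'' for each factor $\GL_{m_j}$, citing the supercuspidal references. But the $L$-parameter on $\GL_{m_j}$ is $\bigoplus_{i\in\I(j)}\phi_i$, where $\I(j)=\{i:d_i/n_i=\lambda_j\}$, and this is irreducible only when $|\I(j)|=1$. In general several $\phi_i$ share the same slope $\lambda_j$, so the centralizer on that factor is $\bb G_m^{|\I(j)|}$ and you need the full Theorem \ref{itm : main theorem} for $\GL_{m_j}$ (equivalently, the displayed formula above applied to $\M$), not merely the $r=1$ case established in \cite{Far20,Zou,AL,Han}. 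The paper handles this by first proving the closed formula for arbitrary $\GL_n$ and then specialising, which is exactly the step you should substitute for your $r=1$ appeal. A smaller point: by Lemma \ref{shimhecke} the complex $R\Gamma_c(\GL_n,b,\mu)[\pi_b]$ is $i_1^*\T_{-\mu}i_{b!}(\pi_b)$ up to shifts and twists, not $i_1^*\T_{\mu}i_{b!}(\pi_b)$; accordingly the Galois piece is $\Hom_{S_\phi}(\chi_b^{-1},r_{\mu^{-1}}\circ\phi)$ rather than the isotypic component of $r_\mu\circ\phi$.
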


\subsection{Spectral action} \textbf{}

We now want to compute the action of a perfect complex $\bL \in \Perf([C_{\phi}])$ on $\mathcal{F}_{\pi}$. The connected component $[C_{\phi}]$ containing $\phi$ gives rise to a direct summand
\[
\Perf([C_{\phi}]) \hookrightarrow \Perf([Z^1(W_{\Q_p}, \GL_n)/\GL_n]).
\]

Therefore the spectral action gives rise to a corresponding direct summand 
\[
\Dlis^{[C_{\phi}]}(\Bunn,\ol{\mathbb{Q}}_{\ell})^{\omega} \subset \Dlis(\Bunn,\ol{\mathbb{Q}}_{\ell})^{\omega}.
\]

We want to understand the structure of the category $\Dlis^{[C_{\phi}]}(\Bunn,\ol{\mathbb{Q}}_{\ell})^{\omega}$ together with the action of $\Perf([C_{\phi}])$. However we have
\[
\Perf([C_{\phi}]) \simeq \bigoplus_{\chi \in \Irr(S_{\phi})} \Perf(\mc O(\bb G_m^r)),
\]
where $\Irr(S_{\phi}) \simeq \displaystyle \prod_{i=1}^r \Z $ is the set of characters of $S_{\phi}$. From the theory of Bernstein decomposition, we know that the category of $\mathcal{O}(\bb G_m^r)$-modules is equivalent to the Bernstein block of $ \Rep_{\overline{\Q}_{\ell}}(\G(\Q_p))$ containing a representation $\pi$ whose $L$-parameter is given by $\phi$ where $\G$ is any inner form of a Levi sub-group of $\GL_n$. Thus $\Perf(\mc O(\bb G_m^r))$ is equivalent to the full sub-category of compact objects of the derived category of these Bernstein blocks. Therefore, it is expected that $\Dlis^{[C_{\phi}]}(\Bunn,\ol{\mathbb{Q}}_{\ell})^{\omega}$ is the direct sum of all of these categories, indexed by the elements in $\Irr(S_{\phi})$. 

 For each $\chi \in \Irr(S_{\phi})$, we define a full subcategory $\mathrm{D}(\Rep(\mathfrak{s}_{\phi}(\chi)))^{\omega}$ of $\Dlis^{[C_{\phi}]}(\Bunn,\ol{\mathbb{Q}}_{\ell})^{\omega}$ as follow. Let $\mathrm{D}(\Rep(\mathfrak{s}_{\phi}(\chi)))$ be the derived category of the Bernstein block of the category $\Rep_{\overline{\Q}_{\ell}}(\G_{b_{\chi}}(\Q_p))$ containing the irreducible representation $\pi_{\chi}$. We denote by $\mathrm{D}(\Rep(\mathfrak{s}_{\phi}(\chi)))^{\omega}$ its full sub-category of compact objects. Denote by $\Ws$ the projection of $ \mathcal{W}_{\psi} $ on $\Rep(\mathfrak{s}_{\phi}(\Id)))^{\omega}$. The following results are theorems \ref{itm : spectral action - basic}, \ref{itm : spectral action - general} and \ref{itm : orthogonal decomposition}.

\begin{theorem}
    Let $\bL$ be a perfect complex in $\Perf([C_{\phi}])$. Then
\begin{enumerate}
    \item We can compute explicitly $\bL \star \Ws $,
    \item We have a decomposition of categories
    \[
    \Dlis^{[C_{\phi}]}(\Bun_n, \overline{\Q}_{\ell})^{\omega} \simeq \bigoplus_{\chi \in \Irr(S_{\chi})}\mathrm{D}(\Rep(\mathfrak{s}_{\phi}(\chi)))^{\omega}.
    \]
\end{enumerate}
\end{theorem}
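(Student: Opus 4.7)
I would exploit the isomorphism $[C_\phi] \simeq [\bb G_m^r/\bb G_m^r]$ (trivial action) together with Theorem \ref{itm : main theorem}. Under the trivial $\bb G_m^r$-action, weight decomposition gives
\[
\Perf([C_\phi]) \simeq \bigoplus_{\chi \in \Irr(S_\phi)} \Perf(\mathcal{O}(\bb G_m^r)) \cdot C_\chi,
\]
so any $\bL$ is uniquely of the form $\bigoplus_\chi C_\chi \otimes M_\chi$ with $M_\chi \in \Perf(\mathcal{O}(\bb G_m^r))$. For Part (1), by additivity and monoidality of the spectral action, $\bL \star \Ws \simeq \bigoplus_\chi M_\chi \star (C_\chi \star \Ws)$. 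First, I would lift Theorem \ref{itm : main theorem} from $\mathcal{F}_{\Id} = \mathcal{F}_\pi$ to $\Ws = i_{1!}(\pi^{\mathfrak{s}_\phi})$, where $\pi^{\mathfrak{s}_\phi}$ is the Bernstein projection of $\cInd_\U^{\GL_n}\psi$ to the block $\mathfrak{s}_\phi(\Id)$ (a projective generator); since the spectral action is compatible with this projection, we obtain $C_\chi \star \Ws \simeq \Wschi$. Next, the identification $\End_{[C_\phi]}(C_\chi) \simeq \mathcal{O}(\bb G_m^r)$ matches the Bernstein center $Z(\mathfrak{s}_\phi(\chi))$ acting on the projective generator $\Wschi$, so $M_\chi \star \Wschi$ becomes a derived tensor product, yielding
\[
\bL \star \Ws \;\simeq\; \bigoplus_{\chi \in \Irr(S_\phi)} M_\chi \otimes^{L}_{\mathcal{O}(\bb G_m^r)} \Wschi.
\]

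\textbf{Part (2).} For each $\chi$, the subcategory $\mathrm{D}(\Rep(\mathfrak{s}_\phi(\chi)))^\omega$ is embedded in $\Dlis^{[C_\phi]}(\Bunn, \ov\Q_\ell)^\omega$ via $F_\chi : V \mapsto i_{b_\chi !}(\delta^{-1/2}_{b_\chi} \otimes V)[-d_{b_\chi}]$, which is fully faithful from $i^*_{b_\chi} i_{b_\chi !} \simeq \Id$. The decomposition then reduces to showing that the spectral-action functor $\Phi : \Perf([C_\phi]) \to \Dlis^{[C_\phi]}(\Bunn, \ov\Q_\ell)^\omega$, $\bL \mapsto \bL \star \Ws$, is an equivalence when matched with $\bigoplus_\chi F_\chi$ via the Bernstein equivalence $\Perf(\mathcal{O}(\bb G_m^r)) \simeq \mathrm{D}(\Rep(\mathfrak{s}_\phi(\chi)))^\omega$ on each summand (using Part (1)). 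Full faithfulness of $\Phi$ splits into: (i) $\End_{\Dlis}(\Wschi) \simeq \mathcal{O}(\bb G_m^r)$ (Bernstein center), already used in Part (1); and (ii) cross-Hom vanishing $\Hom(\Wschi, \mathcal{W}_{\mathfrak{s}_\phi(\chi')}) = 0$ for $\chi \neq \chi'$. For (ii), when $b_\chi$ and $b_{\chi'}$ are Newton-incomparable or $b_\chi$ is not in $\overline{\Bun_n^{b_{\chi'}}}$, the vanishing follows from $i^!_{b_\chi} i_{b_{\chi'} !} = 0$; the remaining case requires controlling the geometric Jacquet functor on Bernstein blocks. Essential surjectivity reduces to the observation that any Schur-irreducible object in $\Dlis^{[C_\phi]}(\Bunn, \ov\Q_\ell)^\omega$ is (up to shift) of the form $i_{b!}(\pi')$ for some $\pi'$ with FS-parameter $\phi$; Proposition \ref{itm : shape of strata} forces $b = b_\chi$ for some $\chi$, and compatibility of FS-parameters with the usual local Langlands for inner forms of $\GL_n$ (\cite[IX.7.3]{FS}, \cite[Theo. 6.6.1]{HKW}) places $\pi'$ in the block $\mathfrak{s}_\phi(\chi)$.

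\textbf{Main obstacle.} The hard part is the cross-Hom vanishing in ingredient (ii), specifically when $b_\chi$ lies in the closure of $\Bun_n^{b_{\chi'}}$. In this case, the geometric boundary functor $i^!_{b_\chi} \circ i_{b_{\chi'} !}$ produces genuine Jacquet-type contributions that must be shown to land in blocks of $\G_{b_\chi}(\Q_p)$ orthogonal to $\mathfrak{s}_\phi(\chi)$. Resolving this requires a case-by-case boundary analysis in the spirit of the proof of Theorem \ref{itm : main theorem}, now applied uniformly across the Newton stratification of $\Bunn$ and leveraging the Part (1) identification of the spectral $\mathcal{O}(\bb G_m^r)$-action with the Bernstein center of each block.
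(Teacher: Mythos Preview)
Your overall architecture is right, but there are two genuine gaps, and for the second one the paper has a much cleaner idea than the route you propose.

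\textbf{Gap in Part (1).} The step ``lift Theorem \ref{itm : main theorem} from $\mathcal{F}_{\Id}$ to $\Ws$'' is not automatic and cannot be dismissed as ``compatibility with projection''. Theorem \ref{itm : main theorem} concerns the single irreducible $\pi_\phi$; the pro-generator $\Ws$ is an infinite object whose irreducible constituents have parameters ranging over all of $[C_\phi]$, so the support-on-one-stratum conclusion does not follow formally. The paper proves $C_\chi \star \Ws \simeq \mathcal{F}_{\Wchi}$ as a separate theorem (Theorem \ref{itm : spectral action - basic}) by re-running the entire induction scheme of Theorem \ref{itm : main theorem}: first the case $r=1$, where one must argue independently that $C_{\xi^m}\star\Ws$ is supported on the basic stratum, is concentrated in degree $0$, and corresponds to a torsion-free rank-one $\mathcal{Z}_{\mathfrak{s}_\phi(\xi^m)}$-module; then the induction on $r$ and on $|\chi|$, where the identification of isotypic pieces now uses only the $\I_{\Q_p}$-action (via the decomposition before Proposition \ref{itm : fundamental decomposition of Hecke operator}) since the Frobenius eigenvalues vary over the block. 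Once this is in place, your $M_\chi$-step is essentially the paper's Theorem \ref{itm : spectral action - general}, which also needs the explicit identification of the map $\mathcal{O}([C_\phi])\to\mathcal{Z}_{\mathfrak{s}_\phi(\chi)}$ (Lemmas \ref{itm : explicit Bernstein morphism}, \ref{itm : general explicit Bernstein morphism}) to match the $R$-action with the Bernstein-center action.

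\textbf{Gap in Part (2): the cross-Hom vanishing.} You correctly identify this as the crux and propose analysing the boundary functor $i^!_{b_\chi}\, i_{b_{\chi'}!}$ via geometric Jacquet modules, then flag the case $b_\chi\in\overline{\Bun_n^{b_{\chi'}}}$ as the hard obstacle. The paper avoids this boundary analysis completely with a trick you are missing: $C_{\chi^{-1}}\star(-)$ is an auto-equivalence of $\Dlis^{[C_\phi]}(\Bunn,\ov\Q_\ell)^\omega$, so
\[
\Hom\big(\bL(\chi)\star\Ws,\ \bL'(\chi')\star\Ws\big)\ \simeq\ \Hom\big(\bL(\Id)\star\Ws,\ \bL'(\chi'\otimes\chi^{-1})\star\Ws\big).
\]
By Part (1) the source is supported on $\Bun_n^1$ and, since $\chi\neq\chi'$ implies $b_{\chi'\otimes\chi^{-1}}\neq 1$, the target is supported on its complement; Lemma \ref{itm : morphism between strata} then gives the vanishing in one line. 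Thus the entire orthogonality of Theorem \ref{itm : orthogonal decomposition} reduces to Part (1) plus a single topological lemma, and no Jacquet-type boundary computation is needed. Your generation/essential-surjectivity argument via Proposition \ref{itm : shape of strata} and the compatibility of parameters matches the paper.
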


Let us describe briefly the strategy of the proof. The first step is to understand $C_{\chi} \star \Ws$ and we proceed 
by induction on $r$ as in the proof of theorem \ref{itm : main theorem}. We deduce the case $r=1$ by using \cite[theorem 1.2]{AL}, \cite[theorem 1.5]{Han} and Bernstein's result on the equivalence between $\Rep(\mathfrak{s}_{\phi}(\Id)))$ and the category of $\ov \Q_{\ell}[X, X^{-1}]$-modules. Then we argue as in the proof of theorem \ref{itm : main theorem} to deduce the general case. Note that it is less complicated since we can use theorem \ref{itm : main theorem} to simplify the arguments.

By the construction of the spectral action, we know that if we have morphisms, cones or retracts of perfect complexes in $\Perf([C_{\phi}])$, then by acting on an object $A \in \Dlis(\Bun_n, \overline{\Q}_{\ell})^{\omega} $, it induces corresponding morphisms, cones or retracts in $\Dlis(\Bun_n, \overline{\Q}_{\ell})^{\omega}$. Moreover, the category $\Perf([C_{\phi}])$ is generated under shifts, cones and retracts by $\Rep(S_{\phi})$ then by tracing back the construction, we are able to compute $\bL \star \Ws $. One important ingredient is the explicit description of the morphisms  
\[
\Psi_{\GL_n} : \mathcal{O}([C_{\phi}]) \longrightarrow \mathcal{Z}(\Rep(\mathfrak{s}_{\phi}(\chi)))
\]
between the factor $\mathcal{O}([C_{\phi}])$ of the spectral Bernstein center and the Bernstein center of the blocks $\Rep(\mathfrak{s}_{\phi}(\chi))$'s. This description relies ultimately on the compatibility between Fargues-Scholze $L$-parameters and the usual $L$-parameters of inner forms of $\GL_n$. \\

In the final part, we would like to verify some cases of the local-global compatibility conjecture. The rough idea is that by using the comparison between the fibers of the Hodge-Tate period map with the Igusa varieties \cite[Thm. 1.15]{CS}, one reduces the local-global compatibility to the computation of the cohomology of the Igusa varieties. Then we can use Mantovan's formula and the description of the Hecke operators to understand Igusa varieties.  

\begin{remark}
\begin{enumerate}
    \item In \cite{Hell, Zhu1}, Eugen Hellmann and Xinwen Zhu stated some conjectures relating the derived category of smooth representations of a $p$-adic split reductive group with the derived category of (quasi-)coherent sheaves on a stack of $L$-parameters and Hellmann also proved his conjectures for $\GL_2$. However it is not clear how to incorporate the spectral actions into their conjectures.
    \item By studying geometric Eisenstein series \cite{Ham1}, Linus Hamann also obtained the description of Hecke eigensheaves associated with $L$-parameters that factor through some maximal torus of an arbitrary reductive group. A construction of Hecke eigensheaves associated with “generous” $L$-parameters for arbitrary reductive groups, together with connections between the Harris–Viehmann conjecture and other conjectures inspired by the geometric Langlands program, will appear in forthcoming work by Hamann, Hansen, and Scholze. \cite{HHS}. 
    \item In \cite{Zou25}, by combining an observation from geometric Langlands program and the techniques in this paper, Konrad Zou obtained a version of the main results in this paper for integral coefficients. The case $r=1$ was also obtained by Chenji Fu.  
\end{enumerate}  
\end{remark}  

\subsubsection*{Organization of the paper} \textbf{} \\

In section \ref{itm : Generalities on Bun}, we recollect some well-known results on vector bundles over the Fargues-Fontaine curve and the stack of these bundles as well as the construction of Fargues-Scholze $L$-parameters. In section \ref{itm : stack of $L$-parameters}, we recall the definition of the stack of $L$-parameters and study some of its basic geometric properties. We also study perfect complexes on this stack and deduce some consequences on the Hecke operators. In section \ref{itm : combinatoric description of Hecke operators}, we state theorem \ref{itm : main theorem}, our first main results. In section \ref{itm : Boyer's trick}, we prove a generalization of Boyer's trick for moduli spaces of local Shtukas $\Sht(\GL_n, b_1, b_2, \mu)$ where we allow both $b_1, b_2$ to be non basic. Then we also deduce some computations that are important for the proof of theorem \ref{itm : main theorem} that occupies the whole section \ref{itm : proof of the first main theorem}. Then in the following sections, we give some applications of our theorem \ref{itm : main theorem}. In section \ref{itm : Hecke-eigensheaves}, we construct Hecke eigensheaves associated to some $L$-parameters $\phi$ and in sections \ref{itm : Harris-Viehmann}, we prove new cases of the Harris-Viehmann's conjecture for non-basic Rapoport-Zink spaces associated to $\GL_n$. In section \ref{itm : categorical Langlands}, we describe some parts of the map $\Psi_{\GL_n}$ between spectral Bernstein center and the usual Bernstein center constructed recently by Fargues and Scholze. Then we use all these results to compute the spectral action of $\Perf([C_{\phi}])$ on $\Dlis^{[C_{\phi}]}(\Bunn, \ov \Q_{\ell})^{\omega}$. In the final section \ref{itm : Local-Global compatibility}, we compute parts of the cohomology of the Igusa varieties and deduce some weak form of the local-global compatibility. 

\subsection*{Acknowledgments}

I would like to thank Alexander Bertoloni-Meli, Linus Hamann, Bao Le Hung, Tuan Ngo Dac and Eva Viehmann for very helpful discussions, feedback and for pointing out several inaccuracies in an earlier draft. I thank Laurent Fargues for feedback in an early stage of this project. I would also like to thank Robin Bartlett, Claudius Heyer, Damien Junger, Lucas Mann, Viet Cuong Pham, Zhixiang Wu, Yifei Zhao for fruitful discussions. Special thanks go to Alexander Bertoloni-Meli, Linus Hamann, Claudius Heyer for constant support and for patiently answering all my numerous questions.  \\

\section*{Notation}
We use the following notation: 
\begin{itemize}
\item $\ell$ and $p$ are distinct prime numbers. 
\item $ \breve \Q_p $ is the completion of the maximal unramified extension of $\Q_p$ with Frobenius $ \sigma $. Let $\overline{\Q}_p$ be an algebraic closure of $\Q_p$ and denote $\Gamma := \Gal(\overline{\Q}_p / \Q_p)$.
\item $ \G $ is a connected reductive group over $\Q_p$. Let $\mathrm{H}$ be a quasi-split inner form of $\G$ and fix an inner twisting $ \G_{\breve \Q_p} \overset{\sim}{\longrightarrow} \mathrm{H}_{\breve \Q_p} $. 
\item  $ A \subseteq T \subseteq B $ where $ A $ is a maximal split torus, $ T = Z_H(A) $ is the centralizer of $ A $ in $T$ and $ B $ is a Borel subgroup in $\mathrm{H}$. Let $ \U $ be its unipotent radical.
\item $ (X^*(T), \Phi, X_*(T), \Phi^{\vee}) $ is the absolute root datum of $\G$ with positive roots $ \Phi^+ $ and simple roots $ \Delta $ with respect to the choice of $B$.
\item $\rho$ is the half sum of the positive roots.
\item  Further, $ (X^*(A), \Phi_0, X_*(A), \Phi^{\vee}_0) $ denotes the relative root datum, with positive roots $ \Phi^+_0 $ and simple roots $ \Delta_0 $.
\item On $ X_*(A)_{\Q} $ resp.~$ X_*(T)_{\Q} $ we consider the partial order given by $ \nu \leq \nu' $ if $ \nu' - \nu $ is a non-negative rational linear combination of positive resp.~relative coroots. 
\item Let $\rP$ be a parabolic subgroup of $\G$, then $\Ind_{\rP}^{\G}$, resp. $\ind_{\rP}^{\G}$, denotes the normalized, resp. un-normalized parabolic induction.
\item Let $ C | \overline{\Q}_p $ be an algebraically closed complete field. Let $C^{\circ}$ resp. $C^{\flat,\circ}$ be the subring of power-bounded elements of $C$ resp. $C^{\flat}$ and let $\xi$ be a generator of the kernel of the surjective map $W(C^{\flat,\circ})\rightarrow C^{\circ}$. Let $\B+:=\B+(C)$ be the $\xi$-adic completion of $W(C^{\flat,\circ})[1/p]$ and $\BdR=\BdR(C)=\B+[\xi^{-1}]$. Then $\B+\cong C  [[\xi]]$ and $\BdR\cong C((\xi))$.
\item Let $X$ be the schematic Fargues-Fontaine curve over $C^{\flat}$. The untilt $C$ of $C^{\flat}$ corresponds to a point $ \infty \in |X|$ with residue field $C$ and $\widehat{\mathcal{O}}_{X, \infty}\cong \B+$. 
\item By $\Gr_{\G}$ we denote the $\B+$-Grassmannian as in \cite[Definition 19.1.1]{SW} and we also write $\Gr_n$ for $\Gr_{\GL_n}$.
\item For $ \lambda \in \Q $, we denote the stable vector bundle on $X$ whose slope is $\lambda$ by $\OO(\lambda)$. If $\lambda = 0$, we simply write $\OO$ for $\OO(0)$.
\item Let $B(\G)$ be the set of $\G(\breve \Q_p)$-$\sigma$-conjugacy classes of elements of $\G(\breve \Q_p)$. For each elements $[b] \in B(\G)$, we denote by $\G_b$ the $\sigma$-centralizer of $b$.  By work of Kottwitz, elements $[b]$ are classified by their Kottwitz point $\kappa_{\G}(b)\in \pi_1(\G)_{\Gamma}$ and their Newton point $\nu_b \in X_*(A)_{\mathbb Q,\dom}$.
\item For a $\G$-bundle $\E$ on $X$ let $\nu_{\E} \in X_*(A)_{\mathbb Q,\dom}$ be the corresponding Newton polygon.
\end{itemize} 	
\section{Generalities on $\Bunn$} \phantomsection \label{itm : Generalities on Bun}

\subsection{Vector bundles on the Fargues-Fontaine curve}
Let $ \Bun(X) $ denote the category of vector bundles on the Fargues-Fontaine curve and recall from \cite{FF} that the curve $X$ is complete in the sense that if $f \in k(X)$ is any nonzero rational function on $X$, then the divisor of $f$ has degree zero. Thus if $\mathcal{E}$ is a rank $n$ vector bundle on the curve $X$ then we can define its degree by $ \deg (\mathcal{E}) := \deg \Lambda^{n} \mathcal{E} $ and its slope by $\mu(\mathcal{E}):= \frac{ \deg \E }{\rank \E}$.

Recall that a vector bundle $\E$ on $X$ is stable if it has no proper, non-zero subbundles $\mathcal{F} \longrightarrow \E$ with $\mu(\mathcal{F}) \ge \mu(\E)$. We say that $\E$ is semi-stable if it has no proper, non-zero subbundles $\mathcal{F} \longrightarrow \E$ with $\mu(\mathcal{F}) > \mu(\E)$.
\begin{definition}
    A Harder-Narasimhan (HN) filtration of a vector bundle $\E$ is a filtration of $\E$ by subbundles $ 0 = \E_0 \subset \E_1 \subset \dotsc \E_m = \E $ such that the quotients $\E_{i+1}/ \E_{i}$ are semi-stable with strictly decreasing slopes $ \mu_1 > \mu_2 > \dotsc > \mu_m $. The Harder-Narasimhan (HN) polygon of $\E$ is the upper convex hull of the points  ($\rank \E_i, \deg \E_i $).
\end{definition}
We have the following results about vector bundles on the Fargues-Fontaine curve.
\begin{theorem}(Fargues-Fontaine, Kedlaya) \cite{FF, Ked}
    Vector bundles on $X$ satisfy the following properties:
    \begin{enumerate}
    \item The set of isomorphism classes of rank $n$ vector bundles over $X$ is classified by the Kottwitz set $B(\GL_n)$.
        \item Every vector bundle $\E$ admits a canonical Harder-Narasimhan filtration.
        \item For every $\lambda$ in $\Q$, there is a unique stable bundle of slope $\lambda$ on $X$, which is denoted by $\mathcal{O}(\lambda)$. Writing $\lambda = r/s$ in lowest terms, then the bundle $\mathcal{O}(\lambda)$ has rank $s$ and degree $r$.
        \item Any semistable bundle of slope $\lambda$ is a finite direct sum $\mathcal{O}^d(\lambda)$, and tensor products of semi-stable bundles are semi-stable.
        \item For any  $\lambda \in \Q $, we have
            \[
            H^0(\mathcal{O}^d(\lambda)) = 0 \textrm{ \ if and only if \ } \lambda < 0
            \]
            and
            \[
            H^1(\mathcal{O}^d(\lambda)) = 0 \textrm{ \ if and only if \ } \lambda \ge 0
            \]
            In particular, any vector bundle $\E$ admits a splitting $ \displaystyle \E = \bigoplus_i \mathcal{O}(\lambda_i) $ of its Harder-Narasimhan filtration.
    \end{enumerate}
\end{theorem}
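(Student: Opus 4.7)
The plan is to reduce the classification to Dieudonn\'e--Manin's theorem for isocrystals via the Fargues--Fontaine functor sending an isocrystal $(D,\varphi)$ over $\breve{\Q}_p$ to a vector bundle on $X$. I would construct $\OO(\lambda)$, for $\lambda = d/h$ in lowest terms with $h > 0$, as the image of the simple isocrystal with underlying vector space $\breve{\Q}_{p^h}$ and Frobenius $p^{-d}\sigma$; a direct computation shows that the resulting bundle has rank $h$ and degree $d$, and the functor is exact and compatible with tensor products and duals --- features that will be used repeatedly in what follows.

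The technical heart is the cohomology computation in (5). I would use the fundamental exact sequence
\begin{equation*}
0 \longrightarrow \Q_p \longrightarrow B_e \longrightarrow \BdR/\B+ \longrightarrow 0
\end{equation*}
which computes $\RGamma(X,\OO_X)$ via the covering by $X \setminus \{\infty\}$ (on which $\OO_X$ is $B_e$) and the formal neighborhood of $\infty$ (on which $\OO_X$ is $\B+$). Tensoring by the isocrystal model of $\OO(\lambda)$ reduces the vanishing $H^0(\OO(\lambda))=0$ for $\lambda<0$ to the absence of $\varphi=p^{-d}$-eigenvectors in $B_{\mathrm{crys}}$ for $d<0$, and reduces $H^1(\OO(\lambda))=0$ for $\lambda\ge 0$ to the surjectivity of $\varphi - p^{-d}$ on the appropriate period ring. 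Both are classical $p$-adic Hodge-theoretic computations, but this is the part I expect to be the main obstacle since it rests on fine analytic properties of the period rings.

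With (5) in hand, stability of $\OO(\lambda)$ follows by a standard Hom-vanishing argument: any proper saturated subbundle of slope $\geq \lambda$ would force, via the HN filtration of the subbundle, a nonzero map $\OO(\mu) \to \OO(\lambda)$ with $\mu \geq \lambda$, hence $H^0(\OO(\lambda-\mu)) \neq 0$, contradicting (5) unless $\mu=\lambda$ and (by rank comparison) the map is an isomorphism. Uniqueness of the stable bundle of slope $\lambda$ is handled in the same way. For (4), a semistable $\E$ of slope $\lambda$ admits a nonzero map $\OO(\lambda) \to \E$ by a cohomological count using (5); any such map is automatically a saturated embedding, and iteration gives $\E \simeq \OO(\lambda)^m$. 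Semistability of tensor products then reduces to inspecting slopes of summands.

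Existence of the HN filtration (2) is formal once one knows that slopes of subbundles are bounded above and attained, which follows from (4) and the classification of semistable bundles. Finally, (1) is a consequence of the splitting of HN filtrations: $\Ext^1(\OO(\mu), \OO(\lambda)) = H^1(\OO(\lambda-\mu)) = 0$ for $\lambda > \mu$ by (5), so every bundle is a direct sum $\bigoplus_i \OO(\lambda_i)$. Such direct sums of rank $n$ are in natural bijection with Newton polygons of rank $n$, and hence with $B(\GL_n)$ (where, for $\GL_n$, the Kottwitz invariant is already determined by the Newton point). All statements apart from (5) reduce to the latter and to Dieudonn\'e--Manin by essentially formal manipulations.
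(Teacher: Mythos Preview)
The paper does not prove this theorem at all: it is stated as a known foundational result with citations to Fargues--Fontaine and Kedlaya, and the paper simply uses it as input. So there is no ``paper's own proof'' to compare against.

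That said, your outline is essentially the standard Fargues--Fontaine approach, and the logical architecture (reduce everything to the cohomology computation in (5), then deduce stability, the classification of semistables, the splitting of the HN filtration, and finally the bijection with $B(\GL_n)$) is correct. One step is thinner than you make it sound: in your argument for (4), the claim that a semistable $\E$ of slope $\lambda$ admits a nonzero map $\OO(\lambda) \to \E$ does not follow from (5) alone by a naive dimension count, since $H^0$ on the Fargues--Fontaine curve need not be finite-dimensional and there is no direct Riemann--Roch in the usual form. The actual argument in Fargues--Fontaine goes through the description of semistable slope-zero bundles as coming from finite-dimensional $\Q_p$-vector spaces (equivalently, through the equivalence with \'etale $\varphi$-modules), and then twists to handle general $\lambda$. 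If you want a self-contained sketch, you should flag that this is where the period-ring analysis (Dieudonn\'e--Manin plus the structure of $B_e$-modules) genuinely enters a second time, not just in (5).
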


Recall that $X$ is the Fargues-Fontaine curve over $C^{\flat}$, which comes equipped with a point $\infty$ corresponding to $C$. By Beauville-Laszlo's gluing theorem \cite{BL} we have a bijective correspondence between vector bundles $\E$ on $X$ and triples $(\E^e,\E_{\B+},\iota)$ where $\E^e$ is a vector bundle over $X\setminus \{\infty\}$, where $\E_{\B+}$ is a vector bundle on $\Spec(\B+)$ and where $\iota:\mathcal{E}^e \otimes_{B_e} \BdR\rightarrow \E_{\B+}\otimes_{\B+}\BdR$ is an isomorphism. Here, the triple corresponding to some $\E$ is given by the respective base changes of $\E$ together with the induced isomorphism.

The pullback of a vector bundle $\E_b$ via $ \Spec(\B+) \longrightarrow X $ is trivial. Indeed, the inclusion $\Q_p \hookrightarrow C\hookrightarrow \B+$ extends to an embedding of an algebraic closure $\overline{\Q}_p$ into $\B+$. By Lang's theorem there is a $g\in \GL_n(\overline{\Q}_p)$ with $gb\sigma(g^{-1})=1$, which induces the desired trivialization. It is well-defined up to the action of $\GL_n(\Q_p)$. The Beauville-Laszlo uniformization depends on the choice of such a trivialization. From now on we consider $\E_b$ together with a trivialization of $\E_{b,\B+}$, without explicitly mentioning it. If $b=1$, we choose the natural trivialization. In all cases, the trivialization of $\E_{b,\B+}$ induces a trivialization of $\E_{b,\B+}\otimes_{\B+}\BdR$ (i.e., an identification with $\BdR^n$ where $n$ is the rank) identifying $\E_{b,\B+}$ with the standard lattice $(\B+)^n$ in $\BdR^n$.

In this context, a modification $f$ from $ \E_1 = (\E_1^e,\E_{1, \B+},\iota_1) $ to $\E_2 = (\E_2^e,\E_{2, \B+},\iota_2)$ is an isomorphism $f : \E_1^e \longrightarrow \E_2^e $. It induces an isomorphism
\[
\overline{f}:= \iota^{-1}_2 \circ f \circ \iota_1^{-1} : \E_{1, \B+} \otimes_{\B+}\BdR \longrightarrow \E_{2, \B+} \otimes_{\B+}\BdR.
\]

The type of this modification is the relative position of $(\B+)^n$ with respect to $ \overline{f}\Big( (\B+)^n\Big) $. In particular, if the type is given by the tuple $ (k_1, \dotsc, k_n)$ where $k_1 \ge \dotsc k_n \ge 0$ then $ \overline{f}\Big( (\B+)^n\Big) \subset (\B+)^n $. Hence the couple $(f, \overline{f}_{| (\B+)^n})$ gives us an injective map from $\E_1$ to $\E_2$.

For each $ x \in \Gr_n (C) $ one can construct a modification $ \mathcal{E}_{b,x} $ of $ \mathcal{E}_b $ as follows. Using the trivialization of $\E_{b,\B+}$, we can write the triple corresponding to $\E_b$ as $(\mathcal{E}_{b | X \setminus \infty}, \E^{n,\tri}_{\B+},\iota)$ where $\E^{n,\tri}_{\B+}$ is the trivial bundle of rank $n$ on $ \Spec(\B+) $. Then $\E_{b,x}$ is given as the vector bundle corresponding to the triple $(\mathcal{E}_{b | X \setminus \infty}, \E^{n,\tri}_{\B+},\iota_x )$ where the isomorphism $\iota_x$ is given by the commutative diagram

\begin{center}
	\begin{tikzpicture}[scale = 1]
	\draw (0, 0) node { $\mathcal{E}_b^e \otimes_{B_e} \BdR $ };
	\draw (0, -2) node { $\mathcal{E}^{n, \tri}_{\B+} \otimes_{\B+} \BdR $. };
	\draw (4, 0) node { $\mathcal{E}^{n, \tri}_{\B+} \otimes_{\B+} \BdR $ };	
	\draw [->] (1.25, 0) -- (2.25, 0)node[midway, above]{$ \iota $};
	\draw [->] (0, -0.5) -- (0, -1.5)node[midway, right]{$ \iota_x $};	
	\draw [->] (0.5, -1.5) -- (3.5, -0.5)node[midway, right]{$~x $};	
	\end{tikzpicture}
\end{center}
Here, $B_e = H^0(X \setminus \infty, \mathcal{O}_X)$ and the map $x$ in the diagram is multiplication by a representative of $x$ on $\BdR^n$. The isomorphism class of the triple only depends on the lattice $x(\E^{n,\tri}_{\B+})\subset \BdR^n$ and is in particular independent of the choice of the representative. 

Write $ \Lambda_{x} := x(\mathcal{E}^{n, \tri}_{\B+}) $ and $ \mathcal{E}^{n, \tri}_{\BdR} := \mathcal{E}^{n, \tri}_{\B+} \otimes_{\B+} \BdR $. The type of the induced modification $ f_x : \E_b \longrightarrow \E_{b, x} $ is the relative position of $\Lambda_x$ with respect to $(\B+)^n$. By the Cartan's decomposition
\[
\Gr_{\GL_n}(C) = \coprod_{\mu} \GL_n(\B+) \mu^{-1}(\xi)\GL_n(\B+) / \GL_n(\B+)
\]
where the union is over all conjugacy classes of cocharacters of $\GL_n$. We have the following decomposition of locally spatial diamonds (\cite[19.2]{SW})
\[
\Gr_{\GL_n} = \coprod_{\mu \in X_*(T)_{\dom}} \Gr_{\GL_n, \mu} 
\]
The element $x$ is type $\mu$ if and only if $x \in \GL_n(\B+) \mu^{-1}(\xi)\GL_n(\B+) / \GL_n(\B+) $. Note that $x$ is of type $\mu$ if and only if the modification $f_x$ is of type $(-\mu)_{\dom}$. Recall that the Iwasawa's decomposition induces a decomposition

\[
\Gr_{\GL_n}(C) = \coprod_{\lambda \in X_*(T)} \U(\BdR) \lambda(\xi)\GL_n(\B+) / \GL_n(\B+),
\]
and we can define the semi-infinite orbits $S_{\lambda}$ associated to $\lambda$ as in \cite[Definition 2.7]{Vie}.

Let $\E, \E'$ be vector bundles on $X$ such that $\E' \simeq \E_x$ for some $x \in \Gr_{\GL_n}(C)$. Recall from \cite[lemma 2.4]{CFS} that the isomorphism between $\E_{| X \setminus \infty}$ and $\E'_{| X \setminus \infty}$ induces for every parabolic subgroup $\rP$ a bijection between
\[
\{ 
\text{reduction of $\E$ to $P$} \} \longrightarrow \{ \text{reduction of $\E'$ to $P$} \}.
\] 

By \cite[lemma 2.6]{CFS} and \cite[lemma 3.11]{Vie}, we can compute explicit this bijection in certain cases by using the information from the intersections $ S_{\lambda} \cap \Gr_{\GL_n, -\mu}(C) $. Moreover, in some cases it also allows us to study modifications of vector bundles.
\begin{example} \phantomsection \label{itm : image modif}
    Let $\E$ be the rank $n$ vector bundle $\OO(1/n') \oplus \OO^{n-n'}$. Then there exists a modification
    \[
    f : \E' \longrightarrow \E
    \]
    of type $\mu = (1, 0, \dotsc, 0)$ if and only if $\E' \in S:= \{ \OO^n, \OO(1/n') \oplus \OO^m \oplus \OO(-1/m') \ | \ n' + m + m' = n \}$.

    It is clear that if $\E' \in S$ then there exists a modification of type $\mu$ from $\E'$ to $\E$. We are going to show the inverse inclusion. Indeed, we consider the canonical Harder-Narasimhan filtration $ \E_1 = \OO(1/n') \subset \E $. This filtration corresponds to a reduction of $\E$ to the standard parabolic subgroup $\rP$ of $\GL_n$ whose Levi factor is $ \M := \GL_{n'} \times \GL_{n-n'} $. The induced $\M$-bundle is $ \E_1 \times \E/ \E_1 $. Then the modification $f$ induces a filtration $\E_1' \subset \E'$ whose corresponding $\M$-bundle is given by $ \E_1' \times \E' / \E_1' $. Moreover, the modification $f$ induces two modifications
    \[
    f_1 : \E'_1 \longrightarrow \E_1 \quad \quad f_2 :   \E' / \E_1' \longrightarrow \OO^{n-n'}
    \]
    of type $\mu_1$ and $\mu_2$ respectively. We want to compute $\E_1'$ and $ \E' / \E_1' $. Suppose that $f$ corresponds to the point $x \in \Gr_{\GL_n, (- \mu)_{\dom}}(C)$. We can further suppose that $ x \in S_{\lambda} \cap \Gr_{\GL_n, (- \mu)_{\dom}}(C) $ where $\lambda \leq (- \mu)_{\dom}$ but $\lambda$ is not necessarily dominant. By \cite[lemma 2.6]{CFS} and \cite[lemma 3.11]{Vie}, the modifications $f_1 \times f_2$ corresponds to the point $\pr_{\M}(x)$ inside $\Gr_{\M}(C)$. We can check that $\pr_{\M}(x)$ belongs to $\Gr_{\M, (- \mu_1 \times \mu_2)_{\dom}}(C)$ where $\mu_1 \times \mu_2 \in \{ (1, 0^{(n'-1)}) \times (0^{(n-n')}) ; (0^{(n')}) \times (1, 0^{(n-n'-1)}) \}$. Thus there are 2 cases as follow:

    \textit{Case 1:} $\mu_1 = (1, 0^{(n'-1)})$ and $\mu_2 = (0^{(n-n')})$. We see that $\E' / \E_1' \simeq \OO^{n-n'} $ and $ 
\E'_1 \simeq \OO^{n'} $ and hence $ \E' \simeq \OO^n $.

    \textit{Case 2:} $\mu_1 = (0^{(n')})$ and $\mu_2 = (1, 0^{(n-n'-1)})$. We see that $\E_1' \simeq \OO(1/n')$ and $\E' / \E_1' \in \{\OO^m \oplus \OO(-1/m') \ | \ m + m' = n - n' \} $. Since $H^1(\mathcal{O}^d(\lambda)) = 0 \textrm{ \ if \ } \lambda \ge 0$, we deduce that $ \E' \simeq \OO(1/n') \oplus \OO^m \oplus \OO(-1/m') $. Therefore $\E'$ belongs to the set $S$.
    
\end{example}
\subsection{Stack of vector bundles on the Fargues-Fontaine curve}
Let $\G/\mathbb{Q}_{p}$ be a connected reductive group. We let $\Perf$ denote the category of perfectoid spaces over $\ol{\mathbb{F}}_{p}$. We write $\ast := \Spd(\ol{\mathbb{F}}_{p})$ for the natural base. The key object of study is the moduli pre-stack $\BunG$ sending $S \in \Perf$ to the groupoid of $\G$-bundles on the relative Fargues--Fontaine curve $X_{S}$. The following theorem gives a geometric description of $\BunG$.
\begin{theorem} \cite[Theorem III.0.2]{FS}
    The pre-stack $\Bun_{\G}$ satisfies the following properties:
    \begin{enumerate}
        \item The prestack $\Bun_{\G}$ is a small $v$-stack.
        \item The points $ | \Bun_{\G} | $ are naturally in bijection with Kottwitz’ set $B(\G)$ of $\G$-isocrystals.
        \item The Newton map 
        \[
        \nu :  | \BunG | \longrightarrow B(\G) \longrightarrow  (X_{*}(\T)_{\Q, \mathrm{dom}})^{\Gamma}
        \]
        is semi-continuous and the Kottwitz map
        \[
        \kappa : | \BunG | \longrightarrow B(\G) \longrightarrow \pi_1(\G_{\ol{\Q}_p})_{\Gamma}
        \]
        is locally constant. Moreover, the map $| \BunG | \longrightarrow B(\G)$ is a homeomorphism when $B(\G)$ is equipped
with the order topology \cite{Vie, Han2}.
        \item The semistable locus $\Bun^{ss}_{\G} \subset \BunG $ is open, and given by
        \[
        \Bun^{ss}_{\G} \simeq \coprod_{b \in B(\G)_{\mathrm{basic}}} [\bullet / \underline{\G_b(\Q_p)}].
        \]
        \item For any $b \in B(\G)$, the corresponding subfunctor
        \[
        i^b : \Bun^b_{\G} \subset \BunG
        \]
        is locally closed, and isomorphic to $[\bullet / \widetilde{\G}_b ]$  where $\widetilde{\G}_b$ is a $v$-sheaf of groups such that $\widetilde{\G}_b \longrightarrow \ast $  is representable in locally spatial diamonds with $ \pi_0\widetilde{\G}_b = \G_b(\Q_p) $. The connected component $\widetilde{\G}_b^{0} \subset \widetilde{\G}_b$ of the identity is cohomologically smooth of dimension $\langle 2\rho, \nu_b \rangle$. 
    \end{enumerate}
\end{theorem}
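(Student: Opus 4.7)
The plan is to establish the five assertions separately, organizing everything around the $v$-stack formalism for diamonds and the Fargues--Fontaine classification of $\G$-bundles, with the most substantive work concentrated in the stratum-by-stratum analysis in (5). For (1) and (2), I would first reduce $v$-descent for $\G$-bundles on the relative curve $X_S$ to $v$-descent for vector bundles on $X_S$ via a Tannakian argument; the latter is standard from Scholze's theory of diamonds and the $v$-descent of quasi-coherent sheaves. The bijection $|\BunG| \simeq B(\G)$ would then be read off on geometric points: over $\Spa(C, C^+)$ with $C$ algebraically closed, Fargues' classification theorem (generalizing the Fargues--Fontaine theorem for $\GL_n$) asserts that every $\G$-bundle on $X_C$ is isomorphic to some $\E_b$, and $\E_b \simeq \E_{b'}$ if and only if $b$ and $b'$ are $\sigma$-conjugate, which is the definition of $B(\G)$.

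For (3), the core input is semi-continuity of $\nu$. In the $\GL_n$ case this is the theorem that Harder--Narasimhan polygons go up under specialization, proved by Kedlaya--Liu. For general $\G$ I would reduce to $\GL_n$ by choosing a faithful representation and using that the Newton point is determined by the slopes of all associated vector bundles. Local constancy of $\kappa$ is easier: it takes values in a discrete group and factors through $\pi_0(\BunG)$. The final homeomorphism statement would then be obtained by combining semi-continuity (which gives one direction of specialization) with the explicit construction of specializations via modifications of $\G$-bundles, realizing every pair $b' \leq b$ in the dominance order as a specialization inside a Beilinson--Drinfeld-type family.

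For (4) and (5), the key is to compute the automorphism sheaf $\widetilde{\G}_b := \underline{\Aut}(\E_b)$ as a $v$-sheaf of groups, whose $\Q_p$-points recover the $\sigma$-centralizer $\G_b(\Q_p)$. When $b$ is basic, $\E_b$ is semistable, its adjoint bundle $\mathrm{ad}(\E_b)$ has only slope zero, and $\widetilde{\G}_b$ is already the pro-\'etale sheaf $\underline{\G_b(\Q_p)}$, yielding the classifying-stack description in (4); openness follows from the semi-continuity in (3). For general $b$, the identity component $\widetilde{\G}_b^0$ is nontrivial and should be identified, infinitesimally, with the Banach--Colmez space of the positive-slope part $\mathrm{ad}(\E_b)^{>0}$ of the adjoint bundle. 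Its dimension is then
\[
\dim H^0\bigl(X, \mathrm{ad}(\E_b)^{>0}\bigr) = \langle 2\rho, \nu_b \rangle,
\]
by a Riemann--Roch computation, and cohomological smoothness follows from the corresponding smoothness of Banach--Colmez spaces attached to positive-slope bundles.

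The main obstacle is precisely the structural analysis of $\widetilde{\G}_b^0$ in (5): proving cohomological smoothness in the $v$-site and identifying the identity component with a Banach--Colmez-type diamond requires the full strength of the small $v$-stack formalism and the theory of Banach--Colmez spaces in the reductive setting, together with their dimension theory. Semi-continuity in (3) is the other genuinely deep ingredient, but it can be inherited from its $\GL_n$ analogue via Tannakian reduction; by contrast, the cohomological smoothness in (5) is intrinsically a diamond-theoretic phenomenon with no classical counterpart and must be proved from scratch.
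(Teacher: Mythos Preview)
The paper does not contain a proof of this theorem: it is stated purely as a citation of \cite[Theorem~III.0.2]{FS}, with no argument given. There is therefore nothing in the paper to compare your proposal against.

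That said, your sketch is a faithful high-level summary of the Fargues--Scholze strategy: Tannakian reduction to $\GL_n$ for $v$-descent and semi-continuity, Fargues' classification on geometric points for the bijection with $B(\G)$, and the identification of $\widetilde{\G}_b^0$ with a positive Banach--Colmez space for the dimension and cohomological smoothness in (5). Your assessment of where the difficulty lies (cohomological smoothness of $\widetilde{\G}_b^0$ and semi-continuity of the Newton map) is accurate. If anything, you slightly understate the subtlety of the homeomorphism $|\BunG| \to B(\G)$ in (3): the paper attributes this separately to \cite{Vie, Han2}, and the specialization-constructing direction is genuinely delicate, not just a byproduct of modifications.
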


For $\G = \GL_n$, we have $X_*(\T) \simeq \Z^n $ and the target of the map $\nu$ is the set of nonincreasing sequences of rational numbers, which are the slopes of the Newton polygon of the corresponding isocrystal. Moreover, $\pi_1(\G_{\ol{\Q}_p})_{\Gamma} = X_*(T)/(\text{coroot lattice}) $ is naturally isomorphic to $\Z$, and in this case $\kappa(b)$ is the endpoint of the Newton polygon. We can make the root data of $\GL_n$ explicit. The positive roots of $\GL_n$ (corresponding to the Borel subgroup given by the upper triangular matrices) are 
\[
 \Phi^+ = \{ e_k - e_h \ | \ k,h \in \{ 1, 2, \dotsc, n \}, k < h \}, 
\]
and the simple roots are
\[
 \Delta = \{ e_i - e_{i+1} \ | \ i \in \{ 1, 2, \dotsc, n-1 \} \}.
\]
Thus if $\nu_b = ( \underbrace{\lambda_1, \dotsc, \lambda_1}_{m_1}, \dotsc, \underbrace{\lambda_r, \dotsc, \lambda_r}_{m_r})$ then $ \langle 2\rho, \nu_b \rangle = \displaystyle \sum_{i < j} m_im_j(\lambda_i - \lambda_j) $.
\subsection{Overview of the Fargues-Scholze local Langlands correspondence}

 We recall that, for any Artin $v$-stack $X$, Fargues--Scholze define a triangulated category $\Dc_{\blacksquare}(X,\overline{\mathbb{Q}}_{\ell})$ of solid $\overline{\mathbb{Q}}_{\ell}$-sheaves \cite[Section~VII.1]{FS} and isolate a nice full subcategory $\Dlis(X,\overline{\mathbb{Q}}_{\ell}) \subset \Dc_{\blacksquare}(X,\overline{\mathbb{Q}}_{\ell})$ of lisse-\'etale $\overline{\mathbb{Q}}_{\ell}$-sheaves \cite[Section~VII.6.]{FS}. We will be interested in the derived category $\Dlis(\Bun_{\G},\ol{\mathbb{Q}}_{\ell})$. The key point is that objects in this category are manifestly related to smooth admissible representations of $\G(\mathbb{Q}_{p})$. The strata of $\BunG$ admit a natural map $\Bun_{\G}^{b} \ra [\ast/\G_{b}(\mathbb{Q}_{p})]$ to the classifying stack defined by the $\mathbb{Q}_{p}$-points of the $\sigma$-centralizer $\G_{b}$, and, by \cite[Proposition~VII.7.1]{FS}, pullback along this map induces an equivalence
\[ \Dc(\G_{b}(\mathbb{Q}_{p}),\ol{\mathbb{Q}}_{\ell}) \simeq \Dlis([\ast/\G_{b}(\mathbb{Q}_{p})],\ol{\mathbb{Q}}_{\ell}) \xrightarrow{\simeq} \Dlis(\Bun^{b}_{\G},\ol{\mathbb{Q}}_{\ell}), \]
where $\Dc(\G_{b}(\mathbb{Q}_{p}),\ol{\mathbb{Q}}_{\ell})$ is the unbounded derived category of smooth $\ol{\mathbb{Q}}_{\ell}$-representations of  $\G_{b}(\mathbb{Q}_{p})$.

We denote by $\Dlis(\Bun_n, \overline{\Q}_{\ell})^{\omega}$ the stable $\infty$-category of compact objects of $\Dlis(\Bun_{\GL_n},\ol{\mathbb{Q}}_{\ell})$. For any $b \in B(\GL_n)$, there is a local chart
\[
\mathrm{Ch}_b : \mathcal{M}_b \longrightarrow \Bun_n
\]
that is representable in locally spatial diamonds, partially proper and cohomologically smooth \cite[Theo. V.3.7]{FS}. The image of $\mathrm{Ch}_b$ is the open sub-stack $\Bun_n^{\le b}$ consisting of the strata smaller than $b$ with respect to the usual partial order in $B(\GL_n)$ \cite[Theo. 1.1]{Han2} \cite[Theo. 6.7]{Vie}.

\begin{remark}
For $b, b' \in B(\GL_n)$, we have $ \nu_b = (-\nu_{\E_b})_{\rm dom} $ and $ \nu_{b'} = (-\nu_{\E_{b'}})_{\rm dom} $. Thus $b$ is smaller than $b'$ with respect to the usual partial order in $B(\GL_n)$ if and only if $\nu_{\E_b}$ is smaller than $ \nu_{\E_{b'}} $ with respect to the usual partial order in $X_{*}(\T)_{\Q}$ where $\T$ is a maximal split torus of $\GL_n$.   
\end{remark}

Recall that via excision triangles, there is an infinite semi-orthogonal decomposition of $\Dlis(\Bun_n,\ol{\mathbb{Q}}_{\ell})$ into the various $\Dlis(\Bun^b_n, \overline{\Q}_{\ell})$ for $b \in B(\GL_n)$ \cite[Chap. VII]{FS}. 
\begin{lemma} \phantomsection \label{itm : morphism between strata}
    Let $b$ be an element in $B(\GL_n)$ and let $\mathcal{F}, \mathcal{G}$ be in $\Dlis(\Bun_{\GL_n},\ol{\mathbb{Q}}_{\ell})$ such that $\mathcal{F}$ is supported on $\Bun_n^{\le b}$ and the intersection of the support of $\mathcal{G}$ with $\Bun_n^{\le b}$ is empty. Then there is no non-trivial map in $\Dlis(\Bun_{\GL_n},\ol{\mathbb{Q}}_{\ell})$ from $\mathcal{F}$ to $\mathcal{G}$.
\end{lemma}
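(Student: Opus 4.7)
The plan is to reduce the lemma to a one-line adjunction calculation using the open--closed decomposition $\Bun_n = U \sqcup Z$, where $U := \Bun_n^{\le b}$ is the open substack from the discussion above and $Z$ denotes its closed complement. Let $j : U \hookrightarrow \Bun_n$ and $i : Z \hookrightarrow \Bun_n$ be the corresponding immersions. The six-functor formalism developed for $\Dlis$ on Artin $v$-stacks in \cite[Chap.~VII]{FS} supplies the adjunction $(j_!, j^*)$ and, for every $\mathcal{F} \in \Dlis(\Bun_n, \overline{\Q}_{\ell})$, an excision triangle
\[
j_! j^* \mathcal{F} \longrightarrow \mathcal{F} \longrightarrow i_* i^* \mathcal{F} \longrightarrow j_! j^* \mathcal{F}[1];
\]
this is precisely a shadow of the infinite semi-orthogonal decomposition recalled just above the lemma statement.

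First I would translate the support hypotheses into the vanishing of pullbacks. Since $\mathcal{F}$ is supported on $U$, the restriction $i^* \mathcal{F}$ vanishes, so the excision triangle collapses to an isomorphism $\mathcal{F} \simeq j_! j^* \mathcal{F}$. Symmetrically, the assumption that the support of $\mathcal{G}$ does not meet $U$ is exactly the statement that $j^* \mathcal{G} = 0$.

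The desired conclusion then follows at once from $(j_!, j^*)$-adjunction:
\[
\RHom_{\Dlis(\Bun_n, \overline{\Q}_{\ell})}(\mathcal{F}, \mathcal{G}) \simeq \RHom(j_! j^* \mathcal{F}, \mathcal{G}) \simeq \RHom(j^* \mathcal{F}, j^* \mathcal{G}) = 0,
\]
and passing to $H^0$ yields the stated vanishing of $\Hom$-groups. There is no serious obstacle; the only point that merits any checking is that the support-theoretic hypotheses really do transfer to the asserted vanishings of $i^*\mathcal{F}$ and $j^*\mathcal{G}$ inside the lisse-\'etale category, and this is immediate from the description of $|\Bun_n|$ via $B(\GL_n)$ together with the fact that, by the last-displayed remark before the lemma, $U = \Bun_n^{\le b}$ is precisely the union of the strata $\Bun_n^{b'}$ with $b' \le b$.
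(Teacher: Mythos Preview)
Your proof is correct and follows essentially the same line as the paper's: write $\mathcal{F} \simeq j_! j^*\mathcal{F}$ for the open immersion $j\colon \Bun_n^{\le b} \hookrightarrow \Bun_n$ and apply adjunction together with $j^*\mathcal{G} = 0$. The only difference is that the paper names the right adjoint of $j_!$ as $j^!$ and then verifies, via the cohomological smoothness of the chart $\mathcal{M}_b \to \Bun_n$ and of $\mathcal{M}_b \to \Bun_n^{\le b}$, that the open immersion itself is cohomologically smooth, so that $j^!$ agrees with $j^*$ up to an invertible twist; your direct appeal to the $(j_!, j^*)$ adjunction for an open immersion bypasses that step.
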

\begin{proof}
    The local chart $ \mathrm{Ch}_b : \mathcal{M}_b \longrightarrow \Bun_n $ is cohomologically smooth by \cite[Theo. V.3.7]{FS} and moreover the proof of this theorem actually show that the map $ \mathrm{Ch}'_b : \mathcal{M}_b \longrightarrow \Bun^{\le b}_n $ is also cohomologically smooth. Therefore by \cite[Def. IV.1.11]{FS}, the open immersion $ i : \Bun_n^{\le b} \hookrightarrow \Bun_n $ is cohomologically smooth. Denote by $\mathcal{C}$ (resp. $\mathcal{C}'$) the category $\Dlis(\Bun_{\GL_n},\ol{\mathbb{Q}}_{\ell})$ (resp. $\Dlis(\Bun^{\le b}_{\GL_n},\ol{\mathbb{Q}}_{\ell})$) for short. We have
    \begin{align*}
        \Hom_{\mathcal{C}}(\mathcal{F}, \mathcal{G}) & = \Hom_{\mathcal{C}}(i_{!}i^*\mathcal{F}, \mathcal{G}) \\
        & = \Hom_{\mathcal{C}'} (i^*\mathcal{F}, i^{!}\mathcal{G}) \quad (i^! \text{\ is the right adjoint of \ } i_{!}).
    \end{align*}

    However, since $i$ is cohomologically smooth, the map $i^!$ is given by taking derived tensor product of the dualizing object with $i^*$. We see that $i^*\mathcal{G} \simeq 0 $ because the intersection of the support of $\mathcal{G}$ with $\Bun_n^{\le b}$ is empty. Hence $ i^! \mathcal{G} \simeq 0 $ and 
    \[
    \Hom_{\mathcal{C}'} (i^*\mathcal{F}, i^{!}\mathcal{G}) = 0.
    \]
\end{proof}

%We will repeatedly use this identification in what follows. In particular, 
For $\G$ a reductive group over $\Q_p$, we denote by $\Pi(\G(\Q_p))$ the set of smooth $\overline{\Q}_{\ell}$-irreducible representations of $\G(\Q_p)$. For $\pi \in \Pi(\G_b(\Q_p))$, we get an object $\rho \in \Dlis(\Bun_{\G}^{b},\ol{\mathbb{Q}}_{\ell}) \subset \Dc(\Bun_{\G},\ol{\mathbb{Q}}_{\ell})$ by extension by zero along the locally closed embedding $i_{b}: \Bun_{\G}^{b} \hookrightarrow \Bun_{\G}$, and the Fargues--Scholze parameter comes from acting on this representation by endofunctors of $\Dlis(\Bun_{\G},\ol{\mathbb{Q}}_{\ell})$ called Hecke operators. To introduce this, for a finite index set $I$, we let $\Rep_{\ol{\mathbb{Q}}_{\ell}}(\phantom{}^{L}\G^{I})$ denote the category of algebraic representations of $I$-copies of the Langlands dual group, and we let $\Div^{I}$ be the product of $I$-copies of the diamond $\Div^{1} = \Spd(\Breve{\mathbb{Q}}_{p})/\mathrm{Frob}^{\mathbb{Z}}$. The diamond $\Div^{1}$ parametrizes, for $S \in \Perf$, characteristic $0$ untilts of $S$, which in particular give rise to Cartier divisors in $X_{S}$. We then have the Hecke stack
\[ \begin{tikzcd}
& & \arrow[dl, swap, "h^{\leftarrow}"] \mathrm{Hck} \arrow[dr,"h^{\rightarrow} \times supp"] & & \\
& \Bun_{\G} & & \Bun_{\G} \times \Div^{I}  & 
\end{tikzcd} \]
defined as the functor parametrizing, for $S \in \Perf$ together with a map $S \rightarrow \Div^{I}$ corresponding to characteristic $0$ untilts $S_{i}^{\sharp}$ defining Cartier divisors in $X_{S}$ for $i \in I$, a pair of $G$-torsors $\mathcal{E}_{1}$, $\mathcal{E}_{2}$ together with an isomorphism 
\[ \beta:\mathcal{E}_{1}|_{X_{S} \setminus \bigcup_{i \in I} S_{i}^{\sharp}} \xrightarrow{\simeq} \mathcal{E}_{2}|_{X_{S} \setminus \bigcup_{i \in I} S_{i}^{\sharp}},\]
where $h^{\leftarrow}((\mathcal{E}_{1},\mathcal{E}_{2},i,(S_{i}^{\sharp})_{i \in I})) = \mathcal{E}_{1}$ and $h^{\rightarrow} \times supp((\mathcal{E}_{1},\mathcal{E}_{2},\beta,(S_{i}^{\sharp})_{i \in I})) = (\mathcal{E}_{2},(S_{i}^{\sharp})_{i \in I})$. For each element $W \in \Rep_{\overline{\mathbb{Q}}_{\ell}}(^{L}\G^{I})$, the geometric Satake correspondence of Fargues--Scholze \cite[Chapter~VI]{FS} furnishes a solid $\overline{\mathbb{Q}}_{\ell}$-sheaf $\mathcal{S}_{W}$ on $\mathrm{Hck}$. This allows us to define Hecke operators.
\begin{definition}
For each $W \in \Rep_{\overline{\mathbb{Q}}_{\ell}}(\phantom{}^{L}\G^{I})$, we define the Hecke operator
\[ \T_{W}: \Dlis(\Bun_{\G},\overline{\mathbb{Q}}_{\ell}) \rightarrow \Dc_{\blacksquare}(\Bun_{\G} \times X^{I},\ol{\mathbb{Q}}_{\ell}) \]
\[ A \mapsto R(h^{\rightarrow} \times supp)_{\natural}(h^{\leftarrow *}(A) \otimes^{\mathbb{L}} \mathcal{S}_{W}),\]
where $\mathcal{S}_{W}$ is a solid $\overline{\mathbb{Q}}_{\ell}$-sheaf and the functor $R(h^{\rightarrow} \times supp)_{\natural}$ is the natural push-forward (i.e the left adjoint to the restriction functor in the category of solid $\overline{\mathbb{Q}}_{\ell}$-sheaves \cite[Proposition~VII.3.1]{FS}). 
\end{definition}
It follows by 
\cite[Theorem~I.7.2, Proposition~IX.2.1, Corollary~IX.2.3]{FS} that this induces a functor 
\[ \Dlis(\Bun_{\G},\overline{\mathbb{Q}}_{\ell}) \rightarrow \Dlis(\Bun_{\G},\overline{\mathbb{Q}}_{\ell})^{BW_{\mathbb{Q}_{p}}^{I}} \]
which we will also denote by $\T_{W}$. The Hecke operators are natural in $I$ and $W$ and compatible with exterior tensor products. For a finite set $I$, a representation $W \in \Rep_{\overline{\mathbb{Q}}_{\ell}}(\phantom{}^L\G^{I})$, maps $\alpha: \overline{\mathbb{Q}}_{\ell} \rightarrow \Delta^{*}W$ and $\beta: \Delta^{*}W \rightarrow \overline{\mathbb{Q}}_{\ell}$, and elements $(\gamma_{i})_{i \in I} \in W_{\mathbb{Q}_{p}}^{I}$ for $i \in  I$, one defines the excursion operator on $\Dlis(\Bun_{\G},\overline{\mathbb{Q}}_{\ell})$ to be the natural transformation of the identity functor given by the composition:
\[ \id = \T_{\overline{\mathbb{Q}}_{\ell}} \xrightarrow{\alpha} \T_{\Delta^{*}W} = \T_{W} \xrightarrow{(\gamma_{i})_{i \in I}} \T_{W} = \T_{\Delta^{*}W}  \xrightarrow{\beta} \T_{\overline{\mathbb{Q}}_{\ell}} = \id. \]
In particular, for all such data, we get an endomorphism of a smooth irreducible $\pi \in \Dc(G(\mathbb{Q}_{p}),\ol{\mathbb{Q}}_{\ell}) \simeq \Dlis(\Bun_{\G}^{1},\ol{\mathbb{Q}}_{\ell}) \subset \Dlis(\Bun_{\G},\ol{\mathbb{Q}}_{\ell})$ which, by Schur's lemma will give us a scalar in $\ol{\mathbb{Q}}_{\ell}$. In other words, to the datum $(I,W,(\gamma_{i})_{i \in I},\alpha,\beta)$ we assign a scalar. The natural compatibilities between Hecke operators will give rise to natural relationships between these scalars. These scalars and the relations they satisfy can be used, via Lafforgue's reconstruction theorem \cite[Proposition~11.7]{VL}, to construct a unique continuous semisimple map
\[ \phi^{\mathrm{FS}}_{\pi}: W_{\mathbb{Q}_{p}} \rightarrow \phantom{}^{L}\G(\overline{\mathbb{Q}}_{\ell}), \]
which is the Fargues--Scholze parameter of $\pi$. It is characterized by the property that for all $I,W,\alpha,\beta$ and $(\gamma_{i})_{i \in I} \in W_{\mathbb{Q}_{p}}^{I}$, the corresponding endomorphism of $\pi$ defined above is given by multiplication by the scalar that results from the composite
\[ \overline{\mathbb{Q}}_{\ell} \xrightarrow{\alpha} \Delta^{*}W = W \xrightarrow{(\phi_{\pi}(\gamma_{i}))_{i \in I}} W = \Delta^{*}W \xrightarrow{\beta} \overline{\mathbb{Q}}_{\ell}. \]
Fargues and Scholze show that their correspondence has various good properties which we will invoke throughout.
\begin{theorem}{\cite[Theorem~I.9.6]{FS}}{\label{FSproperties}}
The mapping defined above 
\[ \pi \mapsto \phi^{\mathrm{FS}}_{\pi} \]
enjoys the following properties:
\begin{enumerate}
    \item (Compatibility with Local Class Field Theory) If $\G = T$ is a torus, then $\pi \mapsto \phi_{\pi}$ is the usual local Langlands correspondence 
    \item The correspondence is compatible with character twists, passage to contragredients, and central characters.
    \item (Compatibility with products) Given two irreducible representations $\pi_{1}$ and $\pi_{2}$ of two connected reductive groups $\G_{1}$ and $\G_{2}$ over $\mathbb{Q}_{p}$, respectively, we have
    \[ \pi_{1} \boxtimes \pi_{2} \mapsto \phi^{\mathrm{FS}}_{\pi_{1}} \times \phi^{\mathrm{FS}}_{\pi_{2}}\]
    under the Fargues--Scholze local Langlands correspondence for $\G_{1} \times \G_{2}$. 
    \item (Compatibility with parabolic induction) Given a parabolic subgroup $P \subset \G$ with Levi factor $M$ and a representation $\pi_{M}$ of $M$, then the Weil parameter corresponding to any sub-quotient of $I_{P}^{\G}(\pi_{M})$, the (normalized) parabolic induction, is the composition
    \[ W_{\mathbb{Q}_{p}}\xrightarrow{\phi^{\mathrm{FS}}_{\pi_{M}}} \\  ^{L}M(\overline{\mathbb{Q}}_{\ell}) \rightarrow ^{L}\G(\overline{\mathbb{Q}}_{\ell}) \]
    where the map $\phantom{}^{L}M(\overline{\mathbb{Q}}_{\ell}) \rightarrow \phantom{}^{L}\G(\overline{\mathbb{Q}}_{\ell})$ is the natural embedding. 
    \item (Compatibility with Harris--Taylor/Henniart LLC)
    For $\G = \GL_{n}$ or an inner form of $\G$ the Weil parameter associated to $\pi$ is the (semi-simplified) parameter $\phi_{\pi}$ associated to $\pi$ by Harris--Taylor/Henniart \cite[Theorem 6.6.1]{HKW}. 
\end{enumerate}
\end{theorem}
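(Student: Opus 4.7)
The overall plan is to reduce each statement to a compatibility of Hecke operators and then to apply Lafforgue's reconstruction theorem to pass this compatibility to the parameter $\phi^{\mathrm{FS}}_\pi$. Recall that the Fargues--Scholze parameter is characterized by the fact that for every excursion datum $(I, W, \alpha, \beta, (\gamma_i)_{i \in I})$, the scalar by which the corresponding excursion operator acts on $\pi$ is computed from $\phi^{\mathrm{FS}}_\pi$ by the explicit formula displayed just before the theorem. Thus, to prove each property it suffices to check the relevant structural symmetry at the level of the Hecke correspondence $\mathrm{Hck}$ together with the geometric Satake sheaf $\mathcal{S}_W$.

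For (1), when $\G = T$ is a torus, a $T$-bundle admits no non-trivial modification at a single characteristic zero point, so $\mathrm{Hck}$ degenerates and the Satake sheaves can be read off directly from the character lattice; the excursion formula collapses to the assertion that $\phi^{\mathrm{FS}}_\chi(\mathrm{Frob})$ is the element of $\widehat{T}(\overline{\Q}_\ell)$ attached to $\chi$ by classical local class field theory. Properties (2) and (3) are formal consequences of the monoidal structures on $\Rep_{\overline{\Q}_\ell}(\LL\G^I)$ and on $\mathrm{Hck}$: character twists correspond to the action of the connected component of $Z(\widehat{\G})$, contragredients to the anti-involution $W \mapsto W^\vee$ on the Satake category compatible with Verdier duality on $\BunG$, and products to the factorization $\mathrm{Hck}_{\G_1 \times \G_2} = \mathrm{Hck}_{\G_1} \times \mathrm{Hck}_{\G_2}$ with $\mathcal{S}_{W_1 \boxtimes W_2} = \mathcal{S}_{W_1} \boxtimes \mathcal{S}_{W_2}$. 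In each case the resulting identity between excursion scalars forces the asserted identity of parameters via Lafforgue's theorem.

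For (4), the key geometric input is the diagram $\Bun_\G \leftarrow \Bun_\rP \to \Bun_\M$ used in the theory of geometric Eisenstein series. One shows that parabolic induction on the automorphic side intertwines the Hecke operator $\T_W$ with $\T_{W|_{\LL\M}}$ on $\Bun_\M$, which is the categorical shadow of constant term on the Satake side. Applied to excursion operators, this produces, for every subquotient of $\Ind_\rP^\G(\pi_\M)$, scalars which factor through the analogous scalars for $\pi_\M$; Lafforgue's reconstruction then forces the parameter to be the composite of $\phi^{\mathrm{FS}}_{\pi_\M}$ with the natural embedding $\LL\M(\overline{\Q}_\ell) \hookrightarrow \LL\G(\overline{\Q}_\ell)$.

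The main obstacle is (5), namely the identification of $\phi^{\mathrm{FS}}_\pi$ with the Harris--Taylor/Henniart parameter for $\GL_n$ and its inner forms. The strategy, carried out in \cite{HKW}, is to compute the Hecke action of the standard minuscule cocharacter $\mu = (1, 0^{(n-1)})$ on the Whittaker sheaf attached to a generic supercuspidal $\pi$ of $\GL_n(\Q_p)$: this isolates the standard representation $\Std$ of $\LL\GL_n$ twisted by the parameter. Matching this Hecke action with the classical Harris--Taylor description of the cohomology of the Lubin--Tate tower \cite{Boy, HT}, together with the Kottwitz conjecture in the basic case, pins down $\phi^{\mathrm{FS}}_\pi$ up to semisimplification on $\Std$ and hence, by Brauer--Nesbitt, as a representation into $\widehat{\GL}_n(\overline{\Q}_\ell)$. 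The extension from supercuspidal $\pi$ to arbitrary irreducible $\pi$ is then reduced to property (4) combined with compatibility with character twists and with local class field theory on the Levi factors.
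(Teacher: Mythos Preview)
The paper does not prove this theorem: it is quoted as \cite[Theorem~I.9.6]{FS}, with part (5) further attributed to \cite[Theorem 6.6.1]{HKW}. There is no argument in the paper to compare your proposal against; the author treats these properties as a black box and invokes them as needed.

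Your sketch is a reasonable high-level summary of how the proofs go in those references, but a couple of points deserve correction or sharpening. For (4), the actual argument in \cite{FS} does not rely on a full Eisenstein-series functor on $\Bun_\G$; rather it uses the commutation of Hecke operators with the map to Bernstein centers and the explicit description of how the excursion algebra restricts along $\widehat{M}\hookrightarrow\widehat{\G}$. For (5), the argument in \cite{HKW} is not organized around the Whittaker sheaf: the core input is their proof of the Kottwitz conjecture for basic local shtuka data, which computes $i_b^*\T_\mu i_{1!}(\pi)$ for supercuspidal $\pi$ in terms of the Harris--Taylor parameter, and then the characterization of $\phi^{\mathrm{FS}}_\pi$ via excursion operators forces agreement. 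The extension to non-supercuspidal $\pi$ and to inner forms proceeds as you say, via (4), Jacquet--Langlands, and character twists. So your outline is in the right spirit but mislocates some of the technical weight.
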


% Let $\G$ be a reductive group. Given a geometric dominant cocharacter $\mu$ of $\G$ with reflex field $E$, we can define the element:
%\[ \tilde{\mu} := \frac{1}{[E:\mathbb{Q}_{p}]} \sum_{\gamma \in \Gamma_{E/\mathbb{Q}_{p}}} \gamma(\mu) \in X_*(T_{\overline{\mathbb{Q}}_{p}})^{+,\Gamma}_{\mathbb{Q}} \]
%We let $\mu^{\flat}$ be the image of $\mu$ in $\pi_1(G)_{\Gamma}$.
%\begin{definition}{\label{shiftedbgu}}
%Let $b'$ be an element in the Kottwitz set $B(\G)$ of $G$ and $\mu$ a geometric dominant cocharacter, we define the set $B(\G,\mu,b')$ to be set of $b \in B(\G)$ for which $\nu_{b} - \nu_{b'} \leq \tilde{\mu}$  with respect to the Bruhat ordering and $\kappa(b) - \kappa(b') = \mu^{\flat}$. 
%\end{definition}
%These sets will carve out the necessary conditions for our shtuka spaces to be non-empty. In particular, 

Let $\G$ be a reductive group and let $b, b'$ be elements in $ B(\G)$. Given a geometric dominant cocharacter $\mu$ of $\G$ with reflex field $E$, we call the quadruple $(\G,b,b',\mu)$ a local shtuka datum. Attached to it, we define the shtuka space
\[ \Sht(\G,b,b',\mu) \longrightarrow \Spd(\Breve{E}), \]
as in \cite{SW} where $\Breve{E} := \Breve{\mathbb{Q}}_{p}E$, to be the space parametrizing modifications 
\[  \mathcal{E}_{b} \longrightarrow \mathcal{E}_{b'} \]
of $\G$-bundles on the Fargues--Fontaine curve $X$ with type bounded by $\mu$.
\begin{remark}
    When $b=1$, we also use the notation $\Sht(\G, b', \mu)$ for $\Sht(\G, 1, b', \mu)$. We note that our definition of $\Sht(\G, b', \mu)$ coincides with $\Sht(\G, b', -\mu)$ in the notation of \cite{SW}, where $ - \mu$ is the dominant inverse of $\mu$. This convention limits the appearance of duals when studying the cohomology of these spaces. 
\end{remark}

This has commuting actions of $\G_{b}(\mathbb{Q}_{p})$ and $\G_{b'}(\mathbb{Q}_{p})$ coming from automorphisms of $\mathcal{E}_{b}$ and $\mathcal{E}_{b'}$, respectively. Moreover the equlities $ b b^{\sigma} (b^{-1})^{\sigma} = b $ and $ (b')^{-1} b' (b')^{\sigma} = (b')^{\sigma} $ induces the isomorphisms $ t_b : \E_{b^{\sigma}} \simeq \E_{b} $ and $ t_{b'} : \E_{b'} \simeq \E_{(b')^{\sigma}}$. Thus we have a Weil descent datum of $\Sht(\G, b, b', \mu)$ defined by
\begin{align*}
    \Sht(\G, b, b', \mu) &\longrightarrow \Sht(\G, b^{\sigma}, (b')^{\sigma}, \mu) \\
    f \quad &\longmapsto t_{b'} \circ f \circ t_b.
\end{align*}

We define the tower 
\[ \Sht(\G,b,b',\mu)_{K} := \Sht(\G,b,b',\mu)/\underline{K} \longrightarrow \Spd(\Breve{E}) \] of locally spatial diamonds \cite[Theorem~23.1.4]{SW}
for varying open compact subgroups $K \subset \G_{b'}(\mathbb{Q}_{p})$. We can define the cohomology $R\Gamma_c(\Sht(\G,b,b',\mu)_{K}, \mathcal{S}_{\mu})$ as in \cite[section 3]{Nao}.

When $b$ is trivial, we denote the moduli space $\Sht(\G,b,b',\mu)$ by $\Sht(\G,b',\mu)$. There is a natural map 
\[ \mathsf{p}: \Sht(\G,b,b',\mu)_{\infty} \longrightarrow \mathrm{Hck} \] 
mapping to the locus of modifications with type bounded by $\mu$. Attached to the geometric cocharacter $\mu$, consider the highest weight representation $V_{\mu} \in \Rep_{\ol{\mathbb{Q}}_{\ell}}(\phantom{}^{L}\G)$. The associated $\ol{\mathbb{Q}}_{\ell}$-sheaf  $\mathcal{S}_{\mu}$ on $\mathrm{Hck}$ (by the geometric Satake isomorphism) considered above will be supported on this locus, and we abusively denote $\mathcal{S}_{\mu}$ for the pullback of this sheaf along $\mathsf{p}$. Since $\mathsf{p}$ factors through the quotient of $\Sht(\G,b,b',\mu)_{\infty}$ by the simultaneous group action of $\G_{b}(\mathbb{Q}_{p}) \times \G_{b'}(\mathbb{Q}_{p})$, this sheaf will be equivariant with respect to these actions. This allows us to define the complex
\[ R\Gamma_{c}(\G,b,b',\mu) := \colim_{K \rightarrow \{1\}} R\Gamma_{c}(\Sht(\G,b,b',\mu)_{K,\mathbb{C}_{p}},\mathcal{S}_{\mu}) \]
which will be a complex of smooth admissible $\G_{b}(\mathbb{Q}_{p}) \times \G_{b'}(\mathbb{Q}_{p}) \times W_{E_{\mu}}$-modules, where $\Sht(\G,b,b',\mu)_{K,\mathbb{C}_{p}}$ is the base change of $\Sht(\G,b,b',\mu)_{K}$ to $\mathbb{C}_{p}$. Remark that if $\mu$ is minuscule then $ \mathcal{S}_{\mu} \simeq \overline{\Q}_{\ell}[d_{\mu}](\tfrac{d_{\mu}}{2})$ where $d_{\mu} = \langle 2 \rho, \mu \rangle$. For $\pi_{b} \in \Pi(\G_{b}(\Q_p))$, this allows us to define the following complexes 
\begin{equation}{\label{eq: RGammaflat}}
    R\Gamma^{\flat}_{c}(\G,b,b',\mu)[\pi_{b}] := R\mathcal{H}om_{\G_{b}(\mathbb{Q}_{p})}(R\Gamma_{c}(\G,b,b',\mu),\pi_{b})
\end{equation}
and
\begin{equation}{\label{eq: RGamma}}
    R\Gamma_{c}(\G,b,b',\mu)[\pi_{b}] := R\Gamma_{c}(\G,b,b',\mu) \otimes^{\mathbb{L}}_{\mathcal{H}(\G_{b}(\Q_p))} \pi_{b}
\end{equation}
where $\mathcal{H}(\G_{b}(\Q_p))$ is the smooth Hecke algebra. Analogously, for $\pi_{b'} \in \Pi(\G_{b'}(\Q_p))$, we can define $R\Gamma_{c}(\G,b,b',\mu)[\pi_{b'}]$ and $R\Gamma^{\flat}_{c}(\G,b,b',\mu)[\pi_{b'}]$. It follows by \cite[Theorem~I.7.2]{FS} and \cite[Pages~324, 325]{FS} that these will be valued in smooth admissible representations of finite length. More precisely, the Hecke operators $ \T_{\mu} $ and the functors $i_{b!}, i^*_{b'}$ preserve compact and ULA objects for all $b, b'$. Then lemma $\ref{shimhecke}$ below implies that the cohomology groups of $R\Gamma_{c}(\G,b,b',\mu)[\pi_{b}]$ are compact and ULA in the category of smooth $\G_{b'}(\Q_p)$-representations with $\overline{\Q}_{\ell}$-coefficients. Now, by pulling through Verdier duality as in \cite[Pages~325]{FS}, we deduce the same conclusion for $R\Gamma^{\flat}_{c}(\G,b,b',\mu)[\pi_{b}]$.

Recall that for each $b \in B(\GL_n)$, we can define a character $\kappa_b : \G_b(\Q_p) \longrightarrow \overline{\Q}^{\times}_{\ell}$ and we have
\[
R\Gamma_c(\widetilde{\G}_b^{0}, \overline{\Q}_{\ell}) = \kappa_b[-2d_b],
\]
as in \cite{GI}, lemma $4.18$ and as in \cite{Ham1}, page $67$, before lemma $10.1$. We now relate the above complexes to Hecke operators on $\Bun_{\G}$. In particular, we have the following result.

\begin{lemma}{\label{shimhecke}}{\cite[Section~IX.3]{FS}}
Given a local shtuka datum $(\G,b,b',\mu)$ as above and $\pi_{b'}$ (resp. $\pi_{b}$) an admissible smooth representation of $\G_{b'}(\mathbb{Q}_{p})$ (resp. of $\G_{b}(\mathbb{Q}_{p})$), we can consider the associated sheaves $\pi_{b} \in \Dc(\G_{b}(\mathbb{Q}_{p}), \overline{\Q}_{\ell}) \simeq \Dlis(\Bun_{\G}^{b})$ and $\pi_{b'} \in \Dlis(\Bun_{\G}^{b'}) \simeq \Dc(\G_{b'}(\mathbb{Q}_{p}), \overline{\Q}_{\ell})$ on the HN-strata $i_{b}: \Bun_{\G}^{b} \hookrightarrow \Bun_{\G}$ and $i_{b'}: \Bun_{\G}^{b'} \hookrightarrow \Bun_{\G}$. There then exists an isomorphism
\[ 
R\Gamma_{c}(\G,b,b',\mu)[\pi_{b} \otimes \kappa_b^{-1}][2d_{b}] \simeq  i_{b'}^{*}\T_{\mu}i_{b!}(\pi_{b})
\]
\[
R\Gamma^{\flat}_{c}(\G,b,b',\mu)[\pi_{b}][-2d_{b'}] \simeq  \big( i_{b'}^{!}\T_{\mu}Ri_{b*}(\pi_{b}) \big) \otimes\kappa_{b'}^{-1},
\]
of complexes of $\G_{b'}(\mathbb{Q}_{p}) \times W_{E_{\mu}}$-modules and an isomorphism 
\[ 
R\Gamma_{c}(\G,b,b',\mu)[\pi_{b'}\otimes \kappa_{b'}^{-1}][2d_{b'}] \simeq i_{b}^{*}\T_{-\mu}i_{b'!}(\pi_{b'}), 
\]
\[
R\Gamma^{\flat}_{c}(\G,b,b',\mu)[\pi_{b'}][-2d_{b}] \simeq \big( i_{b}^{!}\T_{-\mu}Ri_{b'*}(\pi_{b'}) \big) \otimes \kappa_{b}^{-1}, 
\]
of complexes of $\G_{b}(\mathbb{Q}_{p}) \times W_{E_{\mu}}$-modules where $-\mu$ is the dominant cocharacter conjugate to the inverse of $\mu$ and where $d_b = \langle 2\rho, \nu_b \rangle$ and $d_{b'} = \langle 2\rho, \nu_{b'} \rangle$. 
%And moreover, if $\pi_{b'}$ and $\pi_b$ are admissible then
%\[
%R\Gamma^{\flat}_{c}(\G,b,b',\mu)[\pi_{b}][-2d_{b'}] \simeq  i_{b'}^{!}\T_{\mu}Ri_{b*}(\pi_{b})\otimes\kappa_b^{-1},
%\]
%\[
%R\Gamma^{\flat}_{c}(\G,b,b',\mu)[\pi_{b'}][-2d_{b}] \simeq i_{b}^{!}\T_{-\mu}Ri_{b'*}(\pi_{b'}) \otimes \kappa_{b'}^{-1}, 
%\]
%of complexes of $\G_{b}(\mathbb{Q}_{p}) \times W_{E_{\mu}}$-modules, where $-\mu$ is the dominant cocharacter conjugate to the inverse of $\mu$ and where $d_b = \langle 2\rho, \nu_b \rangle$ and $d_{b'} = \langle 2\rho, \nu_{b'} \rangle$. 
\end{lemma}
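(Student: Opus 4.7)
The plan is to compute $i_{b'}^{*} \T_{\mu} i_{b!}(\pi_b)$ by unwinding the definition of the Hecke operator as a correspondence on the Hecke stack and applying proper base change, extracting the shifts and character twists from the structure of the strata $\Bun_{\G}^{b} \simeq [\ast/\widetilde{\G}_b]$. First, I would form the Cartesian square obtained by restricting $h^{\leftarrow}$ and $h^{\rightarrow} \times \mathrm{supp}$ to the substack $\mathrm{Hck}^{\mu} \subset \mathrm{Hck}$ on which $\mathcal{S}_{\mu}$ is supported, and then further pulling back along $i_b : \Bun_{\G}^{b} \hookrightarrow \Bun_{\G}$ on the source and $i_{b'}$ on the target. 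Using the description of $\Sht(\G, b, b', \mu)_{\infty}$ as the moduli of type-$\mu$ modifications $\E_b \to \E_{b'}$, the resulting fiber product identifies with $[\widetilde{\G}_b \backslash \Sht(\G, b, b', \mu)_{\infty} / \widetilde{\G}_{b'}]$, and $\mathcal{S}_{\mu}$ pulls back to the sheaf used in the definition of $R\Gamma_c(\G, b, b', \mu)$.

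Next, I would invoke proper base change, valid in the $v$-stack framework by the ind-properness and smoothness results of \cite[Chapter~IX]{FS}, to rewrite $i_{b'}^{*} \T_{\mu} i_{b!}(\pi_b)$ as compactly supported cohomology along the projection of this fiber product to $[\ast/\widetilde{\G}_{b'}]$, with coefficients induced from $\pi_b$. The shift $[2d_b]$ and the character twist $\kappa_b^{-1}$ arise directly from the identity $R\Gamma_c(\widetilde{\G}_b^{0}, \overline{\Q}_{\ell}) = \kappa_b[-2d_b]$ recalled in the text, while the quotient by the discrete group $\pi_0 \widetilde{\G}_b = \G_b(\Q_p)$ combined with the smooth $\G_b(\Q_p)$-action on $\Sht(\G, b, b', \mu)_{\infty}$ reproduces the derived tensor product over $\mathcal{H}(\G_b(\Q_p))$ with $\pi_b$ appearing in the definition of $R\Gamma_c(\G, b, b', \mu)[\pi_b]$.

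The variant involving $R\Gamma_c^{\flat}$, $i_{b'}^{!}$ and $Ri_{b*}$ would then follow by Verdier duality on $\Bun_{\G}$, using the adjunction swap $(i^{*}, i_{!}) \leftrightarrow (Ri_{*}, i^{!})$ together with the self-duality of Hecke operators under $\mu \mapsto -\mu$; the internal-$\mathrm{Hom}$ form \eqref{eq: RGammaflat} then matches naturally. The parallel isomorphisms for $\pi_{b'}$ in place of $\pi_b$ follow by symmetry: inverting the modification exchanges the roles of the two legs, replacing $\mu$ by its dominant conjugate inverse $-\mu$ and swapping the triple $(b, d_b, \kappa_b)$ with $(b', d_{b'}, \kappa_{b'})$. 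The hard part is the bookkeeping of normalizations, in particular tracking how the shift in $\mathcal{S}_{\mu}$ (for instance $\mathcal{S}_{\mu} \simeq \overline{\Q}_{\ell}[d_\mu](\tfrac{d_\mu}{2})$ in the minuscule case) interacts with the shift $[-2d_b]$ coming from $R\Gamma_c(\widetilde{\G}_b^{0}, \overline{\Q}_{\ell})$ and with the orientations implicit in base change for lisse-\'etale sheaves on non-representable $v$-stacks; these compatibilities are handled systematically by the framework of \cite[Section~IX.3]{FS}, which I would follow verbatim.
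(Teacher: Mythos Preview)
Your proposal is correct and follows essentially the same route as the paper: the first isomorphism is obtained by unwinding the Hecke correspondence over the relevant strata (the paper phrases this via the Hodge--Tate period map being a $\widetilde{\G}_b$-torsor, cf.\ \cite{Fa}, pp.~38--40) and extracting the shift $[2d_b]$ and twist $\kappa_b^{-1}$ from $R\Gamma_c(\widetilde{\G}_b^0,\overline{\Q}_\ell)=\kappa_b[-2d_b]$; the $R\Gamma_c^\flat$ variant is then deduced by Verdier duality together with $i_{b'}^{!}\circ\D=\D\circ i_{b'}^{*}$ and Hom--Tensor duality, and the remaining two formulas by the symmetry $\mu\mapsto -\mu$. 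The only point the paper makes slightly more explicit than you do is the actual duality computation, spelling out how the dualizing objects of the strata $\Bun_n^{b}$ and $\Bun_n^{b'}$ produce the shifts $[-2d_b]$ and $[-2d_{b'}]$ on the dual side.
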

\begin{proof}
In fact, we can argue as in \cite{Fa}, pages $67$-$68$ or as in \cite[section IX.3]{FS}, pages $324$-$325$ to deduce that 
\[
R\Gamma_{c}(\G,b,b',\mu)[\pi_{b} \otimes \kappa_b^{-1} ][2d_{b}] \simeq  i_{b'}^{*}\T_{\mu}i_{b!}(\pi_{b}).
\]
 
Note that the Hodge-Tate period map $\pi_{\rm HT}$ in \cite{Fa} is a $\widetilde{\G}_b$-torsor over the Newton stratum corresponding to $[b']$. Thus we need to take into account the cohomology of the locally spatial diamond $ R\Gamma_c( \widetilde{\G}_b^{0}, \overline{\Q}_{\ell} ) \simeq \kappa_b[-2d_b] $, hence the shift $[2d_{b}]$ and the character $\kappa_b^{-1}$ appear in the formulas. %The character $\kappa_b^{-1}$ comes from the action of $\G_b(\Q_p)$ on $\widetilde{\G}_b^{0}$ by conjugation. 

%By the arguments as in \cite[section IX.3]{FS}, we deduce that 
%\[
%R\Gamma^{\flat}_{c}(\G,b,b',\mu)[\pi_{b}][-2d_{b'}] \simeq  i_{b'}^{!}\T_{\mu}Ri_{b*}(\pi_{b}) \otimes \kappa_b^{-1}.
%\]

%Note that the functor $i_{b'}^{!}$ appears since in general, it is the right adjoint to $i_{b'!}$ and if $b'$ is basic then $i_{b'}^{!} = i_{b'}^{*}$.

%If $\pi_b$ is admissible then $\D (\D(\pi_b)) \simeq \pi_b$ and therefore we can pull through Verdier duality and using derived Hom-Tensor duality to prove the second formula from the first formula as in \cite[section. IX.3, page $317$]{FS}. In more details we have
%\begin{align*}
% \D_{\Bun_n^{b'}} \big( R\Gamma_{c}(\G,b,b',\mu)[\pi_{b} \otimes \kappa_b^{-1} ][2d_{b}] \big) \simeq \D_{\Bun_n^{b'}} \big( i_{b'}^{*}\T_{\mu}i_{b!}(\pi_{b}) \big).
%\end{align*}

%From \cite[Lemmas 3.6, 3.7]{Nao}, we see that $i^{!}_{b'} \circ \D = \D \circ i^{*}_{b'}$. Combining it with the computation of dualizing object of $\Bun^{b''}_n$ for an arbitrary stratum $b''$ (\cite[\S 11.2]{Ham1}), we have
%\begin{align*}
 %   \big( R\Gamma_{c}(\G,b,b',\mu)[\pi_{b} \otimes \kappa_b^{-1} ] \big)^{\vee} [-2d_{b}-2d_{b'}] &\simeq i_{b'}^{!} \D \big(\T_{\mu}i_{b!}(\pi_{b}) \big) \\
 %   & \simeq i_{b'}^{!}\T_{\mu} \D \big( i_{b!}(\pi_{b}) \big) \\
 %   & \simeq i_{b'}^{!}\T_{\mu} Ri_{b*} \D_{\Bun_n^b} (\pi_{b}) \\
 %   & \simeq i_{b'}^{!}\T_{\mu} Ri_{b*} (\pi_{b}^{\vee}) [-2d_b]
%\end{align*}
%where $\vee$ denotes the contragradient representation. Now by applying the Hom-Tensor duality we get the second formula.
\end{proof}

\section{The stack of $L$-parameters} \phantomsection \label{itm : stack of $L$-parameters}
 We fix a prime $\ell \neq p$. Let $\G$ be a reductive group over $\Q_p$ and then we get the dual group $\widehat{G}/ \Z_{\ell}$, which we endow with its usual algebraic action of the Weil group $W_{\Q_p}$. We will be mainly interested in the case $\G = \GL_n$ in this section.
\subsection{The geometry of the stack of $L$-parameters}

In this paragraph we recall the definition of the stack of $L$-parameters and recollect some of its geometric properties such as characterization of some smooth points and connected components.

\subsubsection{Smoothness}

Let $A$ be any $\Z_{\ell}$-algebra, regarded as a condensed $\Z_{\ell}$-algebra. 
\begin{definition} \phantomsection \label{itm : new definition of L-parameters}
    An $L$-parameter for $\G$ with coefficients in $A$ is a section
    \[
    \phi : W_{\Q_p} \longrightarrow \widehat{G}(A) \rtimes W_{\Q_p}
    \]
    of the natural map of condensed groups $ \widehat{G}(A) \rtimes W_{\Q_p} \longrightarrow W_{\Q_p} $. Equivalently, an $L$-parameter for $\G$ with coefficients in $A$ is a condensed $1$-cocycle $\phi : W_{\Q_p} \longrightarrow \widehat{\G}(A)$ for the given $W_{\Q_p}$ action. More concretely, if $A$ is endowed with the topology given by writing $A = \textrm{colim}_{A' \subset A} A'$ where $A'$ is finitely generated $\Z_{\ell}$-algebra with its $\ell$-adic topology then an $L$-parameter with values in $A$ is a $1$-cocycle $ \phi : W_{\Q_p} \longrightarrow \widehat{\G}(A)$ such that if $\widehat{\G} \hookrightarrow \GL_n $ the associated map $ \phi : W_{\Q_p} \longrightarrow \GL_n(A) $ is continuous.
\end{definition}
\begin{remark}
    In constrast to definition \ref{itm : classical L parameter}, we do not require that $\phi$ sends semi-simple elements to semi-simple elements in the above definition.
\end{remark}
With this definition of $L$-parameter over any $\Z_{\ell}$-algebra $A$, we can define a moduli space, denoted by $ Z^1(W_{\Q_p}, \widehat{\G})_{\Z_{\ell}} $, over $\Z_{\ell}$, whose $A$-points are the continuous $1$-cocycles $ \phi : W_{\Q_p} \longrightarrow \widehat{\G}(A) $ with respect to the natural action of $W_{\Q_p}$ on $\widehat{\G}(A)$. This defines the scheme considered in, \cite{FS, DH, Zhu1}. 

Any condensed $1$-cocycle $ \phi : W_{\Q_p} \longrightarrow \widehat{\G}(A)$ is trivial on an open subgroup of the wild
inertia subgroup $\rP_{\Q_p}$; note also that $\rP_{\Q_p}$ acts on $\widehat{\G}$ through a finite quotient. Thus by using discretization process \cite{Helm, DH}, Fargues and Scholze show \cite[ Theorem I.8.1 ]{FS} that $ Z^1(W_{\Q_p}, \widehat{\G})_{\Z_{\ell}} $ can be written as a union of open and closed affine subschemes $ Z^1(W_{\Q_p}/\rP, \widehat{\G})_{\Z_{\ell}} $ as $\rP$ runs through subgroups of the wild inertia $\rP_{\Q_p}$ of $W_{\Q_p}$, where $ Z^1(W_{\Q_p}/\rP, \widehat{\G})_{\Z_{\ell}} $ parametrizes condensed $1$-cocycles that are trivial on $\rP$. Each $ Z^1(W_{\Q_p}/\rP, \widehat{\G})_{\Z_{\ell}} $ is a flat local complete intersection over $\Z_{\ell}$ of dimension $\dim(\G)$. 

This allows us to consider the Artin stack quotient $[Z^1(W_{\Q_p}, \widehat{\G})/\widehat{\G} ]_{\Z_{\ell}}$, where $\widehat{\G}$ acts via conjugation. We then consider the base change to $\overline{\Q}_{\ell}$, denoted by $[Z^1(W_{\Q_p}, \widehat{\G})/\widehat{\G} ]$ and referred to it as the stack of Langlands parameters, as well as the category $\Perf([Z^1(W_{\Q_p}, \widehat{\G})/\widehat{\G}])$ of perfect complexes of coherent sheaves on this space.

To study the geometric properties of the stack of $L$-parameters, we also need to consider its coarse moduli quotient in the category of scheme
\[
Z^1(W_{\Q_p}, \widehat{\G})//\widehat{\G}
\]
of $Z^1(W_{\Q_p}, \widehat{\G})$ by the action of $\widehat{\G}$ via conjugation. Concretely, for every connected component $\Spec R \subset Z^1(W_{\Q_p}, \widehat{\G}) $, we get a corresponding connected component $\Spec R^{\widehat{\G}} \subset Z^1(W_{\Q_p}, \widehat{\G})//\widehat{\G} $. We recall the following definition of semi-simple $L$-parameter.
\begin{definition}
    Let $K$ be an algebraically closed field over $\Z_{\ell}$. An $L$-parameter $\phi : W_{\Q_p} \longrightarrow \widehat{\G}(K) \rtimes W_{\Q_p} $ is semi-simple if whenever the image of $\phi$ is contained in a parabolic subgroup of $\widehat{\G} \rtimes W_{\Q_p}$ then it is contained in the Levi subgroup of this parabolic subgroup. 
\end{definition}

Given an $L$-parameter $\phi : W_{\Q_p} \longrightarrow \widehat{\G} \rtimes W_{\Q_p} $, by \cite[Proposition VIII.3.2]{FS} and \cite[Proposition 4.13]{DH} the $\widehat{\G}$-orbit of $\phi$ defines a closed $\overline{\Q}_{\ell}$-point of $Z^1(W_{\Q_p}, \widehat{\G})//\widehat{\G}$ if and only if $\phi$ is a semi-simple parameter. The
natural map 
\[
\theta : [Z^1(W_{\Q_p}, \widehat{\G})/\widehat{\G}] \longrightarrow Z^1(W_{\Q_p}, \widehat{\G})//\widehat{\G}
\]
evaluated on a $\overline{\Q}_{\ell}$-point in the stack quotient defined by an $L$-parameter $\phi$ defines a $\overline{\Q}_{\ell}$-point in the coarse moduli space given by its semi-simplification $\phi^{ss}$.

From now on we suppose $\G = \GL_n$. In particular the action of $W_{\Q_p}$ on $\widehat{\G}$ is trivial. Let $\phi : W_{\Q_p} \longrightarrow \GL_n(\overline{\Q}_{\ell})$ be a semi-simple and Frobenius semi-simple $L$-parameter. Thus it defines a geometric point $x$ in $Z^1(W_{\Q_p}, \GL_n)//\GL_n$ and a point $\overline{x}$ in the stack $[Z^1(W_{\Q_p}, \GL_n)/\GL_n]$. The geometric properties of $\theta^{-1}(x)$ will be essentially important since they reveal interesting information on the perfect complexes over $[Z^1(W_{\Q_p}, \GL_n)/\GL_n]$. Since $\phi$ is semi-simple, there is a decomposition $\phi = \phi_1 \oplus \dotsc \oplus \phi_r$ of $\phi$ into irreducible representations. 
\begin{proposition} \label{itm : inverse has 1 points}
    Let $|\cdot|_p$ be the norm character $W_{\Q_p} \longrightarrow W_{\Q_p}^{\text{ab}} \simeq \Q_p^{\times} \longrightarrow \C \simeq \overline{\Q}_{\ell} $. Suppose that there do not exist $ 1 \le i \neq j \le r$ such that $ \phi_i \simeq \phi_j $ or $ \phi_i \simeq \phi_j \otimes |\cdot|_p $ then $\theta^{-1}(x)$ has only $1$ geometric point $\overline{x}$. In particular, $\overline{x}$ is a closed point in $[Z^1(W_{\Q_p}, \GL_n)/\GL_n]$.
\end{proposition}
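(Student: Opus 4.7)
The plan is to identify the geometric points of $\theta^{-1}(x)$ with Weil--Deligne data on top of the semisimple parameter $\phi$, and then to use the hypothesis on the $\phi_i$'s to show that the only possible such datum is trivial.

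First I would recall the standard dictionary: a Frobenius-semisimple $L$-parameter $\phi' : W_{\Q_p} \to \GL_n(\overline{\Q}_\ell)$ with a given semisimplification $\phi$ is, up to $\widehat{\GL}_n$-conjugation, determined by a nilpotent endomorphism $N \in \End(V)$ (where $V = \overline{\Q}_\ell^n$ carries $\phi$) satisfying the monodromy relation
\[
\phi(w)\, N\, \phi(w)^{-1} \;=\; |w|_p\, N \quad \text{for all } w \in W_{\Q_p},
\]
modulo conjugation by $S_\phi = \Cent(\phi)$. The parameter $\phi$ itself corresponds to $N=0$, and to show $\theta^{-1}(x) = \{\overline{x}\}$ it suffices to show that every such $N$ vanishes. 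For the interpretation of $\theta^{-1}(x)$, one uses that $\theta$ sends an $L$-parameter to its semisimplification (cf.\ \cite[Proposition~VIII.3.2]{FS}, \cite[Proposition~4.13]{DH}), together with the fact that over $\overline{\Q}_\ell$ the closed orbit in the closure of the orbit of any $\phi'$ is the orbit of $\phi'^{ss}$.

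Next I would decompose according to $\phi = \phi_1 \oplus \dotsc \oplus \phi_r$. Writing $V = \bigoplus_{i=1}^r V_i$ with $V_i$ carrying $\phi_i$, we obtain a block decomposition
\[
\End(V) \;=\; \bigoplus_{i,j} \Hom(V_i, V_j), \qquad N \;=\; (N_{ij})_{i,j}.
\]
The monodromy relation translates into $\phi_j(w)\, N_{ij}\, \phi_i(w)^{-1} = |w|_p\, N_{ij}$, i.e.\ $N_{ij}$ is a $W_{\Q_p}$-equivariant map $\phi_i \to \phi_j \otimes |\cdot|_p$. Since each $\phi_i$ is irreducible, Schur's lemma applies: if $i = j$, the condition forces $N_{ii}$ to be a scalar multiple of the identity that simultaneously is nilpotent and intertwines $\phi_i$ with its twist by $|\cdot|_p$, which by the assumption $\phi_i \not\simeq \phi_i \otimes |\cdot|_p$ gives $N_{ii}=0$; and if $i \neq j$, the assumption that $\phi_i \not\simeq \phi_j \otimes |\cdot|_p$ yields $\Hom_{W_{\Q_p}}(\phi_i, \phi_j \otimes |\cdot|_p) = 0$, so $N_{ij}=0$. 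Hence $N=0$, proving the fiber has the single geometric point $\overline{x}$.

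Finally, $\overline{x}$ is closed in $[Z^1(W_{\Q_p}, \widehat{\GL}_n)/\widehat{\GL}_n]$ because $\phi$ is semisimple: its $\widehat{\GL}_n$-orbit in $Z^1(W_{\Q_p}, \widehat{\GL}_n)$ is closed (by the cited results of Fargues--Scholze and Dat--Helm), so its image in the stack quotient is a closed substack. The most delicate point of the proof is to make sure the Weil--Deligne dictionary is deployed correctly over the stack $[Z^1/\widehat{\GL}_n]$ (in particular, that it correctly describes the fiber of $\theta$ and not merely isomorphism classes of parameters); but once this is granted, the argument is a short application of Schur's lemma under the twist-disjointness hypothesis on the $\phi_i$'s.
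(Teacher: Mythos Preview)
Your approach via the Weil--Deligne dictionary is cleaner than the paper's hands-on argument with Jordan normal forms, but as written it has a genuine gap. You restrict at the outset to \emph{Frobenius-semisimple} $\phi'$, for which the correspondence $\phi' \leftrightarrow (\phi, N)$ is valid. However, the paper's $L$-parameters are arbitrary continuous cocycles (see the remark immediately after the definition of $L$-parameter: ``we do not require that $\phi$ sends semi-simple elements to semi-simple elements''), so $\theta^{-1}(x)$ may a priori contain non-Frobenius-semisimple points. Under the general Weil--Deligne correspondence such a $\phi'$ corresponds to a pair $(\rho, N)$ where $\rho$ is a \emph{smooth} representation with $\rho^{\mathrm{ss}} = \phi$, but $\rho$ need not equal $\phi$; your monodromy computation with $N_{ij}$ never runs, because $N$ is equivariant for $\rho$, not for $\phi$.

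What is missing is the verification that every smooth $\rho$ with $\rho^{\mathrm{ss}} = \phi$ already satisfies $\rho \simeq \phi$. This is where the hypothesis $\phi_i \not\simeq \phi_j$ (for $i \neq j$) does its work, independently of the twist condition. One way to see it: for smooth representations, $\Ext^1_{\mathrm{sm}}(\phi_j,\phi_i) = H^1(\sigma^{\Z}, \Hom_I(\phi_j,\phi_i))$ since inertia acts through a finite quotient and has no higher cohomology in characteristic~$0$; as $\phi$ is Frobenius-semisimple, $\sigma$ acts semisimply on $\Hom_I(\phi_j,\phi_i)$, so the coinvariants vanish unless $\Hom_{W_{\Q_p}}(\phi_j,\phi_i)\neq 0$, i.e.\ unless $\phi_i \simeq \phi_j$. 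Once $\rho = \phi$ is established, your Schur-lemma argument for $N$ goes through and finishes the proof. The paper instead treats both obstructions in one go by analysing the Jordan forms of $\varphi(\tau)$ and $\varphi(\sigma)$ directly: non-semisimple $\varphi(\tau)$ forces $\phi_i \simeq \phi_j \otimes |\cdot|_p$, while semisimple $\varphi(\tau)$ with non-semisimple $\varphi(\sigma)$ forces $\phi_i \simeq \phi_j$ --- the second case is precisely the one you omitted.
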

\begin{proof}

In fact, the semisimple $L$-parameter $\phi$ is generous following the terminology in \cite{Hansen}. Therefore the conclusion of the proposition follows from \cite[remark 6.5]{HL} and \cite[Example 2.1.6]{Hansen}.

\end{proof}

The above proposition implies that $ \displaystyle \theta^{-1}(x) \simeq [\bullet/S_{\phi}] \simeq [\bullet/\prod^r_{i = 1} \Gm] $ and the immersion $ [\bullet/S_{\phi}] \hookrightarrow [Z^1(W_{\Q_p}, \GL_n)/\GL_n] $ is closed. The following proposition tells us that the immersion is in fact closed and regular.
\begin{proposition} \label{itm : smooth locus}
    Let $|\cdot|_p$ be the norm character $W_{\Q_p} \longrightarrow W_{\Q_p}^{\text{ab}} \simeq \Q_p^{\times} \longrightarrow \C \simeq \overline{\Q}_{\ell} $. Suppose that there do not exist $ 1 \le i \neq j \le r$ such that $ \phi_i \simeq \phi_j $ or $ \phi_i \simeq \phi_j \otimes |\cdot|_p $ then $\phi$ defines a closed and smooth geometric point of $[Z^1(W_{\Q_p}, \GL_n)/\GL_n]$.
\end{proposition}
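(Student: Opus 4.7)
The statement splits into two parts, and the closedness of $\overline{x}$ is already contained in the preceding Proposition~\ref{itm : inverse has 1 points}, which identifies $\theta^{-1}(x)$ with $\overline{x}$ and shows it is closed. The real content is the smoothness of $[Z^1(W_{\Q_p}, \widehat{\GL}_n)/\widehat{\GL}_n]$ at $\overline{x}$.

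My plan is to reduce smoothness to a vanishing statement in Galois cohomology. Because $\widehat{\GL}_n$ is smooth and acts on $Z^1(W_{\Q_p}, \widehat{\GL}_n)$ by conjugation, the quotient stack is smooth at $\overline{x}$ if and only if $Z^1(W_{\Q_p}, \widehat{\GL}_n)$ is smooth at $\phi$. Since each affine piece $Z^1(W_{\Q_p}/\rP, \widehat{\GL}_n)_{\Z_\ell}$ is a flat local complete intersection of the expected dimension $\dim \widehat{\GL}_n$, and the tangent space at $\phi$ is the space of continuous $1$-cocycles $Z^1(W_{\Q_p}, \ad(\phi))$, standard deformation theory together with the local Euler characteristic formula for $W_{\Q_p}$-cohomology (as in \cite{DH} and \cite[\S VIII]{FS}) yields that smoothness at $\phi$ is equivalent to $H^2(W_{\Q_p}, \ad(\phi)) = 0$.

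To check this vanishing, I apply local Tate duality:
\[
H^2(W_{\Q_p}, \ad(\phi)) \simeq H^0(W_{\Q_p}, \ad(\phi)^{\vee} \otimes |\cdot|_p)^{*}.
\]
Using $\ad(\phi) = \phi \otimes \phi^{\vee}$ and the decomposition $\phi = \phi_1 \oplus \dotsc \oplus \phi_r$ into irreducibles, the group on the right decomposes as $\bigoplus_{i,j} \Hom_{W_{\Q_p}}(\phi_j, \phi_i \otimes |\cdot|_p)$. Each summand with $i \neq j$ vanishes because $\phi_j$ and $\phi_i \otimes |\cdot|_p$ are irreducible and non-isomorphic by hypothesis. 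Each summand with $i = j$ vanishes because $\phi_i \not\simeq \phi_i \otimes |\cdot|_p$: comparing determinants would force $|\cdot|_p^{\dim \phi_i} = 1$, contradicting the infinite order of $|\cdot|_p$. Thus $H^2(W_{\Q_p}, \ad(\phi)) = 0$ and smoothness follows. The only nontrivial step is the initial reduction to the $H^2$-vanishing, which depends on citing cleanly the tangent-obstruction formalism for $Z^1$ from \cite{DH, FS}; the cohomological computation itself is then routine.
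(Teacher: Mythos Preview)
Your proof is correct and follows essentially the same route as the paper's. Both reduce smoothness to the vanishing of $H^0(W_{\Q_p}, \ad(\phi)^\vee \otimes |\cdot|_p)$ and then decompose this into blocks $\Hom_{W_{\Q_p}}(\phi_j,\phi_i\otimes|\cdot|_p)$, which vanish by the hypothesis and Schur's lemma. The only packaging difference is that the paper invokes the singular support formalism of \cite[\S VIII.2]{FS}, which identifies the obstruction at $\phi$ directly with $H^0(W_{\Q_p},\widehat{g}^*(1))$, whereas you reach the same $H^0$ via the lci/tangent-space count and Tate duality for $H^2$; the paper also appeals to \cite[Prop.~VIII.2.11]{FS} to know the obstruction lies in the nilpotent cone, but your direct $\Hom$ computation (including the diagonal case $\phi_i\not\simeq\phi_i\otimes|\cdot|_p$ via the determinant) makes that step unnecessary.
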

\begin{proof}
    Denote by $x$ the point determined by $\phi$ and denote by $\widehat{g}$ the Lie algebra of $\widehat{\G}$. Since $\phi$ is semi-simple, $x$ is a closed point. Then, using the notation in \cite[section VIII.2]{FS}, we need to show that $ x^*\Sing_{[Z^1(W_{\Q_p}, \GL_n)/\GL_n]_{\Z_{\ell}}} = H^0(W_{\Q_p}, \widehat{g}^{*} \otimes_{\Z_{\ell}} \overline{\Q}_{\ell}(1) ) $ is trivial where the action of $W_{\Q_p}$ on $\widehat{g}^{*} \otimes_{\Z_{\ell}} \overline{\Q}_{\ell}(1)$ is the (adjoint) action defining the $L$-group twisted by $\phi$ and the cyclotomic character. Let $v \in x^*\Sing_{[Z^1(W_{\Q_p}, \GL_n)/\GL_n]_{\Z_{\ell}}} \subset \widehat{g} \simeq \widehat{g}^{*}$. By \cite[Proposition VIII.2.11]{FS}, we know that $v$ belongs to the nilpotent cone. However, our group is split then we can deduce that $\phi(g)v\phi(g)^{-1} = v$ for all $g$ in the inertia subgroup $\I$ of $W_{\Q_p}$ and $\phi(\sigma)v\phi(\sigma)^{-1} = p\cdot v$. Since there do not exist $ 1 \le i \neq j \le r$ such that $ \phi_i \simeq \phi_j \otimes |\cdot|_p $, the latter commutative condition implies that $v$ is a diagonal matrix. Therefore $v$ is trivial. 
\end{proof}

\subsubsection{Connected components}

 We recall the description of the connected components of $[Z^1(W_{\Q_p}, \GL_n)/\GL_n]$, following \cite{DH}. We fix a lift $\sigma$ of the arithmetic Frobenius to $W_{\Q_p}/\rP_{\Q_p}$ and $\tau$ a topological generator of $\I_{\Q_p}/\rP_{\Q_p}$. Let $(W_{\Q_p}/\rP_{\Q_p})^0$ be the subgroup of $W_{\Q_p}/\rP_{\Q_p}$ generated by $\tau , \sigma$ and $\rP_{\Q_p}$, regarded as discrete group and let $W^0_{\Q_p}$ be the inverse image of $(W_{\Q_p}/\rP_{\Q_p})^0$ in $W_{\Q_p}$. Similarly we can define $(\I_{\Q_p}/\rP_{\Q_p})^0$ and $\I^0_{\Q_p}$. The functor that sends $R$ to $Z^1(W^0_{\Q_p}, \GL_n(R))$ is representable by an affine scheme denoted by $Z^1(W^0_{\Q_p}, \GL_n)$. We study the connected components of the stack $[Z^1(W_{\Q_p}, \GL_n)/\GL_n]$ by a discretization process. Thus we choose a decreasing sequence $(\rP^e)_{e \in \N}$ of open normal subgroups of the wild inertia $\rP_{\Q_p}$ whose intersection is $\{ 1 \}$. We know that $ Z^1(W_{\Q_p}, \GL_n)_{\Z_{\ell}} $ can be written as a union of open and closed affine sub-schemes $ Z^1(W^0_{\Q_p}/\rP^e, \GL_n) $ for $e \in \N$. 

For each $e \in \N$, by a reduction to tame parameters argument \cite[Theorem 3.1, equation 4.2]{DH}, there exists a finite set $\Phi_e^{\mathrm{adm}}$ consisting of $1$-cocycle $ \varphi : \rP_{\Q_p} / \rP^e \longrightarrow \GL_n(\overline{\Q}_{\ell})$ such that we have a decomposition
\[
[Z^1(W^0_{\Q_p}/\rP^e, \GL_n)/\GL_n] = \coprod_{\varphi \in \Phi_e^{\mathrm{adm}}} [Z^1(W^0_{\Q_p}, \GL_n)_{\varphi}/C(\varphi)]
\]
where $C(\varphi)$ is the centralizer of $\varphi$ and $Z^1(W^0_{\Q_p}, \GL_n)_{\varphi}$ is the (non empty) sub-scheme of $Z^1(W^0_{\Q_p}, \GL_n)$ parametrizing the $1$-cocycles that extend $\varphi$. There is also a similar decomposition in the situation where we consider the categorical quotient.

%Remark that in general, we can also decompose $Z^1(W^0_{\Q_p}, \GL_n)_{\varphi}/C(\varphi)$ into union of smaller closed and open sub-schemes $Z^1(W^0_{\Q_p}, \GL_n)_{\varphi, \alpha}$ indexed by some finite set $\Sigma(\varphi)_0^{\mathrm{adm}}$ (\cite[pg. 19, 20]{DH}). Moreover, by \cite[Theo. 4.8]{DH}, the affine scheme $Z^1(W^0_{\Q_p}, \GL_n)_{\varphi, \alpha}$ is connected and so is the quotient $Z^1(W^0_{\Q_p}, \GL_n)_{\varphi}/C(\varphi)$. We can even understand the categorical quotient $Z^1(W^0_{\Q_p}, \GL_n)_{\varphi}//C(\varphi)$ by \cite[Theo. 1.7]{DH}.

%We are now interested in the situation where $\varphi : \rP_{\Q_p} / \rP^e \longrightarrow \GL_n(\overline{\Q}_{\ell}) $ is multiplicity free. 
For each $e \in \N$, let $\I^e$ be the open sub-group of the inertia $\I_{\Q_p}$ as in \cite[Corollary 4.16]{DH}. In particular, every semi-simple map $ f : W^0/\rP^e \longrightarrow \GL_n(L) $ is trivial on $ \I^e / \rP^e $ and then extends canonically to $W_{\Q_p}/\rP^e$, where $L$ is any algebraically closed field of characteristic $0$ or $ \ell \neq p $. With the group $\I^e$, we can consider the $\GL_n$-stable closed sub-scheme $Z^1(W^0_{\Q_p}/\I^e, \GL_n)$ of $ Z^1(W^0_{\Q_p}/ \rP^e, \GL_n) $  parametrizing the maps that are trivial on $\I^e$.% Since $\varphi$ is multiplicity free, the semi-simplifications of its extensions to $W_{\Q_p}$ satisfy conditions of proposition \ref{itm : inverse has 1 points} and therefore all of its extensions are semi-simple. Hence the opened and closed sub-scheme $Z^1(W^0_{\Q_p}, \GL_n)_{\varphi}$ is an open and closed sub-scheme of $Z^1(W_{\Q_p}/\I^e, \GL_n)$. Thus, it is enough to study this scheme. 

We only consider the stack of $L$-parameters defined over $\overline{\Q}_{\ell}$ for some prime $\ell \neq p$. %or defined over $\overline{\Z}_{\ell}$ for some prime $\ell$ that is invertible in $\Z[\tfrac{1}{p\mathrm{N}_{\GL_n}}]$ where the constant $\mathrm{N}_{\GL_n}$ is as in \cite[Proposition 6.4]{DH}. 
By \cite[Proposition 6.4]{DH}, there exists a finite set $\widetilde{\Phi}_e^{\mathrm{adm}}$ consisting of $1$-cocycle $ \widetilde{\varphi} : \I_{\Q_p} / \I^e \longrightarrow \GL_n(\overline{\Q}_{\ell})$ such that we have a decomposition similar to the above tamely discretization process
\[
[Z^1(W^0_{\Q_p}/\I^e, \GL_n)/\GL_n] = \coprod_{\widetilde{\varphi} \in \widetilde{\Phi}_e^{\mathrm{adm}}} [Z^1(W^0_{\Q_p}, \GL_n)_{\widetilde{\varphi}}/C(\widetilde{\varphi})]
\]
where $C(\widetilde{\varphi})$ is the centralizer of $\widetilde{\varphi}$ and $Z^1(W^0_{\Q_p}, \GL_n)_{\widetilde{\varphi}}$ is the (non empty) sub-scheme of $Z^1(W^0_{\Q_p}, \GL_n)$ parametrizing the $1$-cocycles $\phi$ such that $ \phi_{| \I_{\Q_p}} = \widetilde{\varphi} $.

%Since all the extension of $\varphi$ to $W_{\Q_p}$ is multiplicity free,
Our goal is to describe the connected component of $\phi$ in $[Z^1(W_{\Q_p}, \GL_n)/\GL_n]$ where $\phi_{| \I_{\Q_p}}$ is multiplicity free (as representation of $\I_{\Q_p}$) and $\phi(\sigma)$ is semi-simple. Hence it is enough to consider the case $ \widetilde{\varphi} $ is multiplicity free (as representation of $\I_{\Q_p}$) and we can write $\widetilde{\varphi} = \widetilde{\varphi}_1 \oplus \dotsc \oplus \widetilde{\varphi}_k $ for some $k$ and the centralizer group $C(\widetilde{\varphi})$ is isomorphic to $\mathbb{G}^k_m$.

\begin{proposition} \phantomsection \label{itm : simple connected components}
    Suppose that $\widetilde{\varphi}$ is multiplicity free then $[Z^1(W^0_{\Q_p}, \GL_n)_{\widetilde{\varphi}}/C(\widetilde{\varphi})]$ is a connected component of $[Z^1(W_{\Q_p}, \GL_n)/\GL_n]$ and we have an isomorphism 
\[
[Z^1(W^0_{\Q_p}, \GL_n)_{\widetilde{\varphi}}/C(\widetilde{\varphi})] \simeq [\mathbb{G}^r_m / \mathbb{G}^r_m],
\]
the quotient stack of $\mathbb{G}^r_m$ by the trivial action of $\mathbb{G}^r_m$ for some natural number $r$.
\end{proposition}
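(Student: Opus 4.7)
The strategy is to write $Z^1(W^0_{\Q_p}, \GL_n)_{\widetilde{\varphi}}$ explicitly as a torus by parameterizing the possible choices of $\phi(\sigma)$, then to compute the quotient stack orbit-by-orbit under the permutation of irreducible $\I_{\Q_p}$-components induced by $\sigma$.

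First, since $\widetilde{\varphi}$ is multiplicity free, decompose it as $\widetilde{\varphi} = \bigoplus_{i=1}^{k} W_i$ into pairwise non-isomorphic irreducible representations of the finite discrete group $\I_{\Q_p}/\I^e$. By Schur's lemma this identifies $C(\widetilde{\varphi}) \simeq \Gm^k$, each factor acting as scalars on the corresponding summand. Because the $W_{\Q_p}$-action on $\widehat{\GL}_n$ is trivial, a continuous $1$-cocycle is just a homomorphism, so extending $\widetilde{\varphi}$ to $W^0_{\Q_p}$ amounts to choosing $\phi(\sigma)$ satisfying the intertwining relation $\phi(\sigma)\widetilde{\varphi}(g)\phi(\sigma)^{-1} = \widetilde{\varphi}(\sigma g \sigma^{-1})$ for all $g \in \I_{\Q_p}/\I^e$; the value on $\rP_{\Q_p}$ and on $\tau$ is fixed by $\widetilde{\varphi}$.

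Second, because $\widetilde{\varphi}$ occurs in $\widetilde{\Phi}_e^{\mathrm{adm}}$ (hence extends to $W^0$), we have $\widetilde{\varphi}^\sigma \simeq \widetilde{\varphi}$, so $\sigma$ acts on $\{W_1,\ldots,W_k\}$ by a permutation $\pi$; let $r$ be the number of its orbits. Fix once and for all an $\I_{\Q_p}$-equivariant isomorphism $\iota_i : W_i \xrightarrow{\sim} W_{\pi(i)}$ for each $i$. Then any admissible $\phi(\sigma)$ decomposes uniquely as $\phi(\sigma) = \bigoplus_{i=1}^k \lambda_i \iota_i$ with $(\lambda_i) \in \Gm^k$, and no further relation is imposed because $W^0_{\Q_p}/\I_{\Q_p}$ is free on $\sigma$ and any composition $\iota_{\pi^{s-1}(i)}\circ\cdots\circ\iota_i: W_i \to W_i$ is an $\I$-endomorphism of an irreducible, hence simply a scalar. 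This identifies $Z^1(W^0_{\Q_p},\GL_n)_{\widetilde{\varphi}} \simeq \Gm^k$ as schemes.

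Third, I compute the conjugation action. If $(\mu_i) \in \Gm^k = C(\widetilde{\varphi})$ acts as the scalar $\mu_i$ on $W_i$, then conjugating $\phi(\sigma)$ by $(\mu_i)$ replaces $\lambda_i$ by $\mu_{\pi(i)}\mu_i^{-1}\lambda_i$. Grouping coordinates according to the $r$ orbits of $\pi$, it suffices to analyze, for each orbit of size $s$, the quotient stack $[\Gm^s / \Gm^s]$ where $\Gm^s$ acts through the surjection $\Gm^s \twoheadrightarrow \Gm^s/\Delta\Gm$ onto the codimension-one subtorus of tuples with product $1$; concretely the kernel of the action is the diagonal $\Delta\Gm$ and the image acts freely and transitively on each fiber of the product map $\prod : \Gm^s \to \Gm$. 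This product map is invariant and identifies the stack quotient with $[\Gm/\Gm]$ for the trivial action. Taking the product over the $r$ orbits yields
\[
[Z^1(W^0_{\Q_p},\GL_n)_{\widetilde{\varphi}} / C(\widetilde{\varphi})] \simeq [\Gm^r/\Gm^r]
\]
with trivial action. Connectedness of this stack, together with the decomposition recalled from \cite{DH} which exhibits it as an open and closed substack of $[Z^1(W_{\Q_p},\GL_n)/\GL_n]$, shows it is a full connected component.

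The main delicacy will be checking that the tuple $(\lambda_i)$ is genuinely unconstrained and that the $C(\widetilde{\varphi})$-action takes exactly the shape above: one has to verify independence from the choice of discretization level $e$, and to be careful with the free structure of $W^0/\I$ in order to rule out hidden cocycle relations. Once these bookkeeping points are settled, the torus computation and the orbit-by-orbit reduction carry the result.
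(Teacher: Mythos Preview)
Your computation of the stack quotient is essentially the same as the paper's: you parameterize $\phi(\sigma)$ directly as $\bigoplus \lambda_i\iota_i$, while the paper fixes a base point $\phi$ and writes any other extension as $\eta\cdot\phi$ for a cocycle $\eta\in Z^1_{\mathrm{Ad}_\phi}(W_{\Q_p}/\I_{\Q_p},C(\widetilde{\varphi}))$. These are equivalent formulations, and your orbit-by-orbit reduction to $[\Gm/\Gm]$ via the product map is exactly what the paper does (first for a single orbit, i.e.\ $\phi$ irreducible, then in general).

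The gap is in your last sentence. The decomposition recalled from \cite{DH} just before the proposition is a decomposition of the \emph{closed substack} $[Z^1(W^0_{\Q_p}/\I^e,\GL_n)/\GL_n]$ of parameters trivial on $\I^e$; it is not stated as a decomposition of the full stack $[Z^1(W_{\Q_p},\GL_n)/\GL_n]$. So from that decomposition alone you only know that $[Z^1(W^0_{\Q_p},\GL_n)_{\widetilde{\varphi}}/C(\widetilde{\varphi})]$ is open and closed inside that closed substack, not in the whole moduli space. The paper does further work here: it uses the description of connected components in \cite[Theorem~6.8, Corollary~4.22]{DH} together with Proposition~\ref{itm : inverse has 1 points} and Lemma~\ref{itm : multiplicity free of Weil action} to check that every $\overline{\Q}_\ell$-point of the ambient connected component $[\phi]$ already lies in the $\widetilde{\varphi}$-piece (in particular ruling out any parameter in $[\phi]$ that is not $\I^e$-trivial or has a different inertial restriction), and then appeals to reducedness of the relevant schemes to upgrade equality of $\overline{\Q}_\ell$-points to equality of substacks. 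Your one-line appeal to \cite{DH} elides this matching argument.
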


\begin{proof}
Fix an element $\phi \in Z^1(W^0_{\Q_p}, \GL_n)_{\widetilde{\varphi}}$ then the conjugation action of $W^0_{\Q_p}$ on $\GL_n(\overline{\Q}_{\ell})$ by $\phi$ stabilizes $C(\widetilde{\varphi})$ and the restricted action on this group factors over $W^0_{\Q_p} / \I^0_{\Q_p}$. Denoting by $\mathrm{Ad}_{\phi}$ this action and denoting by $Z^1_{\mathrm{Ad}_{\phi}}(W^0_{\Q_p}/\I^0_{\Q_p}, C(\widetilde{\varphi}) )_{\overline{\Q}_{\ell}}$ the affine scheme of $1$-cocycles $f : W^0_{\Q_p}/\I^0_{\Q_p} \longrightarrow C(\widetilde{\varphi}) $. Note that for any $w \in W_{\Q_p}^0$ and any element $\phi' \in Z^1(W^0_{\Q_p}, \GL_n(\overline{\Q}_{\ell}))_{\widetilde{\varphi}}$ we can write $ \phi'(w) = \eta(w) \phi(w) $ and by Schur's lemma, we can deduce that $\eta(w)$ belongs to $C(\widetilde{\varphi})(\overline{\Q}_{\ell})$. Hence, as in \cite[pages 19, 20]{DH}, the map $ \eta \longmapsto \eta \cdot \phi $ sets up an isomorphism of $\overline{\Q}_{\ell}$-schemes 
\[
Z^1_{\mathrm{Ad}_{\phi}}(W^0_{\Q_p}/\I^0_{\Q_p}, C(\widetilde{\varphi}) )_{\overline{\Q}_{\ell}} \xrightarrow{ \ \simeq \ }   Z^1(W^0_{\Q_p}, \GL_n)_{\widetilde{\varphi}}.
\]

The scheme $ Z^1_{\mathrm{Ad}_{\phi}}(W^0_{\Q_p}/\I^0_{\Q_p}, C(\widetilde{\varphi}) )_{\overline{\Q}_{\ell}} $ is in fact isomorphic to $C(\widetilde{\varphi})$. We analyze the action of $C(\widetilde{\varphi}) \simeq \mathbb{G}^k_{m}$ on $Z^1(W^0_{\Q_p}, \GL_n)_{\widetilde{\varphi}} \simeq \mathbb{G}^k_m $.    

Suppose first that $\phi$ is irreducible and $\phi_{| I_{\Q_p}} \simeq \widetilde{\varphi} \simeq \widetilde{\varphi}_1 \oplus \dotsc \oplus \widetilde{\varphi}_k$. If $V$ is a vector space together with a morphism $ f : \I_{\Q_p} \longrightarrow \Aut(V) $ then we denote by $V^{\sigma}$ the representation of $\I_{\Q_p}$ whose underlying vector space is $V$ and where $ i \in \I_{\Q_p}$ acts by $ f (\sigma i \sigma^{-1} ) $. Denote by $V_i$ the underlying vector space of $\widetilde{\varphi}_i$ and $ \displaystyle V = \bigoplus_{i = 1}^k V_i$ . Since the action of the Frobenius gives an isomorphism between the $\I_{\Q_p}$-representations $ V $ and $ V^{\sigma} $, we deduce that up to permutation, there are isomorphisms of $\I_{\Q_p}$-representations $ V_1 \simeq V_2^{\sigma} \simeq V_3^{\sigma^2} \simeq \dotsc \simeq V_{k}^{\sigma^{k-1}} $ and $ V_1 \simeq V^{\sigma^k}_1 $. Thus, we can suppose that in the basis $ V_1 \oplus V_2 \oplus \dotsc \oplus V_k $, the map $\phi(\sigma)$ is given by 

\[
\begin{pmatrix}
    0 & \cdots & 0 & 0 & A \\
    1 & 0 & \cdots & 0 & 0 \\
    0 & 1 & 0 & \cdots & 0  \\
    0 & 0 & \ddots & 0 & 0 \\
    0 & \cdots & 0 & 1 & 0
\end{pmatrix}
\]
for some invertible square matrix $A$. Let $h$ be the smallest integer such that $ 1 < h \leq k $ and $V_1 \simeq V^{\sigma^h}_1 $ then $h$ is a divisor of $k$. Note that in the case we consider, $\phi(\sigma)$ is semisimple, therefore $A$ is semisimple and there exists $B$ such that $B^{k/h} = A$. In particular, we see that the sub-vector space
\[
\Big\{ \sum_{j = 0}^{\tfrac{k}{h}-1} \phi(\sigma^{jh})(B^{-j}v) \ \ | \ \ v \in \bigoplus_{i = 1}^h V_i \Big\}
\]
is in fact a sub $W_{\Q_p}$-representation. Since $\phi$ is irreducible, we deduce that $h = k$.

With these explicit computations, we see that the action of $ t = (t_1, \dotsc, t_k) \in C(\widetilde{\varphi}) \simeq \mathbb{G}^r_m$ acting on $Z^1(W^0_{\Q_p}, \GL_n)_{\widetilde{\varphi}} \simeq \mathbb{G}^k_m $ is given by $t \cdot (x_1, \dotsc, x_k) = ( \tfrac{t_1}{t_2}x_1, \dotsc, \tfrac{t_k}{t_1}x_k )$. By using the map $ \mathbb{G}^r_m \longrightarrow \mathbb{G}_m $ that sends $ (t_1, \dotsc, t_k) $ to $ \displaystyle \prod_{i=1}^k t_i $, we see that
\[
[Z^1(W^0_{\Q_p}, \GL_n)_{\widetilde{\varphi}}/C(\widetilde{\varphi})] \simeq [\Gm / \Gm]
\]
where $\Gm$ acts trivially on $\Gm$. Moreover, for $t \in \overline{\Q}^{\times}_{\ell}$ if we denote by $\chi_t$ the unramified character of $W_{\Q_p}$ such that $\chi_t(\sigma) = t$ then the conjugacy class of the $L$-parameter $\phi \otimes \chi_t$ corresponds to the closed immersion $[\bullet/ \Gm] \hookrightarrow [\Gm / \Gm] $ whose image is the closed point $t^k$ in $\Gm$. We remark that $k$ is the torsion number of the supercuspidal representation $\pi_{\phi}$ of $\GL_n(\Q_p)$ (\cite[Definition 4.2]{Secherre}).

In general, if $\phi = \phi_1 \oplus \dotsc \oplus \phi_r$ where the $\phi_i$'s are irreducible and $\phi_{| \I_{\Q_p}}$ is multiplicity free then the same arguments implies that 
\[
[Z^1(W^0_{\Q_p}, \GL_n)_{\widetilde{\varphi}}/C(\widetilde{\varphi})] \simeq [\mathbb{G}^r_m / \mathbb{G}^r_m]
\]
where $\mathbb{G}^r_m$ acts trivially on $\mathbb{G}^r_m$ and the closed point $t = (t^{k_1}_1, \dotsc, t^{k_r}_r) \in \mathbb{G}^r_m$ corresponds to the conjugacy class of the $L$-parameter $ \phi_1 \otimes \chi_{t_1} \oplus \dotsc \oplus \phi_r \otimes \chi_{t_r} $. \\

Now we show that $[Z^1(W^0_{\Q_p}, \GL_n)_{\widetilde{\varphi}}/C(\widetilde{\varphi})]$ is a connected component of $[Z^1(W_{\Q_p}, \GL_n)/\GL_n]$. Indeed, by \cite{DH} we see that $[Z^1(W^0_{\Q_p}, \GL_n)_{\widetilde{\varphi}}/C(\widetilde{\varphi})]$ is contained in the connected component $[\phi]$ of $[Z^1(W_{\Q_p}, \GL_n)/\GL_n]$ containing $\phi$. We can describe all the closed points of $[\phi]$ by \cite[Theorem 6.8, Corollary 4.22]{DH} and lemmas \ref{itm : inverse has 1 points}, \ref{itm : multiplicity free of Weil action}. Thus we deduce that  $[\phi]$ and $[Z^1(W^0_{\Q_p}, \GL_n)_{\widetilde{\varphi}}/C(\widetilde{\varphi})]$ have the same sets of $\overline{\Q}_{\ell}$-points. Since the schemes $ Z^1(W^0_{\Q_p}, \GL_n)_{\widetilde{\varphi}} $ and $ Z^1(W^0_{\Q_p}/ \rP^e, \GL_n) $ are reduced \cite[Theorem 4.1]{DH}, we see that $[Z^1(W^0_{\Q_p}, \GL_n)_{\widetilde{\varphi}}/C(\widetilde{\varphi})]$ is equal to $[\phi]$.
\end{proof}

 Finally we have the following lemma characterizing the conjugacy classes of $L$-parameters $\phi$ such that $\phi_{| \I_{\Q_p}}$ is multiplicity free. 

\begin{lemma} \phantomsection \label{itm : multiplicity free of Weil action}
    Let $\phi$ be an $L$-parameter of $\GL_n$ and $\phi(\sigma)$ is semi-simple. Then $\phi_{| \I_{\Q_p}}$ is multiplicity free if and only if $\phi = \phi_1 \oplus \dotsc \oplus \phi_r$ where the $\phi_i$'s are irreducible and if $i \neq j$ then there does not exists unramified character $\chi$ such that $\phi_i \otimes \chi \simeq \phi_j$.
\end{lemma}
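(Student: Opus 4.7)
The approach is to apply Clifford theory to the normal subgroup $\I_{\Q_p} \triangleleft W_{\Q_p}$, exploiting crucially that the quotient $W_{\Q_p}/\I_{\Q_p}$ is the discrete infinite cyclic group generated by a Frobenius lift $\sigma$.

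For the forward implication, suppose $\phi|_{\I_{\Q_p}} = \bigoplus_j U_j$ with pairwise non-isomorphic irreducible $\I_{\Q_p}$-constituents $U_j$ (so in particular $\phi|_{\I_{\Q_p}}$ is semisimple). Because $\I_{\Q_p}$ is normal in $W_{\Q_p}$, the operator $\phi(\sigma)$ permutes the isotypic components $\{U_j\}$. I would partition $\{U_j\}$ into $\sigma$-orbits $O_k$ and observe that each $V_{O_k} := \bigoplus_{U_j \in O_k} U_j$ is stable under all of $\phi(W_{\Q_p})$, giving a $W_{\Q_p}$-equivariant decomposition $\phi = \bigoplus_k V_{O_k}$. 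Irreducibility of each $V_{O_k}$ follows because any proper $W_{\Q_p}$-subrepresentation would restrict to a proper $\sigma$-stable subsum of a single orbit, which is impossible. Moreover, if $V_{O_k} \simeq V_{O_{k'}} \otimes \chi$ for some unramified character $\chi$, the $\I_{\Q_p}$-restrictions would coincide and hence force $O_k = O_{k'}$, so no two summands are unramified twists.

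For the reverse implication, assume $\phi = \phi_1 \oplus \cdots \oplus \phi_r$ with each $\phi_i$ irreducible and no two related by an unramified twist. The main step is to show that each $\phi_i|_{\I_{\Q_p}}$ is itself multiplicity free. By Clifford theory, $\phi_i|_{\I_{\Q_p}}$ is isotypic of some multiplicity $m_i$ along a single $\sigma$-orbit of irreducible $\I_{\Q_p}$-constituents, and $\phi_i \simeq \Ind_{T_i}^{W_{\Q_p}} \psi_i$ with $T_i$ the $W_{\Q_p}$-stabilizer of one such constituent $U$ and $\psi_i$ an irreducible representation of $T_i$ satisfying $\psi_i|_{\I_{\Q_p}} \simeq m_i U$. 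Since $T_i/\I_{\Q_p}$ is a subgroup of the discrete cyclic group $W_{\Q_p}/\I_{\Q_p} \simeq \mathbb{Z}$, hence itself infinite cyclic (or trivial), the obstruction $H^2(T_i/\I_{\Q_p}, \overline{\Q}_\ell^\times)$ vanishes and $U$ extends to a representation $\widetilde{U}$ of $T_i$. Writing $\psi_i \simeq \widetilde{U} \otimes \rho_i$ for a representation $\rho_i$ of $T_i/\I_{\Q_p}$, irreducibility of $\psi_i$ forces $\rho_i$ to be a character, whence $m_i = 1$.

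Once each $\phi_i|_{\I_{\Q_p}}$ is known to be multiplicity free, I argue by contradiction: if $\phi|_{\I_{\Q_p}}$ were not multiplicity free, then two distinct $\phi_i|_{\I_{\Q_p}}$ and $\phi_j|_{\I_{\Q_p}}$ would share an irreducible component, and since each restriction is a single $\sigma$-orbit the two restrictions would coincide entirely. Then $\Hom_{\I_{\Q_p}}(\phi_i,\phi_j)$ is a nonzero finite-dimensional $\overline{\Q}_\ell$-vector space equipped with a natural action of $W_{\Q_p}/\I_{\Q_p} \simeq \mathbb{Z}$ by twisted conjugation through $\sigma$. Choosing an eigenvector with eigenvalue $z \in \overline{\Q}_\ell^\times$ and letting $\chi$ be the unramified character with $\chi(\sigma) = z$, this eigenvector becomes a nonzero $W_{\Q_p}$-equivariant morphism $\phi_i \to \phi_j \otimes \chi^{-1}$, which by irreducibility and equal dimensions is an isomorphism --- contradicting the hypothesis. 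The main technical obstacle is the Clifford reduction establishing $m_i = 1$; it leans essentially on $W_{\Q_p}/\I_{\Q_p}$ being discrete cyclic, ensuring both that its continuous irreducible $\overline{\Q}_\ell$-representations are one-dimensional and that the relevant $H^2$ vanishes, after which everything else reduces to formal manipulation of isotypic components and Frobenius orbits.
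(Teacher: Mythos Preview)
Your proof is correct and follows the same Clifford-theoretic skeleton as the paper: decompose the inertial restriction into Frobenius orbits of isotypic components, show each irreducible summand has multiplicity-free inertial restriction, and argue that a shared inertial constituent forces an unramified twist.

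The differences are in the execution of the two key steps. For the multiplicity-one claim $m_i=1$, the paper simply writes ``by considering the action of the Frobenius $\sigma$ as before'' and asserts $\phi_{i|\I_{\Q_p}} \simeq V_i \oplus V_i^\sigma \oplus \dotsc \oplus V_i^{\sigma^{k_i-1}}$, referring back to the proof of the preceding proposition (which, however, was carried out under a multiplicity-free hypothesis); your argument via $\phi_i \simeq \Ind_{T_i}^{W_{\Q_p}}\psi_i$, the vanishing of $H^2(\Z,\overline{\Q}_\ell^\times)$, and the one-dimensionality of irreducible representations of $\Z$ supplies a self-contained justification of this point. For the ``shared constituent $\Rightarrow$ unramified twist'' step, the paper rescales bases of the orbit members to force the matrices of $\phi_i(\sigma)$ and $t\phi_j(\sigma)$ to coincide, whereas you extract an eigenvector from the $\sigma$-action on $\Hom_{\I_{\Q_p}}(\phi_i,\phi_j)$; these are equivalent, with your version more invariant and the paper's more explicit. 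One minor remark: your parenthetical ``(or trivial)'' for $T_i/\I_{\Q_p}$ cannot actually occur, since that would make the induced representation infinite-dimensional.
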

\begin{proof}
It is clear that if $\phi_{| \I_{\Q_p}}$ is multiplicity free then $\phi = \phi_1 \oplus \dotsc \oplus \phi_r$ where the $\phi_i$'s are irreducible and if $\dim \phi_i = \dim \phi_j $ for $i \neq j$ then there does not exists unramified character $\chi$ such that $\phi_i \otimes \chi \simeq \phi_j$.

We prove the inverse direction. By considering the action of the Frobenius $\sigma$ as before we see that for $1 \le i \le r$, $ \phi_{i | \I_{\Q_p}} \simeq V_i \oplus V_i^{\sigma} \oplus \dotsc \oplus V_i^{\sigma^{k_i-1}} $ where $k_i$ is the smallest integer such that $ V_i^{\sigma^{k_i}} \simeq V_i $. Therefore $\phi_{i | \I_{\Q_p}}$ is multiplicity free. Moreover if $ V_i \simeq V_j^{\sigma^h} $ then $V_i \oplus V_i^{\sigma} \oplus \dotsc \oplus V_i^{\sigma^{k_i-1}} \simeq V^{\sigma^h}_j \oplus V_j^{\sigma^{h+1}} \oplus \dotsc \oplus V_j^{\sigma^{h + k_j-1}}$. Since $\phi_i$ and $\phi_j$ are irreducible, we deduce that $ 
 k_i = k_j $. By choosing suitable sequences $(t^m_i)^{k_i}_{m=1}$ and $(t^m_j)^{k_j}_{m=1}$ and by multiplying a basis of $V^{\sigma^{m}}_i$ (respectively a basis of $V^{\sigma^{m}}_j$) by a factor $t^m_i$ (respectively $t^m_j$), we see that there exists $t \in \overline{\Q}^{\times}_{\ell}$ such that the matrices of $\phi_i(\sigma)$ and $ t \phi_j(\sigma)$ given in the corresponding bases are equal. Thus $\phi_i$ is an unramified twist of $\phi_j$ and then we have $i = j$. Therefore $\phi_{| \I_{\Q_p}}$ is multiplicity free.
\end{proof}

%Thus $\Hom(W^0/\rP_{\Q_p}, \mathbb{G}^r_{m})$ is isomorphic to $\mathbb{G}^r_{m} \times \Omega_{p-1}$ where $\Omega_{p-1}$ is the group scheme of elements $\alpha$ in $ \mathbb{G}^r_{m} $ such that $\alpha^{p-1} = 1$. We have then a decomposition into connected components
%\[
%Z^1(W^0_{\Q_p}, \GL_n)_{\varphi} = \coprod_{\alpha \in \Omega_{p-1}} Z^1(W^0_{\Q_p}, \GL_n)_{\varphi, \alpha}
%\]
%where $Z^1(W^0_{\Q_p}, \GL_n)_{\varphi, \alpha}$ is the sub-scheme of $Z^1(W^0_{\Q_p}, \GL_n)_{\varphi}$ parametrizing the morphisms $\phi'$ that extends $\varphi$ and such that $ \phi'(\tau) = \alpha \phi(\tau) $ (see also \cite[Coro. 4.21]{DH}).

\subsection{Perfect complexes on the stack of $L$-parameters}
In this subsection we talk about the spectral action defined in \cite{FS}. Denote by $\Perf([Z^1(W_{\Q_p}, \GL_n)/\GL_n])$ the derived category of perfect complexes on $[Z^1(W_{\Q_p}, \GL_n)/\GL_n]$. We write $\Perf([Z^1(W_{\Q_p}, \GL_n)/\GL_n])^{BW_{\mathbb{Q}_{p}}^{\I}}$ for the derived category of objects with a continuous $W_{\mathbb{Q}_{p}}^{\I}$ action for a finite index set $\I$, and $\Dlis(\Bunn,\ol{\mathbb{Q}}_{\ell})^{\omega}$ for the triangulated sub-category of compact objects in $\Dlis(\Bunn,\ol{\mathbb{Q}}_{\ell})$. By \cite[Corollary~X.1.3]{FS}, there exists a $\ol{\mathbb{Q}}_{\ell}$-linear action 
\[ \Perf([Z^1(W_{\Q_p}, \GL_n)/\GL_n])^{BW_{\mathbb{Q}_{p}}^{I}} \ra \mathrm{End}(\Dlis(\Bunn,\ol{\mathbb{Q}}_{\ell})^{\omega})^{BW_{\mathbb{Q}_{p}}^{\I}} \]
\[ C \mapsto \{A \mapsto C \star A\}\]
which, extending by colimits, gives rise to an action 
\[ \IndPerf([Z^1(W_{\Q_p}, \GL_n)/\GL_n])^{BW_{\mathbb{Q}_{p}}^{I}} \ra \mathrm{End}(\Dlis(\Bunn,\ol{\mathbb{Q}}_{\ell})^{\omega})^{BW_{\mathbb{Q}_{p}}^{\I}} \]
where $\IndPerf([Z^1(W_{\Q_p}, \GL_n)/\GL_n])$ is the triangulated category of Ind-Perfect complexes, and this action is uniquely characterized by some complicated properties. For our purposes, we will need the following:
\begin{enumerate}
    \item For $V = \boxtimes_{i \in I} V_{i}\in \Rep_{\ol{\mathbb{Q}}_{\ell}}(\phantom{}^{L}\GL_n^{I})$, there is an attached vector bundle $C_{V} \in \Perf([Z^1(W_{\Q_p}, \GL_n)/\GL_n])^{BW_{\mathbb{Q}_{p}}^{I}}$ whose evaluation at a $\ol{\mathbb{Q}}_{\ell}$-point of $[Z^1(W_{\Q_p}, \GL_n)/\GL_n]$ corresponding to a (not necessarily semi-simple) $L$-parameter $\tilde{\phi}: W_{\mathbb{Q}_{p}} \ra \phantom{}^{L}G(\ol{\mathbb{Q}}_{\ell})$ is the vector space $V$ with $W_{\mathbb{Q}_{p}}^{I}$-action given by $\boxtimes_{i \in I} V_{i} \circ \tilde{\phi}$. Then the Hecke operator $\T_{V}$ defined in the previous sections, is given by the endomorphism
    \[ C_{V} \star (-): \Dlis(\Bunn,\ol{\mathbb{Q}}_{\ell}) \ra \Dlis(\Bunn,\ol{\mathbb{Q}}_{\ell})^{BW_{\mathbb{Q}_{p}}^{I}}. \] 
    by compatibility between Hecke operators and spectral action \cite[Theorem X.1.1]{FS}.
    \item The action is symmetric monoidal in the sense that given $C_{1},C_{2} \in \IndPerf([Z^1(W_{\Q_p}, \GL_n)/\GL_n])$, we have a natural equivalence of endofunctors: 
    \[ (C_{1} \otimes^{\mathbb{L}} C_{2}) \star (-) \simeq C_{1} \star (C_{2} \star (-)). \]
\end{enumerate}

 Let $\phi = \phi_1 \oplus \dotsc \oplus \phi_r$ be a semi-simple and Frobenius semi-simple $L$-parameter as in the previous sub-section, more precisely the $\phi_i$'s are irreducible and if $i \neq j$ then there does not exist unramified character $\xi$ such that $\phi_i \otimes \xi \simeq \phi_j$. Let $S_{\phi}$ be the centralizer of our $\phi$ and let $\Rep_{\overline{\mathbb{Q}}_{\ell}}(S_{\phi})$ be the category of finite-dimensional algebraic $\overline{\mathbb{Q}}_{\ell}$-representations of the group $S_{\phi}$. By the computation in \cite[page 83]{KMSW}, $S_{\phi} \simeq \bb G_m^r$. The parameter $\phi$ defines a closed (by the semi-simplicity of $\phi$) $\ol{\mathbb{Q}}_{\ell}$-point inside the moduli stack $[Z^1(W_{\Q_p}, \GL_n)/\GL_n]$, giving rise to a closed embedding
\[ [\bullet/S_{\phi}] \hookrightarrow [Z^1(W_{\Q_p}, \GL_n)/\GL_n] \] 
which is regularly immersed (by \ref{itm : smooth locus}) inside the connected component containing $\phi$ which we denote by $[C_{\phi}]$. By proposition \ref{itm : simple connected components}, $[C_{\phi}]$ consists of the $L$-parameters of the form $\phi_1 \otimes \xi_1 \oplus \dotsc \oplus \phi_r \otimes \xi_r$ where the $\xi_i$'s are unramified characters. This connected component gives rise to a direct summand
\[
\Perf([C_{\phi}]) \hookrightarrow \Perf([Z^1(W_{\Q_p}, \GL_n)/\GL_n]).
\]

Therefore the spectral action gives rise to a corresponding direct summand 
\[
\Dlis^{[C_{\phi}]}(\Bunn,\ol{\mathbb{Q}}_{\ell})^{\omega} \subset \Dlis(\Bunn,\ol{\mathbb{Q}}_{\ell})^{\omega},
\]
explicitly given as those objects on which the excursion operator corresponding to the function that
is $1$ on $[C_{\phi}]$ and $0$ elsewhere acts via the identity. In particular, the Schur-irreducible objects in this subcategory all have Fargues-Scholze parameter given by an $L$-parameter in $[C_{\phi}]$. % Fix a central character $\lambda$ of $Z(\GL_n)(\Q_p)$ and note that there is a natural embedding $ Z_b : Z(\GL_n)(\Q_p) \hookrightarrow Z(\G_b)(\Q_p)$. Thus for each $ b \in B(\GL_n) $, we can consider the compact objects in the category $\Dlis^{[C_{\phi}]}(\Bun^b_n,\ol{\mathbb{Q}}_{\ell})^{\omega}\subset \mathrm{D}(\G_b(\Q_p), \overline{\Q}_{\ell}) $ such that the restriction of its central character to $Z_b(Z(\GL_n)(\Q_p))$ is given by $\lambda$. Denote by $\Dlambda^{\phi}(\Bunn,\ol{\mathbb{Q}}_{\ell})^{\omega}$ the derived subcategory of $\Dlis^{[C_{\phi}]}(\Bunn,\ol{\mathbb{Q}}_{\ell})^{\omega}$ generated by these compact objects when $b$ varies in $B(\GL_n)$. By the same argument in page $28$ of \cite{Ham}, we can show that the spectral action preserves the subcategory $\Dlambda^{\phi}(\Bunn,\ol{\mathbb{Q}}_{\ell})^{\omega}$. 

Now we consider the vector bundles on the connected component $[C_{\phi}]$. By proposition $\ref{itm : simple connected components}$ we have $[C_{\phi}] \simeq [\bb G_m^r/ \bb G_m^r] $ where $ \bb G^r_m $ acts trivially. Let $\Irr(S_{\phi})$ be the set of irreducible algebraic representations of $S_{\phi}$. For each $\chi \in \Irr(S_{\phi}) = \Irr(\bb G^r_m)$ we have an associated vector bundle $C_{\chi}$ on $[\bb G_m^r/ \bb G_m^r]$. More precisely the trivial line bundle on $\bb G_m^r$ together with the action of $\bb G_m^r$ defined by $\chi$ at every fibers gives rise to a $\bb G_m^r$-equivariant vector bundle on $\bb G_m^r$ and this vector bundle corresponds to $C_{\chi}$ on $[\bb G_m^r/ \bb G_m^r]$. Since $\bb G_m^r$ is affine, we see that $C_{\chi}$ is projective. Therefore it is clear that for $\chi, \chi' \in \Irr(S_{\phi})$ we have $ C_{\chi \otimes \chi'} \simeq C_{\chi} \otimes C_{\chi'} \simeq C_{\chi} \otimes^{\bb L} C_{\chi'} $. In particular, $ C_{\chi} \otimes C_{\chi^{-1}} \simeq C_{\chi^{-1}} \otimes C_{\chi} \simeq C_{\Id} $ is the identity functor of $\Dlis^{[C_{\phi}]}(\Bunn, \ov \Q_{\ell})^{\omega}$. Hence for an arbitrary $\chi \in \Irr(S_{\phi})$, $ C_{\chi} $ is an auto-equivalence of $\Dlis^{[C_{\phi}]}(\Bunn, \ov \Q_{\ell})^{\omega}$.

% Now we consider the vector bundles on the closed subset $[\bullet/S_{\phi}]$. They are defined by elements $W \in \Rep_{\overline{\mathbb{Q}}_{\ell}}(S_{\phi})$, and pushing forward along $ \iota : [\bullet/S_{\phi}] \hookrightarrow [Z^1(W_{\Q_p}, \GL_n)/\GL_n]$ defines a perfect complex, since this map is a regular immersion. For each $\chi \in \Rep_{\ol{\mathbb{Q}}_{\ell}}(S_{\phi})$, we denote $C_{\chi} := \iota_{*} \chi $. Then $C_{\chi}$ acts on $\Dlis(\Bunn,\ol{\mathbb{Q}}_{\ell})$. Since $\iota$ is a regular closed immersion, $\iota_*$ is exact and the adjunction map $\iota^* \iota_* \longrightarrow \Id $ is an isomorphism, hence by the projection formula we have $ C_{\chi \otimes \chi'} \simeq C_{\chi} \otimes^{\mathbb{L}} C_{\chi'} $. Given $\mathcal{F} \in \Dlis(\Bunn,\ol{\mathbb{Q}}_{\ell})$, by \cite[lemma 3.8]{Ham}, $C_{\chi}$ only acts non-trivially on the Schur-irreducible constituents of $\mathcal{F}$ that have Fargues--Scholze parameter equal to the fixed parameter $\phi$. Thus the action of $ \chi \in \Rep_{\ol{\mathbb{Q}}_{\ell}}(S_{\phi})$ on $\Dlis(\Bunn,\ol{\mathbb{Q}}_{\ell})$ restricts to an action on $\Dlambda^{\phi}(\Bunn,\ol{\mathbb{Q}}_{\ell})^{\omega}$ where the character $\lambda$ is the character corresponding to $\det(\phi)$ by local class field theory. 

Let $V \in \Rep_{\overline{\Q}_{\ell}}(\GL_n)$, by the arguments in pages $339$--$340$ and theorem $X.1.1$ in \cite{FS}, we can express the action of the Hecke operator $\T_V$ on $\Dlis^{[C_{\phi}]}(\Bunn,\ol{\mathbb{Q}}_{\ell})^{\omega}$ in terms of the spectral action of the $C_{\chi}$'s for $\chi \in \Irr(S_{\phi})$. Recall that the action of $\T_V$ is given by the vector bundle $C_V$ defined above. By \cite[lemma 3.8]{Ham}, it is enough to consider the restriction of $C_V$ to $[\bb G_m^r/ \bb G_m^r]$ and by simplicity we also use $C_V$ to denote the restriction of $ C_V $ to $[\bb G_m^r/ \bb G_m^r]$.

The restriction of $V$ to $S_{\phi}$ admits a commuting $W_{\Q_p}$-action given by $\phi$. This defines a monoidal functor
\[
\Rep_{\overline{\Q}_{\ell}}(\GL_n) \longrightarrow \Rep_{\overline{\Q}_{\ell}}(S_{\phi})^{BW_{\Q_p}}.
\]

Note that as a $S_{\phi} \times W_{\Q_p}$-representation, we can decompose $V$ into a direct sum
\[
V \simeq \bigoplus_{\chi \in \Irr(S_{\phi})} \chi \boxtimes \sigma_{\chi},
\]
where $\sigma_{\chi}$ is the $W_{\Q_p}$-representation $\Hom_{S_{\phi}}(\chi, V)$.  If we forget the $W_{\Q_p}$-action then on $\Dlis^{[C_{\phi}]}(\Bunn,\ol{\mathbb{Q}}_{\ell})^{\omega}$, we have
\[
\T_V = \bigoplus_{\chi \in \Irr(S_{\phi})} C^{\dim \sigma_{\chi}}_{\chi}.
\]

However, $\T_V$ has an action of $W_{\Q_p}$ and we want to understand this action. Recall that $W_{\Q_p}$ acts on $C_V$ and this action gives rise to the action of $W_{\Q_p}$ on $\T_V(\mathcal{F})$ for any sheaf $\mathcal{F}$. If we apply $\T_V$ to a Schur-irreducible sheaf $\mathcal{F}$ on $\Bun_n$ whose $L$-parameter is given by $\phi$ then we can get precise information of the action of $W_{\Q_p}$.

\begin{proposition} \phantomsection \label{itm : fundamental decomposition of Hecke operator}
Suppose that as a $S_{\phi} \times W_{\Q_p}$-representation, we can decompose $V$ into a direct sum
\[
V \simeq \bigoplus_{\chi \in \Irr(S_{\phi})} \chi \boxtimes \sigma_{\chi},
\]
where $\sigma_{\chi}$ is the $W_{\Q_p}$-representation $\Hom_{S_{\phi}}(\chi, V)$. Then for $\mathcal{F}$ a Schur-irreducible sheaf on $\Bun_n$ whose $L$-parameter is given by $\phi$, we have 
\[
\T_V(\mathcal{F}) = \bigoplus_{\chi \in \Irr(S_{\phi})} C_{\chi} \star \mathcal{F} \boxtimes \sigma_{\chi}.
\]   
as object in $\Dlis^{[C_{\phi}]}((\Bunn,\ol{\mathbb{Q}}_{\ell})^{\omega})^{BW_{\Q_p}}$.
\end{proposition}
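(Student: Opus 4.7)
The plan is to upgrade the inertia-equivariant identification
\[
\T_V(\mathcal{F}) \;\simeq\; \bigoplus_{\chi \in \Irr(S_{\phi})} C_{\chi} \star \mathcal{F} \,\boxtimes\, \sigma_{\chi|\I_{\Q_p}}
\]
already obtained from the commutative diagram of monoidal functors above to a genuinely $W_{\Q_p}$-equivariant identification. Since a Frobenius-semisimple Weil representation is determined by its inertia restriction together with the action of a single Frobenius lift $\sigma$, the only remaining task is to pin down the Frobenius action on each $\chi$-summand.

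First, I would restrict the Weil-equivariant vector bundle $C_V$ to the residual gerbe $[\bullet/S_{\phi}] \hookrightarrow [C_{\phi}]$ at $\phi$. The fiber of $C_V$ at the closed point $\phi$ is $V$ with commuting $S_{\phi}$- and $W_{\Q_p}$-actions, the latter given by $V \circ \phi$. Under the hypothesis $V|_{S_{\phi} \times W_{\Q_p}} = \bigoplus_{\chi} \chi \boxtimes \sigma_{\chi}$, this fiber decomposes $S_{\phi}$-isotypically with Frobenius acting precisely by $\sigma_{\chi}(\sigma)$ on the $\chi$-piece (with no extra twist, because $\chi$ is a character of $S_{\phi}$ only and carries no Weil-equivariance). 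Translating this fiberwise information into a statement over the residual gerbe yields a $W_{\Q_p}$-equivariant splitting $C_V|_{[\bullet/S_{\phi}]} \simeq \bigoplus_{\chi} C_\chi|_{[\bullet/S_{\phi}]} \otimes \sigma_{\chi}$.

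Next, I would invoke Lafforgue's reconstruction characterization of the Fargues--Scholze parameter: for any $\alpha : \ol{\mathbb{Q}}_{\ell} \to V$ and $\beta : V \to \ol{\mathbb{Q}}_{\ell}$, the excursion operator $\id \xrightarrow{\alpha} \T_V \xrightarrow{\sigma} \T_V \xrightarrow{\beta} \id$ acts on the Schur-irreducible $\mathcal{F}$ as the scalar $\beta \circ V(\phi(\sigma)) \circ \alpha$. Since each $C_\chi$ is an auto-equivalence of $\Dlis^{[C_{\phi}]}(\Bunn, \ol{\mathbb{Q}}_{\ell})^{\omega}$, the summand $C_\chi \star \mathcal{F}$ is itself Schur-irreducible, so its endomorphism ring is $\ol{\mathbb{Q}}_{\ell}$. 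Choosing $\alpha, \beta$ to project onto and include from the various $\chi$-isotypic pieces of $V$ and applying Schur's lemma forces the Frobenius action on each $\chi$-summand of $\T_V(\mathcal{F})$ to coincide with $\sigma_{\chi}(\sigma)$ exactly. Combined with the already-established inertia decomposition, this upgrades to the asserted $W_{\Q_p}$-equivariant identification. The principal obstacle in this plan is the final matching step: carefully verifying that the Frobenius-scalar data obtained from the spectral action on $\mathcal{F}$ agrees strictly with $\sigma_{\chi}$ on each piece and not with some unramified twist, which is resolved by Schur-irreducibility and the excursion-operator identification.
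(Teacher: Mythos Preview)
Your overall strategy matches the paper's: exploit Schur-irreducibility of $C_\chi \star \mathcal{F}$ and the excursion-operator characterization of the Fargues--Scholze parameter to pin down the Frobenius action on each summand. The gap is in how you propose to implement the last step. In the excursion formalism the maps $\alpha : \ol{\mathbb{Q}}_\ell \to \Delta^* W$ and $\beta : \Delta^* W \to \ol{\mathbb{Q}}_\ell$ must be morphisms of $\widehat{\GL}_n$-representations, not merely of $S_\phi$-representations. For $I$ a singleton and $W = V$ an irreducible nontrivial representation, the only such $\alpha, \beta$ are zero; and for general $I$, the $\chi$-isotypic pieces of $V$ are not $\widehat{\GL}_n$-subrepresentations, so there is no way to choose $\alpha, \beta$ ``projecting onto and including from the various $\chi$-isotypic pieces''. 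Excursion operators therefore cannot isolate a single $\chi$-summand directly.

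The paper handles this by first taking $V = \std$ and the two-leg excursion datum $(\{1,2\}, \std \boxtimes \std^\vee, x, y, (\gamma, \gamma))$ with the tautological (co)evaluation maps, which are genuinely $\widehat{\GL}_n$-equivariant. Varying $\gamma$ yields only the global trace identity between $\bigoplus_\chi \sigma'_\chi$ and $\bigoplus_\chi \sigma_\chi$, i.e.\ agreement of semi-simplifications of the totals. The separation into individual pieces is then supplied by the inertia side: under the standing hypothesis on $\phi$ the restrictions $\phi_i|_{\I_{\Q_p}}$ have pairwise disjoint constituents (lemma~\ref{itm : multiplicity free of Weil action}), so $\sigma'_{\chi_i}|_{\I_{\Q_p}} \simeq \phi_i|_{\I_{\Q_p}}$ together with the trace identity forces each $\sigma'_{\chi_i} \simeq \phi_i$. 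General $V$ follows by monoidality, since every irreducible algebraic representation of $\GL_n$ embeds in some $\std^m \otimes (\std^\vee)^{m'}$. Your residual-gerbe observation is correct but does not by itself bridge the gap, since $\mathcal{F}$ is not a skyscraper at $\phi$; the trace argument is precisely what connects the pointwise computation to the action on $\mathcal{F}$.
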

\begin{proof}
Let $\alpha$ be an element in $W_{\Q_p}$. Thus it gives rise to an isomorphism of $C_V = \bigoplus_{\chi \in \Irr(S_{\phi})} C^{\dim \sigma_{\chi}}_{\chi} $. Hence it is given by a square matrix $(a_{\chi, \chi'})$ of dimension $\dim_V$ where $a_{\chi, \chi'}$ is the matrix recording the morphism from $C^{\dim \sigma_{\chi}}_{\chi}$ to $C^{\dim \sigma_{\chi'}}_{\chi'}$. 
By the concrete description of $W_{\Q_p}$ on the fibers of $C_V$ we see that $(a_{\chi, \chi'})$ is trivial when $\chi \neq \chi'$. Remark that $ \phi'_{| \I_{\Q_p}} \simeq \phi_{| \I_{\Q_p}} $ for every $\phi'$ in $[C_{\phi}]$, we deduce that if $\alpha \in \I_{\Q_p} $ then the isomorphism given by $\alpha$ is just given by  $\bigoplus_{\chi \in \Irr(S_{\phi})} \sigma_{\chi | \I_{\Q_p}}$, namely the square matrix $a_{\chi, \chi'}$ is given by $\sigma_{\chi | \I_{\Q_p}} (\alpha) $. Thus the action of $\I_{\Q_p}$ on $\T_V$ can be expressed as $ \displaystyle \T_V = \bigoplus_{\chi \in \Irr(S_{\phi})} C_{\chi} \boxtimes \sigma_{\chi | \I_{\Q_p}} $. In other words, the following diagram of monoidal functors commutes
\begin{center}
     \begin{tikzpicture}
     \draw (0,0) node [above] {} node{$\Rep_{\overline{\Q}_{\ell}}(\GL_n)$};
     \draw (7,0) node [above] {} node{$\End_{\overline{\Q}_{\ell}}( \Dlis^{[C_{\phi}]}(\Bunn,\ol{\mathbb{Q}}_{\ell})^{\omega} )^{B\I_{\Q_p}}$};
     \draw (0,-2.5) node [above] {} node{$\Rep_{\overline{\Q}_{\ell}}(S_{\phi})^{B\I_{\Q_p}}$};
     \draw [->] (0,-0.5) -- (0,-2);
     \draw [->] (1.3,0) -- (3.9,0);
     \draw [->] (1.3,-2) -- (5,-0.5);
     \end{tikzpicture}
 \end{center}

Now we want to define the square matrix $ (a_{\chi, \chi}(\sigma)) $ corresponding to the Frobenius $\sigma$. Each entry of $ (a_{\chi, \chi}(\sigma)) $ records an endomorphism of $C_{\chi}$ thus it is given by an element in $\mathcal{O}([C_{\phi}])$. We can suppose that $\phi = \phi_1 \oplus \dotsc \oplus \phi_r$ and then $\mathcal{O}([C_{\phi}]) \simeq \ov \Q_{\ell} [X_1^{\pm}, \dotsc, X_r^{\pm}] $. We claim that if we identify $\phi$ with the closed point $(1, \dotsc, 1)$ then the matrix  $(a_{\chi, \chi}(\sigma))$ is given by $\sigma_{\chi}(\sigma) \cdot X^{\chi} $ where $X^{\chi} := X_1^{d_1} \dotsc X_r^{d_r} $ if $\chi = (d_1, \dotsc, d_r)$. Remark that $\mathcal{O}([C_{\phi}])$ is reduce, then to prove the claim, it enough to compare the actions of $\sigma$ and $\sigma_{\chi}(\sigma) \cdot X^{\chi} $ on fibers. At the closed point given by $\phi$, it is given by $\sigma_{\chi}(\sigma)$ by the description of $C_V$. Consider $t = (t_1, \dotsc, t_r)$ we want to consider the action of $\sigma$ on the fiber at the closed point $\phi_t := \phi_1 \otimes \xi_{t_1} \oplus \dotsc \oplus \phi_r \otimes \xi_{t_r} $ where $\xi_{t_i}$ is the unramified character of $W_{\Q_p}$ corresponding to the point $t_i \in \ov \Q_{\ell}^{\times}$. Note that there is a canonique identification of $S_{\phi}$ and $S_{\phi_t}$. When we have an algebraic representation $V$ of $\widehat{\GL_n}$ and we have $2$ decompositions as $S_{\phi} \times W_{\Q_p} $ representations corresponding to $\phi$ and $\phi_t$.
\[
V = \bigoplus_{\chi \in \Irr(S_{\phi})} \chi \boxtimes \sigma_{\chi}
\]
and 
\[
V = \bigoplus_{\chi \in \Irr(S_{\phi})} \chi \boxtimes \sigma^{\phi_t}_{\chi}.
\]

Note that the parameter $\phi : W_{\Q_p} \longrightarrow \widehat{\GL_n}$ give rise to an action of $S_{\phi} \times W_{\Q_p}$ and we can identify the action of $\phi_t(\sigma)$ with the action of $t \times \phi(\sigma)$. Hence we see that $\sigma^{\phi_t}_{\chi}(\sigma) = t^{\chi} \cdot \sigma_{\chi}(\sigma)$. Thus the claim is proved.

Since $\mathcal{F}$ is Schur-irreducible and $C_{\chi^{-1}} \star C_{\chi} = C_{\Id}  $ is the identity functor, we deduce that $ C_{\chi} \star \mathcal{F} $ is also Schur-irreducible. Thus we have a decomposition as objects in $\Dlis^{[C_{\phi}]}((\Bunn,\ol{\mathbb{Q}}_{\ell})^{\omega})^{BW_{\Q_p}}$
\[
\T_V(\mathcal{F}) = \bigoplus_{\chi \in \Irr(S_{\phi})} C_{\chi} \star \mathcal{F} \boxtimes \sigma'_{\chi}.
\] 
where $ \sigma'_{\chi | \I_{\Q_p}} \simeq \sigma_{\chi | \I_{\Q_p}} $. Since $C_{\chi} \star \mathcal{F}$ is also a Schur irreducible sheaf whose Fargues-Scholze parameter is given by $\phi$, the spectral action of $X^{\chi}$ act by $1$ on $C_{\chi} \star \mathcal{F}$. Thus we deduce that $\sigma_{\chi}' (\sigma) = \sigma_{\chi}(\sigma)$ and the conclusion of the proposition follows.

%Suppose first that $V \simeq \std $ is the standard representation. We choose the excursion data $(\{ 1, 2 \}, \std \boxtimes \std^{\vee}, x, y, (\gamma, \gamma) )$ where $x$ and $y$ are the tautological maps $x : 1 \longrightarrow \std \boxtimes \std^{\vee} $ and $ y : \std \boxtimes \std^{\vee} \longrightarrow 1 $ and where $\gamma$ is any element of $W_{\Q_p}$. 

%Now by evaluating the excursion operators at $\mathcal{F}$ when $\gamma$ varies, we see that the $W_{\Q_p}$-representations $ \displaystyle \bigoplus_{\chi \in \Irr(S_{\phi})} \sigma'_{\chi} $ and $ \displaystyle \bigoplus_{\chi \in \Irr(S_{\phi})} \sigma_{\chi} $ have the same traces. Therefore they are isomorphic up to semi-simplification. Since they are already semi-simple and $\sigma'_{\chi | \I_{\Q_p} } \simeq \sigma_{\chi | \I_{\Q_p}} $, we deduce that $ \sigma_{\chi} \simeq \sigma'_{\chi} $.

%In general, remark that any irreducible algebraic representation of $\GL_n$ is a sub-representation of $ \std^m \otimes (\std^{\vee})^{m'} $ for some $m, m' \in \N$. The result follows from the monoidal property of the Hecke operators.
\end{proof}
  
\section{Hecke operators on $\Bunn$} \phantomsection \label{itm : combinatoric description of Hecke operators}
\subsection{Combinatoric description of the Hecke operators} \label{itm : shape of the supports}
\subsubsection{Local Langlands correspondence for inner forms of $\GL_n$} \textbf{}

Let $F$ be a field of characteristic zero, $\overline{F}$ be an algebraic closure, and $\Gamma$ the Galois group of $ \overline{F} / F $. Let $\G$ be a connected reductive group defined over $F$. An inner form of $\G$ is a connected reductive group $\G_1$ defined over $F$ for which there exists an isomorphism $\xi : \G \otimes_F \overline{F} \longrightarrow \G_1 \otimes_F \overline{F} $  such that for all $ \sigma \in \Gamma $, the automorphism $ \xi^{-1} \sigma (\xi) = \xi^{-1} \circ \sigma \circ \xi \circ \sigma^{-1} $ is an inner automorphism of $\G$. If $ \xi_1 : \G \longrightarrow \G_1 $ and $ \xi_2 : \G \longrightarrow \G_2 $ are two inner twists, then an isomorphism $\xi_1 \longrightarrow \xi_2$ consists of an isomorphism $f : \G_1 \longrightarrow \G_2$ defined over $F$ and having the property that $ \xi_2^{-1} \circ f \circ \xi_1 $ is an inner automorphism of $\G$. The map $ \xi \longmapsto \xi^{-1} \sigma(\xi) $ sets up a bijection from the set of isomorphism classes of inner twists of $\G$ to the set set $H^1(\Gamma, \G_{ad})$. 

When $\G = \GL_n$ and $F$ is a local field then $H^1(\Gamma, \G_{ad}) = H^1(\Gamma, \PGL_n) \simeq \Z / n\Z $ and all the inner forms of $\GL_n$ are the groups $\Res_{D/F} \GL_m$, where $D$ is any division algebra over $F$ of degree $g^2$, where $g$ is a natural number such that $ gm = n $.\\

We consider a local field  ${\Q_v}$ for $v$ any place of $\Q$. The local Langlands group is defined by $\mc{L}_{\Q_v} : = W_{\R} $ (the non trivial extension of $\C^{\times}$ by $\Z/2\Z$) if $v = \infty$ and by $ W_{\Q_p} \times \SL_2(\C)$ if $v = p$ is a prime. For a connected reductive group $\G$, we also set $ \LL\G = \widehat{\G}(\C) \rtimes W_{\Q_v} $ as a topological group where $ \widehat{\G} $ is the Langlands dual group of $\G$. In our case we see that $\LL\GL_n = \GL_n(\C) \times W_{\Q_v} $ as $\GL_n$ is a split group and then $W_{\Q_v}$ acts trivially on $\GL_n(\C)$.
\begin{definition} \phantomsection \label{itm : classical L parameter}
A local $ L $-parameter of $\SL_2$-type for a connected reductive group $ \G $ defined over $ \Q_v $ is a continuous morphism $ \phi: \mc{L}_{\Q_v} \longrightarrow \LL\G$ which commutes with the canonical projections of $\mc{L}_{\Q_v} $ and $ \LL\G $ to $ W_{\Q_v} $ and such that $ \phi $ sends semisimple elements to semisimple elements. We say that $\phi$ is discrete if the image does not lie in any proper parabolic subgroup of $\LL\G$.
\end{definition}

We denote by $\Phi(\GL_n)$ the set of local $ L $-parameters of $\SL_2$-type for $\GL_n$. We denote by $\Phi_2(\GL_n)$ the subset of discrete parameters of $\Phi(\GL_n)$. \\

%We denote by $\Phi(\G)$ the set of $\widehat{\G}$-conjugacy classes of local $\SL_2$-type $L$-parameters. An $L$-parameter of $\SL_2$-type $\phi$ is called bounded (resp. discrete) if its image in $\LL\G$ projects to a relatively compact subset of $\widehat{\G}$ (resp. if its image is not contained in any proper parabolic subgroup of $\LL\G$). We denote by $\Phi_{\bdd}(\G)$, (resp. $\Phi_2(\G)$) the subset of $\Phi(\G)$ consisting of bounded (resp. discrete) $L$-parameters.

We denote the set of isomorphism classes of irreducible admissible representations of a connected reductive group $\G$ by $\Pi(\G)$. We denote the set of tempered, essentially square integrable, and unitary representations by $\Pi_{\temp}(\G)$, $\Pi_2(\G)$, and $\Pi_{\unit}(\G)$ respectively. We denote $\Pi_{\temp}(\G) \cap \Pi_{2}(\G)$ by $\Pi_{2, \temp}(\G)$.

For each $L$-parameter $\phi$ of $\SL_2$-type, we define the centralizer group as below, which plays an important role in the theory
\[
S_{\phi} := \text{Cent}(\text{Im}\phi, \widehat{\G}).
\]

Every $\phi \in \Phi(\GL_n)$ can be written as $ \phi = \phi_1^{k_1} \oplus \phi_2^{k_2} \oplus \dotsc \oplus \phi_r^{k_r} $ with $\phi_i \in \Phi_2(\GL_{n_i})$ and $k_i$ is the multiplicity of $\phi_i$. In particular we have $ k_1n_1 + k_2n_2 + \dotsc + k_rn_r = n $. Hence 
\[
S_{\phi} \simeq \GL_{k_1} \times \dotsc \times \GL_{k_r}.
\]

Each $\phi_i \in \Phi_2(\GL_{n_i})$ is of the form $ \nu_i \otimes \Sym^{m_i-1} $ where $\nu_i$ is an irreducible representation of $W_{\Q_v}$ of dimension $t_i$ and $\Sym^{m_i-1}$ is the representation of dimension $m_i$ of $\SL_2(\C)$ such that $ t_i \cdot m_i = n_i $. Moreover if for each $i$, the restriction $ \phi_{i | \SL_2(\C)} $ is trivial then $\phi$ is semi-simple.

Now let $\G = \Res_{D/F} \GL_m$ be an inner form of $\GL_n$, where $D$ is a division algebra over $\Q_v$ of degree $g^2$. Then the parameter $\phi$ is relevant for $\G$ if and only if $g$ divides $n_i$ for all $1 \leq i \leq r$. We can now recall the results of \cite[sections 2.1, 2.3]{Badu-Alex} \cite{Badu-Renard} on the local Langlands correspondence for $\G$.

\begin{theorem}  \phantomsection \label{itm : relevant}
    Let $\phi = \phi_1^{k_1} \oplus \phi_2^{k_2} \oplus \dotsc \oplus \phi_r^{k_r} \in \Phi(\GL_n)$ where $\phi_i \in \Phi_2(\GL_{n_i})$ be a decomposition of an $L$-parameter of $\SL_2$-type into simple constituents. If $\phi$ is relevant for $\G$ then there exists a unique irreducible unitary representation $\pi_{\phi}$ of $\G(\Q_v)$ corresponding to $\phi$ and characterised by traces identities. Moreover, as $\phi$ runs over $\Phi(\GL_n)$, the representations $\pi_{\phi}$ are different and exhaust $\Pi(\G)$.  
\end{theorem}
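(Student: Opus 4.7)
The plan is to reduce the statement to the essentially square-integrable case via the Langlands--Zelevinsky classification and then transport through the Jacquet--Langlands correspondence for inner forms of $\GL_n$. Writing $\G \simeq \Res_{D/\Q_v}\GL_m$ with $D$ a central division algebra over $\Q_v$ of degree $g^{2}$ and $gm = n$, the relevance hypothesis on $\phi$ amounts to asking that $g \mid n_i$ for every $i$.

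First, for each simple constituent $\phi_i = \nu_i \otimes \Sym^{m_i - 1} \in \Phi_2(\GL_{n_i})$, the local Langlands correspondence for $\GL_{n_i}$ of Harris--Taylor and Henniart attaches an essentially square-integrable representation $\rho_i$ of $\GL_{n_i}(\Q_v)$. The Jacquet--Langlands correspondence for inner forms, in the generality needed here due to Deligne--Kazhdan--Vign\'eras and extended to all discrete series by Badulescu via a global simple trace formula argument, then produces a unique essentially square-integrable representation $\pi_i$ of $\GL_{n_i / g}(D)(\Q_v)$, characterised by the identity of characters on matching regular semisimple elliptic elements. This step is the deepest input and supplies the trace identity that will ultimately characterise $\pi_\phi$.

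To construct $\pi_\phi$ in general, I would organise the $\phi_i$ (with multiplicities $k_i$) into a multi-segment in the sense of Zelevinsky; the $\pi_i$'s assemble into a representation of a Levi inner form of $\G$, which I parabolically induce to obtain a standard module, and $\pi_\phi$ is defined to be its unique Langlands quotient. That $\pi_\phi$ is in fact irreducible and unitary follows from the M\oe glin--Waldspurger classification of the unitary dual of $\GL_n(\Q_v)$, transported to inner forms by Tadi\'c and Badulescu--Renard. Compatibility of Jacquet--Langlands with parabolic induction then promotes the discrete-series trace identity to the desired character identity characterising $\pi_\phi$.

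For injectivity of $\phi \mapsto \pi_\phi$ and for exhaustion of $\Pi(\G)$, I would construct the inverse map: every $\pi \in \Pi(\G)$ is a Langlands quotient of an induction from discrete series on a Levi inner form of $\G$, each such discrete series transfers back to an essentially square-integrable representation of the corresponding split Levi via the inverse Jacquet--Langlands (whose image is exactly the relevant parameters), and the classical LLC assigns an $L$-parameter to each transfer; reassembling these ingredients recovers a unique $\phi$. The main obstacle throughout is the Jacquet--Langlands transfer for discrete series of arbitrary inner forms of $\GL_n$ --- this is genuinely deep and rests on global trace formula methods --- but everything downstream is bookkeeping with Langlands data and linear independence of characters.
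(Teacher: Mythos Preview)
The paper does not give its own proof of this theorem: it is stated as a recollection of known results, with the sentence ``We can now recall the results of \cite[sections 2.1, 2.3]{Badu-Alex} \cite{Badu-Renard} on the local Langlands correspondence for $\G$'' immediately preceding it. Your sketch is essentially the argument carried out in those references --- Jacquet--Langlands transfer for discrete series (Deligne--Kazhdan--Vign\'eras, Badulescu) combined with the Langlands classification on both sides --- so there is no meaningful divergence to discuss.

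One small correction: the classification of the unitary dual of $\GL_n$ over a $p$-adic field is due to Tadi\'c, not M\oe glin--Waldspurger, and you should be cautious about the word ``unitary'' in the statement. As written, the theorem asserts that \emph{every} $\phi \in \Phi(\GL_n)$ yields a unitary $\pi_\phi$ and that these exhaust all of $\Pi(\G)$; but a general Langlands quotient is not unitary, so either ``unitary'' is an imprecision in the statement or it is meant in a weaker sense (e.g.\ unitarisable after a central twist, or restricted to the subclass of parameters for which this holds). Your proof sketch correctly produces $\pi_\phi$ as a Langlands quotient and correctly argues bijectivity with $\Pi(\G)$, but the sentence ``that $\pi_\phi$ is in fact irreducible and unitary follows from \ldots'' conflates two things: irreducibility is automatic for a Langlands quotient, while unitarity genuinely is not, and no classification of the unitary dual will make an arbitrary Langlands quotient unitary. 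This is a wrinkle in the statement rather than a flaw in your overall strategy.
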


Remark that we can identify (the $\widehat{\G}$-conjugacy class of) a local $\SL_2$-type $L$-parameter with (the $\widehat{\G}$-conjugacy classes of) an $L$-parameter over $\overline{\Q}_{\ell}$ in definition \ref{itm : new definition of L-parameters} by \cite[Propositions 1.13, 1.17]{Nao1}. For example, if an $L$-parameter of $\SL_2$-type $\phi = \phi_1 \oplus \dotsc \oplus \phi_r$ where the $\phi_i$'s are discrete, pairwise non-isomorphic and the restrictions $\phi_{i | \SL_2}$ is trivial then the corresponding $L$-parameter is given by $ \iota_{\ell}(\phi_1) \oplus \dotsc \oplus \iota_{\ell} (\phi_r)$ where  $\iota_{\ell} : \C \xrightarrow{ \ \sim \ } \overline{\Q}_{\ell} $ is an isomorphism of fields.

\subsubsection{Combinatoric description of the spectral action} \textbf{}

Next we prove that there is a bijection between the irreducible representations of the centralizer $S_{\phi}$ of some $L$-parameters $\phi$ and the irreducible sheaves over $\Bunn$ whose $L$-parameter is $\phi$. The bijection is similar to the one described in the introduction. It is of combinatoric nature and could be proved independently with the theory of spectral action. Thus we do not need to impose too much assumption on the $L$-parameter $\phi$.

We fix a maximal split torus $\T$ and a Borel subgroup $\rB$ of $\GL_n$. Let $\overline{C}$ denote the closed Weyl chamber in $X_*(\T)_{\R}$ associated to $\rB$ and let $\overline{C}_{\Q}$ denote its intersection with $X_*(\T)_{\Q}$. For any standard parabolic subgroup $\rP$ with Levi decomposition $\rP = \M\rN$ such that $ \T \subset \M $ (i.e., $\M$ is a standard Levi subgroup), we put
\[
X_*(\rP)^+ := \{ \mu \in X_*(\M) \ | \ \langle \mu, \alpha \rangle > 0 \ \text{for any root of $\T$ in $\rN$}  \}.
\]

Then we have a decomposition 
\[
\overline{C} = \coprod_{\rP}  X_*(\rP)^+
\]
where the index is the set of standard parabolic subgroups of $\GL_n$. We define the subset $B(\GL_n)_{\rP}$ of $B(\G)$ to be the pre-image of $X_*(\rP)^+$ under the Newton map. This gives a decomposition
\begin{equation} \phantomsection \label{decomposition of Kottwitz' set}
 B(\GL_n) = \coprod_{\rP} B(\GL_n)_{\rP}   
\end{equation}
where the index is the set of standard parabolic subgroups of $\GL_n$. For a general standard parabolic $\rP = \M\rN$, $B(\GL_n)_{\rP}$ has the following description. By noting that the image of the Newton map $\nu_{\M}$ for $\M$ lies in $X_*(\M)$, we define $B(\M)^+_{\text{bas}}$ by 
\[
B(\M)^+_{\text{bas}} := \{ b \in B(\M)_{\text{bas}} \ | \ \nu_{\M}(b) \in X_*(\rP)^+ \}. 
\]
Then the canonical map $B(\M) \longrightarrow B(\G)$ induces a bijection $ B(\M)^+_{\text{bas}} \simeq B(\GL_n)_{\rP} $ (see \cite[\S 5.1]{KottwitzIsocrystals2}).

Let us recall the formula given in the introduction. We could consider the case the $L$-parameter $\phi$ has a decomposition 
\[
\phi = \phi_1 \oplus \dotsc \oplus \phi_r,
\]
where the $\phi_i$'s are irreducible of dimension $n_i$ and if $n_i = n_j$ then there does not exists unramified character $\xi$ such that $\phi_i \simeq \phi_j \otimes \xi$. We see that this $L$-parameter corresponds to the $L$-parameter of $\SL_2$-type $\iota^{-1}_{\ell}(\phi_1) \oplus \dotsc \oplus \iota^{-1}_{\ell}(\phi_r) $. In this case we know that the Langlands correspondence $\pi$ of $\phi$ is of the form $\Ind_{\rP}^{\GL_n} (\pi_1 \otimes \dotsc \otimes \pi_r) $ for some standard parabolic subgroup $\rP$ and supercuspidal representations $\pi_i$. We have that $ \displaystyle S_{\phi} := \Cent (\phi) = \prod_{i = 1}^r \Gm $ and the set $\Irr(S_{\phi})$ of irreducible representations of $S_{\phi}$ is isomorphic to the abelian group $ \displaystyle \prod_{i = 1}^r \Z$. For each character $\chi = (d_1, \dotsc, d_r) \in \displaystyle \prod_{i=1}^r \Z $, we define an element $b_{\chi} \in B(\GL_n)$, an irreducible representation $\pi_{\chi}$ of $ \G_{\chi}(\Q_p) := \G_{b_{\chi}}(\Q_p)$ and a sheaf $\mathcal{F}_{\chi}$ as follow: 
\begin{enumerate}
    \item[$\bullet$] $b_{\chi}$ is the unique element in $B(\GL_n)$ such that $ \E_{b_{\chi}} \simeq \OO(\lambda_1)^{m_1} \oplus \dotsc \oplus \OO(\lambda_r)^{m_r} $ where $\OO(\lambda_i)$ is the stable vector bundle of slope $ \lambda_i = d_i / n_i $ and $m_i = (d_i, n_i)$.  
    \item[$\bullet$] Consider the group $\G_{b_{\chi}}$, it is an inner form of a standard Levi subgroups of $\GL_n$. Let $\G^*_{b_{\chi}}$ be the split inner form of $\G_{b_{\chi}}$. For each $i$, denote $\G_i := \GL_{m_i}(D_{-\lambda_i})$ where $D_{-\lambda_i}$ is the division algebra whose invariant is $-\lambda_i$, thus $\G_i^* = \GL_{n_i}$. Thus we have a map $\phi_i : W_{\Q_p} \longrightarrow \widehat{\G^*_i}(\overline{\Q}_{\ell})$ and the direct sum $ \phi_1 \oplus \dotsc \oplus \phi_r $ gives us a map $W_{\Q_p} \longrightarrow \displaystyle \prod_{i=1}^r \widehat{\G^*_i}(\overline{\Q}_{\ell})$ whose post-composition with the natural embeddings $ \displaystyle \prod_{i=1}^r \widehat{\G^*_i}(\overline{\Q}_{\ell}) \hookrightarrow \widehat{\G^*_{\chi}}(\overline{\Q}_{\ell}) $ defines an $L$-parameter $\phi_{\chi}$ of $\G_{b_{\chi}}$. Moreover, the post-composition of $\phi_{\chi}$ with $ \widehat{\G^*_{b_{\chi}}}(\overline{\Q}_{\ell}) \hookrightarrow \widehat{\GL}_n(\overline{\Q}_{\ell}) $ is the ($\widehat{\GL}_n$-conjugacy class of) $\phi$. Finally, $\pi_{\chi}$ is the representation of $\G_{b_{\chi}}(\Q_p)$ whose $L$-parameter is given by $\phi_{\chi}$ via the local Langlands correspondence for general linear groups.
    \item[$\bullet$] We can suppose that $\G_{b_{\chi}}^*$ is standard. By (\ref{decomposition of Kottwitz' set}), there exists a unique standard parabolic subgroup $\rP$ of $\GL_n$ with Levi factor given by $\G_{b_{\chi}}^*$ such that $\nu_{b_{\chi}} \in X_*(\rP)^+ $. Let $\delta_{\rP}$ be the modulus character with respect to $\rP$. Then we denote by $\delta_{b_{\chi}}$ (or $\delta_{\chi}$) the character of $ \G_{b_{\chi}} $ whose $L$-parameter is the same as that of $\delta_{\rP| \G_{b_{\chi}}^*}$. We consider the embedding $ i_{b_{\chi}} : \Bun^{b_{\chi}}_n \longrightarrow \Bunn $ and define $\mathcal{F}_{\chi} \displaystyle := i_{b_{\chi} !}(\delta^{-1/2}_{b_{\chi}} \otimes \pi_{\chi}) [- d_{\chi}] $ where $d_{\chi} = \langle 2\rho, \nu_{b_{\chi}} \rangle$.

     We recall that if $ \rP = \M \rN $ where $\M$ is the Levi subgroup and $\rN$ is the unipotent radical then the modulus character is defined by $\delta_{\rP}(mn) := |\det(\ad(m);\Lie \rN)| $ where $|\cdot|$ denotes the normalized absolute value of the field $\ov \Q_{\ell}$.

    Our characters $\delta_{b}$ is the same as the character $\delta_b$ defined by Hamann and Imai in \cite{HI}. We recall also that for $b \in B(\GL_n)$, we have $\nu_{b} = (- \nu_{\E_b})_{\text{dom}}$ then the standard parabolic subgroup corresponding to the Harder-Narasimhan reduction of $\E_{b_{\chi}}$ (\cite[Theorem 1.7]{CFS}) is conjugated to the opposite of $\rP$ above.

\end{enumerate}

\textbf{} 

We show that the map $ \chi \longmapsto (b_{\chi}, \pi_{\chi}) $ is a bijection between the set of characters of $S_{\phi}$ and the set of pairs $(b, \pi_b)$ where $b \in B(\GL_n)$ and $\pi_b$ is an irreducible representation of $\G_b(\Q_p)$ whose $L$-parameter is given by $\phi$. First suppose that there exists $ \chi_1 = (d_{1,1}, \dotsc, d_{1,r})$ and $\chi_2 = (d_{2,1}, \dotsc, d_{2,r}) $ such that $ (b_{\chi_1}, \pi_{\chi_1}) = (b_{\chi_2}, \pi_{\chi_2}) $.

Thus we have $ \E_{b_{\chi_1}} \simeq \E_{b_{\chi_2}} \simeq \OO(\lambda_1)^{m_1} \oplus \dotsc \oplus \OO(\lambda_k)^{m_k} $ where $ \lambda_1 > \lambda_2 > \dotsc > \lambda_k $. By the construction of $b_{\chi_1}$ and $b_{\chi_2}$, for each $\lambda_i$ there exists subsets $\I(\lambda_i)$ and $\J(\lambda_i)$ of $\{ 1, 2, \dotsc, r \}$ such that $d_{1, x}/n_x = \lambda_i$ if and only if $x \in \I(\lambda_i)$ and $d_{2, y}/n_y = \lambda_i$ if and only if $y \in \J(\lambda_i)$. More over $ \G_{\chi_1} (\Q_p) = \G_{\chi_2} (\Q_p) = \displaystyle \prod_{i=1}^k \GL_{m_i}(D_{-\lambda_i}) $ where $\GL_{m_i}(D_{-\lambda_i})$ is isomorphic to the group of automorphisms of the semi-stable vector bundle $\OO(\lambda_i)^{m_i}$. By the construction of $\pi_{\chi_1}$ and $\pi_{\chi_2}$, for each $1 \le i \le k$ one can construct representations $\pi^1_i$ and $\pi^2_i$ of $ \GL_{m_i}(D_{-\lambda_i}) $ such that $ \pi_{\chi_1} = \displaystyle \boxtimes_{i=1}^k \pi^1_i $ and $ \displaystyle \pi_{\chi_2} = \boxtimes_{i=1}^k \pi^2_i $. Moreover the $L$-parameter of 
$\pi^1_i$ is given by $ \displaystyle \bigoplus_{j \in \I(i)} \phi_j $ and the $L$-parameter of $\pi^2_i$ is given by $ \displaystyle \bigoplus_{j \in \J(i)} \phi_j $. Thus $ \pi_{\chi_1} \simeq \pi_{\chi_2} $ if and only if $\pi^1_i \simeq \pi^2_i $ as representations of $ \GL_{m_i}(D_{-\lambda_i}) $ for all $ 1 \le i \le k $. 

Thus for each $ 1 \le i \le k $ we have $ \displaystyle \bigoplus_{j \in \I(i)} \phi_j = \bigoplus_{j \in \J(i)} \phi_j $. We deduce that $ \I(i) = \J(i) $ and more over $ \lambda_i = d_{2,t}/n_t = d_{1, t}/n_t $ for every $t \in \I(i)$. We deduce that $ d_{2,t} = d_{1,t} $ for all $ t \in \I(i) $ and for all $ 1 \le i \le k $. In other words, $\chi_1 = \chi_2$.

We now show that the map $\chi \longmapsto (b_{\chi}, \pi_{\chi})$ is surjective. Consider a pair $(b, \pi_b)$ consisting of an element $b \in B(\GL_n)$ and an irreducible representation $\pi_b$ of $ \G_b(\Q_p) $ whose $L$-parameter, after post-composing with $ \widehat{\G^*_b} \hookrightarrow \widehat{\GL}_n $, is given by $ \phi $.

We can decompose $\E_b \simeq \OO(\lambda_1)^{m_1} \oplus \dotsc \oplus \OO(\lambda_k)^{m_k} $ where $ \lambda_1 > \lambda_2 > \dotsc > \lambda_k $. As before $\G_b(\Q_p) = \displaystyle \prod_{i=1}^d \GL_{m_i}(D_{-\lambda_i}) $. Thus for each $ 1 \le i \le k $, there exists a subset $\I(i)$ of $ \{ 1, 2, \dotsc, r \} $ such that $ \displaystyle \bigoplus_{j \in \I(i)} \phi_j $ is an $L$-parameter of $\GL_{m_i}(D_{-\lambda_i})$ and the direct sum $ \displaystyle \bigoplus_{1 \le i \le k} \bigoplus_{j \in \I(i)} \phi_j $ gives us $ \phi $.

Now for each $t \in \I(i)$, by the explicit description of Levi subgroups of $\GL_{m_i}(D_{-\lambda_i})$, there exists a unique integer $d_t$ such that $\lambda_i = d_t/n_t$. If we take $\chi = (d_1, \dotsc, d_r)$ then we can see that $ (b_{\chi}, \pi_{\chi}) = (b, \pi_b) $. Thus the map $\chi \longmapsto (b_{\chi}, \pi_{\chi})$ is surjective. \\

%From the above discussion and the compatibility between Fargues-Scholze $L$-parameters with the usual $L$-parameters for $\GL_n$ and its inner forms up to semi-simplification, we deduce the following result. 
\begin{proposition} \phantomsection \label{itm : shape of strata}
   If $\mathcal{F}$ is a non-zero irreducible sheaf on $\Bun_n$ supported on a stratum corresponding to $b \in B(\GL_n)$ whose Fargues-Scholze parameter is given by $\phi$ then there exists $\chi \in \displaystyle \Irr(S_{\phi}) \simeq \prod_{i=1}^r \Z $ such that $b = b_{\chi}$.    
\end{proposition}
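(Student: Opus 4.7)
The plan is to apply the surjectivity of $\chi \mapsto (b_\chi,\pi_\chi)$ already established in the discussion preceding the statement, bridging the hypothesis (on the Fargues--Scholze parameter) to classical Harris--Taylor/Henniart parameters via theorem \ref{FSproperties}.

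First I would translate $\mathcal{F}$ through the equivalence $\Dlis(\Bun_n^b,\ol{\mathbb{Q}}_{\ell}) \simeq \Dc(\G_b(\Q_p),\ol{\mathbb{Q}}_{\ell})$ into an irreducible smooth representation $\pi_b$ of $\G_b(\Q_p)$. Writing the Harder--Narasimhan decomposition $\E_b \simeq \bigoplus_{a=1}^k \OO(\lambda_a)^{M_a}$ with distinct $\lambda_1 > \cdots > \lambda_k$, the group factors as $\G_b(\Q_p) \simeq \prod_{a=1}^k \GL_{M_a}(D_{-\lambda_a})$, and irreducibility forces a factorization $\pi_b \simeq \boxtimes_{a=1}^k \pi_b^a$ into irreducible representations of the factors.

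Next I would use theorem \ref{FSproperties}, parts (3) and (5): the Fargues--Scholze parameter of $\pi_b$ (which by hypothesis is $\phi$ after post-composition with $\widehat{\G^*_b} \hookrightarrow \widehat{\GL}_n$) coincides, up to semisimplification, with the direct sum of the classical Harris--Taylor/Henniart parameters $\phi^a$ of the $\pi_b^a$. Since $\phi$ is already semisimple and its irreducible summands $\phi_1,\dotsc,\phi_r$ are pairwise non-isomorphic, uniqueness of the decomposition into irreducibles yields a partition $\{1,\dotsc,r\} = \coprod_a \I(a)$ with $\phi^a \simeq \bigoplus_{j \in \I(a)} \phi_j$. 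Writing $\lambda_a = p_a/q_a$ in lowest terms, the relevance of $\phi^a$ for the inner form $\GL_{M_a}(D_{-\lambda_a})$ forces $q_a \mid n_j$ for each $j \in \I(a)$, and matching total dimensions gives $\sum_{j\in\I(a)} n_j = M_a q_a$.

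Finally I would set $d_j := \lambda_a n_j \in \Z$ for the unique $a$ with $j \in \I(a)$, and $\chi := (d_1,\dotsc,d_r) \in \Irr(S_\phi)$. Then $d_j/n_j = \lambda_a$ and $\gcd(d_j,n_j) = n_j/q_a$, so the vector bundle $\OO(\lambda_1)^{m_1} \oplus \cdots \oplus \OO(\lambda_r)^{m_r}$ attached to $b_\chi$ collapses, on each slope $\lambda_a$, to $\OO(\lambda_a)^{\sum_{j \in \I(a)} n_j/q_a} = \OO(\lambda_a)^{M_a}$. Thus $\E_{b_\chi} \simeq \E_b$ and $b_\chi = b$. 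The only non-formal step is the appeal to the Fargues--Scholze/classical LLC compatibility together with the relevance constraint; everything else is combinatorial bookkeeping on slopes and multiplicities.
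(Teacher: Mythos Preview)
Your proposal is correct and follows essentially the same approach as the paper. The paper states the proposition as an immediate consequence of the surjectivity of $\chi \mapsto (b_\chi,\pi_\chi)$ proved in the preceding discussion together with the compatibility of Fargues--Scholze parameters with the classical ones for inner forms of $\GL_n$; you simply unfold that surjectivity argument explicitly inside the proof, invoking Theorem~\ref{FSproperties}(3),(5) and the relevance constraint in the same way. One minor point you glide over is the twist from \cite[Corollary~IX.7.3]{FS} relating the Fargues--Scholze parameter on $\Bun_n$ to the one on $\Bun_{\G_b}$, but since this twist is by an unramified character it does not disturb the partition $\{1,\dots,r\}=\coprod_a \I(a)$ (by condition~(A1)) nor the slope and rank bookkeeping, so the conclusion $b=b_\chi$ is unaffected.
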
 
\begin{proof}
We note that the Fargues-Scholze parameter of the sheaf $i_{b_{\chi}!}( \delta^{-1/2}_{b_{\chi}} \otimes \pi_{\chi} )$ is given by $\phi$ (see remark 1.3(2)). Thus we have a bijection between the set of irreducible character $\chi$ of $S_{\phi}$ and the set of pairs $(b, \pi_b)$ where $b \in B(\GL_n)$ and $\pi_b$ is an irreducible representation of $\G_b(\Q_p)$ such that the Fargues-Scholze parameter of $i_{b!}(\pi_b)$ is given by $\phi$. Therefore the conclusion of the proposition follows.
\end{proof}

\begin{remark}
    In \cite[Remark I.10.3]{FS}, Fargues and Scholze note that Fargues' conjecture should also include a comparison of $t$-structures. Then the equivalence would also yield a bijection between irreducible objects in the abelian hearts. On one side, these irreducible objects would then be enumerated by pairs $(b, \pi_b)$ of an element $b \in B(\G)$ and an irreducible smooth representation $\pi_b$, by using intermediate extensions. On the other side, they would likely correspond to a Frobenius-semisimple $L$-parameter $ \varphi : W_{\Q_p} \longrightarrow \widehat{\G}(\overline{\Q}_{\ell}) $ together with an irreducible representation of the centralizer $S_{\varphi}$ of $\varphi$. The above combinatoric bijection $ \chi \longmapsto (b_{\chi}, \pi_{\chi}) $ could be a good candidate for Fargues-Scholze's predicted bijection for $\GL_n$. In \cite{BMO}, Bertoloni Meli and Oi proposed a good candidate for the bijection between irreducible objects in the abelian hearts for an arbitrary reductive group $\G$ and $L$-parameters $\varphi$ satisfying some conditions similar to ours. %The readers could also see \cite{Hansen} for more conjectures.
\end{remark}

\subsection{Statement of the first main theorem} \label{conditions on L-parameters}

Let $ [Z^1(W_{\Q_p}, \widehat{\GL}_n)/\widehat{\GL}_n] $ be the stack of $L$-parameters of $\GL_{n, \Q_p}$ and let $Z^1(W_{\Q_p}, \widehat{\GL}_n)//\widehat{\GL}_n$ be the categorical quotient. We have a map $\theta : [Z^1(W_{\Q_p}, \widehat{\GL}_n)/\widehat{\GL}_n] \longrightarrow Z^1(W_{\Q_p}, \widehat{\GL}_n)//\widehat{\GL}_n$ given by the semi-simplification. Let $\phi$ be a semi-simple $L$-parameter of $\GL_n$ and let $|\cdot|_p$ be the norm character $W_{\Q_p} \longrightarrow W_{\Q_p}^{\text{ab}} \simeq \Q_p^{\times} \longrightarrow \C \simeq \overline{\Q}_{\ell} $. We suppose that $\phi$ satisfies the following condition:
\begin{enumerate} \label{itm : condition A1}
    \item[(A1)] We have a decomposition $ \phi = \phi_1 \oplus \dotsc \oplus \phi_r $ where the $\phi_i$'s are pairwise disjoint irreducible representations of dimension $n_i$ and if $i \neq j$ then there does not exist unramified character $\chi$ such that $ \phi_i \simeq \phi_j \otimes \chi $. 
%    \item Let $\varphi$ be a lift of the Frobenius element in $W_{\Q_p}$. For $1 \le i \le r $, let $\Theta_i$ be the set of eigenvalues of $ \phi_i(\varphi) $. Then for each $1 \le i \le r$, we have $ \displaystyle \Z(\Theta_i) \cap \Z( \cup_{j \neq i} \Theta_j) = \{ 1 \} $ where $\Z(\Theta)$ denotes the $\Z$-module generated by the subset $\Theta$ in the abelian group $\overline{\Q}^{\times}_{\ell}$.
\end{enumerate}

We have that $ \displaystyle S_{\phi} := \Cent (\phi) = \prod_{i = 1}^r \Gm $. 

By proposition \ref{itm : inverse has 1 points}, we see that $ \phi $ defines a closed point $x$ in $Z^1(W_{\Q_p}, \widehat{\GL}_n)//\widehat{\GL}_n$ and $\theta^{-1}(x)$ is the image of the closed embedding $ \iota : [\bullet /S_{\phi}] \longrightarrow [Z^1(W_{\Q_p}, \widehat{\GL}_n)/\widehat{\GL}_n] $. Denote by $ [C_{\phi}] $ the connected component of $[Z^1(W_{\Q_p}, \widehat{\GL}_n)/\widehat{\GL}_n]$ containing the image of $\iota$. By proposition \ref{itm : simple connected components}, we know that $[C_{\phi}] \simeq [\bb G_m^r / \bb G_m^r ]$ where the quotient is taken with respect to the trivial action of $\bb G_m^r$. Let $C \in \Perf^{\mathrm{qc}}([Z^1(W_{\Q_p}, \widehat{\GL}_n)/\widehat{\GL}_n])$ be a sheaf such that its support does not intersect with $[C_{\phi}]$. Then by \cite[lemma 3.8]{Ham}, we know that $ C \star \mathcal{F} = 0 $ where $\mathcal{F}$ is any Schur irreducible sheaf on $\Bunn$  whose $L$-parameter belongs to $[C_{\phi}]$. The study of the spectral action on those sheaves $\mathcal{F}$ is therefore reduced to the description of the $ C \star \mathcal{F}$ for $C$ the vector bundles on $ [C_{\phi}]$.

Denote by $\Perf([C_{\phi}])$ the category of perfect complex supported on $[C_{\phi}]$. We have monoidal functors
\[
\Rep_{\ov \Q_{\ell}}(S_{\phi}) \longrightarrow \Perf([C_{\phi}]) \longrightarrow \Perf([Z^1(W_{\Q_p}, \widehat{\GL}_n)/\widehat{\GL}_n])
\]
where the image of an irreducible character $\chi$ is the vector bundle on $[C_{\phi}]$ corresponding to the structural sheaf on $\bb G_m^r$ together with the $\bb G_m^r$-action defined by $\chi$.

%Since $ \phi $ is semi-simple, by deformation theory $x$ is a closed point in $Z^1(W_{\Q_p}, \widehat{\GL}_n)//\widehat{\GL}_n$ and $\theta^{-1}(x)$ is the image of the closed embedding $ [\bullet /S_{\phi_{\pi}}] \longrightarrow [Z^1(W_{\Q_p}, \widehat{\GL}_n)/\widehat{\GL}_n] $. By \cite[theorem VIII.2.1]{FS} and semi-simplicity of $\phi$, this point belongs to the smooth locus of $[Z^1(W_{\Q_p}, \widehat{\GL}_n)/\widehat{\GL}_n]$. Thus the push-forward of vector bundles over $[\bullet /S_{\phi}]$ to $[Z^1(W_{\Q_p}, \widehat{\GL}_n)/\widehat{\GL}_n]$ are still perfect. We have then an embedding $\Rep_{\Q_p}(S_{\phi}) \longrightarrow \Perf([Z^1(W_{\Q_p}, \widehat{\GL}_n)/\widehat{\GL}_n])$. 

Remark that $S_{\phi}$ is commutative then the set $\Irr( S_{\phi} )$ of its algebraic characters forms a group under the tensor product operator. In this specific case that group is isomorphic to $ \displaystyle \prod_{i=1}^r \Z $. Let $ \displaystyle \chi = (d_1, \dotsc, d_r) \in \prod_{i=1}^r \Z$ be the character of $S_{\phi}$ such that $ \chi (t_1, \dotsc, t_r) \displaystyle = \prod_{i = 1}^r t_i^{d_i} $. Then we denote by $C_{\chi}$ the corresponding vector bundle on $[C_{\phi}]$. We can also define a triple $( b_{\chi}, \pi_{\chi}, \mathcal{F}_{\chi} )$ as in the previous section. Recall that $ \mathcal{F}_{\chi} := i_{b_{\chi} !} ( \delta^{-1/2}_{b_{\chi}} \otimes \pi_{\chi}) [-d_{\chi}]$ where $d_{\chi} = \langle 2\rho, \nu_{b_{\chi}} \rangle$.

\begin{theorem} \phantomsection \label{itm : main theorem}
Let $\phi = \phi_1 \oplus \dotsc \oplus \phi_r$ be an $L$-parameter satisfying the conditions in the beginning of subsection \ref{conditions on L-parameters}. Let $\chi = (d_1, \dotsc, d_r)$ be an element in $\displaystyle \prod_{i=1}^r \Z $, then we have
\[
C_{\chi} \star \mathcal{F}_{\Id} = \mathcal{F}_{\chi}.
\]
where $\Id$ is the identity of $ \displaystyle \prod_{i=1}^r \Z$.
\end{theorem}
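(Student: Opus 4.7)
The plan is a double induction, outer on $r$ and inner on $D = \sum_{i=1}^{r} d_i$, after reducing to $\chi \in \Irr(S_\phi)^+$ using that each $C_{\chi_i}$ is an auto-equivalence of $\Dlis^{[C_\phi]}(\Bunn, \overline{\Q}_\ell)^\omega$. The computational engine is the minuscule cocharacter $\mu = (1, 0^{(n-1)})$: by Proposition \ref{itm : fundamental decomposition of Hecke operator} one has
\[
\T_\mu(\mathcal{F}_{\pi'}) \;\simeq\; \bigoplus_{i=1}^{r} C_{\chi_i} \star \mathcal{F}_{\pi'} \,\boxtimes\, \phi_i,
\]
and Lemma \ref{shimhecke} identifies the stalks of $\T_\mu i_{b'!}(-)$ with the cohomology of the shtuka space $\Sht(\GL_n, b', b, \mu)$. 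Matching $W_{\Q_p}$-actions on the two sides and extracting the $[\phi_i]$-isotypical piece therefore turns a computation of $C_{\chi_i} \star (-)$ into a computation of shtuka cohomology. The base case $r=1$ is handled by the works of Anschütz--Le Bras, Hansen, Fargues and Zou on the single supercuspidal block $[\Gm/\Gm]$, and the base case $D=1$ is obtained by applying this identification to $b_1 = 1$ together with the known Harris--Viehmann-type inputs.

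For the inner induction step from $D = s-1$ to $D = s$, pick $i$ with $\chi' := \chi \otimes \chi_i^{-1} \in \Irr(S_\phi)^+$ and with the Newton polygons of $b_{\chi'}$ and $b_\chi$ as close as possible (this combinatorial choice is what makes Boyer's trick available). By monoidality and the induction hypothesis $C_{\chi'} \star \mathcal{F}_{\Id} = \mathcal{F}_{\chi'}$, one has $C_\chi \star \mathcal{F}_{\Id} = C_{\chi_i} \star \mathcal{F}_{\chi'}$, and since $\mathcal{F}_{\chi'}$ is concentrated on the single stratum $b_{\chi'}$, every stalk is accessible via Lemma \ref{shimhecke} applied to the triple $(b_{\chi'}, b, \mu)$. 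The stalk at $b = b_\chi$ is the $[\phi_i]$-isotypical part of $R\Gamma_c(\GL_n, b_{\chi'}, b_\chi, \mu)[\pi_{\chi'}]$; the generalized Boyer's trick (Proposition \ref{generalized Boyer's trick} and Lemma \ref{itm : first computation}) reduces this to shtuka cohomologies for proper Levi subgroups of the form $\prod \GL_{m_j}$ with $m_j < n$, where the outer induction hypothesis on $r$ yields the required answer $\delta^{-1/2}_{b_\chi} \otimes \pi_\chi[-d_\chi]$ after tracking the modulus twist and cohomological shift.

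The remaining and most delicate task is to show that $i_b^{*}(C_\chi \star \mathcal{F}_{\Id}) = 0$ for every $b \ne b_\chi$. Proposition \ref{itm : shape of strata}, which rests on the compatibility of Fargues--Scholze parameters with Harris--Taylor/Henniart for inner forms of $\GL_n$, restricts the candidates to $b = b_\xi$ with $\xi \in \Irr(S_\phi)$. Following Example \ref{itm : image modif}, the Harder--Narasimhan reduction of the target of a type-$\mu$ modification is computed via the Iwasawa projection $\pr_\M$ on $\Gr_n$ in the style of \cite[Lemma 2.6]{CFS} and \cite[Lemma 3.11]{Vie}, giving a small explicit list of $b_\xi$ reachable from $b_{\chi'}$; minusculeness of $\mu$ keeps this list tractable, and each such $b_\xi$ turns out to be Hodge--Newton close enough to $b_{\chi'}$ for the generalized Boyer's trick to apply once more, reducing to Levi cohomologies governed by the outer induction. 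Vanishing of the $[\phi_i]$-isotypical component is then deduced from the semi-orthogonal decomposition of $\Dlis(\Bunn, \overline{\Q}_\ell)$, Lemma \ref{itm : morphism between strata}, and standard adjunction identities. The main obstacle is precisely this last step: the set of $b$ admitting a type-$\mu$ modification from a non-semistable $\E_{b_{\chi'}}$ is not known in general, and it is only the combination of minusculeness of $\mu$, the explicit Iwasawa calculation, and the fact that one needs vanishing only of the $[\phi_i]$-isotypical component (thanks to $C_{\chi_i}$ being an auto-equivalence), rather than of the full cohomology, that renders the case analysis manageable.
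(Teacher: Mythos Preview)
Your outline tracks the paper's strategy closely through the reduction to $\Irr(S_\phi)^+$, the double induction, the base cases, and the computation of the stalk at $b_\chi$ via Lemma~\ref{itm : first computation}/\ref{itm : second computation}. The gap is in the vanishing step. Your claim that ``each such $b_\xi$ turns out to be Hodge--Newton close enough to $b_{\chi'}$ for the generalized Boyer's trick to apply once more'' is not correct, and the paper's proof does not proceed this way.

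Concretely: when one unwinds the Iwasawa projection argument (Case~2 in each of \S\ref{itm : positive slopes}.1--\S\ref{itm : positive slopes}.3), one often lands on a $b_\xi$ whose \emph{largest} Harder--Narasimhan slope $h_{\max}$ strictly exceeds $\lambda_1$. Such a $b_\xi$ shares no common semi-stable direct summand with $b_{\chi'}$ at the top, so neither Proposition~\ref{generalized Boyer's trick} nor Corollary~\ref{coro of generelized Boyer's trick} applies to the pair $(b_{\chi'},b_\xi)$, and there is no Levi reduction available. The paper handles these $b_\xi$ by a different mechanism: one first argues (via an auxiliary minimality hypothesis on $r(\chi)$ when $\lambda_k>0$, or via the already-established strictly-positive-slopes case when $\lambda_k=0$) that $C_\xi\star\mathcal{F}_{\Id}\simeq\mathcal{F}_\xi$ for the specific $\xi$ appearing. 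Then one chooses $b$ \emph{maximal} (or minimal, depending on the case) in the support of $C_\chi\star\mathcal{F}_{\Id}$, uses excision to produce a nonzero morphism $C_\chi\star\mathcal{F}_{\Id}\to\mathcal{F}_\xi$, applies the auto-equivalence $C_{\chi_{\tilde{j}}^{-1}}$ for a carefully chosen $\tilde{j}$, and invokes Lemma~\ref{itm : morphism between strata} to obtain a contradiction, because after twisting the source and target live on strata that are not comparable. This step uses the induction hypothesis on $D$, not on $r$. The case analysis ($\lambda_k>0$; $\lambda_k=0$ with $r>2$; $\lambda_k=0$ with $r=2$) exists precisely because the choice of $\tilde{j}$ and the verification that $C_\xi\star\mathcal{F}_{\Id}\simeq\mathcal{F}_\xi$ require different arguments in each regime; your sketch collapses all of this into a single Boyer-type reduction, which does not go through.
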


\begin{remark}
    While preparing this manuscript, the author was informed that Linus Hamann and David Hansen formulated a conjecture describing the Hecke operators for an arbitrary reductive group $\G$ and $L$-parameters satisfying similar conditions as ours \cite{Hansen}. Hamann and Hansen call the $L$-parameters satisfying their specific conditions "generous $L$-parameters" since it is closely related to the vanishing of the cohomology of Shimura varieties with mod $\ell$-coefficient.    
\end{remark} 

We record here a useful corollary of the the theorem.
\begin{corollary} \phantomsection \label{itm : useful corollary}
    Let $\phi = \phi_1 \oplus \dotsc \oplus \phi_r$ be an $L$-parameter satisfying the conditions in the beginning of subsection \ref{conditions on L-parameters} and let $\chi \in \displaystyle \prod_{i=1}^r \Z $. Then for every stratum $b \in B(\GL_n)$ not equal to $b_{\chi}$, we have
    \[
    i_b^{!}( \mathcal{F}_{\chi} ) \simeq 0.
    \]
\end{corollary}
\begin{proof}
    The sheaf $\mathcal{F}_{\lambda}$ is compact and ULA then $i_b^{!}( \mathcal{F}_{\chi} )$ is also compact and ULA by \cite[Theorem 1.3.1]{HHS1}. Thus this sheaf is cohomologically bounded and its cohomology groups are admissible representations of $\G_{b_{\lambda}}(\Q_p)$. 
    
    Suppose that $i_b^{!}( \mathcal{F}_{\chi} )$ is not trivial. Then there exists an integer $k \in \Z$ and a smooth irreducible representation $\pi$ of $\G_{b_{\lambda}}(\Q_p)$ such that $\Hom_{\G_{b_{\lambda}}(\Q_p)}(\pi[k], i_b^{!}( \mathcal{F}_{\chi} )) \neq 0 $. Since $i_b^{!}$ is right adjoint of $i_{b!}$, we deduce that
    \[
    \Hom_{\Dlis(\Bunn, \ov\Q_{\ell})}(i_{b!} \pi[k],  \mathcal{F}_{\chi} ) \neq 0.
    \]

    If the Fargues-Scholze parameter of $i_{b!} \pi[k]$ with respect to $\Bunn$ is not equal to $\phi$ then we can chose an element $x \in \mathcal{Z}^{\spec}(\GL_n, \overline{\Q}_{\ell}) $ such that the excursion operator corresponding to $x$ acts by $0$ on $\mathcal{F}_{\chi}$ and acts by an isomorphism on $i_{b!} \pi[k]$. Since an excursion operator gives rise to an element of the geometric Bernstein center of $\Dlis(\Bunn, \overline{\Q}_{\ell})$ \cite[Theorem X.5.2]{FS}, we see that $\Hom_{\Dlis(\Bunn, \ov\Q_{\ell})}(i_{b!} \pi[k],  \mathcal{F}_{\chi} ) = 0$. Therefore, the Fargues-Scholze parameter of $i_{b!} \pi[k]$ is equal to $\phi$ and by proposition $\ref{itm : shape of strata}$, there exists $\chi' \in \displaystyle \prod_{i=1}^r \Z  $ such that $b = b_{\chi'}$ and moreover $i_{b!} \pi[k] \simeq \mathcal{F}_{\chi'}[k']$. Thus we have
    \begin{align*}
        \Hom_{\Dlis(\Bunn, \ov\Q_{\ell})}(i_{b!} \pi[k],  \mathcal{F}_{\chi} ) &= \Hom_{\Dlis(\Bunn, \ov\Q_{\ell})}(\mathcal{F}_{\chi'}[k'],  \mathcal{F}_{\chi} ) \\
        &= \Hom_{\Dlis(\Bunn, \ov\Q_{\ell})}(\mathcal{F}_{\Id}[k'],  \mathcal{F}_{\chi \otimes (\chi')^{-1} } ) \\
        &= 0
    \end{align*}
    where we applied the spectral action $C_{(\chi')^{-1}}$ and use theorem \ref{itm : main theorem} to obtain the second equality and the last equality follows from the fact that $\mathcal{F}_{\chi \otimes (\chi')^{-1} }$ does not supported on the trivial stratum of $\Bunn$. It yeilds a contradiction and hence we conclude that $i_b^{!}( \mathcal{F}_{\chi} )$ is trivial.
\end{proof}

\section{An analogue of Boyer's trick} \phantomsection \label{itm : Boyer's trick}
In this section, we prove an analogue of theorems $1.7$ and $4.13$ in \cite{Han1} based on the results developed in loc.cit. and derive some applications.
\subsection{An analogue of Boyer's trick}

Fix a maximal torus and a Borel subgroup $ \T \subset \rB \subset \GL_n $. For each $\lambda \in \Q$, let $\OO(\lambda)$ be the stable vector bundle of slope $\lambda$. Consider $b, b' \in B(\GL_n)$ and suppose that we have $\mathcal{E}_{b} = \OO(\lambda_1) \oplus \dotsc \oplus \OO(\lambda_s) $ and $\mathcal{E}_{b'} = \OO(\lambda'_1) \oplus \dotsc \oplus \OO(\lambda'_t) $ where $\lambda_i \ge \lambda_{i+1}$ and $\lambda'_i \ge \lambda'_{i+1}$. Let $\mu = (k_1, k_2, \dotsc, k_n)$ be a minuscule dominant cochatacter of $\GL_n$ and let $\deg(\mu) \displaystyle := \sum_{i = 1}^n k_i$ and suppose $ \deg(\mu) = \deg(\E_{b'}) - \deg(\E_{b})$.

Recall that for each $b \in B(\GL_n)$, we can define a character $\kappa_b : \G_b(\Q_p) \longrightarrow \overline{\Q}^{\times}_{\ell}$ as in \cite{GI}, lemma $4.18$ and as in \cite{Ham1}, page $91$, before lemma $11.1$.

\begin{proposition} \phantomsection \label{generalized Boyer's trick}

Suppose that we have decompositions $\E_b = \E_{b_1} \oplus \E_{b_2} $ and $\E_{b'} = \E_{b'_1} \oplus \E_{b'_2} $ where $ \E_{b_1} = \OO(\lambda_1) \oplus \dotsc \oplus \OO(\lambda_{s'}) $; $ \E_{b_2} = \OO(\lambda_{s'+1}) \oplus \dotsc \oplus \OO(\lambda_{s})$ and $ \E_{b'_1} = \OO(\lambda'_1) \oplus \dotsc \oplus \OO(\lambda'_{t'}) $; $ \E_{b'_2} = \OO(\lambda'_{t'+1}) \oplus \dotsc \oplus \OO(\lambda'_{t})$ such that $\rank(\E_{b_1}) = \rank(\E_{b'_1}) = m < n $ and $\deg(\E_{b'_1}) \displaystyle = \deg(\E_{b_1}) + \sum_{i=1}^m k_i $ as well as $\lambda'_{t'} > \lambda'_{t'+1} $. Suppose that we have a filtration $ \mathcal{F}_1 \subset \dotsc \subset \mathcal{F}_{\ell} \subseteq \E_{b_1} \subset \mathcal{F}_{\ell + 1} \subset \dotsc \subset \E_{b} $ where $ \mathcal{F}_1 \subset \dotsc \subset \mathcal{F}_{\ell} \subset \mathcal{F}_{\ell + 1} \subset \dotsc \subset \E_{b} $ is the canonical Harder-Narasimhan filtration of $\E_{b}$. Let $\rP_b$ be the parabolic subgroup of $\G_b$ fixing this filtration. We define $\mu_1 := (k_1, k_2, \dotsc, k_m)$ and $ \mu_2 := (k_{m+1}, k_{m+2}, \dotsc, k_n)$ then we have the following equality
\[
\Sht(\GL_n, b, b', \mu) = \Big( \Sht(\GL_m, b_1, b'_1, \mu_1) \times_{\Spd \breve{\Q}_p} \Sht(\GL_{n-m}, b_2, b'_2, \mu_2) \times_{\Spd \breve{\Q}_p} \mathcal{J}^{U} \Big) \times^{\underline{\rP_b}} \underline{\G}_b,
\]
where $\mathcal{J}^{U}$ is the unipotent diamond in group $ \widetilde{\G}_{b'}^0 / \widetilde{\G}_{b'_1}^0 \times \widetilde{\G}_{b'_2}^0 $. Moreover there is a natural isomorphism of $\G_b(\Q_p) \times \G_{b'}(\Q_p) \times W_{\Q_p} $-modules
\[
H^*_c(\Sht(\GL_n, b, b', \mu), \overline{\Q}_{\ell}) \simeq \ind^{\G_b}_{\rP_b} \Big( H^{*-2d}_c(\Sht(\GL_m, b_1, b'_1, \mu_1) \times \Sht(\GL_{n-m}, b_2, b'_2, \mu_2), \overline{\Q}_{\ell}) \otimes \kappa \Big)
\]
where $\ind^{\G_b}_{\rP_b}$ denotes the un-normalized parabolic induction; $d = \langle 2\rho, \nu_{b'} \rangle - \langle 2\rho_{m}, \nu_{b'_1} \rangle - \langle 2\rho_{n-m}, \nu_{b'_2} \rangle$ is the dimension of $\mathcal{J}^{U}$ and $\rho, \rho_{m}, \rho_{n-m}$, respectively, are half of the sum of positive roots of $\GL_n$, $\GL_m$, $\GL_{n-m}$ respectively and where $\kappa = \kappa_{b'} \otimes ( \kappa_{b'_1}^{-1} \times \kappa_{b'_2}^{-1} )$.
\end{proposition}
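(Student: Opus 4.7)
The plan is to decompose a modification of type $\mu$ between $\E_b$ and $\E_{b'}$ into a pair of Levi modifications plus a unipotent datum, exploiting the Harder--Narasimhan structure on $\E_{b'}$. The hypothesis $\lambda'_{t'} > \lambda'_{t'+1}$ ensures that $\E_{b'} = \E_{b'_1} \oplus \E_{b'_2}$ is the canonical splitting of part of the Harder--Narasimhan filtration of $\E_{b'}$, so the subbundle $\E_{b'_1} \subset \E_{b'}$ is intrinsic. Given a point $f \in \Sht(\GL_n, b, b', \mu)(S)$, pulling back $\E_{b'_1}$ under $f$ (away from $\infty$) and taking the Beauville--Laszlo closure at $\infty$ produces a rank $m$ subbundle $\E_b^1 \subset \E_b$; the degree compatibility $\deg(\E_{b'_1}) = \deg(\E_{b_1}) + \sum_{i=1}^m k_i$ combined with the minuscule condition on $\mu$ forces $\deg(\E_b^1) = \deg(\E_{b_1})$.

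Arguing as in Example \ref{itm : image modif} via the Iwasawa decomposition of $\Gr_{\GL_n}$ (using \cite[lemma 2.6]{CFS} and \cite[lemma 3.11]{Vie}), the induced modifications $\E_b^1 \to \E_{b'_1}$ and $\E_b/\E_b^1 \to \E_{b'_2}$ are of Levi types $\mu_1$ and $\mu_2$. The hypothesis on the intermediate filtration $\mathcal{F}_1 \subset \dotsc \subset \mathcal{F}_{\ell} \subseteq \E_{b_1} \subset \mathcal{F}_{\ell + 1} \subset \dotsc \subset \E_b$ (refining the Harder--Narasimhan filtration of $\E_b$) then ensures that every such $\E_b^1$ lies in the $\G_b(\Q_p)$-orbit of the reference reduction $\E_{b_1} \subset \E_b$, yielding a $\rP_b$-reduction of $\E_b$. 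Fixing this reference reduction, the Levi datum becomes a point of $\Sht(\GL_m, b_1, b'_1, \mu_1) \times \Sht(\GL_{n-m}, b_2, b'_2, \mu_2)$, while the freedom in lifting a Levi datum to a full modification of $\E_{b'}$ is parametrized by the unipotent quotient $\widetilde{\G}^0_{b'} / (\widetilde{\G}^0_{b'_1} \times \widetilde{\G}^0_{b'_2}) = \mathcal{J}^U$ of dimension $d$. Globalizing the choice of reduction via the $\underline{\G}_b/\underline{\rP_b}$-bundle of all $\rP_b$-structures on $\E_b$ yields the claimed contracted product presentation.

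For the cohomology identity, I apply $R\Gamma_c$ to this presentation: the $\times^{\underline{\rP_b}} \underline{\G}_b$ part is a $\underline{\G}_b / \underline{\rP_b}$-bundle, and hence contributes un-normalized parabolic induction $\ind^{\G_b}_{\rP_b}$; Künneth splits the threefold product; and $R\Gamma_c(\mathcal{J}^U, \overline{\Q}_{\ell})$ is concentrated in degree $2d$ and gives a one-dimensional $\G_{b'}(\Q_p)$-representation which, by the dualizing-object computations in \cite[\S 11.2]{Ham1} and \cite[lemma 4.18]{GI}, matches the character $\kappa = \kappa_{b'} \otimes (\kappa_{b'_1}^{-1} \times \kappa_{b'_2}^{-1})$. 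The main technical obstacle is the uniqueness claim for the $\rP_b$-reduction: an arbitrary modification of type $\mu$ might a priori pull back $\E_{b'_1}$ to a subbundle of $\E_b$ not $\G_b(\Q_p)$-conjugate to $\E_{b_1}$, and the filtration hypothesis plus the minuscule condition on $\mu$ are precisely what prevent this, through a case analysis on the intersections $S_{\lambda} \cap \Gr_{\GL_n, -\mu}(C)$ analogous to the one carried out in Example \ref{itm : image modif}.
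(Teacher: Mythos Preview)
Your overall architecture is the same as the paper's: pull back the canonical subbundle $\E_{b'_1}\subset\E_{b'}$ along $f$ to obtain $\E_b^1\subset\E_b$, split $f$ into Levi pieces $(f_1,f_2)$ plus a unipotent datum in $\mathcal{J}^U$, and then induce from $\rP_b$ to $\G_b$. However, two steps that you treat as routine are where the actual work lies, and your sketch does not supply them.

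First, degree equality is not enough: you need $\E_b^1\simeq\E_{b_1}$ as vector bundles before you can say anything about $\G_b(\Q_p)$-orbits. The paper gets this by a squeeze on Harder--Narasimhan polygons. From the HN property of $\E_b$ one has $\deg(\E_b^1)\le\deg(\E_{b_1})$ (this HN input is already needed for your degree-equality claim; minusculeness alone does not force it) and more generally $\nu_{\E_b^1}\le\nu_{\E_{b_1}}$, and after the degree equality also $\nu_{\E_b/\E_b^1}\le\nu_{\E_{b_2}}$. The reverse bound on the concatenation comes from \cite[Corollary~2.9]{MC}, and the two together pin down $\E_b^1\simeq\E_{b_1}$, $\E_b/\E_b^1\simeq\E_{b_2}$. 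Only then does the filtration hypothesis (that $\E_{b_1}$ sits between steps of the HN filtration of $\E_b$) yield the $\G_b(\Q_p)$-transitivity you invoke. Your closing remark about a ``case analysis on $S_\lambda\cap\Gr_{\GL_n,-\mu}(C)$'' is not how the paper handles this step and would not work beyond the simplest $\mu$; the exact Levi types $\mu_1,\mu_2$ are instead extracted from \cite[Lemma~3.2]{Han1} once the degree is pinned down.

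Second, the assertion that $\mathcal{J}^U$ parametrizes the lifts of a given Levi datum $(f_1,f_2)$ to a full modification $f$ requires an argument. Writing $f=\begin{pmatrix} f_1 & t \\ 0 & f_2 \end{pmatrix}$ with $t:\E_{b_2}\to\E_{b'_1}$, one needs $t=h\circ f_2$ for a genuine bundle morphism $h:\E_{b'_2}\to\E_{b'_1}$. The candidate $h$ is a priori only meromorphic at $\infty$; the paper checks that it carries the $\B+$-lattice of $\E_{b'_2}$ into that of $\E_{b'_1}$, and this is exactly where the inequality $k_m\ge k_{m+1}$ (dominance of $\mu$, i.e.\ the fact that $\mu_1,\mu_2$ are the first $m$ and last $n-m$ entries of $\mu$) is used. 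Without this computation the transitivity of the $\mathcal{J}^U$-action is unjustified.
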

%\begin{remark}
 %   The conditions in the proposition can be rephrased as "Hodge polygon touches Newton polygon".
%\end{remark}

\begin{corollary} \label{coro of generelized Boyer's trick}
  Suppose that $\mu = (k_1, \dotsc, k_n)$ with $ k_{n-m+1} = \dotsc = k_n = 0 $ and we have a decomposition $\E_b = \E_{b_1} \oplus \E_{b_2} $ and $\E_{b'} = \E_{b'_1} \oplus \E_{b'_2} $ where $ \E_{b'_1} \simeq \E_{b_1} = \OO(\lambda_1) \oplus \dotsc \oplus \OO(\lambda_{s'}) $; $ \E_{b_2} = \OO(\lambda_{s'+1}) \oplus \dotsc \oplus \OO(\lambda_{s})$ and $ \E_{b'_2} = \OO(\lambda'_{s'+1}) \oplus \dotsc \oplus \OO(\lambda'_{t})$ such that $\rank(\E_{b_1}) = \rank(\E_{b'_1}) = m < n $ as well as $\lambda_{s'} > \lambda_{s'+1} $. Suppose that we have a filtration $ \mathcal{F}_1 \subset \dotsc \subset \mathcal{F}_{\ell} \subseteq \E_{b'_1} \subset \mathcal{F}_{\ell + 1} \subset \dotsc \subset \E_{b'} $ where $ \mathcal{F}_1 \subset \dotsc \subset \mathcal{F}_{\ell} \subset \mathcal{F}_{\ell + 1} \subset \dotsc \subset \E_{b'} $ is the canonical Harder-Narasimhan filtration of $\E_{b'}$. Let $\rP_{b'}$ be the parabolic subgroup of $\G_{b'}$ fixing this filtration. Denote $\mu_1 = (0, 0, \dotsc, 0)$ and $  \mu_2 = (k_1, k_{2}, \dotsc, k_{n-m})$ then we have the following equality

\[
\Sht(\GL_n, b, b', \mu) = \Big( \Sht(\GL_m, b_1, b'_1, \mu_1) \times_{\Spd \breve{\Q}_p} \Sht(\GL_{n-m}, b_2, b'_2, \mu_2) \times_{\Spd \breve{\Q}_p} \mathcal{J}^{U} \Big) \times^{\underline{\rP_{b'}}} \underline{\G}_{b'},
\]
where $\mathcal{J}^{U}$ is the unipotent diamond in group $ \widetilde{\G}_{b}^0 / \widetilde{\G}_{b_1}^0 \times \widetilde{\G}_{b_2}^0 $. Moreover there is a natural isomorphism of $\G_b(\Q_p) \times \G_{b'}(\Q_p) \times W_{\Q_p} $-modules 
\[
H^*_c(\Sht(\GL_n, b, b', \mu), \overline{\Q}_{\ell}) \simeq \ind^{\G_{b'}}_{\rP_{b'}} \Big( H^{*-2d}_c(\Sht(\GL_m, b_1, b'_1, \mu_1) \times \Sht(\GL_{n-m}, b_2, b'_2, \mu_2), \overline{\Q}_{\ell}) \otimes \kappa \Big)
\]
where $d = \langle 2\rho, \nu_{b} \rangle - \langle 2\rho_{m}, \nu_{b_1} \rangle - \langle 2\rho_{n-m}, \nu_{b_2} \rangle$ is the dimension of $\mathcal{J}^{U}$ and $\rho, \rho_{m}, \rho_{n-m}$, respectively, are half of the sum of positive roots of $\GL_n$, $\GL_m$, $\GL_{n-m}$ respectively and $\kappa = \kappa_b \otimes ( \kappa_{b_1}^{-1} \times \kappa_{b_2}^{-1}) $.
\end{corollary}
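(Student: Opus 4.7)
The plan is to deduce the corollary from Proposition \ref{generalized Boyer's trick} by dualizing the shtuka datum. The key observation is the natural isomorphism of moduli spaces
\[
\Sht(\GL_n, b, b', \mu) \xrightarrow{\sim} \Sht(\GL_n, b', b, \mu^{-}),
\]
sending a modification $f : \E_b \longrightarrow \E_{b'}$ of type $\mu$ to its inverse $f^{-1} : \E_{b'} \longrightarrow \E_b$, which has type $\mu^{-} := (-k_n, -k_{n-1}, \dotsc, -k_1)$ (the dominant inverse of $\mu$, obtained by acting by the longest Weyl element of $\GL_n$). This isomorphism interchanges the roles of the $\G_b(\Q_p)$- and $\G_{b'}(\Q_p)$-actions, is compatible with the Weil descent data, and intertwines the corresponding Satake sheaves; in particular it induces an isomorphism on compactly-supported cohomology compatible with all the group and Galois actions in play.

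I would then apply Proposition \ref{generalized Boyer's trick} to the datum $(\GL_n, b', b, \mu^{-})$, with $b'$ now playing the role of the source and $b$ that of the target. All the hypotheses match exactly: the canonical Harder--Narasimhan filtration of $\E_{b'}$ passes through $\E_{b'_1}$ as required (this is assumed in the corollary); the target decomposition $\E_b = \E_{b_1} \oplus \E_{b_2}$ has $\rank(\E_{b_1}) = m$ and $\deg(\E_{b_1}) = \deg(\E_{b'_1})$, which is consistent with $\E_{b_1} \simeq \E_{b'_1}$ since $k_{n-m+1} = \dotsc = k_n = 0$ forces the first $m$ components of $\mu^{-}$ to vanish; and the condition $\lambda_{s'} > \lambda_{s'+1}$ is precisely the slope-strict-inequality required by the proposition (now applied to the new target $\E_b$). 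Consequently, the first factor in the product provided by the proposition is $\Sht(\GL_m, b'_1, b_1, 0) \simeq \Sht(\GL_m, b_1, b'_1, 0) = \Sht(\GL_m, b_1, b'_1, \mu_1)$, and the second factor is $\Sht(\GL_{n-m}, b'_2, b_2, \mu_2^{-}) \simeq \Sht(\GL_{n-m}, b_2, b'_2, \mu_2)$ under the analogous duality.

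Translating the conclusion of Proposition \ref{generalized Boyer's trick} back through the above duality, one obtains
\[
\Sht(\GL_n, b, b', \mu) = \Big( \Sht(\GL_m, b_1, b'_1, \mu_1) \times_{\Spd \breve{\Q}_p} \Sht(\GL_{n-m}, b_2, b'_2, \mu_2) \times_{\Spd \breve{\Q}_p} \mathcal{J}^{U} \Big) \times^{\underline{\rP_{b'}}} \underline{\G}_{b'},
\]
where $\mathcal{J}^{U}$ now lives inside $\widetilde{\G}^0_b / (\widetilde{\G}^0_{b_1} \times \widetilde{\G}^0_{b_2})$ because the ``source group'' in the reversed datum is $b$; this gives the geometric half of the corollary. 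The cohomological formula then follows by reading off the contribution of $\mathcal{J}^{U}$ as a shift by $2d$ together with a twist by $\kappa_b \otimes (\kappa_{b_1}^{-1} \times \kappa_{b_2}^{-1})$, using the identity $R\Gamma_c(\widetilde{\G}_b^{0}, \overline{\Q}_{\ell}) \simeq \kappa_b[-2d_b]$ recalled before Lemma \ref{shimhecke}, while the outer contracted product produces the un-normalized parabolic induction $\ind^{\G_{b'}}_{\rP_{b'}}$.

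The main point to verify carefully is that the duality $f \mapsto f^{-1}$ commutes with everything: the Weil descent datum, the equivariant Satake sheaf (no stray Tate twists appear because for $\GL_n$ one has $w_0 = w_0^{-1}$ and $\mu^{-} = w_0(-\mu)$, and the Satake functor is compatible with Chevalley duality), and the ULA and compactness properties needed to pass to cohomology. Once this routine but slightly delicate bookkeeping is in place, the corollary is an immediate consequence of Proposition \ref{generalized Boyer's trick}.
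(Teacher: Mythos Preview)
Your proposal is correct and follows exactly the paper's own approach: the paper's proof is the single sentence ``apply the theorem to $\Sht(\GL_n, b', b, -\mu)$ where $-\mu = (-k_n, \dotsc, -k_1)$'', which is precisely the duality $f \mapsto f^{-1}$ you describe. Your verification that the hypotheses of Proposition~\ref{generalized Boyer's trick} match after swapping source and target (the HN-filtration condition now on $\E_{b'}$, the strict slope inequality now on the $b$-side, the first $m$ entries of $\mu^{-}$ vanishing, and the unipotent diamond and twist living on the $b$-side) is accurate; the extra bookkeeping about Satake sheaves and Weil descent is not really needed here since the identification $\Sht(\GL_n,b,b',\mu)\simeq\Sht(\GL_n,b',b,\mu^{-})$ is elementary at the level of moduli, but it does no harm.
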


For a proof of the corollary, we just need to apply the theorem to the moduli space of shtukas $\Sht(\GL_n, b', b, -\mu)$ where $-\mu = (-k_n, -k_{n-1}, \dotsc, -k_1)$ is the dominant cocharacter corresponding to the inverse of $\mu$.

Below are some illustrating pictures.
\begin{center}
			\begin{tikzpicture}[scale = 0.75]
			\draw (0,0) node [above] {} node{$\bullet$};	
			\draw (2,3) node [below] {} node{$\bullet$};
			\draw (4,4) node [below] {} node{$\bullet$};
			\draw (7,5) node [below] {} node{$\bullet$};
			\draw (10,4) node [below] {} node{$\bullet$};
                \draw (4,3) node [below] {} node{$\bullet$};
			\draw (7,4) node [below] {} node{$\bullet$};
			\draw (10,5) node [below] {} node{$\bullet$};
			\draw [dashed] (-1,1) -- (11,1);
			\draw [dashed] (-1,5) -- (11,5);
			\draw [dashed] (-1,2) -- (11,2);
			\draw [dashed] (-1,3) -- (11,3);
			\draw [dashed] (-1,4) -- (11,4);
			\draw [dashed] (1,0) -- (1,5);
			\draw [dashed] (2,0) -- (2,5);
			\draw [dashed] (3,0) -- (3,5);
			\draw [dashed] (4,0) -- (4,5);
			\draw [dashed] (5,0) -- (5,5);
			\draw [dashed] (6,0) -- (6,5);
			\draw [dashed] (7,0) -- (7,5);
			\draw [dashed] (8,0) -- (8,5);
			\draw [dashed] (9,0) -- (9,5);
			\draw [dashed] (10,0) -- (10,5);
			\draw [dashed] (11,0) -- (11,5);
			\draw [dashed] (-1,0) -- (-1,5);
			\draw [-] (-1,0) -- (11,0);
			\draw [-] (0,0) -- (0,5);
			\draw [-] (0,0) -- (4,3);
			\draw [-] (4,3) -- (7,4);
                \draw [-] (7,4) -- (10,4);
			\draw [very thick] (0,0) -- (2,3);
			\draw [very thick] (2,3) -- (4,4);
			\draw [very thick] (4,4) -- (7,5);
                \draw [very thick] (7,5) -- (10,5);
			\draw [->] (12,3.5) -- (5.9,3.5);
			\draw (12,3.5) node [above, right] {$\nu_{\E_b}$};
			\draw [->] (4,6) -- (4,4.2);
			\draw (2.5,6.5) node [above, right] {\text{break point}};
                \draw (7,7) node [above, right] {\text{$n = 10, \mu = (1, 0^{(9)})$}};
			\draw [->] (5.5,7.5) -- (5.5,4.55);
			\draw (5.5,7.7) node [above] {$\nu_{\E_b'}$};
			\end{tikzpicture}
\end{center}

In this case we have $ \mu = (1, 0^{(9)}) $ and
\[
\E_{b} = \OO(3/4) \oplus \OO(1/3) \oplus \OO^3 \quad \quad \E_{b'} = \OO(3/2) \oplus \OO(1/2) \oplus \OO(1/3) \oplus \OO^3.
\]
Thus $\E_{b_1} = \OO(3/4)$; $\E_{b'_1} = \OO(3/2) \oplus \OO(1/2)$ and $\E_{b_2} \simeq \E_{b'_2} \simeq \OO(1/3) \oplus \OO^3 $ and $\mu_1 = (1, 0^{(3)}) $; $\mu_2 = (0^{(6)}) $  as well as 
\[
P_b(\Q_p) = \G_b(\Q_p) \simeq D^{\times}_{-3/4}(\Q_p) \times D^{\times}_{-1/3}(\Q_p) \times \GL_3(\Q_p)
\]
Moreover we have $ \langle 2\rho_4, \nu_{b'_1} \rangle = 4 $; $ \langle 2\rho_6, \nu_{b'_2} \rangle = 3 $ and $ \langle 2\rho_{10}, \nu_{b'} \rangle = 27 $, hence $d = 20$.

\begin{center}
			\begin{tikzpicture}[scale = 0.75]
                \draw (-2,-3) node [above] {} node{$\bullet$};
			\draw (0,0) node [above] {} node{$\bullet$};	
			\draw (2,3) node [below] {} node{$\bullet$};
			\draw (7,5) node [below] {} node{$\bullet$};
			\draw (10,5) node [below] {} node{$\bullet$};
                \draw (4,2) node [below] {} node{$\bullet$};
			\draw (10,3) node [below] {} node{$\bullet$};
			\draw [dashed] (-2,1) -- (11,1);
			\draw [dashed] (-2,5) -- (11,5);
			\draw [dashed] (-2,2) -- (11,2);
			\draw [dashed] (-2,3) -- (11,3);
			\draw [dashed] (-2,4) -- (11,4);
                \draw [dashed] (-2,-1) -- (11,-1);
                \draw [dashed] (-2,-2) -- (11,-2);
                \draw [dashed] (-2,-3) -- (11,-3);
			\draw [dashed] (1,-3) -- (1,5);
			\draw [dashed] (2,-3) -- (2,5);
			\draw [dashed] (3,-3) -- (3,5);
			\draw [dashed] (4,-3) -- (4,5);
			\draw [dashed] (5,-3) -- (5,5);
			\draw [dashed] (6,-3) -- (6,5);
			\draw [dashed] (7,-3) -- (7,5);
			\draw [dashed] (8,-3) -- (8,5);
			\draw [dashed] (9,-3) -- (9,5);
			\draw [dashed] (10,-3) -- (10,5);
			\draw [dashed] (11,-3) -- (11,5);
			\draw [dashed] (-1,-3) -- (-1,5);
			\draw [dashed] (-2,0) -- (11,0);
			\draw [dashed] (0,-3) -- (0,5);
                \draw [dashed] (-2,-3) -- (-2,5);
			\draw [-] (0,0) -- (4,2);
			\draw [-] (4,2) -- (10,3);
			\draw [very thick] (-2,-3) -- (2,3);
			\draw [very thick] (2,3) -- (4,4);
			\draw [very thick] (4,4) -- (7,5);
                \draw [very thick] (7,5) -- (10,5);
			\draw [->] (12,2) -- (5.9,2);
			\draw (12,2) node [above, right] {$\nu_{\E_b}$};
			\draw [->] (0,6) -- (0,0.2);
			\draw (-1.5,6.5) node [above, right] {\text{break point}};
                \draw (7,7) node [above, right] {\text{$n = 12, \mu = (1^{(2)}, 0^{(10)})$}};
			\draw [->] (5.5,7.5) -- (5.5,4.55);
			\draw (5.5,7.7) node [above] {$\nu_{\E_b'}$};
			\end{tikzpicture}
\end{center}

In this case we see that $ \mu = (1^{(2)}, 0^{(10)}) $ and
\[
\E_b \simeq \OO(3/2) \oplus \OO(1/2)^2 \oplus (1/6)  \quad \quad \E_{b'} \simeq \OO(3/2)^2 \oplus \OO(1/2) \oplus \OO(1/3) \oplus \OO^3.
\]
Thus $ \E_{b'_1} \simeq \E_{b_1} \simeq \OO(3/2) $ and $ \E_{b_2} \simeq \OO(1/2)^2 \oplus (1/6) $; $ \E_{b'_2} \simeq \OO(3/2) \oplus \OO(1/2) \oplus \OO(1/3) \oplus \OO^3 $ and $\mu_1 = (0^{(2)})$; $ \mu_2 = (1^{(2)}, 0^{(8)})$. Moreover
\[
P_{b'}(\Q_p) = D^{\times}_{-3/2}(\Q_p) \times D^{\times}_{-3/2}(\Q_p) \times D^{\times}_{-1/2}(\Q_p) \times D^{\times}_{-1/3}(\Q_p) \times \GL_3(\Q_p)
\]
and 
\[
G_{b'}(\Q_p) = \GL_2 (D_{-3/2}(\Q_p)) \times D^{\times}_{-1/2}(\Q_p) \times D^{\times}_{-1/3}(\Q_p) \times \GL_3(\Q_p).
\]

\begin{proof}

Notice that if $ f : \OO(x_1) \oplus \dotsc \oplus \OO(x_r) \longrightarrow \E $ is a modification of type $ \mu_{\text{cent}} = (1, \dotsc, 1) $ then $ \E \simeq \OO(x_1 + 1) \oplus \dotsc \oplus \OO(x_r + 1) $. Thus by the same arguments in \cite[Theorem 6.3]{KH20}, we can replace the moduli space of Shtukas $ \Sht(\GL_n, b, b', \mu) $ by $ \Sht(\GL_n, b, b' \cdot b_{\text{cent}} , \mu \cdot \mu_{\text{cent}} ) $ where $ b_{\text{cent}} $ is the unique basic element in $B(\GL_n, \mu_{\text{cent}})$. Therefore we can suppose that $ \mu = (k_1, \dotsc, k_n) $ with $k_n \ge 0$.

We use the same strategy as in \cite{Han1}. First we prove that any modification $f : \E_b \longrightarrow \E_{b'}$ gives rise to a couple of modifications $ f_1 : \E_{b_1} \longrightarrow \E_{b'_1} $ and $ f_2 : \E_{b_2} \longrightarrow \E_{b'_2} $ of type $\mu_1$ and $\mu_2$ respectively. Then we prove the first statement of the proposition by examining the action of $ \widetilde{\Aut}(\E_{b'_1} \oplus \E_{b'_2}) $ on the moduli spaces of Shtukas. Finally we deduce the cohomological consequence by using the results concerning $\mathcal{J}^{U}$ proved in section 4.3 of \cite{Han1}. \\

 \textbf{Step 1} \ First we prove that any modification of type $\mu$ from $\E_{b}$ to $\E_{b'}$ gives rise to modifications from $\E_{b_1}$ to $\E_{b'_1}$ of type $\mu_1$ and from $\E_{b_2}$ to $\E_{b'_2}$ of type $\mu_2$. \\

\textit{ The case of a point} \\

Let $C$ be a complete algebraically closed field over $\Q_p$ and let $X$ be the Fargues-Fontaine  curve associated to $(C, C^{\flat})$. We denote by $\infty$ the Cartier divisor in $X$ corresponding to the untilt $C$ of $C^{\flat}$. Consider a modification 
\[
f : \E_{b | X \setminus \infty} \longrightarrow \E_{b' | X \setminus \infty}
\]
of type $\mu$ between vector bundles on $X$.

We choose a Borel subgroup $B$ of $\GL_n$. The decomposition $\E_{b'} = \E_{b'_1} \oplus \E_{b'_2} $ is compatible with the canonical reduction of $\E_{b'}$. Thus there exists a standard parabolic subgroup $\rP$ whose Levi component $\M$ is isormophic to $\GL_m \times \GL_{n-m}$ and a reduction $\E_{b', \rP}$ of $\E_{b'}$ to $\rP$ corresponding to the natural filtration coming from the above decomposition. By \cite[lemma 2.4]{CFS}, the modification $f$ and the reduction $\E_{b', \rP}$ induces a reduction $\E_{b, \rP}$ of $\E_b$ to $\rP$. We denote by $\E_{b, \M} = \E_1 \times \E_2 $ the reduction to $\M$ of $\E_{b, \rP}$. Notice that $ \mu = (k_1, \dotsc, k_n) $ with $k_n \ge 0$ then $ f $ can be extended to an injection from $ \E_b $ to $\E_{b'}$. Indeed, we view a vector bundle $\E$ as a triple $(\mathcal{E}_{ | X \setminus \infty}, \E^{\tri}_{\B+},\iota)$. Thus we have an isomorphism 
\[
f : \mathcal{E}_{b | X \setminus \infty} \longrightarrow \mathcal{E}_{b' | X \setminus \infty},
\]
and by using $\iota_b, \iota_{b'}$, it can be extended to an isomorphism 
\[
\overline{f} : \E^{\tri}_{b, \B+} \otimes_{\B+} \BdR  \longrightarrow \E^{ \tri}_{b' \B+} \otimes_{\B+} \BdR 
\]
Since $k_n \ge 0$, we see that $\overline{f}( \E^{\tri}_{b, \B+} ) \subset \E^{\tri}_{b', \B+} $. Thus it induces a map $ f' : \E^{\tri}_{b, \B+} \longrightarrow \E^{\tri}_{b', \B+} $. The couple $(f, f')$ is an injective map from $ \E_b $ to $\E_{b'}$.

In particular we see that $\E_1$ is exactly the intersection $ \E_b \cap \E_{b'_1} $. 

We will show that $\E_1 \simeq \E_{b_1}$ and $\E_2 \simeq \E_{b_2}$.  

Indeed, remark first that $\deg (\E_1) \le \deg (\E_{b_1}) $ by the property of the Harder-Narasimhan filtration of $\E_b$ \cite[Theorem A.5]{NV}. By \cite[lemma 2.6]{CFS} \cite[lemma 3.11]{Vie}, the modification $f$ induces the modifications
\[
f_1 : \E_{1 | X \setminus \infty} \longrightarrow \E_{b'_1 | X \setminus \infty} \quad \text{and} \quad f_2 : \E_{2 | X \setminus \infty} \longrightarrow \E_{b'_2 | X \setminus \infty}
\]
of type $\mu_1'$ resp. $\mu_2'$ such that $(\mu_1' \times \mu_2')_{\text{dom}} \le \mu = (k_1, \dotsc, k_n) $. In particular we have the equality
\[
\deg(\E_1) + \deg(\mu_1') = \deg(\E_{b'_1}).
\]
Moreover the condition $(\mu_1' \times \mu_2')_{\text{dom}} \le \mu$ implies that $\deg (\mu_1') \displaystyle \le \sum_{i = 1}^m k_i $. Thus we have
\[
\deg(\E_{b'_1}) = \deg(\E_1) + \deg(\mu_1') \le \deg (\E_{b_1}) + \sum_{i = 1}^m k_i = \deg (\E_{b'_1}).
\]
Therefore we have $ \deg(\E_1) = \deg (\E_{b_1}) $ and $\deg (\mu_1') \displaystyle = \sum_{i = 1}^m k_i$.

The Harder-Narasimhan filtration $ \mathcal{G}_1 \subset \dotsc \subset \mathcal{G}_{\ell} = \E_1 $ of $\E_1$ induces a filtration $ \mathcal{G}_1 \subset \dotsc \subset \mathcal{G}_{\ell} = \E_1 \subset \E_b $. Therefore, by the property of Herder-Narasimhan filtration, we see that $\nu_{\E_1}(\rank (\mathcal{G}_i)) \le \nu_{\E_b}(\rank (\mathcal{G}_i)) = \nu_{\E_{b_1}}(\rank (\mathcal{G}_i)) $ for $1 \le i \le \ell$ where $\nu_{\E_1}$, resp $\nu_{\E_b}$ are the Harder-Narasimhan polygons of $\E_1$, resp. $\E_{b}$. Thus $\nu_{\E_1}$ lies below the convex hull of the points $(\rank (\mathcal{G}_i), \nu_{\E_{b_1}}( \rank (\mathcal{F}_i))$, $ 1 \leq i \leq \ell $. Thus we deduce that $ \nu_{\E_1} $ lies below $ \nu_{\E_{b_1}} $. The same argument together with the equality $ \deg(\E_1) = \deg (\E_{b_1}) $ implies that $ \nu_{\E_2} $ lies below $\nu_{\E_{b_2}}$. However, by \cite[corollary 2.9]{MC}, the polygon $\nu$ forming from the slopes of $\nu_{\E_1}$ and $\nu_{\E_2}$ lies above the Newton polygon $\nu_{b}$. Therefore, by the uniqueness of Harder-Narasimhan filtration, we deduce that $ \E_1 \simeq \E_{b_1} $ and $ \E_2 \simeq \E_{b_2} $. Now we view a vector bundle $\E$ as a triple $(\mathcal{E}_{ | X \setminus \infty}, \E^{n,\tri}_{\B+},\iota)$ and notice that in this case $\B+$ is a discrete valuation ring. Then by  applying \cite[lemma 3.2]{Han1} to the modifications $f, f_1, f_2$; we deduce that $ \mu $ is a direct summand of $\mu_1$ and $\mu_2$. It implies that $ f_1 $, respectively, $f_2$ are of type $ \mu_1 $ and $\mu_2$ respectively.

\textit{ The general case} 

Let $S = \Spa (A, A^+)$ be an affinoid perfectoid space over $\Q_p$ and consider the vector bundles $ \E_{b'} = \E_{b'_1} \oplus \E_{b'_2} $ over the Fargues-Fontaine curve $X_{S}$ associated to $S$. Then $\E_{b'_1}$ is a saturated sub-bundle of $\E_{b'}$. Let $ f : \E_b \longrightarrow \E_{b'} $ be a modification of type $\mu$. Since $ \mu = (k_1, \dotsc, k_n) $ with $k_n \ge 0$, we deduce that that $f$ can be extended to an injection from $\E_b$ to $\E_{b'}$. Consider the intersection $\E_1 = \E_{b'_1} \cap \E_b $. By proceeding as in \cite[\S 3.2]{Han1}, by using the equivalence between the category of $B$-pairs over $A$ and that of vector bunbles on the Fargues-Fontaine curve $X_{S}$ (\cite{KL}) and also proposition $2.4$ in \cite{Han1}, we can show that $ \E_1 \simeq \E_{b_1} $ and the quotient $ \E_{b} / \E_{b_1} $ is isormorphic to $\E_{b_2}$. Moreover the modification $f$ induces modifications 
\[
f_1 : \E_{b_1} \longrightarrow \E_{b'_1}, \quad \quad f_2 : \E_{b_2} \longrightarrow \E_{b'_2},
\]
of type $\mu_1$ respectively $\mu_2$. \\

\textbf{Step 2} Analyzing the effect of the actions of $ \widetilde{\G}^0_{b'} / \widetilde{\G}^0_{b'_1} \times \widetilde{\G}^0_{b'_2} $ and of $ \underline{\G_b(\Q_p)} $. \\

 We consider an intermediate space $\Sht(\rP_b, b, b', \mu)$ of $\rP_b$-shtukas as in \cite[section 1.5]{Han1}. Let $f$ be a type $\mu$ modification
\[
f : \E_{b} \longrightarrow \E_{b'},
\]
then the decomposition $ \E_{b'} = \E_{b'_1} \oplus \E_{b'_2} $ induces a filtration $ 0 \subset \E_{ f} \subset \E_b $. We can describe the points of $\Sht(\rP_b, b, b' \mu)$ as the modifications $f$ such that the filtration $ 0 \subset \E_{f} \subset \E_b $ is exactly the filtration $ 0 \subset \E_{b_1} \subset \E_{b} $. If $\lambda_{s'} > \lambda_{s' + 1}$ then $\rP_b(\Q_p) \simeq \G_b(\Q_p)$ and then by the uniqueness of Harder-Narasimhan filtration, $\Sht(\rP_b, b, b', \mu) \simeq \Sht(\GL_n, b, b' \mu) $. However, if $\lambda_{s'} = \lambda_{s' + 1}$ then $\rP_b(\Q_p)$ is a proper parabolic subgroup of $\G_b(\Q_p)$.

The same analyse in \cite{Han1} goes through in our situation and we have an isomorphism commuting with all additional structures
\[
\Sht(\GL_n, b, b', \mu ) = \Sht(\rP_b, b, b', \mu) \times^{\underline{\rP_b}} \underline{\G}_b.
\]

Now we show that 
\[
\displaystyle \Sht(\rP_b, b, b', \mu) \simeq \Big( \Sht(\GL_m, b_1, b'_1, \mu_1) \times_{\Spd \breve{\Q}_p} \Sht(\GL_{n-m}, b_2, b'_2, \mu_2) \times_{\Spd \breve{\Q}_p} \mathcal{J}^{U} \Big).
\]   

Let $ \displaystyle f : \E_b \longrightarrow \E_{b'} $ be a modification of type $\mu$ in $\Sht(\rP_b, b, b', \mu)$. Thus by step $1$, it gives rise to modifications
\[
f_1 : \E_{b_1} \longrightarrow \E_{b'_1}, \quad \quad f_2 : \E_{b_2} \longrightarrow \E_{b'_2},
\]
of type $\mu_1$ respectively $\mu_2$. Moreover, the direct sum $ f' = f_1 \oplus f_2 $ is a modification from $\E_b$ to $\E_{b'}$ of type $\mu$. We show that there exists $ u \in \mathcal{J}^{U} \simeq \widetilde{\G}^0_{b'} / \widetilde{\G}^0_{b'_1} \times \widetilde{\G}^0_{b'_2} $ such that $ f = u \circ f' $.

We know that $ f, f' \in \Hom( \E_b, \E_{b'} ) $. If we write $ \E_b \simeq \E_1 \oplus \E_2 $ and $ \E_{b'} = \E_{b'_1} \oplus \E_{b'_2} $ then $ f $ can be written by
\[
\begin{pmatrix}
    f_1 & t \\
    0 & f_2
\end{pmatrix},
\]
where $t = (t_e, t')$ is a map from $ \E_2 $ to $\E_{b'_1}$. We show that there exists a map $ h = (h_e, h') : \E_{b'_2} \longrightarrow \E_{b'_1} $ such that $ t = h \circ f_2 $. We use again the description of a vector bundle $\E$ by a triple $(\mathcal{E}_{ | X \setminus \infty}, \E^{\tri}_{\B+},\iota)$. Since $ f_2 : \mathcal{E}_{2 | X \setminus \infty} \simeq \mathcal{E}_{b'_2 | X \setminus \infty} $ is an isomorphism, it induces a map $ h_e :  \mathcal{E}_{b'_2 | X \setminus \infty} \longrightarrow \mathcal{E}_{b'_1 | X \setminus \infty} $ such that $ t_e = h_e \circ f_2 $. By using the map $ \iota_{b'_2} $ and $\iota_{b'_1}$, we have an isomorphism
\[
\overline{h}_e := \iota_{b'_1} \circ h_e \circ \iota^{-1}_{b'_2} : \E^{\tri}_{b'_2, \B+} \otimes_{\B+} \BdR  \longrightarrow \E^{ \tri}_{b'_1 \B+} \otimes_{\B+} \BdR.
\]
Similarly, we can consider the maps $ \overline{t}_e $ and $ \overline{f}_2 $ and we have the identity $ \overline{t}_e = \overline{h}_e \circ \overline{f}_2 $ such that the restriction to $\E^{\tri}_{2, \B+}$ gives rise to an identity:
\[
t' : \E^{\tri}_{2, \B+} \xrightarrow{f'_2} f'_2(\E^{\tri}_{2, \B+}) \xrightarrow{\overline{h}^*_e} \E^{\tri}_{b'_1, \B+},
\]
where $\overline{h}^*_e$ is the restriction of $\overline{h}_e$ to $f'_2(\E^{\tri}_{2, \B+})$. Note that $f_2$ is a modification of type $ (k_{m+1}, \dotsc, k_n) $ with $ k_n \ge 0 $ then $ \xi^{k_{m+1}} \E^{\tri}_{b'_2, \B+} \subset f'_2(\E^{\tri}_{2, \B+}) \subset \E^{\tri}_{b'_2, \B+} $. Note that $ \mu_1 = (k_1, \dotsc, k_m) $ then we have $\overline{h}^*_e ( f'_2(\E^{\tri}_{2, \B+}) ) \subset \xi^{k_m} \E^{\tri}_{b'_1, \B+}$. Since $ k_{m} \ge k_{m+1} $, we deduce that $ \overline{h}^*_e (\E^{\tri}_{b'_2, \B+}) \subset \E^{\tri}_{b'_1, \B+} $. Therefore the couple $ (h_e, \overline{h}_e) $ gives rise to a map $ h : \E_{b'_2} \longrightarrow \E_{b'_1} $ such that $t = h \circ f_2$. Now if we choose the map $u$ of the form 
\[
\begin{pmatrix}
    \Id_{\E_{b'_1}} & h \\
    0 & \Id_{\E_{b'_2}}
\end{pmatrix},
\]
with respect to $ \E_{b'} = \E_{b'_1} \oplus \E_{b'_2} $ then we have $ f = u \circ f' $. We conclude that there is an isomorphism commuting with all additional structures
\[
\displaystyle \Sht(\rP_b, b, b', \mu) \simeq \Big( \Sht(\GL_m, b_1, b'_1, \mu_1) \times_{\Spd \breve{\Q}_p} \Sht(\GL_{n-m}, b_2, b'_2, \mu_2) \times_{\Spd \breve{\Q}_p} \mathcal{J}^{U} \Big).
\]  

Finally, we deduce the cohomological consequences by arguing as in the proof of \cite[Proposition 4.11]{HI} and by using the computation of the cohomology of $\mathcal{J}^{U}$ (page $91$, before lemma 11.1 in \cite{Ham1}).
\end{proof}   

\begin{remark} \phantomsection \label{itm : opposite parabolic group}
By (\ref{decomposition of Kottwitz' set}), there exists a unique standard parabolic subgroup $\rP$ of $\GL_n$ such that the reduction of $\E_b$ to the Levi factor $\M$ of $\rP$ is semi-stable and the Newton point $\nu_{\E_b}$ of $\E_b$ is in $X_*(\rP)^+_{\Q}$. Moreover there is a twisting $\xi_b$ between $\G_b$ and $\M$ realizing $\G_b$ as an inner form of $\M$. There also exists a unique 
standard parabolic subgroup $\rQ$ whose Levi factor is $\rN$ such that $\nu_b \in X_*(\rQ)^+_{\Q}$. Since $\nu_b = - (\nu_{\E_b})_{\text{dom}}$, we see that $\rQ$ is conjugated to the opposite of $\rP$. Similarly, there exists the objects $\M', \rP', \rQ'$ and $\xi_{b'}$ for $\E_{b'}$. Moreover, the filtration $ 0 \subset \E_{b'_1} \subset \E_{b'} $ determines a reduction of $\E_{b'}$ to a standard parabolic subgroup $\rR'$ containing $\rQ'$. Note that $ \rP_b $ is a parabolic subgroup of $\G_b$. Since we can suppose that the filtration $ 0 \subset \E_{b_1} \subset \E_b $ corresponds to the filtration $ 0 \subset \E_{b'_1} \subset \E_{b'} $ (by the modifications in $\Sht(\rP_b, b, b' \mu)$), if we denote by $\widetilde{\rP}_b$ the parabolic subgroup of $\M$ corresponding to $\rP_b$ by the twisting $\xi_b$ then $\widetilde{\rP}_b$ is standard in $\M$, thus $\widetilde{\rP}_b$ is conjugated to an opposite parabolic subgroup in $\rN$.    
\end{remark}

\begin{lemma} \phantomsection \label{itm : kappa and modulus}
    For $b \in B(\GL_n)$, we have $ \kappa_b = \delta^{-1}_{b} $.
\end{lemma}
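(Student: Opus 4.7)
The plan is a direct computation comparing the explicit formulas for the two characters on $\G_b(\Q_p) = \prod_{j=1}^k \GL_{m_j}(D_{-\lambda_j})$, where we write $\E_b = \bigoplus_{j=1}^k \OO(\lambda_j)^{m_j}$ with $\lambda_1 > \cdots > \lambda_k$ and $\lambda_j = r_j/s_j$ in lowest terms. Both $\kappa_b$ and $\delta_b^{-1}$ will turn out to be products of reduced norm characters, and I will match exponents block by block.

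First I would give an explicit description of $\widetilde{\G}_b^0$. The sheaf $\SheafHom(\E_b, \E_b) = \bigoplus_{i,j} \OO(\lambda_j-\lambda_i)^{m_i m_j}$ carries a natural filtration coming from the Harder--Narasimhan filtration of $\E_b$, and $\widetilde{\G}_b^0$ is a successive extension of Banach--Colmez diamonds $\mathrm{BC}(\OO(\lambda_j-\lambda_i)^{m_i m_j}[1])$ indexed by pairs $(i,j)$ with $i<j$ (so $\lambda_j-\lambda_i<0$, giving the positive-dimensional contribution). Adding up the graded dimensions recovers $d_b = \langle 2\rho, \nu_b\rangle$. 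The conjugation action of $g=(g_j) \in \G_b(\Q_p)$ preserves this filtration, acting on each graded piece by $X\mapsto g_j X g_i^{-1}$.

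Next I would compute $\kappa_b$. By definition $\kappa_b$ is the character through which $\G_b(\Q_p)$ acts on the one-dimensional top cohomology $H^{2d_b}_c(\widetilde{\G}_b^0,\overline{\Q}_\ell)$. Using the standard formula for the action of a linear endomorphism on the top cohomology of a negative-slope Banach--Colmez space (see \cite{Ham1}, page $91$, and \cite{GI}, Lemma $4.18$), each factor contributes $|\mathrm{Nrd}(g_j)|^{-m_i s_i (\lambda_i-\lambda_j)}\,|\mathrm{Nrd}(g_i)|^{m_j s_j(\lambda_i-\lambda_j)}$. Summing over all $i<j$ produces an expression $\kappa_b(g) = \prod_j |\mathrm{Nrd}(g_j)|^{\alpha_j}$ with $\alpha_j$ an explicit function of the $\lambda_i$ and $m_i$. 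On the other side, the split Levi is $\G_b^* = \prod_j \GL_{m_j s_j}$ sitting in the standard parabolic $\rP$ of $\GL_n$ determined by $\nu_b \in X_*(\rP)^+$; since $\nu_b = (-\nu_{\E_b})_{\dom}$, the blocks of $\rP$ appear in reverse Harder--Narasimhan order. The standard formula $\delta_{\rP}(\mathrm{diag}(h_1,\ldots,h_k)) = \prod_{i<j}|\det h_i|^{n_j}|\det h_j|^{-n_i}$ gives $\delta_{\rP}|_{\G_b^*}$ explicitly, and local class field theory converts $\det$ on $\GL_{m_j s_j}(\Q_p)$ to $\mathrm{Nrd}$ on $\GL_{m_j}(D_{-\lambda_j})(\Q_p)$, producing $\delta_b(g) = \prod_j |\mathrm{Nrd}(g_j)|^{\beta_j}$. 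A direct combinatorial check then verifies $\alpha_j = -\beta_j$.

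The main technical obstacle will be careful bookkeeping: the ordering reversal between $\nu_{\E_b}$ and $\nu_b$, the factor of $s_j$ relating $\det$ on the split inner form to $\mathrm{Nrd}$ on $\GL_{m_j}(D_{-\lambda_j})$, and the precise sign of the reduced-norm action on the top cohomology of each Banach--Colmez graded piece. In fact, the identification of $\kappa_b$ with (the inverse of) a modulus character is already implicit in \cite{Ham1} and \cite{GI}, so an alternative and shorter route is to quote their explicit formula for $\kappa_b$ and to observe that the Hamann--Imai normalisation of $\delta_b$ adopted here is precisely designed so that the two formulas agree.
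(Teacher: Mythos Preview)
Your approach is correct in outline and is the natural, direct route: compute both characters block by block as products of reduced-norm characters and match the exponents. This is essentially the argument in the Hamann--Imai reference \cite{HI} that the paper itself cites. One small slip: the graded pieces of $\widetilde{\G}_b^0$ are the positive Banach--Colmez spaces $\mathrm{BC}(\OO(\lambda_i-\lambda_j)^{m_im_j})$ for $i<j$ (global sections of positive-slope bundles, coming from $\Hom(\OO(\lambda_j)^{m_j},\OO(\lambda_i)^{m_i})$), not the shifted negative ones you wrote; but you flag exactly this kind of sign bookkeeping as the main hazard, and it does not affect the strategy.

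The paper takes a genuinely different, indirect route. Rather than computing either character explicitly, it runs an induction on the number of slopes and on $\deg(\E_b)$, and at each step chooses a well-placed irreducible $\pi$ (or $\pi_b$) and analyses the Fargues--Scholze parameters of the irreducible constituents of $i_{b'}^*\T_\mu i_{b!}(\pi_b)$ in two ways: once via the compatibility of the Fargues--Scholze construction with the strata (\cite[IX.7.1]{FS}, \cite[Prop.~3.14]{Ham}), which forces the constituents to lie in $\{\pi_{j,b'}\otimes\delta_{b'}^{-1/2}\}$, and once via Lemma~\ref{shimhecke} together with the generalised Boyer trick (Proposition~\ref{generalized Boyer's trick}) and second adjointness, which forces them to lie in $\{\pi_{j,b'}\otimes\kappa_{b'}\delta_{b'}^{1/2}\}$. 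Comparing the two descriptions yields $\kappa_{b'}=\delta_{b'}^{-1}$.

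Your direct computation is more elementary and self-contained; the paper's argument avoids the explicit combinatorics entirely but relies on the heavier machinery already developed in the paper (compatibility of $L$-parameters, Boyer's trick, adjunctions), which is why the author presents it as ``a sketch of the proof in our cases at hand'' rather than invoking \cite{HI} directly.
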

\begin{proof}

It is a special case of \cite[Corollary 1.9(2)]{HI}.

\end{proof}

\subsection{Some preliminary computations} \textbf{}

Let $\phi = \phi_1 \oplus \dotsc \oplus \phi_r$ be an $L$-parameter satisfying the conditions of theorem \ref{itm : main theorem}. We know that $\Irr(S_{\phi}) \displaystyle \simeq \prod_{i=1}^r \Z $. Let $\chi = (d_1, \dotsc, d_r)$ be a character in $ \Irr(S_{\phi})^+ := \{ (t_1, \dotsc, t_r) \in \Irr(S_{\phi}) \ | \ t_j \ge 0 \ \forall \ 1 \le j \le r \}$.  Suppose that $ \E_{b_{\chi}} \simeq \OO(\lambda_1)^{m_1} \oplus \dotsc \oplus \OO(\lambda_k)^{m_k} $ where $ \lambda_1 > \dotsc > \lambda_k $. For each $ 1 \le \ell \le k $ we denote by $\I(\ell) \subset \{ 1, 2, \dotsc, r \}$ the set of indexes $j$ such that $d_j/n_j = \lambda_{\ell}$ where $ n_j := \dim \phi_j $ and $\displaystyle \sum^r_{j=1} n_j = n $.

 Let $ b \in B(\GL_n) $ be an element such that $ \E_b \simeq \mathcal{G} \oplus \OO(\lambda_i)^{m'_i} \oplus \OO(\lambda_{i+1})^{m_{i+1}} \oplus \dotsc \oplus \OO(\lambda_k)^{m_k} $ where $1 \le i \le k$ and $0 < m'_i \le m_i$ are some fixed integers and $\mathcal{G}$ is a vector bundle such that the slopes of its Newton polygon are strictly bigger than the smallest slope of $\OO(\lambda_1)^{m_1} \oplus \dotsc \oplus \OO(\lambda_{i-1})^{m_{i-1}} \oplus \OO(\lambda_i)^{m_i - m'_i}$.

Let $ b' \in B(\GL_n) $ be an element such that $ \E_{b'} \simeq \OO(\lambda_1)^{m_1} \oplus \dotsc \oplus \OO(\lambda_{i-1})^{m_{i-1}} \oplus \OO(\lambda_i)^{m_i} \oplus \mathcal{H} $ where $1 \le i \le k$ is a fixed integer as above and $\mathcal{H}$ is a vector bundle such that the biggest slope of its Newton polygon is smaller than $\lambda_i$ and strictly bigger than $\lambda_{i+1}$.

We define $D := \deg(\E_{b_{\chi}})$ and $D_1:= \deg(\E_b)$ and $D_2:= \deg(\E_{b'})$. % Suppose that $ D < D_1$ and $D < D_2 $. 
Let $\chi_j \in \Irr(S_{\phi})$ be the element $(d_1, \dotsc, d_r)$ where $d_j = 1$ and $d_a = 0$ for $ 1 \le a \neq j \le r $. In the following lemmas, we analyse the complexes $i_b^* C_{\chi_j} \star \mathcal{F}_{\chi} $ and $i_{b'}^* C_{\chi_j} \star \mathcal{F}_{\chi} $ where $\mathcal{F}_{\chi}$ is defined as in \S \ref{itm : shape of the supports}.

\begin{lemma} \phantomsection \label{itm : first computation}
    Suppose that $D_1 - D = 1$ and let $\chi' = (a_1, \dotsc, a_r) \in \Irr(S_{\phi})^+$ be a character such that $\displaystyle \sum^r_{j=1} a_j = 1$. We define $S_+ := \{ 1 \le j \le r \ | \ a_j > 0 \} $ and $S_0 := \{ 1, 2, \dotsc, k \} \setminus S_+ $, in particular $|S_+| = 1$. Suppose that theorem $\ref{itm : main theorem}$ is true for every $L$-parameters $\displaystyle \phi' = \bigoplus_{j \in J \subsetneq \{ 1, \dotsc, r \}} \phi_j$, then:
    \begin{enumerate}
        \item If $ \displaystyle S_+ \subset \big( \bigcup_{i < j \le k} \I(j) \big) $ then
        \[
        i^*_bC_{\chi'} \star (i_{b_{\chi}!} \delta_{\chi}^{-1/2} \otimes \pi_{\chi} ) \simeq 0
        \]
        where $\pi_{\chi}$ is the irreducible representation of $\G_{b_{\chi}}(\Q_p)$ whose $L$-parameter is given by $\phi_{\chi}$ as in \S \ref{itm : shape of the supports} (whose post-composition with $ \widehat{\G^*_{b_{\chi}}} \hookrightarrow \widehat{\GL}_n $ is given by $\phi$).
        \item For $ \displaystyle S_+ = \{ c \} \subset \big( \bigcup_{1 \le j \le i} \I(j) \big) $ we have
        \[
        i^*_bC_{\chi'} \star \mathcal{F}_{\chi} \simeq
        \begin{cases}
         i_b^* \mathcal{F}_{\chi' \otimes \chi} \text{ \ if  $b = b_{\chi' \otimes \chi}$} \\
         0 \quad \quad \text{ \ if  $b \neq b_{\chi' \otimes \chi}$.}
        \end{cases}
         \]
    \end{enumerate}
\end{lemma}

Similarly we have 

\begin{lemma} \phantomsection \label{itm : second computation}
    Suppose that $D_2 - D = 1$ and let $\chi' = (a_1, \dotsc, a_r) \in \Irr(S_{\phi})^+$ be a character such that $\displaystyle \sum^r_{j=1} a_j = 1$. We define $S_+ := \{ 1 \le j \le r \ | \ a_j > 0 \} $ and $S_0 := \{ 1, 2, \dotsc, k \} \setminus S_+ $. Suppose that theorem $\ref{itm : main theorem}$ is true for every $L$-parameters $ \displaystyle \phi' = \bigoplus_{j \in J \subsetneq \{1, \dotsc, r\} } \phi_j $, then:
    \begin{enumerate}
        \item If $ \displaystyle S_+ \subset \big( \bigcup_{1 \le j < i} \I(j) \big) $ then
        \[
        i^*_bC_{\chi'} \star (i_{b_{\chi}!} \delta_{\chi}^{-1/2} \otimes \pi_{\chi} ) \simeq 0
        \]
        where $\pi_{\chi}$ is the irreducible representation of $\G_{b_{\chi}}(\Q_p)$ whose $L$-parameter is given by $\phi_{\chi}$ as in \S \ref{itm : shape of the supports} (whose post-composition with $ \widehat{\G^*_{b_{\chi}}} \hookrightarrow \widehat{\GL}_n $ is given by $\phi$).
        \item For $ \displaystyle S_+ = \{ c  \} \subset \big( \bigcup_{i \le j \le k} \I(j) \big) $ we have
        \[
        i^*_{b'}C_{\chi'} \star \mathcal{F}_{\chi} \simeq
        \begin{cases}
         i_{b'}^* \mathcal{F}_{\chi' \otimes \chi} \text{ \ if $b' = b_{\chi' \otimes \chi}$} \\
         0 \quad \quad \text{ \ if $b' \neq b_{\chi' \otimes \chi}$.}
        \end{cases}
         \]
    \end{enumerate}
\end{lemma}

Remark that part $(2)$ of the lemmas does not completely describe $C_{\chi'} \star \mathcal{F}_{\chi}$ since we assume that $b$ satisfies some constraints related to $b_{\chi}$ in our running hypothesis. For example, if $\OO(\lambda_k)$ is not a direct factor of $\E_b$ then part $(2)$ of lemma \ref{itm : first computation} does not tell us anything about $i^*_{b}C_{\chi'} \star \mathcal{F}_{\chi}$. If we suppose $D - D_1 = 1$ and $D - D_2 = 1$ as well as all the $a_j$'s are non-positive then we also get similar results. 
\begin{proof}
    We only prove lemma \ref{itm : first computation}, the proof for lemma \ref{itm : second computation} is similar and less complicated. 

    We suppose first that $m'_i = m_i$ and denote by $\mu$ the cocharacter $(1, 0^{(n-1)})$. Thus we have $r_{\mu} = \Std \GL_n$ and since all the $a_j$'s are non negative, we see that $ \Hom_{S_{\phi}} (\chi', r_{\mu} \circ \phi) $ is non trivial. Note that in this case $i > 1$ and the triple $ (b_{\chi}, b, \mu) $ satisfies the condition in proposition \ref{generalized Boyer's trick}. Therefore we have
\[
\Sht(\GL_n, b_{\chi}, b, \mu) = \Sht(\GL_m, \widetilde{b}, b_1, \mu_1) \times_{\Spd \breve{\Q}_p} \Sht(\GL_{n-m}, b_2, b_2, \mu_2) \times_{\Spd \breve{\Q}_p} \mathcal{J}^{U} 
\]
where
\begin{enumerate}
    \item[$\bullet$] $\E_{\widetilde{b}} \simeq \OO(\lambda_1)^{m_1} \oplus \dotsc \oplus \OO(\lambda_{i-1})^{m_{i-1}} $; $\E_{b_2} \simeq \OO(\lambda_i)^{m_i} \oplus \dotsc \oplus \OO(\lambda_{k})^{m_{k}} $ and $\E_{b_1} \simeq \mathcal{G}$. In particular $\E_{b_{\chi}} \simeq \E_{\widetilde{b}} \oplus \E_{b_2}$ and $\E_b \simeq \E_{b_1} \oplus \E_{b_2}$.
    \item[$\bullet$] $\mu_1 = (1, 0^{(m-1)}) $ and $\mu_2 = (0^{(n-m)})$ where $m = \rank (\E_{b_1})$.
    \item[$\bullet$] $\mathcal{J}^{U}$ is the unipotent diamond in group $ \widetilde{\G}_{b}^0 / \big( \widetilde{\G}_{b_1}^0 \times \widetilde{\G}_{b_2}^0 $ \big).
\end{enumerate}

Recall that for $\chi$, we have an associated representation $\pi_{\chi}$ of $\G_{b_{\chi}}(\Q_p) = \displaystyle \prod_{1\le j \le k} \Aut_{\OO^{m_j}(\lambda_j)}(\Q_p)$ whose $L$-parameter is given by $\displaystyle \prod_{1\le j \le k} \bigoplus_{h \in \I(j)} \phi_h $. If we write $\pi_{\chi} = \widetilde{\pi} \boxtimes \pi_2$ as representation of $ \G_{b_{\chi}}(\Q_p) = \G_{\widetilde{b}}(\Q_p) \times \G_{b_2}(\Q_p) $, then we have an isomorphism of $\G_b(\Q_p) \times W_{\Q_p}$-modules
\begin{align*} 
   & \quad \ R\Gamma_c(\GL_n, b_{\chi}, b, \mu)[\delta^{1/2}_{\chi} \otimes \pi_{\chi} ] \\
   & \simeq R\Gamma_c(\GL_n, b_{\chi}, b, \mu) \otimes^{\mathbb{L}}_{\mathcal{H}(\G_{b_{\chi}})} \delta^{1/2}_{\chi} \otimes \pi_{\chi} \\
    & \simeq \big( R\Gamma_c( \Sht(\GL_m, \widetilde{b}, b_1, \mu_1) \times_{\Spd \breve{\Q}_p} \Sht(\GL_{n-m}, b_2, b_2, \mu_2)) \otimes \kappa \big)\otimes^{\mathbb{L}}_{\mathcal{H}(\G_{b_{\chi}})} \delta^{1/2}_{\chi} \otimes \pi_{\chi} [h-2d]\otimes|\cdot|^{\tfrac{h}{2}} \\
    & \simeq \kappa \otimes \big( R\Gamma_c(\GL_m, \widetilde{b}, b_1, \mu_1) \otimes^{\mathbb{L}}_{\mathcal{H}(\G_{\widetilde{b}})} \delta^{1/2}_{\chi | \G_{\widetilde{b}}} \otimes {\widetilde{\pi}} \boxtimes R\Gamma_c(\GL_{n-m}, b_2, b_2, \mu_2) \otimes^{\mathbb{L}}_{\mathcal{H}(\G_{b_2})} \delta^{1/2}_{\chi | \G_{b_2}} \otimes \pi_2 \big) [h-2d] \otimes|\cdot|^{\tfrac{h}{2}},
\end{align*}
where the first isomorphism follows from the definition and where
\begin{enumerate}
    \item [$\bullet$] $\delta_{\chi} = \delta_{\chi | \G_{\widetilde{b}}} \boxtimes \delta_{\chi | \G_{b_2}} $ as characters of $ \G_{b_{\chi}}(\Q_p) = \G_{\widetilde{b}}(\Q_p) \times \G_{b_2}(\Q_p) $.
    \item [$\bullet$] $|\cdot|$ denotes the norm character of $W_{\Q_p}$ under the geometric normalization of local class field theory.
    \item [$\bullet$] $h = \langle \mu, 2\rho \rangle - \langle \mu_1, 2\rho_m \rangle$ and the twist $|\cdot|^{\tfrac{h}{2}}$ as well as the shift $[h]$ come from the difference in the normalization of the sheaves $\mathcal{S}_{\mu}$ and $\mathcal{S}_{\mu_1}$ in the definition of $R\Gamma_c(\GL_n, b_{\chi}, b, \mu)$ and $R\Gamma_c(\GL_m, \widetilde{b}, b_1, \mu_1)$.
    \item [$\bullet$] $d = d_b - d_{b_1} - d_{b_2}$ and the shift $[-2d]$ comes from the cohomology of the automorphism group $\mathcal{J}^U$. Here $ d_b = \langle \nu_b, 2\rho \rangle $, $ d_{b_1} = \langle \nu_{b_1}, 2\rho_m \rangle$ and $ d_{b_2} = \langle \nu_{b_2}, 2\rho_{n-m} \rangle$ and $\kappa = \kappa_b \otimes (\kappa^{-1}_{b_1} \times \kappa^{-1}_{b_2}) = \delta_b^{-1} \otimes (\delta_{b_1} \times \delta_{b_2}) $ is a character of $\G_b(\Q_p)$ (lemma \ref{itm : kappa and modulus}).
\end{enumerate} 

%By the hom-tensor adjunction and \cite[proposition 4.11]{HI}, we have an isomorphism of $\G_b(\Q_p) \times W_{\Q_p}$-modules
%\[
%\quad \ R\Gamma_c(\GL_n, b_{\chi}, b, \mu)[\delta^{-1/2}_{\chi} \otimes \pi_{\chi} ] \otimes \kappa^{-1}_b \otimes \chi_{\mu} \simeq R\Gamma_c(\GL_n, b_{\chi}, b, \mu)[\delta^{1/2}_{\chi} \otimes \pi_{\chi} ],
%\]
%and an isomorphism of $\G_{b_1}(\Q_p) \times W_{\Q_p}$-modules
%\[
%\quad \ R\Gamma_c(\GL_m, \widetilde{b}, b_1, \mu_1)[ \delta^{-1/2}_{\chi | \G_{\widetilde{b}}} \otimes {\widetilde{\pi}} ] \otimes \kappa_{b_1}^{-1} \otimes \chi_{\mu_1} \simeq R\Gamma_c(\GL_m, \widetilde{b}, b_1, \mu_1)[ \delta^{-1/2}_{\chi | \G_{\widetilde{b}}} \otimes {\widetilde{\pi}} \otimes \kappa_{\widetilde{b}}^{-1}], 
%\]
%where $\chi_{\mu}$ and $\chi_{\mu_1}$ are characters of $W_{\Q_p}$ defined from $\delta_b$ and $\delta_{b_1}$ as in \cite[page 29]{HI}. 
Therefore, we have an isomorphism of $\G_b(\Q_p) \times W_{\Q_p}$-modules
\begin{align*}
    & \quad \ R\Gamma_c(\GL_n, b_{\chi}, b, \mu)[\delta^{1/2}_{\chi} \otimes \pi_{\chi} ] \\
    & \simeq \kappa \otimes \big( R\Gamma_c(\GL_m, \widetilde{b}, b_1, \mu_1)[\delta^{1/2}_{\chi | \G_{\widetilde{b}}} \otimes {\widetilde{\pi}}] \boxtimes R\Gamma_c(\GL_{n-m}, b_2, b_2, \mu_2)[\delta^{1/2}_{\chi | \G_{b_2}} \otimes \pi_2 ]\big)[h-2d]\otimes|\cdot|^{\tfrac{h}{2}}. \numberthis \phantomsection \label{itm : Boyertrick-cohomology}
\end{align*}

Besides, by lemmas \ref{shimhecke} and \ref{itm : kappa and modulus}, we have 
\begin{equation} \phantomsection \label{itm : equa}
    i^*_b \T_{\mu}(i_{b_{\chi} !} \delta^{-1/2}_{\chi} \otimes \pi_{\chi}) \simeq R\Gamma_c(\GL_n, b_{\chi}, b, \mu)[\delta^{1/2}_{\chi} \otimes \pi_{\chi}][2d_{b_{\chi}}].
\end{equation}

Since $\T_{\Id}$ is the identity functor, lemma \ref{shimhecke} implies that
\begin{equation} \phantomsection \label{itm : equa3}
    R\Gamma_c(\GL_{n-m}, b_2, b_2, \mu_2)[\delta^{1/2}_{\chi | \G_{b_2}} \otimes \pi_2 ] \simeq \delta^{1/2}_{\chi | \G_{b_2}} \otimes \pi_2 \otimes \kappa_{b_2}[-2d_{b_2}],
\end{equation}
and in particular, $W_{\Qp}$ acts trivially on $R\Gamma_c(\GL_{n-m}, b_2, b_2, \mu_2)[\delta^{1/2}_{\chi | \G_{b_2}} \otimes \pi_2]$. \\

Remark that as $S_{\phi} \times W_{\Q_p}$-representations, we have an identification
\[
r_{\mu} \circ \phi = \bigoplus_{\xi \in \Irr(S_{\phi})} \xi \boxtimes \sigma_{\xi} 
\]
where $\sigma_{\xi}$ is the $W_{\Q_p}$-representations $\Hom_{S_{\phi}}(\xi, r_{\mu} \circ \phi)$. Since the Fargues-Scholze's parameter (with respect to $\Bun_n$) of $i_{b_{\chi} !} \delta^{-1/2}_{\chi} \otimes \pi_{\chi}$ is given by $\phi$, we have the following decomposition of Hecke operator by proposition \ref{itm : fundamental decomposition of Hecke operator}
\begin{equation*}
    \T_{\mu}(i_{b_{\chi} !} \delta^{-1/2}_{\chi} \otimes \pi_{\chi}) = \bigoplus_{\xi \in \Irr(S_{\phi})} C_{\xi} \star (i_{b_{\chi} !} \delta^{-1/2}_{\chi} \otimes \pi_{\chi}) \boxtimes \sigma_{\xi}. 
\end{equation*}

Remark that the condition on $\phi$ allows us to understand $C_{\xi} \star (i_{b !} \delta^{-1/2}_{\chi} \otimes \pi_{\chi}) $ by identifying the $W_{\Q_p}$-action of $\T_{\mu}(i_{b_{\chi} !} \delta^{-1/2}_{\chi} \otimes \pi_{\chi})$. More precisely, for any $d \in B(\GL_n)$, we have an identification of the cohomology groups as $\G_d(\Q_p) \times W_{\Q_p}$-representations:
\[
\mathrm{H}^*\big(i_d^*\T_{\mu}(i_{b_{\chi} !} \delta^{-1/2}_{\chi} \otimes \pi_{\chi})\big) \simeq \bigoplus_{\xi \in \Irr(S_{\phi})} \mathrm{H}^* \big( i_d^* C_{\xi} \star (i_{b_{\chi} !} \delta^{-1/2}_{\chi} \otimes \pi_{\chi}) \big) \boxtimes \sigma_{\xi}.
\]

Hence, by taking the $[\sigma_{\xi}]$-isotypical part of each cohomology group $\mathrm{H}^i\big(i_d^*\T_{\mu}(i_{b_{\chi} !} \delta^{-1/2}_{\chi} \otimes \pi_{\chi})\big)$, one can compute $\mathrm{H}^i \big( i_d^* C_{\xi} \star (i_{b_{\chi} !} \delta^{-1/2}_{\chi} \otimes \pi_{\chi}) \big)$. We will see that the complex $i_d^* C_{\xi} \star (i_{b_{\chi} !} \delta^{-1/2}_{\chi} \otimes \pi_{\chi})$ is concentrated in $1$ degree and then we can recover $i_d^* C_{\xi} \star (i_{b_{\chi} !} \delta^{-1/2}_{\chi} \otimes \pi_{\chi})$ from its cohomology groups.

Note that the (usual) $L$-parameter of $\pi_2$ is given by $\displaystyle \bigoplus_{ t \in \bigcup_{i \le j \le k} \I(j)} \phi_t $ and the $L$-parameter of $\widetilde{\pi}$ is given by $ \displaystyle \bigoplus_{ t \in \bigcup_{1 \le j < i} \I(j)} \phi_t $. The observation is that if we regard the cohomology groups $R\Gamma_c(\GL_n, b_{\chi}, b, \mu)[\delta^{1/2}_{\chi} \otimes \pi_{\chi}]$ as $W_{\Q_p}$-module then by (\ref{itm : Boyertrick-cohomology}), its irreducible sub-quotients are isomorphic to an unramified twist of $\phi_t$ for $ \displaystyle t \in \bigcup_{1 \le j < i} \I(j) $ since $W_{\Q_p}$ acts trivially on $ R\Gamma_c(\GL_{n-m}, b_2, b_2, \mu_2)[\delta^{1/2}_{\chi | \G_{b_2}} \otimes \pi_2] $ by equation (\ref{itm : equa3}). 

However if $ \displaystyle S_+ = \{ c \} \subset \big( \bigcup_{i \le j \le k} \I(j) \big) $ then $\sigma_{\chi} \simeq \phi_c$.  
Thus the condition on the irreducible factors of $\phi$ in the beginning of \S \ref{conditions on L-parameters} allows us to conclude that the $[\sigma_{\chi}]$-isotypical part of $R\Gamma_c(\GL_n, b_{\chi}, b, \mu)[\delta^{1/2}_{\chi} \otimes \pi_{\chi}]$ vanishes. Therefore $i^*_bC_{\chi'} \star (i_{b_{\chi}!} \delta_{\chi}^{-1/2} \otimes \pi_{\chi} ) \simeq 0$, hence the first point of the lemma. \\

We are going to prove the second point. We see that $ r_{\mu} \circ \phi = \displaystyle \bigoplus_{ 1 \le j \le r} \phi_j $. Hence 
\[
\T_{\mu}(i_{b_{\chi} !} \delta^{-1/2}_{\chi} \otimes \pi_{\chi}) = \bigoplus^r_{j = 1} C_{\chi_j} \star (i_{b_{\chi} !} \delta^{-1/2}_{\chi} \otimes \pi_{\chi}) \boxtimes \phi_j,
\]
and we only need to consider the case $\chi' = \chi_{c}$ for some $c \in \big( \bigcup_{1 \le j < i} \I(j) \big)$. We need to compute the $[\phi_{c}]$-isotypical part of the cohomology groups $\mathrm{H}^*(R\Gamma_c(\GL_m, \widetilde{b}, b_1, \mu_1)[\delta^{1/2}_{\chi | \G_{\widetilde{b}}} \otimes {\widetilde{\pi}}] \otimes |\cdot|^{\tfrac{h}{2}} )$.

Let $\widetilde{\phi}$ denote the $L$-parameter $ \displaystyle \bigoplus_{ t \in \bigcup_{1 \le j < i} \I(j)} \phi_t $ and let $\widetilde{\chi}$, respectively $ \widetilde{\chi}_c $ be the element in $ \displaystyle \Irr(S_{\widetilde{\phi}}) \simeq \prod_{t \in \bigcup_{1 \le j < i} \I(j)} \Z $ obtained from $\chi$, respectively $\chi_c$ by removing all the indexes corresponding to $\phi_t$, where ${t \in \bigcup_{i \le j < k} \I(j)}$. Thus we can check that $ \widetilde{b} = b_{\widetilde{\chi}} $ and $\widetilde{\pi} = \pi_{\widetilde{\chi}}$. Hence by lemmas \ref{shimhecke} and \ref{itm : kappa and modulus}, we have 
\begin{equation} \phantomsection \label{itm : equa2}
    i^*_{b_1} \T_{\mu_1}(i_{{\widetilde{b}} !} \delta^{1/2}_{\chi | \G_{\widetilde{b}}} \otimes \pi_{\widetilde{\chi}} \otimes \delta_{\widetilde{b}}^{-1}) \simeq R\Gamma_c(\GL_m, \widetilde{b}, b_1, \mu_1)[ {\delta^{1/2}_{\chi | \G_{\widetilde{b}}} \otimes \pi_{\widetilde{\chi}}}][2d_{\widetilde{b}}],
    %\delta^{1/2}_{\chi | \G_{\widetilde{b}}}
\end{equation}
where $ d_{\widetilde{b}} = \langle \nu_{\widetilde{b}}, 2\rho_m \rangle$.
 
We have $\delta^{1/2}_{\chi | \G_{\widetilde{b}}} \otimes \pi_{\widetilde{\chi}} \otimes \delta_{\widetilde{b}}^{-1} = \delta^{1/2}_{\chi | \G_{\widetilde{b}}} \delta_{\widetilde{b}}^{-1/2} \otimes \pi_{\widetilde{\chi}} \otimes \delta_{\widetilde{b}}^{-1/2}$ and the twist by the character $\delta^{1/2}_{\chi | \G_{\widetilde{b}}} \delta_{\widetilde{b}}^{-1/2}$ corresponding to the twist by $|\cdot|^{\tfrac{m-n}{2}}$ for the $L$-parameter of $\pi_{\widetilde{\chi}} \otimes \delta_{\widetilde{b}}^{-1/2}$. Thus the (Fargues-Scholze) $L$-parameter of $\delta^{1/2}_{\chi | \G_{\widetilde{b}}} \otimes \pi_{\widetilde{\chi}} \otimes \delta_{\widetilde{b}}^{-1}$ with respect to $\Bun_m$ is given by $\widetilde{\phi} \otimes |\cdot|^{\tfrac{m-n}{2}}$. 

By theorem $\ref{itm : main theorem}$, applying to the $L$-parameter $\widetilde{\phi} \otimes |\cdot|^{\tfrac{m-n}{2}} $ we deduce that 
\[
i^*_{b_1} C_{\widetilde{\chi}_c} \star (i_{b_{\widetilde{b}} !} \delta^{1/2}_{\chi | \G_{\widetilde{b}}} \otimes \pi_{\widetilde{\chi}} \otimes \delta^{-1}_{\widetilde{b}} ) [-d_{\widetilde{b}}] \simeq
\begin{cases}
       i^*_{b_1} (\delta^{1/2}_{b | \G_{b_1}} \delta^{-1/2}_{b_1}) \otimes \mathcal{F}_{\widetilde{\chi}_c \otimes \widetilde{\chi}}  \text{ \ if $b_1 = b_{ \widetilde{\chi}_c \otimes \widetilde{\chi} }$} \\
         0 \quad \quad \text{ \  otherwise.}
        \end{cases}
\]

Note that the twist by $(\delta^{1/2}_{b | \G_{b_1}} \delta^{-1/2}_{b_1})$ comes from the twist by $|\cdot|^{\tfrac{m-n}{2}}$ in the $L$-parameter side. 

%However by using the fact that all the slopes of $\E_{b_{\widetilde{\chi}}}$ are bigger than that of $\E_{b_2}$, 
We can check from the constructions that $ \E_{b_{ \widetilde{\chi}_c \otimes \widetilde{\chi} }} \oplus \E_{b_2} \simeq \E_{b_{\chi_c \otimes \chi}} $. We see that if $ b \neq b_{\chi_c \otimes \chi} $ then $b_1 \neq b_{ \widetilde{\chi}_c \otimes \widetilde{\chi} } $ and the $[\phi_c]$-isotypical part of $ \mathrm{H}^* (R\Gamma_c(\GL_m, \widetilde{b}, b_1, \mu_1)[\delta^{1/2}_{\chi | \G_{\widetilde{b}}} \otimes {\widetilde{\pi}}] \otimes |\cdot|^{\tfrac{h}{2}})$ is empty. Thus by equality (\ref{itm : Boyertrick-cohomology}), the $[\phi_c]$-isotypical part of $\mathrm{H}^* (R\Gamma_c(\GL_n, b_{\chi}, b, \mu)[\delta^{1/2}_{\chi} \otimes \pi])$ is empty and then $i^*_bC_{\chi_c} \star \mathcal{F}_{\chi} \simeq 0$.

Note that in this case by using the fact that all the slopes of the Newton polygon of $\E_{b_{\widetilde{\chi}}}$ are bigger than that of $\E_{b_2}$ we can see that the filtration $ \E_{b_{ \widetilde{\chi}_c \otimes \widetilde{\chi} }} \subset \E_{b_{ \widetilde{\chi}_c \otimes \widetilde{\chi} }} \oplus \E_{b_2} \simeq \E_{b_{\chi_c \otimes \chi}} $ can be refined to the canonical Harder-Narasimhan filtration of $\E_{b_{\chi_c \otimes \chi}}$. However in the context of lemma $\ref{itm : second computation}$, it is not always true. Nevertheless, it is still true under the assumption that $ b' = b_{ \widetilde{\chi}_c \otimes \widetilde{\chi} } $.

Now we suppose that $ b = b_{\chi_c \otimes \chi } $ then $b_1 = b_{ \widetilde{\chi}_c \otimes \widetilde{\chi} } $. Note that by the assumption, all the slopes of the Newton polygon of $\E_{b_1} \simeq \mathcal{G}$ are strictly bigger than $\lambda_{i-1}$, we deduce that $\I(i-1) = \{ c \}$. Note that $\mathcal{F}_{\widetilde{\chi}_c \otimes \widetilde{\chi}} \simeq \delta_{b_1}^{-1/2} \otimes \pi_{\widetilde{\chi}_{c'} \otimes \widetilde{\chi}} [-d_{b_1}] $ so from (\ref{itm : equa2}) we see that
\[
R\Gamma_c(\GL_m, \widetilde{b}, b_1, \mu_1)[\delta^{1/2}_{\chi | \G_{\widetilde{b}}} \otimes {\widetilde{\pi}} ] [2d_{\widetilde{b}}] \otimes |\cdot|^{\tfrac{h}{2}} \simeq \bigoplus_{b_{ \widetilde{\chi}_{c'} \otimes \widetilde{\chi} } = b_1 } (\delta^{1/2}_{b | \G_{b_1}} \delta^{-1}_{b_1}) \otimes \pi_{\widetilde{\chi}_{c'} \otimes \widetilde{\chi}} \boxtimes \phi_{c'} [d_{\widetilde{b}}-d_{b_1}]
\]
where the sum is over all $c' \in \big( \bigcup_{1 \le j < i} \I(j) \big) $ such that $b_{\widetilde{\chi}_{c'} \otimes \widetilde{\chi}} = b_1$. Note that $h = m-n$ and the twist $|\cdot|^{\tfrac{m-n}{2}}$ appearing on the left hand side of (\ref{itm : equa2}) is canceled by the twist $|\cdot|^{\tfrac{h}{2}}$ on the left hand side of the above equation. 

By combining with equations (\ref{itm : Boyertrick-cohomology}), (\ref{itm : equa}), (\ref{itm : equa3}) we have: 
\begin{align*}
 &\quad \ i^*_b \T_{\mu}(i_{b_{\chi} !} \delta^{-1/2}_{\chi} \otimes \pi_{\chi}) \\
% &\simeq \kappa \otimes \Big( R\Gamma_c(\GL_m, \widetilde{b}, b_1, \mu_1)[\delta^{-1/2}_{\chi | \G_{\widetilde{b}}} \otimes {\widetilde{\pi}}] \times R\Gamma_c(\GL_{n-m}, b_2, b_2, \mu_2)[\delta^{-1/2}_{\chi | \G_{b_2}} \otimes \pi_2] \Big) [h-2d+2d_{b_{\chi}}](\tfrac{h}{2}) \\
 &\simeq \kappa \otimes \Big( R\Gamma_c(\GL_m, \widetilde{b}, b_1, \mu_1)[\delta^{1/2}_{\chi | \G_{\widetilde{b}}} \otimes {\widetilde{\pi}}] \times \delta^{1/2}_{\chi | \G_{b_2}} \otimes \pi_2 \otimes \kappa_{b_2} \Big) [h-2d+2d_{b_{\chi}}-2d_{b_2}] \otimes |\cdot|^{\tfrac{h}{2}} \\
 &\simeq \bigoplus_{b_{ \widetilde{\chi}_{c'} \otimes \widetilde{\chi} } = b_1 } \kappa \otimes \big( \delta^{1/2}_{b | \G_{b_1}} \pi_{\widetilde{\chi}_{c'} \otimes \widetilde{\chi}} \otimes \kappa_{b_1} \boxtimes \delta^{1/2}_{b | \G_{b_2}} \otimes \pi_2 \otimes \kappa_{b_2} \big) \boxtimes \phi_{c'} [h-2d+2d_{b_{\chi}}-2d_{b_2}-d_{\widetilde{b}}-d_{b_1}] \\
 &\simeq \bigoplus_{b_{ \widetilde{\chi}_{c'} \otimes \widetilde{\chi} } = b_1 } \delta^{-1/2}_{b} \otimes \big( \pi_{\widetilde{\chi}_{c'} \otimes \widetilde{\chi}} \boxtimes \pi_2 \big) \boxtimes \phi_{c'} [h-2d+2d_{b_{\chi}}-2d_{b_2}-d_{\widetilde{b}}-d_{b_1}]
\end{align*}
where in the second isomorphism we use that $\delta_{b | \G_{b_2}} = \delta_{\chi | \G_{b_2}}$ and in the last isomorphism we use the fact that $\kappa = \kappa_b \otimes (\kappa_{b_1}^{-1} \times \kappa_{b_2}^{-1})$.
%where in the last line, there is a Tate twist $(\tfrac{-h}{2}) = (\tfrac{m-n}{2})$ coming from the difference between the action of $W_{\Q_p}$ in $ i^*_{b_1} \T_{\mu_1} \big( i_{{\widetilde{b}} !} \delta^{-1/2}_{\chi | \G_{\widetilde{b}}} \otimes \pi_{\widetilde{\chi}} \big) $ and in $ i^*_{b_1} \T_{\mu_1} \big( i_{{\widetilde{b}} !} \delta^{-1/2}_{\widetilde{b}} \otimes \pi_{\widetilde{\chi}} \big) $. 

Since $\mathcal{F}_{\chi} = (i_{b_{\chi} !} \delta^{-1/2}_{\chi} \otimes \pi_{\chi})[-d_{b_{\chi}}] $ and by the condition $\I(i-1) = \{c\}$, we can verify that
\[
d_b = d_{b_{\chi}} - d_{\widetilde{b}} + d_{b_1} + h,
\]
moreover, we have the identity $d = d_b - d_{b_1} - d_{b_2}$. % by using lemma \ref{itm : kappa and modulus} and by replacing $\pi_{\chi}$ by $\pi_{\chi} \otimes \kappa^{-1}_{b_{\chi}} $ in the computation of $i^*_b \T_{\mu}(i_{b_{\chi} !} \delta^{-1/2}_{\chi} \otimes  \pi_{\chi} \otimes \kappa_{b_{\chi}} )$ ,  
Thus we see that
\begin{align*}
    i^*_b \T_{\mu}( i_{b_{\chi} !} \delta^{-1/2}_{\chi} \otimes \pi_{\chi}) \simeq \bigoplus_{b_{ \widetilde{\chi}_{c'} \otimes \widetilde{\chi} } = b_1 } \big( \delta^{-1/2}_b \otimes \pi_{\chi \otimes \chi_{c'}} \big) \boxtimes \phi_{c'}[-h+d_{\widetilde{b}}-d_{b_1}].
\end{align*}

By identifying the $W_{\Q_p}$-action, we see that
\[
i^*_b C_{\chi_c} \star (i_{b_{\chi} !} \delta^{-1/2}_{\chi} \otimes \pi_{\chi})[-d_{b_{\chi}}] \simeq \delta^{-1/2}_b \otimes \pi_{\chi \otimes \chi_{c}}[-h+d_{\widetilde{b}}-d_{b_{\chi}}-d_{b_1}] \simeq \delta^{-1/2}_b \otimes \pi_{\chi \otimes \chi_{c}}[-d_b]
\]
and we deduce that
\[
i^*_bC_{\chi'} \star \mathcal{F}_{\chi} \simeq i^*_b \mathcal{F}_{\chi \otimes \chi_{c}}. \\
\]
\\
Now we treat the case $m'_i < m_i$. Thus again by proposition \ref{generalized Boyer's trick} we have
\[
\Sht(\GL_n, b_{\chi}, b, \mu) = \Big( \Sht(\GL_m, \widetilde{b}, b_1, \mu_1) \times_{\Spd \breve{\Q}_p} \Sht(\GL_{n-m}, b_2, b_2, \mu_2) \times_{\Spd \breve{\Q}_p} \mathcal{J}^{U} \Big) \times^{\underline{\rP}} \underline{\G}
\]
with similar notations as before. The only difference is that now $\rP$ is a proper parabolic subgroup of $\G := \G_{b_{\chi}}$. Unravelling the definition of these groups, we see that
\[
\G_{b_{\chi}} = \Aut(\OO(\lambda_1)^{m_1}) \times \dotsc \times \Aut(\OO(\lambda_i)^{m_i}) \times \dotsc \times \Aut(\OO(\lambda_k)^{m_k})
\]
and $\rP$ is a parabolic subgroup whose Levi factor is
\[
\M = \Aut(\OO(\lambda_1)^{m_1}) \times \dotsc \times \Aut(\OO(\lambda_i)^{m_i-m'_i}) \times \Aut(\OO(\lambda_i)^{m'_i}) \times \dotsc \times \Aut(\OO(\lambda_k)^{m_k}).
\]

Thus 
\[
R\Gamma_c(\GL_n, b_{\chi}, b, \mu) = \ind^{\G}_{\rP} \Big( R\Gamma_c(\GL_m, \widetilde{b}, b_1, \mu_1) \times R\Gamma_c(\GL_{n-m}, b_2, b_2, \mu_2) \times R\Gamma_c(\mathcal{J}^{U}) \Big)
\]
and we want to compute $ R\Gamma_c(\GL_n, b_{\chi}, b, \mu)[\delta^{1/2}_{b_{\chi}} \otimes \pi_{\chi}] $ where $\pi_{\chi}$ is a representation of $\G_{b_{\chi}}(\Q_p)$ associated to the character $\chi$ (and the $L$-parameter $\phi$). Thus we need to find a way to get rid of the (unnormalize) parabolic induction $\ind^{\G}_{\rP}$. Note that we do not need to deal with this issue in the context of lemma \ref{itm : second computation} since the parabolic induction in that case is from some parabolic subgroup of $\G_{b}(\Q_p)$. 

Let $\pi$ be an irreducible representation of $\G'(\Q_p)$ whose $L$-parameter is given by $\displaystyle \bigoplus_{j \in \I(i)} \theta_j \otimes \phi_j $, where $\G' = \Aut(\OO^{m_i}(\lambda_i))$ and the $\theta_j$'s are unramified characters. Then by second adjointness theorem we see that
\[
 \Hom_{\G'}(\Ind_{\rP'}^{\G'}(\rho), \pi^{\vee}) = \Hom_{\M'}(\rho, r_{\overline{\rP'}}(\pi^{\vee})),
\]
where $r_{\overline{\rP'}}$ denotes the normalized Jacquet functor, $\pi^{\vee}$ is the contragredient of $\pi$; $\M' = \Aut(\OO(\lambda_i)^{m_i-m'_i}) \times \Aut(\OO(\lambda_i)^{m'_i}) $ and $\rP'$ is the parabolic subgroup of $\G'$ and which is the direct factor of $\rP$ and whose Levi factor is $\M'$ ; finally $\overline{\rP'}$ is the opposite parabolic of $\rP'$. Note that $\Ind_{\rP'}^{\G'}$ and $r_{\overline{\rP'}}$ are exact functors and $\Ind_{\rP'}^{\G'}$ preserves projective objects, we deduce that 

\[
 R\Hom_{\G'}(\Ind_{\rP'}^{\G'}(\rho), \pi^{\vee}) = R\Hom_{\M'}(\rho, r_{\overline{\rP'}}(\pi^{\vee})),
\]
by using the Hom-Tensor adjunction we see that
\[
 \Ind_{\rP'}^{\G'}(\rho) \otimes^{\mathbb{L}}_{\G'} \pi = \rho \otimes^{\mathbb{L}}_{\M'} (r_{\overline{\rP'}}(\pi^{\vee}))^{\vee},
\]
The constituents of $r_{\overline{\rP'}}(\pi^{\vee})$ are some representations of $\M'(\Q_p)$ whose $L$-parameter post-composed with the natural embedding $\widehat{\M'} \longrightarrow \widehat{\G'}$ is given by $\displaystyle \bigoplus_{j \in \I(i)} \theta_j \otimes \phi_j^{\vee} $. Note that the $\theta_j \otimes \phi_j$'s are pairwise distinct then the $L$-parameters of these representations of $\M'(\Q_p)$ are pairwise distinct. Now since there is no non-trivial extension between tempered representations with different cuspidal supports, we deduce that
\[
r_{\overline{\rP'}}(\pi^{\vee}) = \bigoplus_{A \in \A} \pi_{A}^{\vee}
\]
where $\A$ is the set of subsets $A$ of $\I(i)$ such that $ \displaystyle \sum_{j \in A} \dim\phi_j = \rank \OO(\lambda_i)^{m_i - m'_i}  $ and $\pi_A$ is the representation $\pi_{1,A} \times \pi_{2,A}$ of $\M'(\Q_p)$ whose $L$-parameter is given by $\displaystyle \bigoplus_{j \in A} \theta_j \otimes \phi_j \times \bigoplus_{j' \in \I(i) \setminus A} \theta_{j'} \otimes \phi_{j'}$. Therefore, we have
\[
\ind_{\rP'}^{\G'}(\rho) \otimes^{\mathbb{L}}_{\G'} \pi = \Ind_{\rP'}^{\G'}(\rho \otimes \delta^{-1/2}_{\mP'} ) \otimes^{\mathbb{L}}_{\G'} \pi = \bigoplus_{A \in \A} (\rho \otimes \delta^{-1/2}_{\mP'}) \otimes^{\mathbb{L}}_{\M'} \pi_A.
\]

Note that by abuse of notation, we will also use $\delta_{\mP'}$ the character of $\M$ obtained from $\delta_{\mP'}$ by extending trivially from $\M'$. Now by choosing $\rho$ to be the $\M'(\Q_p)$-part of $ \big( R\Gamma_c(\GL_m, \widetilde{b}, b_1, \mu_1) \times R\Gamma_c(\GL_{n-m}, b_2, b_2, \mu_2) \big) \times R\Gamma_c(\mathcal{J}^{U})$ and $\pi$ to be the $\G'(\Q_p)$-part of $ \delta^{1/2}_{b_{\chi}} \otimes \pi_{\chi} $, we deduce the following equality, similar to equation (\ref{itm : Boyertrick-cohomology}) with the same notations
\begin{align*}
& \quad \kappa^{-1} \otimes R\Gamma_c(\GL_n, b_{\chi}, b, \mu)[\delta^{1/2}_{b_{\chi}} \otimes \pi_{\chi}] [2d-h]\otimes|\cdot|^{\tfrac{-h}{2}} \\
& \simeq \bigoplus_{A \in \A} R\Gamma_c(\GL_m, \widetilde{b}, b_1, \mu_1)[\delta^{1/2}_{\chi | \G_{\widetilde{b}}} \delta^{-1/2}_{\mP'| \G_{\widetilde{b}}} \otimes \pi^A_{1,\chi}] \times  R\Gamma_c(\GL_{n-m}, b_2, b_2, \mu_2)[\delta^{1/2}_{\chi | \G_{b_2}} \delta^{-1/2}_{\mP'| \G_{b_2}}  \otimes \pi^A_{2,\chi}],
\numberthis \phantomsection \label{itm : second equation for the computation}
\end{align*} 
where for each $A \in \A$, $ \pi^A_{1, \chi} \times \pi^A_{2, \chi} $ is the representation of $\M(\Q_p)$ such that the $\Aut(\OO^{m_j}{\lambda_j})$-factor is given by $\pi_A$ if $j = i$ and is given by the $\Aut(\OO(\lambda_j)^{m_j})$-factor of $\pi_{\chi}$ if $i \neq j$.

This equation is an analogue of $(\ref{itm : Boyertrick-cohomology})$, now we can use the same arguments as in the case $m_i = m'_i$. We can show the vanishing results in exactly the same manner. More precisely, if $c \in \displaystyle \big(\I(i) \setminus A \big) \cup \big( \bigcup_{i < j \le k} \I(j) \big) $ then the $[\phi_c]$-isotypical part of $ \mathrm{H}^* (R\Gamma_c(\GL_n, b_{\chi}, b, \mu)[\delta^{1/2}_{b_{\chi}} \otimes \pi_{\chi}])$ is empty since $W_{\Q_p}$ acts trivially on $R\Gamma_c(\GL_{n-m}, b_2, b_2, \mu_2)$. In particular if $ c \in \big( \bigcup_{i < j \le k} \I(j) \big) $ then the $[\phi_c]$-isotypical part of $ \mathrm{H}^* \big( R\Gamma_c(\GL_n, b_{\chi}, b, \mu)[\delta^{1/2}_{b_{\chi}} \otimes \pi_{\chi}] \big) $ is empty.

If $b \neq b_{\chi \otimes \chi_c } $ then $i^*_{b'}C_{\chi'} \star \mathcal{F}_{\chi} \simeq 0$ as before. Now we suppose that $c \in \big( \bigcup_{1 < j \le i} \I(j) \big) $ and $ b = b_{\chi \otimes \chi_c } $ then we can show that $c \in \I(i)$ and $ \dim \phi_c = \rank \OO(\lambda_i)^{m_i - m'_i} $. In particular, if $ c \in A$ then $ A = \{ c \} $. Thus when taking the $[\phi_c]$-isotypical part we have
\begin{align*}
    & \quad \mathrm{H}^* \big( \kappa^{-1} \otimes R\Gamma_c(\GL_n, b_{\chi}, b, \mu)[\delta^{1/2}_{b_{\chi}} \otimes \pi_{\chi}][2d-h]\otimes|\cdot|^{\tfrac{-h}{2}} \big) [\phi_c] \\ 
%    &= \bigoplus_{A \in \A} \mathrm{H}^* \big( R\Gamma_c(\GL_m, \widetilde{b}, b_1, \mu_1)[\delta^{1/2}_{\chi | \G_{\widetilde{b}}} \delta^{-1/2}_{\mP'| \G_{\widetilde{b}}} \otimes\pi^A_{1,\chi}]\big)[\phi_c] \times \mathrm{H}^*\big( R\Gamma_c(\GL_{n-m}, b_2, b_2, \mu_2)[\delta^{1/2}_{\chi | \G_{b_2}} \delta^{-1/2}_{\mP'| \G_{b_2}}  \otimes \pi^A_{2,\chi}] \big) \\
%    &=\bigoplus_{A \in \A, \ c \in A} \mathrm{H}^* \big( R\Gamma_c(\GL_m, \widetilde{b}, b_1, \mu_1)[\delta^{1/2}_{\chi | \G_{\widetilde{b}}} \delta^{-1/2}_{\mP'| \G_{\widetilde{b}}} \otimes\pi^A_{1,\chi}]\big)[\phi_c] \times \mathrm{H}^*\big( R\Gamma_c(\GL_{n-m}, b_2, b_2, \mu_2)[\delta^{1/2}_{\chi | \G_{b_2}} \delta^{-1/2}_{\mP'| \G_{b_2}}  \otimes \pi^A_{2,\chi}] \big) \\
   &= \mathrm{H}^* \big( R\Gamma_c(\GL_m, \widetilde{b}, b_1, \mu_1)[\delta^{1/2}_{\chi | \G_{\widetilde{b}}} \delta^{-1/2}_{\mP'| \G_{\widetilde{b}}} \otimes \pi^{\{ c \}}_{1,\chi}]\big)[\phi_c] \times \mathrm{H}^* \big( R\Gamma_c(\GL_{n-m}, b_2, b_2, \mu_2)[\delta^{1/2}_{\chi | \G_{b_2}} \delta^{-1/2}_{\mP'| \G_{b_2}}  \otimes \pi^{\{c\}}_{2,\chi}] \big).    
\end{align*}

We note that $\delta^{1/2}_{\chi | \G_{\widetilde{b}}} \delta^{-1/2}_{\mP'| \G_{\widetilde{b}}} \otimes \pi^A_{1,\chi} \otimes \delta_{\widetilde{b}}^{-1} = \delta^{1/2}_{\chi | \G_{\widetilde{b}}} \delta^{-1/2}_{\mP'| \G_{\widetilde{b}}} \delta_{\widetilde{b}}^{-1/2} \otimes \pi^A_{1,\chi} \otimes \delta_{\widetilde{b}}^{-1/2}$ and the twist by the character $\delta^{1/2}_{\chi | \G_{\widetilde{b}}} \delta^{-1/2}_{\mP'| \G_{\widetilde{b}}} \delta_{\widetilde{b}}^{-1/2}$ corresponding to the twist by $|\cdot|^{\tfrac{m-n}{2}}$ for the $L$-parameter of $\pi^A_{1,\chi} \otimes \delta_{\widetilde{b}}^{-1/2}$. This twist compensates the twist $|\cdot|^{\tfrac{-h}{2}}$ on the left hand side of the equation (\ref{itm : second equation for the computation}).

Now, using the same computations as in the case $m_i = m_{i'}$ we can show that
\[
i^*_bC_{\chi'} \star \mathcal{F}_{\chi} \simeq i^*_b \mathcal{F}_{\chi \otimes \chi_{c}},
\]
and we are done.
\end{proof}

\section{Proof of the first main theorem} \textbf{} \phantomsection \label{itm : proof of the first main theorem}
 
 This section is devoted to the proof of theorem \ref{itm : main theorem}. By \cite[theorem 6.3]{KH20}, it is enough to prove the theorem for $\chi \in \Irr(S_{\phi})^+ := \{ (d_1, \dotsc, d_r) \in \Irr(S_{\phi}) \ | \ d_i \ge 0 \ \forall \ 1 \le i \le r \}$. We will proceed by induction on $r$. One can deduce the case $r=1$ by \cite[theorem 1.5]{Han} and lemma \ref{shimhecke}. Thus we mainly focus on proving the case $r \ge 2$. The arguments are rather technical and complicated. However it is greatly simpler if we suppose further that $\phi$ is a direct sum of characters. In particular, under this assumption, the slopes of the Newton point $\nu_{b_{\chi}}$ are all integer for $\chi \in \Irr(S_{\phi}) $. The readers who only want to grab the main ideas could impose that condition in the rest of this section.
% \cite{Far20}, \cite{Zou},    

Suppose that the theorem is true for $1, \dotsc ,r-1$ then we show that it is true for $r$. Now we proceed by induction on $\displaystyle D =  \sum_{i = 1}^r d_i =: |\chi| $. For $D = 1$, one could use lemmas \ref{itm : first computation}, \ref{itm : second computation} to verify the theorem. However, we will use the known cases of Harris-Viehmann's conjecture to show the base case. Then we suppose that the theorem is true for $ D < s $ and prove the case $D = s$. By a direct application of proposition \ref{generalized Boyer's trick} and lemmas \ref{itm : first computation}, \ref{itm : second computation}, we can show that $i^*_{b_{\chi}}C_{\chi} \star \mathcal{F}_{\Id} = \mathcal{F}_{\chi}$. The most ad hoc part of the proof is to show that the restrictions of $C_{\chi} \star \mathcal{F}_{\Id}$ to other strata vanish. 

In this section, we fix a maximal split torus and a Borel subgroup $\T \subset \rB \subset \GL_n$. For $b, b' \in B(\GL_n)$, we have $ \nu_b = (-\nu_{\E_b})_{\rm dom} $ and $ \nu_{b'} = (-\nu_{\E_{b'}})_{\rm dom} $. Thus $b$ is smaller than $b'$ with respect to the usual partial order in $B(\GL_n)$ if and only if $\nu_{\E_b}$ is smaller than $ \nu_{\E_{b'}} $ with respect to the usual partial order in $X_{*}(\T)_{\Q}$. We always use the partial order when we compare elements $b, b' \in B(\GL_n)$ and elements $\nu_{\E_b}$, $\nu_{\E_{b'}}$ in $X_{*}(\T)_{\Q}$.

\subsection{The base case} \textbf{} \label{itm : the base case} \textbf{}  \\

     We prove the case $D = 1$. For $1 \le i \le r$, let $\chi_i = (d_1, \dotsc, d_r)$ be the character such that $d_i = 1$ and $d_j = 0$ for $ 1 \le j \neq i \le r $. We will show that $ C_{\chi_i^{-1}} \star \mathcal{F}_{\chi_i} = \mathcal{F}_{\Id} $. Then by the monoidal property and the fact that $C_{\Id} \star \mathcal{F}_{\chi} = \mathcal{F}_{\chi} $ for all $\chi$, we deduce that $ C_{\chi_i} \star \mathcal{F}_{\Id} = \mathcal{F}_{\chi_i} $. 
    
    Let $ \mu = (1, 0^{(n-1)}) $. Fix $ 1 \le i \le r $, we have $ \E_{b_{\chi_i}} \simeq \OO(1/n_i) \oplus \OO^{n-n_i} $.  By the Harris-Viehmann conjecture (or proposition \ref{generalized Boyer's trick}) for $\Sht(\GL_n, b_{\chi_i}, \mu)$ and \cite{HT}, \cite[theorem 1.2]{AL}, \cite[theorem 1.5]{Han}, we see that
    \[
     \Sht(\GL_n, b_{\chi_i}, \mu) = \Big( \Sht(\GL_{n-n_i}, 1_{\GL_{n-n_i}}, \Id) \times_{\Spd \breve{\Q}_p} \Sht(\GL_{n_i}, b_1, \mu_1) \times_{\Spd \breve{\Q}_p} \widetilde{\G}^0_{b_{\chi_i}} \Big) \times^{\underline{\rP}} \underline{\GL_n}.
     \]
    where $1_{\GL_{n-n_i}}$ is the trivial element in $\GL_{n-n_i}(\breve{\Q}_p)$, $\E_{b_1} \simeq \OO(1/n_i)$, $ \mu_1 = (1, 0^{(n_i - 1)}) $ is a cocharacter of $\GL_{n_i}$ and $\rP$ is the standard parabolic subgroup of $\GL_n$ whose Levi factor is $\GL_{n_i} \times \GL_{n - n_i}$. By taking the cohomology and lemma \ref{itm : kappa and modulus}, we have  
    \begin{align*} 
       & \ \quad R\Gamma_c(\GL_n, b_{\chi_i}, \mu)  [\delta^{1/2}_{b_{\chi_i}} \otimes \pi_{\chi_i} ] \\
%       &\simeq R\Gamma_c(\GL_n, b_{\chi_i}, \mu)  [\delta^{-1/2}_{b_{\chi_i}} \otimes \pi_{\chi_i}] \\
       &\simeq \ind^{\GL_n}_{\rP}\Big( R\Gamma_c(\GL_{n-n_i}, b_1, \Id)[\delta^{-1/2}_{b_{\chi_i} | \GL_{n-n_i}} \otimes \pi] \boxtimes R\Gamma_c(\GL_{n_i}, b_1, \mu_1)[ \delta^{-1/2}_{b_{\chi_i}| \G_{b_1}} \otimes \pi^{b_1}_1] \Big) [n_i-n] \otimes |\cdot|^{\tfrac{n-n_i}{2}} \numberthis \label{itm : auxillaire} \\ 
       &\simeq \ind^{\GL_n}_{\rP}\Big( \big( \delta^{1/2}_{\rP | \GL_{n-n_i}} \otimes \pi \big) \boxtimes \big( \delta^{1/2}_{\rP | \GL_{n_i}} \otimes \pi_1 \big) \Big) \boxtimes \phi_i^{\vee} [n_i-n] \quad   \\
       &\simeq \pi_{\Id} \boxtimes \phi_i^{\vee} [n_i-n] \numberthis \label{itm : first equation}
    \end{align*}
    as complexes of $ \GL_n(\Q_p) \times W_{\Q_p} $-modules where $\pi$ is the representation of $\GL_{n-n_i}(\Q_p)$ whose $L$-parameter is given by $ \displaystyle \bigoplus_{1 \le j \neq i \le r} \phi_j$. The representations $\pi^{b_1}_1$ resp. $\pi_1$ are supercuspidal representations of $\G_{b_1}(\Q_p)$, resp. $\GL_{n_i}(\Q_p)$ whose $L$-parameter is given by $\phi_i$ and $\vee$ denotes the contragredient representation. In particular, we see that $ \pi_{\chi_i} \simeq \pi \boxtimes \pi^{b_1}_1 $. Moreover, by remark (\ref{itm : opposite parabolic group}), we see that we use the opposite parabolic subgroup $\rP^{op}$ to define the modulus character $\delta_{b_{\chi_i}}$, thus $\delta_{b_{\chi_i}}$ and $\delta^{-1}_{\rP}$ have the same $L$-parameters. 
    
      Note that in equation (\ref{itm : auxillaire}) the cohomology of the connected component of the automorphisms group $\widetilde{G}_{b_{\chi_i}}^0$ contributes a shift $[-2(n-n_i)]$ and a twist $\delta^{-1}_{b_{\chi_i}}$ (as right $\G_{b_{\chi_i}}(\Q_p)$ module), the normalization in the sheaves $\mathcal{S}_{\mu}$ and $\mathcal{S}_{\mu_1}$ contributes a twist $|\cdot|^{\tfrac{n-n_i}{2}}$ and a shift $[n-n_i]$, hence a shift $[n_i-n]$ and a twist $|\cdot|^{\tfrac{n-n_i}{2}}$. There is no Tate twist in the third line since the modulus character $\delta^{-1/2}_{b_{\chi_i}| \G_{b_1}}$ contributes a twist $|\cdot|^{\tfrac{n_i-n}{2}}$ when taking the cohomology of local Shimura variety, more precisely by the case $r=1$ we have $\Sht(\GL_{n_i}, b_1, \mu_1)[ \delta^{-1/2}_{b_{\chi_i}| \G_{b_1}} \otimes \pi^{b_1}_1] \simeq \delta^{1/2}_{\rP | \GL_{n_i}} \otimes \pi_1 \boxtimes \phi_i^{\vee}(-\tfrac{n-n_i}{2})$.

    However $ \displaystyle r_{\mu^{-1}} \circ \phi = \bigoplus_{j=1}^r \chi^{-1}_j \boxtimes \phi_j^{\vee} $ and $ \mathcal{F}_{\chi_i} := i_{b_{\chi_i} !} (\delta^{-1/2}_{b_{\chi_i}} \otimes \pi_{\chi_i}) [n_i-n] $ then by lemma \ref{shimhecke} we deduce that
    \begin{align*}
        R\Gamma_c(\GL_n, b_{\Id}, b_{\chi_i}, \mu) [ \delta^{1/2}_{b_{\chi_i}} \otimes \pi_{\chi_i}][n-n_i] \simeq i_1^*\T_{\mu^{-1}}(\mathcal{F}_{\chi_i}) \simeq \bigoplus_{j=1}^r i^*_1 C_{\chi^{-1}_j} \star \mathcal{F}_{\chi_i} \boxtimes \phi_j^{\vee}.
    \end{align*}
    
     By using equation (\ref{itm : first equation}), we see that the left hand side is isomorphic to $ \mathcal{F}_{\Id} \boxtimes \phi_i^{\vee}$. By identifying the $W_{\Q_p}$-action, we deduce that
     \[
     i_1^* C_{\chi_i^{-1}} \star \mathcal{F}_{\chi_i} \simeq \mathcal{F}_{\Id},
     \]
and for $ j \neq i$, we have 
\begin{align} \phantomsection \label{itm : restriction to 1}
  i_1^* C_{\chi_j^{-1}} \star \mathcal{F}_{\chi_i} \simeq 0.  
\end{align}

     To show that $C_{\chi_i^{-1}} \star \mathcal{F}_{\chi_i} \simeq \mathcal{F}_{\Id}$, we need to show that the restrictions of $C_{\chi_i^{-1}} \star \mathcal{F}_{\chi_i}$ to other $HN$-strata are zero. Thus we need to compute the image of $b_{\chi_i}$ by some modifications of type $\mu^{-1}$. The set of images is given by $ \Big\{ \OO^n, \ \E_{b_{m, m'}} := \OO(1/n_i) \oplus \OO^m \oplus \OO(-1/m') \ | \ n_i + m + m' = n \Big\}$ by example \ref{itm : image modif}. 

     Thus we need to compute $R\Gamma_c(\GL_n, b_{\chi_i},b_{m, m'} , \mu)$ for $m' \neq 0$. To do this, we use the same arguments as in lemmas \ref{itm : first computation}, \ref{itm : second computation}. Hence we have
    \[
    i_{b_{m,m'}}^* C_{\chi_i^{-1}} \star \mathcal{F}_{\chi_i} = 0.
    \]

\subsection{Induction step I : Restriction of $C_{\chi} \star \mathcal{F}_{\Id}$ to the stratum $b_{\chi}$} \textbf{} \\
     
\textit{Suppose that the theorem is true for $ D = 1, \dotsc, s-1   $. Now consider the case $D = s$.} Let $ \chi = (d_1, \dotsc, d_r) $ be an element in $\Irr(S_{\phi})$ such that the $d_i$'s are non-negative and $\displaystyle \sum_{i = 1}^r d_i = s$. 

Suppose that we have $ \E_{b_{\chi}} = \OO(\lambda_1)^{m_1} \oplus \dotsc \oplus \OO(\lambda_k)^{m_k} $ with $ \lambda_1 > \lambda_2 > \dotsc > \lambda_k \ge 0 $ and $ m_i > 0 $ for $ 1 \le i \le k $. By the construction of $b_{\chi}$, there exists for each $ 1 \le i \le k $, a non-empty subset $\I(i)$ of $\{ 1, 2, \dotsc, r \}$ such that for $j \in \I(i)$, we have $ d_j/n_j = \lambda_i $. We denote by $r(\chi)$ the number $ \mathrm{min} \{ \dim \phi_j \ | \ j \in \I(k) \}$.  \\

    We show that $ i^*_{b_{\chi}} C_{\chi} \star \mathcal{F}_{\Id} = \mathcal{F}_{\chi} $. \\

     We suppose that $\lambda_k = 0$. The case $ \lambda_k > 0 $ is simpler and can be treated similarly. \\
     
     In this case $\lambda_{k-1} > 0$. We consider a character $\chi' = (d_1', \dotsc, d_r')$ where for one index $j^* \in \I(k-1)$ we have $ d_{j^*}' = d_{j^*} - 1 $ and $ d_m' = d_m $ for all other $r-1$ indexes. In particular the $d_j$'s are non-negative and $ \displaystyle \sum_{i=1}^k d'_i = s-1 $. By construction $ \E_{b_{\chi'}} = \OO(\lambda_1)^{m_1} \oplus \dotsc \oplus \OO(\lambda_{k-1})^{m_{k-1} - t} \oplus \OO(\lambda'_k)^{t'} \oplus \OO(\lambda_k)^{m_k} $ where $ \lambda'_{k} = \tfrac{d_{j^*}'}{n_{j^*}}$; $ t = \gcd(d_{j^*}, n_{j^*}) $ and $t' = \gcd( d_{j^*}', n_{j^*} )$. Moreover we have $\chi = \chi_{j^*} \otimes \chi'$. Thus by monoidal property we have
     \[
     C_{\chi} \star \mathcal{F}_{\Id} = C_{\chi_{j^*}} \star ( C_{\chi'} \star \mathcal{F}_{\Id} ).
     \]

     By the induction hypothesis (on D) for $\chi'$ we see that $ C_{\chi'} \star \mathcal{F}_{\Id} = \mathcal{F}_{\chi'} $. Thus we need to compute $ i^*_{b_{\chi}} C_{\chi_{j^*}} \star \mathcal{F}_{\chi'} $. We can do it by computing $ R\Gamma_c(\GL_n, b_{\chi'}, b_{\chi}, \mu )[\delta^{1/2}_{b_{\chi'}} \otimes \pi_{\chi'}][2d_{b_{\chi'}}] $ where $\mu = (1, 0^{(n-1)})$. The triple $(b_{\chi'}, b_{\chi}, \mu)$ satisfies the hypothesis of corollary \ref{coro of generelized Boyer's trick} and lemma \ref{itm : second computation} then we deduce that
     \[
    i^*_{b_{\chi}} C_{\chi_{j^*}} \star \mathcal{F}_{\chi'} = \mathcal{F}_{\chi}.
    \]

\textit{Now we need to show that the restrictions of $C_{\chi} \star \mathcal{F}_{\Id}$ to other strata vanish.} \textbf{} \\

\subsection{Induction step II : Vanishing of the restrictions of $C_{\chi} \star \mathcal{F}_{\Id}$ to other strata}  \phantomsection \label{itm : positive slopes} \textbf{} \\

 From now on we suppose that $C_{\chi} \star \mathcal{F}_{\Id} \not\simeq \mathcal{F}_{\chi} $ and try to find a contradiction. 

\subsubsection{The case where $\lambda_k > 0$} \textbf{}  \\

We suppose $\lambda_k > 0$ in this paragraph. 

In the last subsection, we associated a natural number $r(\chi)$ to $\chi$. Let $C$ denotes the set of characters $\xi \in \Irr(S_{\phi})^+$ such that $\E_{b_{\xi}}$ is a direct sum of semi-stable vector bundles of strictly positive slopes, $C_{\xi} \star \mathcal{F}_{\Id} \not\simeq \mathcal{F}_{\xi} $ and $|\xi| = s$. Since $C$ is finite, we can assume that $r(\chi) = \mathrm{min}\{ r(\xi) \ | \ \xi \in C \} $.
      
Consider  a character $\chi' = (d_1', \dotsc, d_r')$ where $ d_{j^*}' = d_{j^*} - 1 $ for one index $j^* \in \I(k)$ such that $\dim \phi_{j^*} = r(\chi)$ and $ d_i' = d_i $ for all other $r-1$ indexes. Hence $ \E_{b_{\chi'}} = \OO(\lambda_1)^{m_1} \oplus \dotsc \oplus \OO(\lambda_{k-1})^{m_{k-1}} \oplus \OO(\lambda_k)^{m_k - t} \oplus \OO(\lambda'_k)^{t'} $ where $ t = \gcd(d_j, n_j)$; $t' = \gcd(d_{j^*}', n_{j^*})$ and $ \lambda'_k = \tfrac{d_{j^*}'}{n_{j^*}}$. 
     
     We have $ \chi = \chi_{j^*} \otimes \chi' $ and then $ C_{\chi} \star \mathcal{F}_{\Id} = C_{\chi_{j^*}} \star \mathcal{F}_{\chi'} $. Thus the set of strata where the restriction of $C_{\chi} \star \mathcal{F}_{\Id}$ is non null is a subset of the strata indexed by images of the modification of $ \E_{b_{\chi'}} $ of type $\mu = (1, 0^{(n-1)}) $. Moreover, by proposition \ref{itm : shape of strata} and the fact that the formalism of Fargues-Scholze parameter and the restrictions to Harder-Narasimhan strata commute \cite[Proppsition 3.14, Corollary 3.15]{Ham}, we deduce that these strata are of the form $b_{\xi}$ for some $\xi \in \Irr(S_{\phi})$.
   
     Since $C_{\chi} \star \mathcal{F}_{\Id} \neq \mathcal{F}_{\chi} $, we can choose an element $ b \neq b_{\chi} \in B(\GL_n) $ such that $i^*_bC_{\chi} \star \mathcal{F}_{\Id}$ is non trivial. Thus there is a modification of type $\mu$
     \[
     f : \E_{b_{\chi'}} \longrightarrow \E_b.
     \]

    %Since the set of $b$ such that there is such a modification $f$ is finite, we can assume that $b$ is maximal with respect to the usual partial order of $B(\GL_n)$ among the strata where the restriction of $C_{\chi} \star \mathcal{F}_{\Id}$ is non trivial. 
    Remark that if the biggest slope of $\nu_{\E_b}$ equals to $\lambda_1$ then we can apply lemmas \ref{itm : first computation}, \ref{itm : second computation} to the triple $\E_{b_{\chi'}}$, $\E_b$ and $\mu = (1, 0^{(n-1)})$ to compute $i^*_bC_{\chi} \star \mathcal{F}_{\Id}$ and check that if $ b \neq b_{\chi} \in B(\GL_n) $ then the restriction $i^*_bC_{\chi} \star \mathcal{F}_{\Id}$ is trivial, which is a contradiction. Hence we deduce that the biggest slope of $\nu_{\E_b}$ is not equal to $\lambda_1$.

    Consider the filtration $\OO(\lambda_1)^{m_1} \oplus \dotsc \oplus \OO(\lambda_{k-1})^{m_{k-1}} \oplus \OO(\lambda_k)^{m_k - t} \subset \E_{b_{\chi'}} $. Denote by $\rP$ the standard parabolic subgroup corresponding to this filtration and denote by $\M$ its Levi factor. Hence $ \M \simeq \GL_{n - n_{j^*}} \times \GL_{n_{j^*}} $. The modification $\mu$ induces a filtration $ \E \subset \E_b $ of $ \E_b $. The knowledge of this filtration and that of extension between vector bundles allows us to understand $b$.

     The $\M$-bundle corresponding to the filtration of $\E_{\chi'}$ is $ \OO(\lambda_1)^{m_1} \oplus \dotsc \oplus \OO(\lambda_{k-1})^{m_{k-1}} \oplus \OO(\lambda_k)^{m_k - t} \times \OO(\lambda'_k)^{t'} $. The modification $f$ induces modifications
     \[
     f_1 : \OO(\lambda_1)^{m_1} \oplus \dotsc \oplus \OO(\lambda_{k-1})^{m_{k-1}} \oplus \OO(\lambda_k)^{m_k - t} \longrightarrow \E
     \]
     and 
     \[
     f_2 : \OO(\lambda'_k)^{t'} \longrightarrow \E_{b} / \E =: \E'.
     \]
     of type $\mu_1$ and $\mu_2$ respectively. By \cite[lemma 2.6]{CFS}, \cite[lemma 3.11]{Vie}, there are $2$ possibilities: 

     \textbf{Case 1}: $\mu_1 = (1, 0^{(n-n_{j*}-1)}) $ and $\mu_2 = (0^{(n_{j*})})$.
     
     Thus $\E' \simeq \OO(\lambda'_k)^{t'}$ and in particular, all the slopes of $\nu_{\E}$ are strictly bigger than $\lambda'_k$ (even strictly bigger than $\lambda_{k}$ if $|\I(k)| = 1$). Since $H^1(\mathcal{O}(\lambda)) = 0$ if $\lambda \ge 0$ we deduce that $ \E_b = \E \oplus \OO(\lambda'_k)^{t'} $. 

     Thus we can apply lemma \ref{itm : first computation} to the triple $\E_{b_{\chi'}}$, $\E_b$ and $\chi_{j^*}$ to deduce that if $b \neq b_{\chi} \in B(\GL_n)$ then
     \[
     i^*_b C_{\chi_{j^*}} \star \mathcal{F}_{\chi'} = i^*_b C_{\chi} \star \mathcal{F}_{\Id} \simeq 0,
     \]
     which is a contradiction as above.

     %Thus we can compute $R\Gamma_c(\GL_n, b_{\chi'}, b, \mu)[\pi_{\chi'}] $ by using the corollary \ref{coro of generelized Boyer's trick} and the induction hypothesis for $ n - n_{j^*} $. By the same arguments near the end of the base case, we show that the resulting complex as $W_{\Q_p}$-module complex does not contain any subquotient-module isomorphic to $\phi_{j^*}$. Thus the identification of $W_{\Q_p}$-modules
     %\[
     %i^*_{b} \T_{\mu} (\mathcal{F}_{\chi'}) = \bigoplus_{i=1}^r i^*_{b} C_{\chi_i} \star (\mathcal{F}_{\chi'}) \boxtimes \phi_i
     %\]
     %implies that $ i^*_b C_{\chi_{j^*}} \star \mathcal{F}_{\chi'} = i^*_b C_{\chi} \star \mathcal{F}_{\Id} $ vanishes.

     \textbf{Case 2}: $\mu_1 = (0^{(n-n_{j*})}) $ and $\mu_2 = (1, 0^{(n_{j*} - 1)})$. 
     
     In particular $\E \simeq \OO(\lambda_1)^{m_1} \oplus \dotsc \oplus \OO(\lambda_{k-1})^{m_{k-1}} \oplus \OO(\lambda_k)^{m_k - t} $. Since $\OO(\lambda'_k)^{t'}$ is semi-stable, we can compute $\E'$ explicitly. If $\E'$ is a semi-stable vector bundle then it is semi-stable of slope $\lambda_k$ and therefore we find out that $ b = b_{\chi} $ and we are done. If it is not the case we still see that $ \E' = \E_x \oplus \E'' $ where $\E''$ is a semi-stable vector bundle of slope $ \lambda $ such that $\lambda_k > \lambda \ge 0$. Moreover if $h$ is a slope of $\nu_{\E_x}$ then $\lambda < h \le \lambda + 1$. 
     \begin{lemma}
         We have $\lambda > 0$.
     \end{lemma}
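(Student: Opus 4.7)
The plan is to argue by contradiction: suppose $\lambda = 0$, so that $\E''$ is a sum of copies of $\OO$, say $\E'' \simeq \OO^a$. The first step is to upgrade $\E''$ from a quotient of $\E_b/\E$ to a direct summand of $\E_b$. Extracting from $0 \to \E \to \E_b \to \E_x \oplus \E'' \to 0$ the sub-extension $0 \to \E \to \pi^{-1}(\E'') \to \E'' \to 0$, one has $\Ext^1(\E'', \E) = H^1(X, \E^{\oplus a})$. Since every slope of $\E$ is at least $\lambda_k > 0$, this group vanishes, and consequently $\E_b$ contains $\OO^a$ as a direct summand.

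By Proposition \ref{itm : shape of strata}, $b = b_\xi$ for some $\xi = (e_1, \dotsc, e_r) \in \Irr(S_\phi)^+$. The presence of an $\OO$-summand in $\E_{b_\xi}$ then forces $e_{j_0} = 0$ for at least one index $j_0$, and the entire slope-$0$ part of $\E_{b_\xi}$ has rank exactly $\sum_{j : e_j = 0} n_j = a$.

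I would then apply the generalized Boyer trick (Corollary \ref{coro of generelized Boyer's trick}) to the triple $(b_{\chi'}, b, \mu)$, splitting off the $\OO^a$ summand of $\E_b$. This expresses $R\Gamma_c(\GL_n, b_{\chi'}, b, \mu)[\pi_{\chi'}]$ (up to parabolic induction, shifts, and twists) as a tensor product of cohomologies of local shtuka spaces for $\GL_m$ with $m < n$, whose associated $L$-parameters involve only a proper sub-collection of the $\phi_i$'s. By the outer induction hypothesis on $r$ (theorem \ref{itm : main theorem} at strictly smaller rank), the $[\phi_{j^*}]$-isotypical part of the resulting expression can be described explicitly. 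The crucial point is that the irreducible constituents surviving this reduction arise from parameters built out of the $\phi_j$ with $e_j = 0$, whereas $\phi_{j^*}$ indexes a constituent of the smallest-slope piece of $\E_{b_\chi}$; by hypothesis (A1) these are distinct and not unramified twists of one another, so the $[\phi_{j^*}]$-isotypical contribution vanishes. Hence $i^*_b C_\chi \star \mathcal{F}_{\Id} = i^*_b C_{\chi_{j^*}} \star \mathcal{F}_{\chi'} = 0$, contradicting the hypothesis that $i^*_b C_\chi \star \mathcal{F}_{\Id}$ is nontrivial.

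The main obstacle will be verifying that Corollary \ref{coro of generelized Boyer's trick} applies cleanly: its hypotheses require compatible sub-bundle decompositions on both sides, with the degree shift absorbed into a sub-cocharacter of $\mu$. In the edge case $d_{j^*} = 1$, where $\lambda'_k = 0$ and $\E_{b_{\chi'}}$ itself already contains $\OO$-summands, one must carefully decide which slope-$0$ components are matched across the modification, and may need to combine Proposition \ref{generalized Boyer's trick} and Corollary \ref{coro of generelized Boyer's trick} in sequence to isolate the relevant contribution before the induction hypothesis can be applied.
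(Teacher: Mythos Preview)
Your strategy—assume $\lambda=0$, exhibit an $\OO$-summand in $\E_b$, then use Boyer's trick to force $i^*_b C_{\chi_{j^*}}\star\mathcal F_{\chi'}=0$—is the paper's strategy as well, but your execution has gaps. A small one first: splitting $0\to\E\to\pi^{-1}(\E'')\to\E''\to0$ only gives a \emph{section} $\OO^a\hookrightarrow\E_b$, not a summand (think of $\OO\hookrightarrow\OO(1)$). You need the other preimage $\E_b'':=\pi^{-1}(\E_x)$: then $\E_b/\E_b''\simeq\OO^a$, and $\E_b''$, being an extension of positive-slope bundles, has only positive slopes, so $\Ext^1(\OO^a,\E_b'')=0$ and $\E_b=\E_b''\oplus\OO^a$.

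More substantively, what you call the ``edge case'' $\lambda'_k=0$ is in fact the \emph{only} case. If $\lambda'_k>0$ then the composite $\OO(\lambda'_k)^{t'}\hookrightarrow\E'\twoheadrightarrow\OO^a$ vanishes (no nonzero maps from positive slope to slope $0$), so $\OO(\lambda'_k)^{t'}$ would inject into $\E_x$ of strictly smaller rank, which is impossible. Hence $d_{j^*}=1$, the slope-$0$ part of $\E_{b_{\chi'}}$ is $\OO^{n_{j^*}}$, and that of $\E_b$ is $\OO^{\ell_1}$ with $0<\ell_1<n_{j^*}$. The paper now simply invokes Lemma~\ref{itm : first computation} (with $\chi'$ in the role of that lemma's $\chi$). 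If you unpack it, the right tool is Proposition~\ref{generalized Boyer's trick}, not Corollary~\ref{coro of generelized Boyer's trick} (the tops of $\E_{b_{\chi'}}$ and $\E_b$ do not match), and the parabolic induction lands on $\G_{b_{\chi'}}$. Undoing it by second adjointness applies a Jacquet functor to the $\GL_{n_{j^*}}$-factor of $\pi_{\chi'}$, which is \emph{supercuspidal} with parameter $\phi_{j^*}$; since $0<\ell_1<n_{j^*}$ the parabolic is proper and this Jacquet module is zero. Everything dies there—the outer induction on $r$ is never used, and your account of surviving constituents in terms of the indices $j$ with $e_j=0$ (which refer to the target $b=b_\xi$, whereas the Jacquet module is taken on the source side) does not reflect what the reduction actually produces.
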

     \begin{proof}
         Indeed, if $\lambda = 0$ then $\lambda'_k = 0$ and therefore $\E_b \simeq \E_y \oplus \OO^{\ell_1}$ for some vector bundle $\E_y$ such that the slopes of $\nu_{\E_y}$ are strictly positive and $\ell_1 < n_{j*} $. We also see that $\OO^{n_{j*}}$ is a direct factor of $\E_{\chi'}$. Therefore we can apply lemma \ref{itm : first computation} to triple $\E_{\chi'}$, $\E_b$ and $\chi_{j^*}$ to deduce that
         \[
         i^*_bC_{\chi_{j*}} \star \mathcal{F}_{\chi'} \simeq 0,
         \]
     a contradiction. Hence we have $\lambda > 0$.    
     \end{proof}
     By the property of the Harder-Narasimhan filtration \cite[Theorem A.5]{NV}, we see that $\nu_{\E_b}$ is bounded below by the concatenation $ \nu_{\E} \oplus \nu_{\E'} $ of $\nu_{\E}$ and $\nu_{\E'}$. By \cite[lemma 2.9]{MC} we also have $ \nu_{\E_b} \le \nu_{\E \oplus \E'} $. Therefore $\E_b \simeq \E_y \oplus \E''$ where $\E_y$ is a vector bundle such that if $h$ is a slope of $\nu_{\E_y}$ then $ h > \lambda$. Moreover we already know that if $h_{\rm max}$ is the biggest slope of $\nu_{\E_y}$ then $h_{\rm max} \neq \lambda_1$. Therefore $ h_{\rm max} > \lambda_1 $ and we deduce that $b$ is not smaller than $b_{\chi}$ with respect to the usual order in $B(\GL_n)$.
     
     Now we see that by proposition \ref{itm : shape of strata}, there exists $\xi = (a_1,\dotsc, a_r) \in \Irr(S_{\phi})^+$ such that $ b = b_{\xi} $. For any such $\xi$, all the slopes of $\nu_{\E_{b_{\xi}}} = \nu_{\E_b}$ are strictly positive and $r(\xi) < r(\chi) $ (since $\rank \E'' < \rank \E' = r(\chi)$). Therefore
     \[
     C_{\xi} \star \mathcal{F}_{\Id} \simeq \mathcal{F}_{\xi}.
     \]

     We know that the spectral action preserves compact objects and ULA objects then $ i^*_b C_{\chi} \star \mathcal{F}_{\Id} $ is a ULA object and moreover all of its Schur-irreducible constituents have (Fargues-Scholze) $L$-parameter given by $\phi$. Therefore, up to replacing $\xi$ by another $\xi'$ such that $b_{\xi} = b_{\xi'} = b$ and up to some shift, we can suppose that there is a non-trivial morphism 
     \[
     g_1 : i_{b !}i^*_b C_{\chi} \star \mathcal{F}_{\Id} \longrightarrow \mathcal{F}_{\xi},
     \]
     more precisely, we can construct $g_1$ from a quotient of the highest degree non-trivial cohomology group of $i_{b !}i^*_b C_{\chi} \star \mathcal{F}_{\Id}$.

     Since the set $S_{\rm supp} \subset B(\GL_n) $ of strata where the restriction of $C_{\chi} \star \mathcal{F}_{\Id}$ is non trivial is finite, there exists maximal elements in $S_{\rm supp}$ with respect to the restriction of the usual order in $B(\GL_n)$. Since $b$ is not smaller than $b_{\chi}$ with respect to that partial order, we can suppose that $b$ is a maximal element in $S_{\rm supp}$. Hence, by applying the excision exact triangle to the closed embedding $  \Bun_n^{ \ge b}  \longrightarrow \Bun_n $, we deduce that there is a non trivial morphism
     \[
     g_2 : C_{\chi} \star \mathcal{F}_{\Id} \longrightarrow i_{b !}i^*_b C_{\chi} \star \mathcal{F}_{\Id} \xrightarrow{\ g_1 \ } \mathcal{F}_{\xi} \simeq C_{\xi} \star \mathcal{F}_{\Id}.
     \]

     Recall that $\lambda$ is the smallest slope of $\E_{b_{\xi}}$ and there exists an index $\widetilde{j}$ such that $ \tfrac{a_{\widetilde{j}}}{n_{\widetilde{j}}} = \lambda $. Since $\lambda > 0$ we see that $ \chi, \xi \in \Irr(S_{\phi})^{> 0} := \{ (c_1, \dotsc, c_r) \ | \ c_i > 0 \}$. However for an arbitrary character $\theta \in \Irr(S_{\phi})$, $C_{\theta} \star( C_{\theta^{-1}} \star (-)) = C_{\theta^{-1}} \star( C_{\theta} \star (-)) = C_{\Id} \star (-) $ is the identity functor of $ \Dlis^{[C_{\phi}]}(\Bun_n, \overline{\Q}_{\ell})^{\omega} $, we deduce that $C_{\theta^{-1}}(-)$ is an auto-equivalence of $ \Dlis^{[C_{\phi}]}(\Bun_n, \overline{\Q}_{\ell})^{\omega} $. Thus by applying the auto-equivalence $ C_{\chi^{-1}_{\widetilde{j}}} $ to $g_2$ we get a non trivial morphism  
     \[
     g : C_{\chi \otimes \chi^{-1}_{\widetilde{j}}} \star \mathcal{F}_{\Id} \longrightarrow C_{\xi \otimes \chi^{-1}_{\widetilde{j}}} \star \mathcal{F}_{\Id}.
     \]

     The characters $\chi \otimes \chi^{-1}_{\widetilde{j}}$ and $\xi \otimes \chi^{-1}_{\widetilde{j}}$ belong to $\Irr(S_{\phi})^+$ then by the induction hypothesis (on $D$), we have $C_{\chi \otimes \chi^{-1}_{\widetilde{j}}} \star \mathcal{F}_{\Id} \simeq \mathcal{F}_{\chi \otimes \chi^{-1}_{\widetilde{j}}} $ and $C_{\xi \otimes \chi^{-1}_{\widetilde{j}}} \star \mathcal{F}_{\Id} \simeq \mathcal{F}_{\xi \otimes \chi^{-1}_{\widetilde{j}}} $. By the choice of $\widetilde{j}$, the biggest slope of $\E_{b_{\xi \otimes \chi^{-1}_{\widetilde{j}}}}$ is still $h_{\rm max}$ and the biggest slope of $\E_{b_{\chi \otimes \chi^{-1}_{\widetilde{j}}}}$ is not bigger than $\lambda_1$. In particular, $b_{\xi \otimes \chi^{-1}_{\widetilde{j}}}$ is not smaller than $b_{\chi \otimes \chi^{-1}_{\widetilde{j}}}$ with respect to the usual partial order of $B(\GL_n)$. Thus by lemma \ref{itm : morphism between strata} there is no non-trivial morphism from $\mathcal{F}_{\chi \otimes \chi^{-1}_{\widetilde{j}}}$ to $\mathcal{F}_{\xi \otimes \chi^{-1}_{\widetilde{j}}}$. Hence a contradiction. In other words, the restriction $i^*_b C_{\chi} \star \mathcal{F}_{\Id}$ must be trivial.

\subsubsection{The case where $\lambda_k = 0$ and $r > 2$} \textbf{} \\

 We suppose that $\lambda_k = 0$ in this paragraph.

  Consider  a character $\chi' = (d_1', \dotsc, d_r')$ where $ d_{j^*}' = d_{j^*} - 1 $ for one index $j^* \in \I(k-1)$ and $ d_i' = d_i $ for all other $r-1$ indexes. Thus $ \E_{b_{\chi'}} = \OO(\lambda_1)^{m_1} \oplus \dotsc \oplus \OO(\lambda_{k-1})^{m_{k-1} - t} \oplus \OO(\lambda'_k)^{t'} \oplus \OO(\lambda_k)^{m_k} $ where $ \lambda'_{k} = \tfrac{d_{j^*}'}{n_{j^*}}$; $ t = \gcd(d_{j^*}, n_{j^*}) $ and $t' = \gcd( d_{j^*}', n_{j^*} )$. In particular $\chi' \in \Irr(S_{\phi})^+$ and $ \chi = \chi_{j^*} \otimes \chi' $. Then by induction hypothesis we have $ C_{\chi} \star \mathcal{F}_{\Id} = C_{\chi_{j^*}} \star \mathcal{F}_{\chi'} $.

 As before we can choose an element $ b \neq b_{\chi} \in B(\GL_n) $ such that $i^*_bC_{\chi} \star \mathcal{F}_{\Id}$ is non trivial. Hence, there is a modification of type $\mu = (1, 0^{(n-1)})$
\[
f : \E_{b_{\chi'}} \longrightarrow \E_b,
\]
in particular $\E_b$ is a direct sum of semi-stable vector bundles with non-negative slopes. If the trivial line bundle $\OO$ is a direct factor of $\E_b$ then we can apply lemma \ref{itm : first computation} to the triple $\E_{b_{\chi'}}$, $\E_b$ and $\mu$ to deduce that $ \E_b \simeq \E_{b_{\chi}} $ and we are done. 
  
Thus we suppose that $\E_b$ is a direct sum of semi-stable vector bundles whose slopes are strictly positive. % Since the set of $b$ such that there is such a modification $f$ is finite, we can assume that $b$ is maximal with respect to the usual partial order of $B(\GL_n)$ among the strata where the restriction of $C_{\chi} \star \mathcal{F}_{\Id}$ is non trivial. 

Consider the filtration $ \OO(\lambda_1)^{m_1} \oplus \dotsc \oplus \OO(\lambda_{k-1})^{m_{k-1}-t} \oplus \OO(\lambda'_k)^{t'}  \subset \E_{b_{\chi'}} $. Denote by $\rP$ the standard parabolic subgroup of $\GL_n$ corresponding to the filtration as well as $\M$ the Levi factor. Hence $ \M \simeq \GL_{n - m} \times \GL_{m} $ where $ m = \rank (\OO(\lambda_k)^{m_k}) $. The modification $f$ induces a filtration $ \E \subset \E_b $ of $ \E_b $ and $ \E_b / \E \simeq \E' $.

  The $\M$-bundle corresponding to the filtration of $\E_{b_{\chi'}}$ is  $ \OO(\lambda_1)^{m_1} \oplus \dotsc \oplus \OO(\lambda'_k)^{t'} \times \OO(\lambda_k)^{m_k} $. The modification $f$ induces modifications
\[
f_1 : \OO(\lambda_1)^{m_1} \oplus \dotsc \oplus \OO(\lambda'_k)^{t'} \longrightarrow \E
\]
and 
\[
f_2 : \OO(\lambda_k)^{m_k} \longrightarrow \E'
\]
of type $\mu_1$ and $\mu_2$ respectively. As before we can compute the $\M$-bundle $ \mathcal{E} \times \mathcal{E}' $ by using \cite[lemma 2.6]{CFS}. There are $2$ possibilities:

  \textbf{Case 1}: $\mu_1 = (1, 0^{(n-m-1)})$ and $\mu_2 = (0^{(m)})$.
  
  Thus $\E' \simeq  \OO(\lambda_k)^{m_k}$ and all the slopes of $\nu_{\E}$ are strictly positive. Since $H^1(\mathcal{O}(\lambda)) = 0$ if $\lambda \ge 0$ we deduce that $ \E_b = \E \oplus \OO(\lambda_k)^{m_k} $. It can not happen since $\lambda_k = 0$ and we suppose that the trivial line bundle $\OO$ is not a direct factor of $\E_b$. 

  \textbf{Case 2}: $\mu_1 = (0^{(n-m)})$ and $\mu_2 = (1, 0^{(m-1)})$ and in particular we have $\E \simeq \OO(\lambda_1)^{m_1} \oplus \dotsc \oplus \OO(\lambda'_k)^{t'} $. 
 
 Since $ \OO(\lambda_k)^{m_k} \simeq \OO^m $, we can compute $\E'$ explicitly. 

  If $\E'$ is not semi-stable then it is of the form $ \E_z \oplus \mathcal{O}^a $ for some $a > 0$ and it can not happen as in case $1$. Therefore we deduce that $\E'$ is semi-stable of slope $ \lambda = 1/m$. Note that since $s > 1$ we see that $\deg \E \ge 1$.

\begin{lemma} \phantomsection \label{itm : first auxiliary lemma}
    We have 
    \[
    \sum_{j=1}^{k-1} |\I(j)| > 1
    \]
\end{lemma}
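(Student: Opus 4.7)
The plan is to argue by contradiction. Since each $|\I(j)| \ge 1$ and $k \ge 2$ (as $\lambda_k = 0 < \lambda_{k-1}$ requires at least one strictly positive slope), one has $\sum_{j=1}^{k-1}|\I(j)| \ge k-1 \ge 1$, so the only way the claim could fail is $k = 2$ and $|\I(1)| = 1$. Writing $\I(1) = \{j_0\}$, the hypothesis $r > 2$ forces $|\I(2)| = r-1 \ge 2$, and one has $\chi = \chi_{j_0}^{d_{j_0}}$ together with $\E_{b_{\chi'}} \simeq \OO(\lambda_1')^{t'} \oplus \OO^m$, where $m = n - n_{j_0} = \sum_{j \in \I(2)} n_j$ strictly exceeds $n_j$ for every $j \in \I(2)$.

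In this scenario I would first unwind the Case~2 output: $\E_b$ fits into a short exact sequence
\[
0 \to \OO(\lambda_1')^{t'} \to \E_b \to \OO(1/m) \to 0,
\]
with stable quotient of rank $m$. By proposition \ref{itm : shape of strata} together with \cite[Corollary IX.7.3]{FS}, we have $b = b_\xi$ for some $\xi = (a_1, \dotsc, a_r) \in \Irr(S_\phi)^+$, and degree comparison gives $\sum_j a_j = d_{j_0}$.

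The key step would be to match the slope decomposition of $\E_b \simeq \bigoplus_i \OO(\mu_i)^{s_i}$ against the combinatorics of $\xi$: each appearing slope $\mu_i$ is realized as $a_j/n_j$ for some $j$ with $a_j > 0$, and the total rank at that slope is a sum of the corresponding $n_j$'s. When the extension splits (the case $\lambda_1' \ge 1/m$), the quotient $\OO(1/m)$ is a direct summand of $\E_b$ of rank $m$, and I would argue by divisibility that no subset of the $n_j$'s (with the required multiplicities) can sum to $m$ except the whole of $\I(2)$, which would then force $a_j = n_j/m \notin \Z_{>0}$ since $n_j < m$; the remaining variant involving $j_0$ is excluded by a similar arithmetic check.

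The main obstacle will be the non-split case $\lambda_1' < 1/m$, where $\E_b$ can acquire intermediate slopes strictly between $\lambda_1'$ and $1/m$. Here I plan to bound the Harder-Narasimhan polygon of $\E_b$ between the two extreme polygons using \cite[Theorem A.5]{NV} together with \cite[Corollary 2.9]{MC}, and then enumerate the admissible semistable decompositions to extract the same numerical contradiction from the strict inequalities $n_{j_1} < m$ for every $j_1 \in \I(2)$.
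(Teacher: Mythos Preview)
Your reduction to $k=2$, $\I(1)=\{j_0\}$, $|\I(2)|=r-1\ge 2$ and the extension $0\to\OO(\lambda_1')^{t'}\to\E_b\to\OO(1/m)\to 0$ are correct. For the non-split case $\lambda_1'<\lambda=1/m$ you plan an enumeration, but the paper's argument is a one-liner: by \cite[Corollary~2.9]{MC} every slope of $\E_b$ lies in $(0,\lambda]$; since $b=b_\xi$ with all entries $a_j\ge 1$, for each $i\in\I(2)$ the bundle $\E_b$ carries a slope $a_i/n_i\ge 1/n_i$; and $|\I(2)|\ge 2$ forces $n_i<m$, so $1/n_i>1/m=\lambda$, an immediate contradiction.

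The genuine gap is in your split case $\lambda_1'>\lambda$. The constraint $a_j/n_j=1/m$ with $a_j\ge 1$ does force $n_j\ge m$, which rules out every $j\in\I(2)$ from the slope-$1/m$ block and leaves only $J_2=\{j_0\}$ with $n_{j_0}=m$; but then $J_1=\I(2)$ with $a_j=n_j\lambda_1'$, and nothing prevents these from being positive integers. Concretely, take $r=3$, $(n_1,n_2,n_3)=(2,2,4)$, $\chi=(0,0,3)$: then $j_0=3$, $m=4=n_{j_0}$, $\lambda_1'=1/2$, and $\E_b=\OO(1/2)^2\oplus\OO(1/4)=\E_{b_\xi}$ for $\xi=(1,1,1)$; so a valid $\xi$ exists with $b\ne b_\chi$, and your combinatorial non-existence argument cannot conclude. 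The paper never tries to exclude all $\xi$. Instead it observes that in the split case $\E_b=\OO(\lambda_1')^{t'}\oplus\OO(1/m)$ shares its top block $\OO(\lambda_1')^{t'}$ with $\E_{b_{\chi'}}$, so lemma~\ref{itm : second computation} (which rests on the induction hypothesis for $r'<r$ via the generalized Boyer trick) applies directly and yields the cohomological vanishing $i_b^*C_{\chi_{j^*}}\star\mathcal F_{\chi'}=0$ for $b\ne b_\chi$, contradicting the running assumption on $b$.
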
  
\begin{proof}
    Suppose the contrary that $ \displaystyle \sum_{j=1}^{k-1} |\I(j)| = 1 $. Thus we see that $k=2$ and $\E$ is a semi-stable vector bundle of slope $\lambda'_1 > 0$. By the property of the Harder-Narasimhan filtration, we see that $\nu_{\E_b}$ is bounded below by the concatenation of $\nu_{\E}$ and $\nu_{\E'}$ that we denote by $\nu_{\E} \oplus \nu_{\E'} $. Moreover, by \cite[lemma 2.9]{MC}, $\nu_{\E_b}$ is bounded above by $\nu_{\E \oplus \E'}$.
  
  If $\lambda < \lambda'_1 $ then from the inequalities $ \nu_{\E} \oplus \nu_{\E'} \le \nu_{\E_b} \le \nu_{\E \oplus \E'} $, we deduce that $ \E_b = \E \oplus \E' $. In this case we can apply lemma \ref{itm : second computation} to the triple $\E_b$, $\E_{b_{\chi'}}$ and $C_{\chi_{j^*}}$ to deduce that $ i^*_b C_{\chi_{j^*}} \star \mathcal{F}_{\chi'}$ vanishes if $ b \neq b_{\chi} $. 

   Suppose that $ \lambda \ge \lambda'_1 $. Notice that by \cite[Corollary 2.9]{MC} we see that if $h$ is a slope of $\nu_{\E_b}$ then $ \lambda \ge h > 0 $ since all the slopes of $\nu_{\E}$ and of $\nu_{\E'}$ are strictly positive and smaller than $\lambda$.

  Note that we have $ |\I(2)| \ge 2 $ since $ r > 2 $. By using proposition \ref{itm : shape of strata}, we deduce that $\E_b$ has a slope not smaller than $ 1/n_i $ for every $ i \in \I(2) $. However since $|\I(2)| \ge 2$, we deduce that $ m = \rank \OO(\lambda_2)^{m_2} > n_i $. It contradicts the fact that $ \lambda = 1/m \ge 1/n_i $ and allows us to conclude that 
  \[
    \sum_{j=1}^{k-1} |\I(j)| > 1
  \]  
\end{proof}

     Now by the lemma, we know that $ \displaystyle \sum_{j=1}^{k-1} |\I(j)| > 1 $, then the biggest slope of $\nu_{\E}$ is $\lambda_1$. Hence if $ h_{\rm max}$ is the biggest slope of $\nu_{\E_b}$ then $ h_{\rm max} \ge \lambda_1$.

    If $h_{\rm max} = \lambda_1$ then again, by applying lemma \ref{itm : second computation} to $\E_{b}$, $\E_{\chi'}$ and $\mu = (1, 0^{(n-1)})$ we can conclude that $ i^*_b C_{\chi_{j^*}} \star \mathcal{F}_{\chi'}$ vanishes if $ b \neq b_{\chi} $ and it gives us a contradiction. Thus we deduce that $h_{\rm max} > \lambda_1$ and that $b$ is not smaller than $b_{\chi}$ with respect to the usual order in $B(\GL_n)$.
    
    By proposition \ref{itm : shape of strata}, there exists $\xi = (a_1,\dotsc, a_r) \in \Irr(S_{\phi})^+$ such that $ b = b_{\xi}$. Since $\E_b$ is a direct sum of semi-stable vector bundles with strictly positive slopes, by the previous paragraph we deduce that
     \[
     C_{\xi} \star \mathcal{F}_{\Id} \simeq \mathcal{F}_{\xi}.
     \]

     We know that the spectral action preserves compact object then $ i^*_b C_{\chi} \star \mathcal{F}_{\Id} $ is a compact object and moreover all of its Schur-irreducible constituents have $L$-parameter given by $\phi$. Therefore, up to replacing $\xi$ by another $\xi'$ such that $b_{\xi} = b_{\xi'} = b$ and up to some shift, we can suppose as before that there is a non-trivial morphism 
     \[
     g_1 : i_{b !}i^*_b C_{\chi} \star \mathcal{F}_{\Id} \longrightarrow \mathcal{F}_{\xi}.
     \]

Since the set $S_{\rm supp} \subset B(\GL_n) $ of strata where the restriction of $C_{\chi} \star \mathcal{F}_{\Id}$ is non trivial is finite, there exists maximal elements in $S_{\rm supp}$ with respect to the restriction of the usual order in $B(\GL_n)$. Since $b$ is not smaller than $b_{\chi}$ with respect to that partial order, we can suppose that $b$ is a maximal element in $S_{\rm supp}$ as before. Hence, by applying the excision exact triangle to the closed embedding $  \Bun_n^{ \ge b}  \longrightarrow \Bun_n $, we deduce that there is a non trivial morphism
     \[
     g_2 : C_{\chi} \star \mathcal{F}_{\Id} \longrightarrow i_{b !}i^*_b C_{\chi} \star \mathcal{F}_{\Id} \xrightarrow{\ g_1 \ } \mathcal{F}_{\xi} \simeq C_{\xi} \star \mathcal{F}_{\Id}.
     \]

     We define $\I := \{ i \ | \ \tfrac{a_i}{n_i} = h_{\rm max} \}$ and since $ \displaystyle \sum_{j=1}^{k-1} |\I(j)| > 1 $, we can choose an index $\widetilde{j} \in \displaystyle \bigcup_{j = 1}^{k-1} \I(j) $ such that $| \I \setminus \{ \widetilde{j} \} | \ge 1$.
     
     By applying the auto-equivalence $ C_{\chi^{-1}_{\widetilde{j}}} $ to $g_2$ we get a non trivial morphism  
     \[
     g : C_{\chi \otimes \chi^{-1}_{\widetilde{j}}} \star \mathcal{F}_{\Id} \longrightarrow C_{\xi \otimes \chi^{-1}_{\widetilde{j}}} \star \mathcal{F}_{\Id}.
     \]

     Note that by the choice of $\widetilde{j}$, we see that $ \chi \otimes \chi^{-1}_{\widetilde{j}}, \xi \otimes \chi^{-1}_{\widetilde{j}} \in \Irr(S_{\phi})^{+} $ then by the induction hypothesis on $D = s-1$, we deduce that $C_{\chi \otimes \chi^{-1}_{\widetilde{j}}} \star \mathcal{F}_{\Id} $ is supported on $ b_{\chi \otimes \chi^{-1}_{\widetilde{j}}} $ and $C_{\xi \otimes \chi^{-1}_{\widetilde{j}}} \star \mathcal{F}_{\Id}$ is supported on $ b_{\xi \otimes \chi^{-1}_{\widetilde{j}}} $.
     
     Again, by the choice of $\widetilde{j}$, the biggest slope of $\E_{b_{\xi \otimes \chi^{-1}_{\widetilde{j}}}}$ is still $h_{\rm max}$ and the biggest slope of $\E_{b_{\chi \otimes \chi^{-1}_{\widetilde{j}}}}$ is not bigger than $\lambda_1$. In particular, $b_{\xi \otimes \chi^{-1}_{\widetilde{j}}}$ is not smaller than $b_{\chi \otimes \chi^{-1}_{\widetilde{j}}}$ with respect to the usual partial order of $B(\GL_n)$. Thus by lemma \ref{itm : morphism between strata} there is no non-trivial morphism from $\mathcal{F}_{\chi \otimes \chi^{-1}_{\widetilde{j}}}$ to $\mathcal{F}_{\xi \otimes \chi^{-1}_{\widetilde{j}}}$. Hence a contradiction. Therefore, the restriction $i^*_b C_{\chi} \star \mathcal{F}_{\Id}$ must be trivial.

\subsubsection{The case where $\lambda_k = 0$ and $r = 2$} \textbf{} \\

At this point we know that $ k = r = 2 $ hence we can suppose $\I(1) = \{ 1 \}$ and $\I(2) = \{ 2 \}$. Thus $\E_{\chi} \simeq \OO(\lambda_1)^{m_1} \oplus \OO^m$ with $\chi = (x, 0)$ for some $x \in \N$. Then we choose $\chi' = (x-1, 0)$ and denote $b \neq b_{\chi} \in B(\GL_n)$ an element such that $i^*_bC_{\chi} \star \mathcal{F}_{\Id}$ is non trivial. Hence, there is a modification of type $\mu = (1, 0^{(n-1)})$
\[
f : \E_{b_{\chi'}} \longrightarrow \E_b.
\]

All the arguments in the last paragraph (the case $ r > 2 $) before lemma \ref{itm : first auxiliary lemma} still work. Therefore we deduce that $\E_b \simeq \E_{\xi}$ where $\xi = (x_1, x_2) \in \Irr(S_{\phi})^{> 0}$ and where 
$\rank \OO(\lambda_1)^{m_1} = \dim \phi_1 =: n_1$; $m = \dim \phi_2 =: n_2$ and $ x_1 + x_2 = x $. Moreover we can also deduce that 
there is an injection $\OO(\lambda_1') \hookrightarrow \E_b $ and the quotient is isomorphic to $ \OO(\lambda) $ where $ \lambda'_1 = \tfrac{x_1+x_2-1}{n_1} $, $ \lambda = \tfrac{1}{n_2}$. Remark that we also know (by the case $\lambda_k > 0$) that
\[
C_{\xi} \star \mathcal{F}_{\Id} \simeq \mathcal{F}_{\xi}.
\]

By using lemma \ref{itm : second computation} to the triple $\E_b$, $\E_{b_{\chi'}}$ and $C_{\chi_{j^*}}$, we can deduce that $\lambda \ge \lambda'_1$ as before. In particular we see that $ \tfrac{x_2}{n_2} > \tfrac{x_1}{n_1} $. We consider the following cases

\textbf{Case 1 :} $\tfrac{x_2}{n_2} > \lambda_1$.

Thus $\nu_{\E_{b}}$ is not smaller than $ \nu_{\E_{b_{\chi}}} $ with respect to the usual partial order in $X_{*}(\T)_{\Q}$. By choosing an element $b$ maximal with these properties and using the excision exact triangles argument with respect to the closed embedding $ \Bun_n^{\ge b} \hookrightarrow \Bun_n$ as above, we can obtain a contradiction.

\textbf{Case 2 :} $ \lambda_1 \ge \tfrac{x_2}{n_2} $.

In this case $\nu_{\E_{b}}$ is smaller than $ \nu_{\E_{b_{\chi}}} $ with respect to the usual partial order in $X_{*}(\T)_{\Q}$. Note that the spectral action preserves ULA objects then $C_{\chi} \star \mathcal{F}_{\Id}$ is ULA and then $ i^*_{b_{\xi}} (C_{\chi} \star \mathcal{F}_{\Id}) $ is also ULA. Hence there is a non-trivial morphism
\[
g : C_{\xi} \star \mathcal{F}_{\Id} \simeq \mathcal{F}_{\xi} \longrightarrow i^*_{b_{\xi}} (C_{\chi} \star \mathcal{F}_{\Id}),
\]
more precisely, we can construct $g$ from a sub-module of the non-trivial cohomology group of $i^*_{b_{\xi}} (C_{\chi} \star \mathcal{F}_{\Id})$ which has lowest degree. 

 Now we can suppose that $b$ is minimal with respect to the partial order among the strata where the restriction of $C_{\chi} \star \mathcal{F}_{\Id}$ is non-trivial. In particular $b$ is smaller than and not equal to $b_{\chi}$ in $B(\GL_n)$. By similar excision exact triangles argument for the open embedding $ \Bun_n^{b \ge} \hookrightarrow \Bun_n $, we can suppose that up to some shift, there is a non-trivial morphism
\[
g_1 : C_{\xi} \star \mathcal{F}_{\Id} \simeq \mathcal{F}_{\xi} \longrightarrow C_{\chi} \star \mathcal{F}_{\Id}.
\]

Now we apply the auto-equivalence $C_{\chi^{-1}_1}$ on $g_1$ to get a non-trivial morphism (where $\chi_1 = (1, 0)$)
\[
g_2 : C_{\xi \otimes \chi^{-1}_1} \star \mathcal{\Id} \simeq \mathcal{F}_{\xi \otimes \chi^{-1}_1 } \longrightarrow C_{\chi \otimes \chi^{-1}_1} \star \mathcal{F}_{\Id} \simeq \mathcal{F}_{\chi \otimes \chi^{-1}_1}.
\]

If $x_1 + x_2 > 2$ then we see that $b_{\chi \otimes \chi^{-1}_1}$ is not smaller than $b_{\xi \otimes \chi^{-1}_1}$ and lemma \ref{itm : morphism between strata} gives rise to a contradiction.

If $x_1 + x_2 = 2$ then $x_1 = x_2 = 1$ and $\xi \otimes \chi^{-1}_1 = (0, 1)$ and $\chi \otimes \chi^{-1}_1 = (1, 0)$. Applying again the auto-equivalence $\chi_2^{-1}$ on $g_2$ to get a non-trivial morphism
\[
g_3 : C_{\Id} \star \mathcal{F}_{\Id} \longrightarrow C_{\xi'} \star \mathcal{F}_{\Id},
\]
where $\xi' = (1, -1)$. By lemma \ref{itm : morphism between strata} the restriction $i_{1}^*C_{\xi'} \star \mathcal{F}_{\Id}$ is non trivial. However it contradicts equation (\ref{itm : restriction to 1}).

It allows us to finally conclude that 
\[
C_{\chi} \star \mathcal{F}_{\Id} \simeq \mathcal{F}_{\chi}.
\]

\section{Hecke-eigensheaves for $\GL_n$} \label{itm : Hecke-eigensheaves}

The main goals of this subsection is to use theorem \ref{itm : main theorem} to give an explicit description of the Hecke eigensheaves associated to some $L$-parameters $\phi$. \\ 

Throughout this paragraph, we fix an $L$-parameter $\phi = \phi_1 \oplus \dotsc \oplus \phi_r$ satisfying the conditions of theorem \ref{itm : main theorem}. In particular the connected component $[C_{\phi}]$ of $[Z^1(W_{\Q_p}, \widehat{\GL}_n)/\widehat{\GL}_n]$ containing $\phi$ is isomorphic to $[\bb G_m^r/ \bb G_m^r ]$ with the trivial action of $\bb G_m^r$. Then $S_{\phi} = \displaystyle \prod_{i = 1}^r \Gm $, $ \Irr(S_{\phi}) \displaystyle \simeq \prod_{i = 1}^r \Z $ and all the algebraic irreducible representations of $ S_{\phi} $ are of dimension $1$. Moreover, the category $\Rep_{\Q_{\ell}} (S_{\phi}) $ is semi-simple, we see that the regular representation of $S_{\phi}$ has the following description
\[
V_{\text{reg}} = \bigoplus_{\chi \in \Irr(S_{\phi})} \chi.
\] 
We deduce an explicit description of a Hecke eigensheaf associated to $\phi$.
\begin{theorem} \label{itm : Hecke eigensheaf}
The sheaf
\[
\displaystyle \mathcal{G}_{\phi} := \bigoplus_{\chi \in \Irr(S_{\phi})} \mathcal{F}_{\chi}
\]
is a non trivial Hecke eigensheaf corresponding to the $L$-parameter $\phi$.
\end{theorem}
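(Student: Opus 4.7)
The plan is to deduce the eigensheaf property directly from the main theorem (Theorem \ref{itm : main theorem}) combined with the monoidal property of the spectral action and Proposition \ref{itm : fundamental decomposition of Hecke operator}. The key combinatorial observation is that multiplication by any character $\chi \in \Irr(S_{\phi})$ permutes the index set $\Irr(S_{\phi})$, so the sum $\bigoplus_{\xi} \mathcal{F}_{\chi \otimes \xi}$ is canonically isomorphic to $\mathcal{G}_{\phi}$ itself for every $\chi$.

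First I would check that $C_{\chi} \star \mathcal{F}_{\xi} = \mathcal{F}_{\chi \otimes \xi}$ for every $\chi, \xi \in \Irr(S_{\phi})$. By the monoidal property of the spectral action and Theorem \ref{itm : main theorem},
\[
C_{\chi} \star \mathcal{F}_{\xi} = C_{\chi} \star (C_{\xi} \star \mathcal{F}_{\Id}) = (C_{\chi} \otimes C_{\xi}) \star \mathcal{F}_{\Id} = C_{\chi \otimes \xi} \star \mathcal{F}_{\Id} = \mathcal{F}_{\chi \otimes \xi}.
\]
Next, for each $V \in \Rep_{\overline{\Q}_{\ell}}(\GL_n)$ decompose, as an $S_{\phi} \times W_{\Q_p}$-representation,
\[
V \simeq \bigoplus_{\chi \in \Irr(S_{\phi})} \chi \boxtimes \sigma_{\chi}, \qquad \sigma_{\chi} = \Hom_{S_{\phi}}(\chi, V).
\]
Each sheaf $\mathcal{F}_{\xi}$ is Schur-irreducible (it is the extension by zero of an irreducible representation shifted by a modulus twist), and by the compatibility between Fargues--Scholze parameters on $\Bun_n$ and those on $\Bun_{\G_{b_{\xi}}}$ (\cite[Corollary IX.7.3]{FS}) its Fargues--Scholze parameter on $\Bun_n$ is precisely $\phi$. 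Hence Proposition \ref{itm : fundamental decomposition of Hecke operator} applies and gives
\[
\T_V(\mathcal{F}_{\xi}) = \bigoplus_{\chi \in \Irr(S_{\phi})} C_{\chi} \star \mathcal{F}_{\xi} \boxtimes \sigma_{\chi} = \bigoplus_{\chi \in \Irr(S_{\phi})} \mathcal{F}_{\chi \otimes \xi} \boxtimes \sigma_{\chi}.
\]

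Now I would sum over $\xi \in \Irr(S_{\phi})$. Using that for each fixed $\chi$ the translation $\xi \mapsto \chi \otimes \xi$ is a bijection of $\Irr(S_{\phi})$, one obtains
\[
\T_V(\mathcal{G}_{\phi}) = \bigoplus_{\xi}\bigoplus_{\chi} \mathcal{F}_{\chi \otimes \xi} \boxtimes \sigma_{\chi} = \bigoplus_{\chi} \Bigl( \bigoplus_{\xi} \mathcal{F}_{\xi} \Bigr) \boxtimes \sigma_{\chi} = \mathcal{G}_{\phi} \boxtimes \Bigl(\bigoplus_{\chi} \sigma_{\chi}\Bigr).
\]
Since every irreducible algebraic character of $S_{\phi}$ is one-dimensional, the underlying $W_{\Q_p}$-representation of $\bigoplus_{\chi} \sigma_{\chi}$ is exactly $V \circ \phi$; this is the Hecke eigensheaf identity $\T_V(\mathcal{G}_{\phi}) \simeq \mathcal{G}_{\phi} \boxtimes (V \circ \phi)$. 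Non-triviality is immediate since $\mathcal{F}_{\Id} \simeq \mathcal{F}_{\pi} \neq 0$ is a direct summand of $\mathcal{G}_{\phi}$.

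The one step that is not purely formal is checking that these identifications are natural in $V$, so that they assemble into a genuine Hecke eigensheaf structure (a symmetric monoidal natural transformation $V \mapsto \T_V(\mathcal{G}_{\phi}) = \mathcal{G}_{\phi} \boxtimes (V \circ \phi)$), rather than merely an isomorphism for each fixed $V$. This will be the main technical point: it follows by unpacking Proposition \ref{itm : fundamental decomposition of Hecke operator} and using that the entire construction factors through the commutative diagram of monoidal functors displayed just before that proposition, which is manifestly natural in $V$. The case of several copies (needed to pin down the full $W_{\Q_p}^I$-equivariance structure for $I$ finite) reduces to the case $|I|=1$ by the compatibility of the Hecke operators with exterior tensor products.
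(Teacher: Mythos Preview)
Your argument is correct and matches the paper's proof almost exactly. The only cosmetic difference is the order of operations: the paper first proves $C_{\chi'} \star \mathcal{G}_{\phi} = \mathcal{G}_{\phi}$ for every $\chi'$ (using the same translation bijection on $\Irr(S_{\phi})$ you use) and then writes $\T_V(\mathcal{G}_{\phi}) = \bigoplus_{\chi}(C_{\chi}\star\mathcal{G}_{\phi})\boxtimes\sigma_{\chi}$ in one step, whereas you apply Proposition~\ref{itm : fundamental decomposition of Hecke operator} to each Schur-irreducible $\mathcal{F}_{\xi}$ and then sum over $\xi$; these are the same computation reparenthesized, and your version is arguably cleaner since Proposition~\ref{itm : fundamental decomposition of Hecke operator} is stated for Schur-irreducible sheaves. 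Your final remark about naturality in $V$ is extra care the paper does not spell out.
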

\begin{proof}
    By the description of the $\mathcal{F}_{\chi}$'s, it is clear that the stalk of $\mathcal{G}_{\phi}$ at the stratum $ \Bun^1_n $ is isomorphic to $ \mathcal{F}_{\Id} $. Thus the sheaf $\mathcal{G}_{\phi}$ is non-zero. 

    Remark that for every $\chi' \in \Irr(S_{\phi})$ we have $ \displaystyle \chi' \otimes \bigoplus_{\chi \in \Irr(S_{\phi})} \chi = \bigoplus_{\chi \in \Irr(S_{\phi})} \chi $. Thus by theorem \ref{itm : main theorem} and the monoidal property of the spectral action, we deduce that
    \[
    C_{\chi'} \star \mathcal{G}_{\phi} = C_{\chi'} \star \bigoplus_{\chi \in \Irr(S_{\phi})} C_{\chi} \star \mathcal{F}_{\Id} = \bigoplus_{\chi \in \Irr(S_{\phi})} C_{\chi} \star \mathcal{F}_{\Id} = \mathcal{G}_{\phi}.
    \]

    Let $ V $ be an algebraic representation of $ \GL_n(\overline{\Q}_{\ell}) $ and let $\T_V$ be the corresponding Hecke operator. We show that $ \displaystyle \T_V ( \mathcal{G}_{\phi} ) = \mathcal{G}_{\phi} \boxtimes V \circ \phi $ as sheaf on $ \Bunn $ with $W_{\Q_p}$-action. Recall that the restriction of $V$ to $S_{\phi}$ admits a commuting $W_{\Q_p}$-action given by $\phi$. Thus, as $S_{\phi} \times W_{\Q_p}$-representation, we have
    \[
    V \circ \phi \simeq \bigoplus_{\chi \in \Irr(S_{\phi})} \chi \boxtimes \sigma_{\chi},    
    \]
where $\sigma_{\chi} := \Hom_{S_{\phi}}(\chi, V \circ \phi)$ is a $W_{\Q_p}$-representation.

    In particular, since $\dim \chi = 1$ we deduce that $ \displaystyle \bigoplus_{\chi \in \Irr(S_{\phi})} \sigma_{\chi} = V \circ \phi $.
    
    On the other hand, we have
    \[
    \T_V(\mathcal{G}_{\phi}) = \bigoplus_{\chi \in \Irr(S_{\phi})} \big( C_{\chi} \star \mathcal{G}_{\phi} \big) \boxtimes \sigma_{\chi} = \bigoplus_{\chi \in \Irr(S_{\phi})} \mathcal{G}_{\phi} \boxtimes \sigma_{\chi} = \mathcal{G}_{\phi} \boxtimes \bigoplus_{\chi \in \Irr(S_{\phi})} \sigma_{\chi} = \mathcal{G}_{\phi} \boxtimes V \circ \phi.
    \]

    Therefore $\mathcal{G}_{\phi}$ is a non-zero Hecke eigensheaf of the $L$-parameter $\phi$.
\end{proof}
\begin{example}
Let us describe the stalks of $\mathcal{G}_{\phi}$ in some special cases. 

Suppose further that $ \phi = \phi_1 \oplus \dotsc \oplus \phi_n $ is a sum of $n$ characters. Thus the Newton polygon of the vector bundle $\E_{b_{\chi}}$ are of the form $(d_1^{(n_1)}, \dotsc, d_k^{(n_k)})$ for a decreasing chain $ d_1 > d_2 > \dotsc > d_k $ of integers (where $d_i^{(n_i)}$ indicates that $d_i$ appears with multiplicity $n_i$).

%Consider $ b = (d_1^{(n_1)}, \dotsc, d_k^{(n_k)}) $ for a decreasing chain $ d_1 > d_2 > \dotsc > d_k $ of integers. 

Let $ b = b_{\xi} $ for $ \xi = (d_1^{(n_1)}, \dotsc, d_k^{(n_k)}) \in \Irr(S_{\phi}) $. We see that $\G_b = \GL_{n_1} \times \dotsc \times \GL_{n_k}$ and for a character $ \chi = (t_1, \dotsc, t_n) \in \Irr(S_{\phi})$ we have $ b_{\chi} = b $ if and only if $(t_1, \dotsc, t_n)$ is a permutation of $(d_1^{(n_1)}, \dotsc, d_k^{(n_k)})$. The stabilizer of $(t_1, \dotsc, t_n)$ under the action of the permutation group $S_n$ is a subgroup isomorphic to $ S_{n_1} \times S_{n_2} \times \dotsc \times S_{n_k} $. Therefore the stalk of $\mathcal{G}_{\phi}$ at $\Bun^b_n$ is given by
\[
\bigoplus_{w \in W_{\GL_n} / W_{\G_b}} \mathcal{F}_{\xi^w}
\]
where $W_{\G}$ denotes the Weyl group of $\G$. This description is compatible with the one given by L. Hamann in \cite[Theorem 1.14]{Ham1}.
\end{example}

 One expects that the restriction of $ i^*_b \mathcal{G}_{\phi}$ to the stratum $\Bun_n^b$ is closely related to the functor $\Red_b$ defined by Shin \cite[\S 6.2]{SWS} and Bertoloni-Meli \cite[Definition 5.6]{ABM}. It is first observed by Hamann in \cite[\S 11]{Ham1}.

\section{Harris-Viehmann's conjecture} \label{itm : Harris-Viehmann}

The main goals of this section is to compute part of the cohomology of the moduli spaces $\Sht(\GL_n, b, b', \mu)$ and deduce new cases of the Harris-Viehmann's conjecture for $\GL_n$. 

Throughout this section, we fix an $L$-parameter $\phi = \phi_1 \oplus \dotsc \oplus \phi_r$ satisfying the conditions of theorem \ref{itm : main theorem}. In particular we have $S_{\phi} = \displaystyle \prod_{i = 1}^r \Gm $, $ \Irr(S_{\phi}) \displaystyle \simeq \prod_{i = 1}^r \Z $ and $[C_{\phi}] \simeq [\bb G_m^r / \bb G_m^r]$, the quotient of $\bb G_m^r$ by the trivial action of $\bb G_m^r$.

Let $b, b' \in B(\GL_n)$ be such that $\phi$ is $\G_{b}(\Q_p)$-relevant (see the paragraph before theorem \ref{itm : relevant}) and and let $\pi_{b}$ be the irreducible representation of $\G_{b}(\Q_p)$ corresponding to $\phi$ under the local Langlands correspondence. Let $\lambda \in \Irr(S_{\phi})$ be the character corresponding to the pair $(b, \pi_{b})$. Then by lemmas \ref{shimhecke} and \ref{itm : kappa and modulus} we have
\[
 R\Gamma_{c}(\GL_n,b',b,\mu)[\delta^{1/2}_b \otimes \pi_{b}][d_{b}] \simeq  i_{b'}^{*}\T_{\mu^{-1}}\mathcal{F_{\lambda}}, 
\]
where $d_b = \langle 2\rho, \nu_b \rangle $.

As $S_{\phi} \times W_{\Q_p}$-representations, we have an identification
\[
r_{\mu^{-1}} \circ \phi = \bigoplus_{\chi \in \Irr(S_{\phi})} \chi \boxtimes \sigma_{\chi}, 
\]
where $\sigma_{\chi}$ is the $W_{\Q_p}$-representations $\Hom_{S_{\phi}}(\chi, r_{\mu^{-1}} \circ \phi)$, then we have 
\[
\T_{\mu^{-1}} \mathcal{F}_{\lambda} = \bigoplus_{\chi \in \Irr(S_{\phi})} C_{\chi} \star \mathcal{F}_{\lambda} \boxtimes \sigma_{\chi}. 
\]

Hence we can use the explicit description of the action of $C_{\chi}$ to compute $R\Gamma_{c}(\GL_n,b',b,\mu)) [\delta^{1/2}_b \otimes \pi_{b}]$.

\begin{theorem} \label{itm : Harris-Viehmann conjecture}
    Let $ b \in B(\GL_n) $ be an element such that $\phi$ is $ \G_b(\Q_p) $-relevant and let $\M$ be the standard Levi subgroup of $\GL_n$ that is the split inner form of $\G_b$ and $\rP$ is the standard parabolic subgroup of $\GL_n$ whose Levi factor is $\M$. Let $\mu$ be an arbitrary cocharacter of $\GL_n$ and $ \pi_b $ be an irreducible representation of $\G_b(\Q_p)$ such that its corresponding $L$-parameter $\phi^b$ post-composed with the natural embedding $ ^{L}\G_b(\overline{\mathbb{Q}}_{\ell}) \longrightarrow ^{L}\GL_n(\overline{\mathbb{Q}}_{\ell}) $ is $\phi$. Thus we have 
    \begin{equation} \phantomsection \label{itm : cohomology of RZ spaces}
      R\Gamma_{c}(\GL_n,b,\mu) [\delta^{1/2}_b \otimes \pi_b] = \pi_1 \boxtimes \Hom_{S_{\phi}} (\chi^{-1}_b ,r_{\mu^{-1}} \circ \phi)[-d]  
    \end{equation}
    where $\pi_1$ is the irreducibe representation of $\GL_n(\Q_p)$ whose $L$-parameter is $\phi$ and $\chi_b$ is the unique character of $S_{\phi}$ corresponding to the couple $(b, \pi_b)$ and where $d = \langle 2\rho, \nu_b \rangle$. In particular, the generalized Harris-Viehmann's conjecture is true in this case. More precisely, suppose that as $ ^{L}\M(\overline{\mathbb{Q}}_{\ell})$-representation, we have 
    \begin{equation} 
   r_{\mu^{-1}} = \bigoplus_{\mu_{\M} \in X_{*}(\M) } (r_{\mu^{-1}_{\M}})^{m_{\mu^{-1}_{\M}}} 
\end{equation}
for some multiplicity $ m_{\mu^{-1}_{\M}} = \dim \Hom ( r_{\mu^{-1}_{\M}} ,r_{\mu^{-1} | \M} ) $. Then we have

\begin{align*}
R\Gamma_{c}(\GL_n, b,\mu) [ \delta^{1/2}_b \otimes \pi_{b}] [d - d_{\M}]  &\simeq \bigoplus_{\mu_{\M} \in X_{*}(\M) } \big( \Ind_{\rP}^{\GL_n} R\Gamma_{c}(\M, b_{\M},\mu_{\M})  [\delta^{1/2}_{b_{\M}} \otimes \pi_{b}]\big)^{m_{\mu^{-1}_{\M}}}.
\end{align*}
where $d_{\M} = \langle 2\rho_{\M}, \nu_{b_{\M}} \rangle$, $ d = \langle 2\rho, \nu_b \rangle $ and $b_{\M}$ is the reduction of $b$ to $\M$.
\end{theorem}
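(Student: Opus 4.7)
The plan is to apply lemma \ref{shimhecke} together with the Hecke operator decomposition of proposition \ref{itm : fundamental decomposition of Hecke operator} and then invoke theorem \ref{itm : main theorem} to identify the stalk at the trivial stratum. Concretely, by lemma \ref{shimhecke} and lemma \ref{itm : kappa and modulus} we have
\[
R\Gamma_{c}(\GL_n,b,\mu)[\delta^{1/2}_b \otimes \pi_{b}][d] \simeq i_{1}^{*}\T_{\mu^{-1}}\mathcal{F}_{\chi_b},
\]
where $\chi_b \in \Irr(S_{\phi})$ is the character assigned to the pair $(b,\pi_b)$ by the bijection in \S\ref{itm : shape of the supports}. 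Proposition \ref{itm : fundamental decomposition of Hecke operator} gives
\[
\T_{\mu^{-1}}\mathcal{F}_{\chi_b} \simeq \bigoplus_{\chi \in \Irr(S_{\phi})} \bigl(C_{\chi} \star \mathcal{F}_{\chi_b}\bigr) \boxtimes \sigma_{\chi},
\]
and by the monoidal property of the spectral action together with theorem \ref{itm : main theorem} one has $C_{\chi}\star \mathcal{F}_{\chi_b} \simeq C_{\chi}\star C_{\chi_b}\star \mathcal{F}_{\Id} \simeq \mathcal{F}_{\chi \otimes \chi_b}$. Since $\mathcal{F}_{\chi \otimes \chi_b}$ is concentrated on the stratum $\Bun_n^{b_{\chi \otimes \chi_b}}$, its restriction to $\Bun_n^{1}$ is non-zero precisely when $\chi \otimes \chi_b = \Id$, in which case it equals $\pi_1 = \mathcal{F}_{\Id}$. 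This immediately yields formula \eqref{itm : cohomology of RZ spaces}.

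For the Harris--Viehmann statement, I would apply the formula just proved \emph{separately} to each general linear factor of the Levi $\M = \GL_{m_1} \times \cdots \times \GL_{m_k}$. The representation $\pi_b$ decomposes as $\pi_b \simeq \pi_{1}^{\M} \boxtimes \cdots \boxtimes \pi_{k}^{\M}$ with corresponding $L$-parameters $\phi^b_1,\dotsc,\phi^b_k$, and each $\phi^b_j$ satisfies the assumption of theorem \ref{itm : main theorem} for $\GL_{m_j}$ since the irreducible constituents of $\phi$ are pairwise non-isomorphic up to unramified twist. Applied to $\M$ and the cocharacter $\mu_\M$, the first part of the theorem yields
\[
R\Gamma_c(\M,b_\M,\mu_\M)[\delta^{1/2}_{b_\M}\otimes \pi_b] \simeq \pi_{\M,1} \boxtimes \Hom_{S_{\phi^b}}\!\bigl(\xi_{b_\M}^{-1},\, r_{\mu_\M^{-1}}\!\circ \phi^b\bigr)[-d_\M],
\]
where $\pi_{\M,1}$ is the representation of $\M(\Q_p)$ with parameter $\phi^b$ and $\xi_{b_\M} \in \Irr(S_{\phi^b})$ is the character attached to $(b_\M,\pi_b)$ (for $\M$). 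The identification $S_{\phi}=S_{\phi^b}$, coming from the fact that the decomposition of $\phi$ into irreducibles is inherited from that of $\phi^b$, together with the compatibility of the combinatorial bijections of \S\ref{itm : shape of the supports} for $\GL_n$ and for $\M$, matches $\xi_{b_\M}$ with $\chi_b$.

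Next, I would apply normalized parabolic induction $\Ind_\rP^{\GL_n}$. By Theorem \ref{FSproperties}(4), $\Ind_\rP^{\GL_n} \pi_{\M,1} \simeq \pi_1$ (irreducibility follows from assumption (A1)). Branching the $^L\M$-representation yields, as $S_\phi \times W_{\Q_p}$-modules,
\[
r_{\mu^{-1}} \circ \phi \;=\; (r_{\mu^{-1}}|_{^L\M}) \circ \phi^b \;=\; \bigoplus_{\mu_\M \in X_*(\M)} \bigl(r_{\mu_\M^{-1}} \circ \phi^b\bigr)^{m_{\mu_\M^{-1}}},
\]
so taking $\Hom_{S_\phi}(\chi_b^{-1},-)$ splits the Hom-space in \eqref{itm : cohomology of RZ spaces} as the sum over $\mu_\M$ of the corresponding spaces for $\M$, with the multiplicities $m_{\mu_\M^{-1}}$. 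Assembling everything and accounting for the shift $[d-d_\M]$ that compares the twists by $\delta_b$ and $\delta_{b_\M}$ on the two sides, one obtains the desired identity.

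The main obstacle is bookkeeping: keeping track of the shifts, the modulus character twists, and the normalizations so that the isomorphism in \eqref{itm : cohomology of RZ spaces} applied to $\M$ is compatible, after $\Ind_\rP^{\GL_n}$, with the isomorphism for $\GL_n$. All the geometric content is already contained in theorem \ref{itm : main theorem}; the second part is purely a matching of combinatorics of $L$-parameters, the branching $r_{\mu^{-1}}|_{^L\M}$, and the parabolic induction of LLC.
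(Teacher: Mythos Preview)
Your proof is correct and follows essentially the same route as the paper: use lemma \ref{shimhecke} and lemma \ref{itm : kappa and modulus} to translate into $i_1^*\T_{\mu^{-1}}\mathcal{F}_{\chi_b}$, decompose via proposition \ref{itm : fundamental decomposition of Hecke operator}, apply theorem \ref{itm : main theorem} to see that only the $\chi=\chi_b^{-1}$ term survives at the trivial stratum, and then for the Harris--Viehmann part run the same formula for $\M$ and match via $S_\phi\simeq S_{\phi^b}$, the branching $r_{\mu^{-1}}|_{\widehat{\M}}$, and $\pi_1=\Ind_\rP^{\GL_n}\pi_{\M,1}$. One small remark: the shift $[d-d_\M]$ is not really a comparison of the modulus twists $\delta_b$ versus $\delta_{b_\M}$ but simply the difference of the two cohomological shifts $[-d]$ and $[-d_\M]$ coming from the two applications of \eqref{itm : cohomology of RZ spaces}; otherwise your bookkeeping is accurate.
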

\begin{proof}
We recall that $\Sht(\GL_n,b,\mu)$ is the space parametrizing modifications $f : \E_1 \longrightarrow \E_b$ of type $\mu$.
As before, we have
\[
 R\Gamma_{c}(\GL_n, b,\mu) [\delta^{1/2}_b \otimes \pi_{b}] \simeq  i_{1}^{*}\T_{\mu^{-1}} \mathcal{F}_{\chi_b}[-d].
\]

Then we consider the $S_{\phi} \times W_{\Q_p} $-representation $ \displaystyle r_{\mu^{-1}} \circ \phi = \bigoplus_{\chi \in \Irr(S_{\phi})} \chi \boxtimes \sigma_{\chi}$ where $\sigma_{\chi}$ is the $W_{\Q_p}$-representation $\Hom_{S_{\phi}}(\chi, r_{\mu^{-1}} \circ \phi)$. Since the couple $(b, \pi_b)$ corresponds to the character $\chi_b$, we deduce that $ i_{1}^{*} C_{\chi} \star \mathcal{F}_{\chi_b} \simeq 0 $ if $\chi \neq \chi^{-1}_b$. Thus
\begin{align*}
 R\Gamma_{c}(\GL_n, b,\mu) [\delta^{1/2}_b \otimes \pi_{b}] &\simeq \bigoplus_{\chi \in \Irr(S_{\phi})} i^{*}_1C_{\chi} \star \mathcal{F}_{\chi_b} \boxtimes \sigma_{\chi}[-d] \\
 &\simeq \quad i^{*}_1C_{\chi^{-1}_b} \star \mathcal{F}_{\chi_b} \boxtimes \sigma_{\chi^{-1}_b}[-d] \\
 &\simeq \pi_1 \boxtimes \Hom_{S_{\phi}} (\chi^{-1}_b ,r_{\mu^{-1}} \circ \phi)[-d].   
\end{align*}

Suppose that $\E_b \simeq \E(\lambda_1) \oplus \dotsc \oplus \E(\lambda_k)$ where $\lambda_1 > \dotsc > \lambda_k$ and for each $j$, $\E(\lambda_j)$ is a semi-stable vector bundle of slope $\lambda_j$. Then $\M \simeq \GL_{m_1} \times \dotsc \times \GL_{m_k}$ where $m_j := \rank\E(\lambda_j)$ and the natural embedding $ ^{L}\G_b(\overline{\mathbb{Q}}_{\ell}) \longrightarrow \ ^{L}\GL_n(\overline{\mathbb{Q}}_{\ell}) $ induces a morphism of $L$-groups $ \eta : \ ^{L}\M(\overline{\mathbb{Q}}_{\ell}) \longrightarrow \ ^{L}\GL_n(\overline{\mathbb{Q}}_{\ell}) $ such that $ \phi = \eta \circ \phi^b $. In particular we have $S_{\phi^b} \simeq S_{\phi} $ and from now on, for each $ \chi \in \Irr(S_{\phi}) $, we denote by $ \chi_{\M} $ its corresponding character in $ \Irr(S_{\phi^b}) $. Now we consider $ r_{\mu^{-1}} $ as an $ \M $-representation by $\eta$. Since the category $\Rep_{\overline{\Q}_{\ell}}\M$ is semi-simple, there is a decomposition
\begin{equation} \phantomsection \label{itm : restriction to M}
   r_{\mu^{-1}} = \bigoplus_{\mu_{\M} \in X_{*}(\M) } (r_{\mu^{-1}_{\M}})^{m_{\mu^{-1}_{\M}}} 
\end{equation}
for some multiplicity $ m_{\mu^{-1}_{\M}} = \dim \Hom ( r_{\mu^{-1}_{\M}} ,r_{\mu^{-1} | \M} ) $. Moreover the decomposition (\ref{itm : restriction to M}) is compatible with $W_{\Q_p}$-action since $ \phi = \eta \circ \phi^b $. Thus
\[
 \Hom_{S_{\phi}} (\chi^{-1}_b ,r_{\mu^{-1}} \circ \phi) = \bigoplus_{\mu_{\M} \in X_{*}(\M) } \Hom_{S_{\phi^b}} (\chi^{-1}_{b, \M} ,r_{\mu^{-1}_{\M}} \circ \phi^b)^{m_{\mu^{-1}_{\M}}}.
\]

Denote by $b_{\M}$ the reduction of $b$ to $\M$ and $\pi^{\M}_1$ the unique irreducible representation of $\M(\Q_p)$ whose $L$-parameter is $\phi^b$ then $ \pi_1 = \Ind_{\rP}^{\GL_n} \pi^{\M}_1 $ (normalized parabolic induction). By using the formula (\ref{itm : cohomology of RZ spaces}) for $\M$ we deduce that  
\begin{align*}
R\Gamma_{c}(\GL_n, b,\mu) [ \delta^{1/2}_b \otimes \pi_{b}] [d - d_{\M}]  &\simeq \pi_1 \boxtimes \Hom_{S_{\phi}} (\chi^{-1}_b ,r_{\mu^{-1}} \circ \phi)[-d_{\M}]  \\
&\simeq \pi_1 \boxtimes \bigoplus_{\mu_M \in X_{*}(M) } \Hom_{S_{\phi^b}} (\chi^{-1}_{b, \M} ,r_{\mu^{-1}_M} \circ \phi^b)^{m_{\mu^{-1}_M}}[-d_{\M}]  \\
&\simeq \bigoplus_{\mu_{\M} \in X_{*}(\M) } \big( \Ind_{\rP}^{\GL_n} R\Gamma_{c}(\M, b_{\M},\mu_{\M})  [\delta^{1/2}_{b_M} \otimes \pi_{b}]\big)^{m_{\mu^{-1}_{\M}}}.
\end{align*}
where $d_{\M} = \langle 2\rho_{\M}, \nu_{b_{\M}} \rangle$ and $ d = \langle 2\rho, \nu_b \rangle $.

%The above formula is a particular case of the generalized Harris-Viehmann's conjecture formulated by Hansen-Scholze. Moreover, a proof of this conjecture for general reductive group $\G$ would appear in their forthcoming joint work with L. Hamann \cite{HHS}. 

If $\mu = (1^{(a)}, 0^{(n - a)})$ is a minuscule cocharacter then $ r_{\mu^{-1}} = (\Lambda^{a} \Std)^{\vee} $. Hence if $\chi_b = (d_1, \dotsc, d_r)$ for $ d_1, \dotsc, d_r \ge 0 $ then 
\[
\Hom_{S_{\phi}} (\chi^{-1}_b ,r_{\mu^{-1}} \circ \phi) = \bigotimes^r_{i = 1} (\Lambda^{d_i} \phi_i)^{\vee},
\]
thus
\[
R\Gamma_{c}(\GL_n, b,\mu) [ \delta^{1/2}_b \otimes \pi_{b}] [d - d_{\M}] \simeq \Ind_{\rP}^{\GL_n} R\Gamma_{c}(\M, b_{\M},\mu_{\M})  [\delta^{1/2}_{b_M} \otimes \pi_{b}]
\]
where $\mu_{\M} = \mu_1 \times \dotsc \times \mu_k$ and $\mu_i = (1^{(\deg\E(\lambda_i))}, 0^{(\rank\E(\lambda_i) - \deg\E(\lambda_i))}) $. Hence we obtain the Harris-Viehmann's conjecture in this particular case \cite[Conjecture 5.2]{Har} \cite[Conjecture 8.5]{RV} \cite[Conjecture 3.2.1]{Ber22}.
  
\end{proof}
\begin{example}
We suppose that $ b \in B(\GL_2) $ is an element such that $\mathcal{E}_b = \mathcal{O}(-1) \oplus \mathcal{O}(-2)$; $\mu^{-1} = (3, 0)$ and $ \phi = \phi_1 \oplus \phi_2 $ is the sum of $2$ characters. We see that $ S_{\phi} \simeq \Gm \times \Gm $ and $ \Irr(S_{\phi}) \simeq \Z \times \Z $. We denote by $\chi_{(x, y)}$ the character corresponding to $(x,y)$. Since $ r_{\mu^{-1}} = \Sym^3 \Std \GL_2 $, we have the following identification of $ S_{\phi} \times W_{\Q_p} $-representations
\[
r_{\mu^{-1}} \circ \phi \simeq \chi_{(3,0)} \boxtimes \phi_1^3 \bigoplus \chi_{(2,1)} \boxtimes \phi_1^2 \otimes \phi_2 \bigoplus \chi_{(1,2)} \boxtimes \phi_1 \otimes \phi_2^2 \bigoplus \chi_{(0,3)} \boxtimes \phi^3_2.
\]

Let $\pi_b$ be the representation such that the couple $(b, \pi_b)$ corresponds to the character $\chi_{(-1, -2)}$. Thus theorem \ref{itm : main theorem} implies that $ i^*_1C_{\chi_{(3, 0)}} \star \mathcal{F}_{\chi_{(-1, -2)}} = i^*_1C_{\chi_{(2, 1)}} \star \mathcal{F}_{\chi_{(-1, -2)}} = i^*_1C_{\chi_{(0, 3)}} \star \mathcal{F}_{\chi_{(-1, -2)}} = 0 $ and $i^*_1C_{\chi_{(1, 2)}} \star \mathcal{F}_{\chi_{(-1, -2)}} = \pi_1$. Hence
    \[
    R\Gamma_{c}(\GL_n,b,\mu)) [ \delta^{1/2}_{b} \otimes \pi_b] = \pi_1 \boxtimes \phi_1 \otimes \phi^2_2 [-d],
    \]
where $d = \langle 2\rho, \nu_b \rangle = 1 $.\footnote{See also the computation in \cite[Example 8.10]{Nao}.}

\end{example}
\begin{example}
    We suppose that $ \phi = \phi_1 \oplus \phi_2 $ where $\phi_1$ and $\phi_2$ are irreducible representations whose dimension are given by natural numbers $n_1 > n_2$. Let $ b \in B(\GL_n) $ be the element such that $ \mathcal{E}_{b} = \OO(-1/n_1) \oplus \O(-1/n_2) $. We are going to compute the cohomology of Rapoport-Zink spaces/local Shimura varieties $ R\Gamma_{c}(\GL_n,b,\mu_i))  [\pi_b] $ ($i = 1, 2$) where $ \mu^{-1}_1 = (1^{(2)}, 0^{(n-2)}) $; $ \mu^{-1}_2 = (2, 0^{(n-1)}) $ and $\pi_b$ is the irreducible representation of $ \G_b(\Q_p) $ whose $L$-parameter is $\phi$. Thus the couple $(b, \pi_b)$ corresponds to the character $\chi_{(-1, -1)}$.

    We see that $ S_{\phi} \simeq \Gm \times \Gm $ and $ \Irr(S_{\phi}) \simeq \Z \times \Z $. We denote by $\chi_{(x, y)}$ the character corresponding to $(x,y)$. Since $ \displaystyle r_{\mu^{-1}_1} = \Lambda^2 \Std$, we deduce the following identification of $ S_{\phi} \times W_{\Q_p} $-representations
    \[
    r_{\mu^{-1}_1} \circ \phi \simeq \chi_{(-2, 0)} \boxtimes \Lambda^2 \phi_1 \bigoplus \chi_{(-1, -1)} \boxtimes \phi_1 \otimes \phi_2 \bigoplus \chi_{(0, -2)} \boxtimes \Lambda^2 \phi_2.
    \]

    By theorem \ref{itm : main theorem}, we have $ i^*_1C_{\chi_{(2, 0)}} \star \mathcal{F}_{\chi_{(1,1)}} = i^*_1C_{\chi_{(0, 2)}} \star \mathcal{F}_{\chi_{(1,1)}} = 0 $ and $i^*_1C_{\chi_{(1, 1)}} \star \mathcal{F}_{\chi_{(1,1)}} = \pi_1$ where $\pi_1$ is the irreducible representation of $\GL_n$ whose $L$-parameter is $\phi$. Hence
    \[
    R\Gamma_{c}(\GL_n,b,\mu_1)[\delta^{1/2}_b \otimes \pi_b] = \pi_1 \boxtimes \phi_1 \otimes \phi_2[-d],
    \]
where $d = \langle 2\rho, \nu_b \rangle = n_2 - n_1 $.

    Similarly, we have $ \displaystyle r_{\mu^{-1}_2} = \Sym^2 \Std $, we deduce the following identification of $ S_{\phi} \times W_{\Q_p} $-representations
    \[
    r_{\mu^{-1}_2} = \chi_{(2, 0)} \boxtimes \Sym^2 \phi_1 \bigoplus \chi_{(1, 1)} \boxtimes \phi_1 \otimes \phi_2 \bigoplus \chi_{(0, 2)} \boxtimes \Sym^2 \phi_2.
    \]

    Therefore
    \[
    R\Gamma_{c}(\GL_n,b,\mu_2)) [\delta^{1/2}_b \otimes \pi_b] = \pi_1 \boxtimes \phi_1 \otimes \phi_2[-d],
    \]
where $d = \langle 2\rho, \nu_b \rangle = n_2 - n_1$.    
\end{example}

\section{On the categorical form of Fargues' conjecture for $\GL_n$} \phantomsection \label{itm : categorical Langlands}
In this section, we describe the map $\Psi_{\GL_n}$ from the spectral Bernstein center of $\GL_n$ to its Bernstein center by using the compatibility between Fargues-Scholze $L$-parameters and the usual $L$-parameters for $\GL_n$. Then we combine this description with theorem \ref{itm : main theorem} to describe the action of $\Perf([C_{\phi}])$ on $\Dlis(\Bun_n, \ov \Q_{\ell})^{\omega}$.
\subsection{Bernstein centers}\textbf{}

Recall that the spectral Bernstein center of $\GL_n$ is $ \mc Z^{\rm spec}(\GL_n, \overline{\Q}_{\ell}) := \mathcal{O}(Z^1(W_{\Q_p}, \GL_n))^{\GL_n} $, the ring of global functions on the stack/the coarse moduli space of $L$-parameters. The geometric Bernstein center of $\GL_n$ is $ \mc Z^{\rm geom}( \GL_n, \overline{\Q}_{\ell} ) := \pi_0 (\End (\id_{\Dlis(\Bun_n, \overline{\Q}_{\ell})})) $. There is a natural identification between $ \mc Z^{\rm spec}(\GL_n, \overline{\Q}_{\ell}) $ and the algebra of excursion operators (\cite[theorem VIII.5.1]{FS}). By \cite[theo. VIII.4.1]{FS}, there is a map $ \mc Z^{\rm spec}(\GL_n, \overline{\Q}_{\ell}) \longrightarrow \mc Z^{\rm geom}(\GL_n, \overline{\Q}_{\ell})$ which induces a map
\[
\Psi_{\GL_n} : \mc Z^{\rm spec}(\GL_n, \overline{\Q}_{\ell}) \longrightarrow \mc Z (\GL_n, \overline{\Q}_{\ell})
\]
by the inclusion $\Dlis(\GL_n(\Q_p), \overline{\Q}_{\ell}) \hookrightarrow \Dlis(\Bun_n, \overline{\Q}_{\ell})$ and where $\mc Z (\GL_n, \overline{\Q}_{\ell})$ is the Bernstein center of $\GL_n$.

Let $f \in \mathcal{O}(Z^1(W_{\Q_p}, \GL_n))^{\GL_n}$ be an element and denote by $\mathcal{O}$ the structural sheaf of $[Z^1(W_{\Q_p}, \GL_n) / \GL_n ]$. In particular the multiplication by $ f $ gives rise to a map of $\mathcal{O}(Z^1(W_{\Q_p}, \GL_n))^{\GL_n}$-modules $ \mc O \xrightarrow{ \ \cdot f \ } \mc O  $. Note that $\mathcal{O}$ is the vector bundle $C_{\rm tri}$ corresponding to the Hecke operator of the trivial representation of $\GL_n$. Hence via the spectral action, the map $ \mc O \xrightarrow{ \ \cdot f \ } \mc O  $ induce an element in $\End(\mc O \star A) \simeq \End(A)$ for every $A \in \Dlis(\Bun_n, \ov \Q_{\ell})$ and therefore it induces an element $\widetilde{f}$ in $ \End(\id_{\Dlis(\Bun_n, \overline{\Q}_{\ell})^{\omega}}) $. Now the excursion operator corresponding to $f$ also gives rise to an element $\overline{f}$ in $ \End(\id_{\Dlis(\Bun_n, \overline{\Q}_{\ell})^{\omega}}) $. By the compatibility of the spectral action and the excursion operators \cite[Theorem 5.2.1]{Zou}, we have $ \widetilde{f} = \overline{f} $. More concretely, let $A$ be a Schur-irreducible element in $ \Dlis(\Bun_n, \overline{\Q}_{\ell})^{\omega} $ whose Fargues-Scholze $L$-parameter is given by $\phi$. Then $ \mc O \xrightarrow{ \ \cdot f \ } \mc O  $ acting on $A$ gives us an endomorphism
\[
\{ \mc O \star A = A \longrightarrow \mc O \star A = A \} \in \End(A) = \overline{\Q}_{\ell},
\]
which will be precisely the scalar $\alpha$ given by evaluating $A$ on the excursion datum corresponding to $f$ (see also \cite[page 24] {Ham}). By \cite[Proposition VIII.3.8]{FS}, $\phi$ corresponds to a surjective map 
\[
\rm Ev_{\phi} : \mathcal{O}(Z^1(W_{\Q_p}, \GL_n))^{\GL_n} \longrightarrow \overline{\Q}_{\ell}
\]
and the scalar $\alpha$ above obtained by evaluating $\phi$ on the excursion datum corresponding to $f$ is exactly $ \mathrm{Ev}_{\phi} (f) $.\\

Now let $\phi = \phi_1 \oplus \dotsc \oplus \phi_r$ be an $L$-parameter satisfying the condition of theorem \ref{itm : main theorem} and let $[C_{\phi}]$ be the connected component of $[Z^1(W_{\Q_p}, \GL_n)/\GL_n]$ containing $\phi$. By proposition \ref{itm : simple connected components}, $[C_{\phi}]$ is isomorphic to $[\bb G_m^r / \bb G_m^r]$ where $ \bb G_m^r $ acts trivially. Therefore, the ring of global functions of $[C_{\phi}]$ is given by $ \overline{\Q}_{\ell}[X_1, \dotsc, X_r, X_1^{-1}, \dotsc, X_r^{-1}] $ and it is a direct factor of $\mathcal{O}(Z^1(W_{\Q_p}, \GL_n))^{\GL_n}$. Now we want to give an explicit description of the restriction of the map $\Psi_{\GL_n}$ to the factor $ \mathcal{O}([C_{\phi}]) \simeq \overline{\Q}_{\ell}[X_1, \dotsc, X_r, X_1^{-1}, \dotsc, X_r^{-1}] $. As we will see, this description ultimately comes from the fact that the Fargues-Scholze $L$-parameter and the usual $L$-parameter are compatible up to semi-simplification for irreducible representations of $\GL_n(\Q_p)$.

Let $ \rB \subset \GL_n $ be the standard upper triangular Borel subgroup and let $\psi$ be a generic character of the unipotent radical $ \U \subset \rB$. This Whittaker datum yields the Whittaker representation $ \cInd_{\U(\Q_p)}^{\GL_n(\Q_p)} \psi $. The so-called Whittaker sheaf is the sheaf $\mathcal{W}_{\psi}$ concentrated on $\Bun^1_n$ corresponding to $\cInd_{\U(\Q_p)}^{\GL_n(\Q_p)} \psi $.

Let $ \M := \GL_{n_1} \times \dotsc \times \GL_{n_r} $ where $n_i = \dim \phi_i \ (1 \le i \le r)$ be the Levi subgroup of $\GL_n$ corresponding to $\phi$. Let $\pi^{\M}$ be the supercuspidal representation of $\M$ whose $L$-parameter is given by $\phi_1 \times \dotsc \times \phi_r$. Thus $\mathfrak{s}_{\phi} := (\M, \pi^{\M})$ is a cuspidal pair of $\GL_n$ and we denote by $ \Rep( \mathfrak{s}_{\phi}) $ the corresponding Bernstein block of the category $\Rep_{\overline{\Q}_{\ell}} \GL_n(\Q_p)$ of smooth representations of $\GL_n(\Q_p)$. By \cite{Bern}, $\Rep_{\overline{\Q}_{\ell}} \GL_n(\Q_p)$ decomposes into a product of indecomposable Bernstein blocks. Let $\mathcal{W}_{\mathfrak{s}_{\phi}}$ be the Bernstein component of $ \cInd_{\U(\Q_p)}^{\GL_n(\Q_p)} \psi $ in $\mathfrak{s}_{\phi}$ and let $\mathcal{Z}_{\mathfrak{s}_{\phi}}$ be the center of the block $ \Rep( \mathfrak{s}_{\phi}) $. Let $\M^{\rm un}$ be the variety  of unramified characters of $\M$, thus $\M^{\rm un} \simeq \bb G_m^r $. Let $\M^{\rm un}_{\phi}$ be the quotient of $\M^{\rm un}$ by the (finite) subgroup of unramified characters fixing $\phi_1 \times \dotsc \times \phi_r$. Thus $\M^{\rm un}_{\phi}$ is also isomorphic to $\bb G_m^r$. It is known by \cite{Bern}, \cite{BH} that $\mathcal{Z}_{\mathfrak{s}_{\phi}}$ is isomorphic to the global functions of the variety $ \M^{\rm un}_{\phi}$. %In our case, $\M^{\rm un}_{\phi}$ is isomorphic to $\bb G_m^r$. 
Thus
\[
\mathcal{Z}_{\mathfrak{s}_{\phi}} \simeq \End(\mathcal{W}_{\mathfrak{s}_{\phi}}) \simeq \overline{\Q}_{\ell}[Y_1, \dotsc, Y_r, Y_1^{-1}, \dotsc, Y_r^{-1}].
\]

It is known that $\mathcal{W}_{\mathfrak{s}_{\phi}}$ is a projective object of the block $ \Rep( \mathfrak{s}_{\phi}) $ \cite[Corollary 8.6]{CS19} and we are going to prove that $\mathcal{W}_{\mathfrak{s}_{\phi}}$ is in fact a pro-generator of this block.

Indeed, if $\phi$ is a supercuspidal $L$-parameter then $\pi := \pi^{\GL_n}$ is supercuspidal. Moreover by \cite[\S 9.2]{BH}, we know that $\Ws = \cInd^{\GL_n}_{\GL_n^o} \pi' $ where $\GL_n^o := \{ g \in \GL_n | \det g \in \Z_p^* \} $ is an open, dense, normal subgroup of $\GL_n$ and $\pi'$ is the unique direct factor of the restriction $ \pi_{| \GL_n^o} $ such that $\Hom_{\GL_n^o}(\cInd_{\U}^{\GL_n^o} \psi, \pi')$ is non trivial. In particular, $\Ws$ is a pro-generator of the block.

In general, $\mathfrak{s}_{\phi} = [\M, \pi^{\M}]$ is a cuspidal pair for $\GL_n$ and we also consider $\mathfrak{t}_{\phi} = [\M, \pi^{\M}]$ as a cuspidal pair of $\M$. Hence, by \cite[\S 4]{Soll} the representation $\Ind_{\rP}^{\GL_n}(\mathcal{W}_{{\mathfrak{t}_{\phi}}})$ is a pro-generator of $\Rep(\mathfrak{s}_{\phi})$ where $\rP$ is the standard parabolic subgroup of $\GL_n$ whose Levi factor is $\M$. By \cite[\S 9.3]{BH} and \cite[Proposition 4.1]{Soll} we deduce that $ \Ws \simeq \Ind_{\rP}^{\GL_n}(\mathcal{W}_{{\mathfrak{t}_{\phi}}}) $ is a pro-generator of $\Rep(\mathfrak{s}_{\phi})$ (note that \cite[Proposition 2 \S 10.1]{BH} implies that the algebras $\End_{\G}(\I_{\rP}^{\G}(\rm E_{\rm B}))$ and $\End_{\rm L}(\I^{\rm L}_{\rP \cap \rm L}(\rm E_{\rm B}))$ in \cite[Proposition 4.1]{Soll} are isomorphic for the Bernstein blocks we consider).

Moreover, the block $ \Rep( \mathfrak{s}_{\phi}) $ is equivalent to the category of modules over $\mathcal{Z}_{\mathfrak{s}_{\phi}}$ via the functor
\begin{align*}
   \rF : \Rep( \mathfrak{s}_{\phi}) &\longrightarrow \mathcal{Z}_{\mathfrak{s}_{\phi}}-\text{Mod} \\
    \pi &\longmapsto \Hom_{\Rep( \mathfrak{s}_{\phi})}(\mathcal{W}_{\mathfrak{s}_{\phi}}, \pi).
\end{align*}

Let $\pi_{\phi}$ be the irreducible representation of $\GL_n(\overline{\Q}_{\ell})$ whose $L$-parameter is $\phi$. Without loss of generality, by replacing $\phi$ by an appropriate $L$-parameter in $[C_{\phi}]$, we can suppose that $ \rF (\pi_{\phi}) $ is isomorphic to $\overline{\Q}_{\ell}$ as $ \mathcal{Z}_{\mathfrak{s}_{\phi}} $-module  where $\mathcal{Z}_{\mathfrak{s}_{\phi}}$ acts via the character
\[
\overline{\Q}_{\ell}[Y_1, \dotsc, Y_r, Y_1^{-1}, \dotsc, Y_r^{-1}] \longrightarrow \overline{\Q}_{\ell}, \quad Y_i \longmapsto 1.
\]

More generally, let $\xi = (\xi_1, \dotsc, \xi_r)$ be an unramified character of $\M$, we denote by $\phi \otimes \xi$ the $L$-parameter $ \displaystyle \bigoplus_{i=1}^r \phi_i \otimes \xi_i $ and by $ \pi_{\phi \otimes \xi} $ the corresponding irreducible representation of $\GL_n(\Q_p)$. Then $\rF( \pi_{\phi \otimes \xi} )$ is isomorphic to $\overline{\Q}_{\ell}$ as $ \mathcal{Z}_{\mathfrak{s}_{\phi}} $-module  where $\mathcal{Z}_{\mathfrak{s}_{\phi}}$ acts via the reduction
\[
\overline{\Q}_{\ell}[Y_1, \dotsc, Y_r, Y_1^{-1}, \dotsc, Y_r^{-1}] \longrightarrow  \overline{\Q}_{\ell}[Y_1, \dotsc, Y_r, Y_1^{-1}, \dotsc, Y_r^{-1}] / \mathfrak{m}_{\xi} \simeq \overline{\Q}_{\ell}
\]
where $\mathfrak{m}_{\xi}$ is the maximal ideal corresponding to the closed point $ \xi $ in $\M^{\rm un}_{\phi}$. More precisely, $\xi$ corresponds to the points $(\xi_1^{k_1}, \dotsc, \xi_r^{k_r})$ in $ \M^{\rm un}_{\phi} $ where $k_i$ is the torsion number of  $\phi_i$ for $ 1 \leq i \leq r $.  \\

Recall that we associate a pair $(b_{\chi}, \pi_{\chi})$ to each character $\chi$ of $S_{\phi} \simeq \bb G_m^r$ where $b_{\chi} \in B(\GL_n)$ and $\pi_{\chi}$ is an irreducible representation of $\G_{b_{\chi}}(\Q_p)$. Let $\mf{s}_{\phi}(\chi) = [\tau_{\chi}, \M_{b_{\chi}}] $, resp. $\mf{t}_{\phi}(\chi) = [\tau_{\chi}, \M_{b_{\chi}}] $  be the cuspidal pair of $\G_{b_{\chi}}(\Q_p)$, resp. of $\M_{b_{\chi}}(\Q_p)$ corresponding to $\pi_{\chi}$, resp. $\tau_{\chi}$ where $(\tau_{\chi}, \M_{b_{\chi}})$ is the cuspidal support of $\pi_{\chi}$. Let $\Rep(\mf{s}_{\phi}(\chi))$, resp. $\Rep(\mf{t}_{\phi}(\chi))$ be the corresponding Bernstein block of $\G_{b_{\chi}}(\Q_p)$, resp. $\M_{b_{\chi}}(\Q_p)$. As above we can describe a pro-generator of this category. Indeed, $\mathcal{W}_{\mf{t}_{\phi}(\chi)} := \cInd^{\M_{b_{\chi}}}_{\M_{b_{\chi}}^o}(\tau_{\chi})'$ is a pro-generator of $\Rep(\mf{t}_{\phi}(\chi))$ where $\tau(\chi)'$ is a direct factor of the restriction $ \tau_{ \chi | \M_{b_{\chi}}^o} $. Similarly $\Wschi := \Ind_{\rP_{b_{\chi}}}^{\G_{b_{\chi}}} (\mathcal{W}_{\mf{t}_{\phi}(\chi)}) $ is a pro-generator of $\Rep(\mf{s}_{\phi}(\chi))$. It is also known that $\mathcal{Z}_{\mf{s}_{\phi}(\chi)} = \End(\Wschi) \simeq \overline{\Q}_{\ell}[X_1, \dotsc, X_r, X_r^{-1}, \dotsc, X_r^{-1}]$ and the block $ \Rep( \mathfrak{s}_{\phi}(\chi)) $ is equivalent to the category of modules over $\mathcal{Z}_{\mathfrak{s}_{\phi}(\chi)}$ via the functor
\begin{align*}
   \rF : \Rep( \mathfrak{s}_{\phi}(\chi)) &\longrightarrow \mathcal{Z}_{\mathfrak{s}_{\phi}(\chi)}-\text{Mod} \\
    \pi &\longmapsto \Hom_{\Rep( \mathfrak{s}_{\phi}(\chi))}(\mathcal{W}_{\mathfrak{s}_{\phi}(\chi)}, \pi).
\end{align*}

We also denote by $\mathcal{F}_{\Wchi}$ the sheaf $ i_{b_{\chi} !}(\delta^{-1/2}_{b_{\chi}} \otimes \Wschi)[-d_{\chi}] $ supported on $b_{\chi}$ where $d_{\chi} = \langle 2\rho, \nu_{b_{\chi}} \rangle$ and where $i_{b_{\chi}} : \Bun_n^{b_{\chi}} \longrightarrow \Bun_n$ is the canonical immersion. By abuse of notations, we sometimes use $\Ws$ instead of $\mathcal{F}_{\mathcal{W}(\Id)}$ to denote $ i_{1 !}(\Ws) $. \\
  
We denote by $R$ the ring $\OO([C_{\phi}]) \simeq \overline{\Q}_{\ell}[X_1, \dotsc, X_r, X_1^{-1}, \dotsc, X_r^{-1}] $ and by $R_{\chi}$ the ring $\mathcal{Z}_{\mathfrak{s}_{\phi}(\chi)} \simeq \overline{\Q}_{\ell}[Y_1, \dotsc, Y_r, Y_1^{-1}, \dotsc, Y_r^{-1}] $ to lighten the notations. We have a natural identification between $R$ and $R_{\chi}$ by the map that sends $X_i$ to $Y_i$ for $1 \le i \le r$. If $\chi = \Id$, we even denote $R_{\chi}$ by $R'$. Note that for each Bernstein block $\mathfrak{s}$ of $\Rep_{\overline{\Q}_{\ell}}(\GL_n(\overline{\Q}_p))$, we have a canonical projection map $ \Pr_{\mathfrak{s}} : \mathcal{Z}(\GL_n, \overline{\Q}_{\ell}) \longrightarrow \mathcal{Z}_{\mathfrak{s}} $.

The following lemma is well known for the experts.

\begin{lemma} \phantomsection \label{itm : explicit Bernstein morphism}
    If $\mathfrak{s} \neq \mathfrak{s}_{\phi}$ then the map $ \Pr_{\mathfrak{s}} \circ \Psi_{\GL_n | \mathcal{O}([C_{\phi}])} : \mathcal{O}([C_{\phi}]) \longrightarrow \mathcal{Z}(\GL_n, \overline{\Q}_{\ell}) \longrightarrow \mathcal{Z}_{\mathfrak{s}} $ is the zero map and if $\mathfrak{s} = \mathfrak{s}_{\phi}$ then we have the following description
    \begin{align*}
       \mathrm{Pr}_{\mathfrak{s}_{\phi}} \circ \Psi_{\GL_n | \mathcal{O}([C_{\phi}])} : \mathcal{O}([C_{\phi}]) & \longrightarrow \mathcal{Z}(\GL_n, \overline{\Q}_{\ell}) \longrightarrow \mathcal{Z}_{\mathfrak{s}_{\phi}} \\
        X_i &\longmapsto Y_i.
    \end{align*}
\end{lemma}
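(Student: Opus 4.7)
The plan is to combine two ingredients: (a) the explicit characterization of $\Psi_{\GL_n}$ as acting on irreducibles through their Fargues--Scholze parameters, and (b) the compatibility between Fargues--Scholze parameters and Harris--Taylor/Henniart parameters up to semi-simplification for $\GL_n$, i.e.\ Theorem \ref{FSproperties}(5). As discussed just before the statement of the lemma, for any $f \in \OO(Z^1(W_{\Q_p}, \widehat{\GL}_n))^{\widehat{\GL}_n}$ and any irreducible $\pi$ with Fargues--Scholze parameter $\phi^{\mathrm{FS}}_\pi$, $\Psi_{\GL_n}(f)$ acts on $\pi$ by the scalar $\mathrm{Ev}_{\phi^{\mathrm{FS}}_\pi}(f)$; since an element of $\mathcal{Z}_{\mathfrak{s}}$ is determined by the scalars by which it acts on the irreducible objects of $\mathfrak{s}$, the whole question reduces to computing these scalars.

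For the first assertion, I would take any irreducible $\pi \in \mathfrak{s}$ with $\mathfrak{s} \neq \mathfrak{s}_\phi$ and show that its Fargues--Scholze parameter does not meet $[C_\phi]$. Indeed, the classical parameter $\phi_\pi$ has semi-simplification $\phi_\pi^{\mathrm{ss}}$ determined (via the local Langlands correspondence for $\M_\pi$ applied to its cuspidal support) by the Bernstein component of $\mathfrak{s}$; since $\mathfrak{s} \neq \mathfrak{s}_\phi$, this cuspidal support is not inertially equivalent to $(\M,\pi^{\M})$, hence $\phi_\pi^{\mathrm{ss}}$ is not an unramified twist of $\phi$ and in particular $\phi_\pi^{\mathrm{ss}} \notin [C_\phi]$. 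By Theorem \ref{FSproperties}(5) we then have $\phi^{\mathrm{FS}}_\pi = \phi_\pi^{\mathrm{ss}} \notin [C_\phi]$, so viewing $f \in \OO([C_\phi])$ as extended by zero to the other connected components yields $\mathrm{Ev}_{\phi^{\mathrm{FS}}_\pi}(f) = 0$. This holds for every irreducible $\pi$ in the block, so $\mathrm{Pr}_{\mathfrak{s}} \circ \Psi_{\GL_n}$ vanishes on $\OO([C_\phi])$.

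For the second assertion, I would test the identity $\mathrm{Pr}_{\mathfrak{s}_\phi} \circ \Psi_{\GL_n}(X_i) = Y_i$ on the dense family of irreducibles $\pi_{\phi \otimes \xi}$ for $\xi = (\xi_1, \dotsc, \xi_r)$ an unramified twist of $(\phi_1, \dotsc, \phi_r)$. The action of $Y_i$ on $\pi_{\phi \otimes \xi}$ is, by the reduction-mod-maximal-ideal description recalled right before the lemma, the scalar $\xi_i^{k_i}$, where $k_i$ is the torsion number of $\phi_i$. On the other hand, $\Psi_{\GL_n}(X_i)$ acts on $\pi_{\phi \otimes \xi}$ by $\mathrm{Ev}_{\phi \otimes \xi}(X_i)$, and under the identification $[C_\phi] \simeq [\bb G_m^r / \bb G_m^r]$ of Proposition \ref{itm : simple connected components} the $L$-parameter $\phi_1 \otimes \chi_{\xi_1} \oplus \dotsc \oplus \phi_r \otimes \chi_{\xi_r}$ corresponds precisely to the closed point $(\xi_1^{k_1}, \dotsc, \xi_r^{k_r})$, so this scalar is also $\xi_i^{k_i}$. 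The two central elements thus agree on a Zariski dense set of closed points of $\Spec \mathcal{Z}_{\mathfrak{s}_\phi}$, and hence coincide. The only real subtlety I anticipate is bookkeeping: ensuring the torsion-number exponents match on both sides, which amounts to using consistently the normalization fixed at the end of the proof of Proposition \ref{itm : simple connected components}.
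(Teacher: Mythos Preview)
Your proposal is correct and follows essentially the same approach as the paper: both reduce to evaluating on irreducibles via $\mathrm{Ev}_{\phi^{\mathrm{FS}}_\pi}$, use Theorem~\ref{FSproperties}(5) to identify the Fargues--Scholze parameter with the semi-simplified classical one, and conclude from agreement on all closed points (the paper phrases this as triviality of the Jacobson radical of $R'$, which is equivalent to your Zariski-density argument). Your version is slightly more explicit in tracking the torsion exponents $k_i$, whereas the paper simply compares the images of $X_i$ and $Y_i'$ in $R'/\mathfrak{m}'_\xi$ without naming the scalar, but the content is the same.
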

\begin{proof}
    Let $\pi$ be any irreducible representation of $\GL_n(\Q_p)$ that does not belong to the Bernstein block $\Rep( \mathfrak{s}_{\phi})$. By the compatibility of the Fargues-Scholze L-parameter and the usual L-parameter, we see that the $L$-parameter $\varphi$ of $\pi$ does not belong to the connected component $[C_{\phi}]$. Thus $\Psi_{\GL_n | \mathcal{O}([C_{\phi}])}(X_i) \in \mathcal{Z}(\GL_n, \overline{\Q}_{\ell}) $ acts trivially on $\pi$ since the valuation of $X_i$ at the closed point corresponding to $\varphi^{\rm ss}$ vanishes. Hence $\Pr_{\mathfrak{s}} \circ \Psi_{\GL_n | \mathcal{O}([C_{\phi}])}$ is the zero map. 

    Now let $\xi = (\xi_1, \dotsc, \xi_r)$ be an unramified character of $\M^{\rm un}$ and $\pi$ be an irreducible representation whose $L$-parameter is given by $\phi \otimes \xi$. We denote by $\mathfrak{m}_{\xi}$ (resp. $\mathfrak{m}'_{\xi}$) the maximal ideal of $R$ (resp. $R'$) corresponding to the point $\xi$. Then by the above description, if we let $\alpha$ be the image of $X_i$ in the quotient $R / \mathfrak{m}_{\xi}$ then the excursion operator corresponding to $X_i$ acts on $\pi$ by $\alpha \in \ov \Q_{\ell} \simeq \End(\pi)$. That means the element $ Y'_i := \mathrm{Pr}_{\mathfrak{s}_{\phi}} \circ \Psi_{\GL_n | \mathcal{O}([C_{\phi}])}(X_i)$ also acts on $\pi$ by $\alpha \in \overline{\Q}_{\ell} \simeq \End(\pi) $. Since the usual $L$-parameter of $\pi$ is the same as its Fargues-Scholze $L$-parameter, we deduce that the image of $Y'_i$ in $R' / \mathfrak{m}'_{\xi}$ is also given by $\alpha$. In other words, $Y'_i - Y_i $ is in the maximal ideal $ \mathfrak{m}'_{\xi} $. This holds for an arbitrary point $\xi$, hence $ Y'_i = Y_i $ since the Jacobson radical of $\mathcal{O}([C_{\phi}])$ is trivial. To see this, we just note that the units in $\mathcal{O}([C_{\phi}])$ are the monomials and for a ring $A$, un element $u$ belongs to its Jacobson radical if and only if $ 1 + ua $ is a unit for all $a \in A$.     
\end{proof}

In more general situation where we consider a character $\chi \in \Irr(S_{\phi})$, we can embed the derived category of $\Rep(\mathfrak{s}_{\phi}(\chi))$ into $\Dlis(\Bun_n, \overline{\Q}_{\ell})$ by the functor
\begin{align*}
  i_{\chi} : \mathrm{D}(\Rep(\mathfrak{s}_{\phi}(\chi))) &\longrightarrow \quad \Dlis(\Bun_n, \overline{\Q}_{\ell}) \\
  \pi \ \quad \quad \quad &\longmapsto \quad i_{b_{\chi}!}( \delta^{-1/2}_{b_{\chi}} \otimes \pi)[-d_{\chi}]
\end{align*}
where $d_{\chi} = \langle 2\rho, \nu_{b_{\chi}} \rangle$. Therefore we have an induced map
\[
\Psi^{\chi}_{\GL_n} : \mc Z^{\rm spec}(\GL_n, \overline{\Q}_{\ell}) \longrightarrow \mc Z_{\mathfrak{s}_{\phi}(\chi)}.
\]

We know that Fargues-Scholze $L$-parameter is also compatible with the usual one for inner forms of $\GL_n$ \cite[Theorem 6.6.1]{HKW}. Moreover, the Fargues-Scholze $L$-parameter associated to the sheaf $i_{b_{\chi}!}( \delta^{-1/2}_{b_{\chi}} \otimes \pi)[-d_{\chi}]$ is the same as the usual $L$-parameter of $\pi$ post-composed with the natural embedding $ \widehat{\G^*_{b_{\chi}}}(\overline{\Q}_{\ell}) \hookrightarrow \widehat{\GL}_n(\overline{\Q}_{\ell}) $. Then by the same arguments as in lemma \ref{itm : explicit Bernstein morphism}, we can show the following result.
\begin{lemma} \phantomsection \label{itm : general explicit Bernstein morphism}
The restriction of the map $\Psi^{\chi}_{\GL_n}$ to $\mathcal{O}([C_{\phi}])$ is given explicitly by:
    \begin{align*}
        \Psi^{\chi}_{\GL_n | \mathcal{O}([C_{\phi}])} : \mathcal{O}([C_{\phi}]) & \longrightarrow \mathcal{Z}_{\mathfrak{s}_{\phi}(\chi)} \\
        X_i &\longmapsto Y_i.
    \end{align*}
\end{lemma}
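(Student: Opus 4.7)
The plan is to mimic the proof of Lemma \ref{itm : explicit Bernstein morphism}, with the key input being the compatibility of Fargues--Scholze $L$-parameters with the usual $L$-parameters for inner forms of $\GL_n$ (cited from \cite[Theorem 6.6.1]{HKW} and \cite[IX.7.3]{FS}). The rough idea is to evaluate on all irreducible representations of $\G_{b_\chi}(\Q_p)$ in the block $\Rep(\mathfrak{s}_\phi(\chi))$ and to observe that the image of $X_i$ and of $Y_i$ act by the same scalar, after which triviality of the Jacobson radical of the Laurent polynomial ring $\mathcal{Z}_{\mathfrak{s}_\phi(\chi)}$ forces equality.

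First I would, for every unramified character $\xi=(\xi_1,\dotsc,\xi_r)$ of $\M^{\rm un}$, let $\pi_{\chi\otimes\xi}$ denote the irreducible representation of $\G_{b_\chi}(\Q_p)$ whose usual $L$-parameter post-composed with $\widehat{\G_{b_\chi}^*}\hookrightarrow\widehat{\GL}_n$ is $\phi\otimes\xi := \bigoplus_i \phi_i\otimes\xi_i$. Via the functor $i_\chi$, this yields the sheaf $\mathcal{F}_{\chi\otimes\xi}=i_{b_\chi!}(\delta^{-1/2}_{b_\chi}\otimes\pi_{\chi\otimes\xi})[-d_\chi]$ on $\Bun_n$. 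By \cite[Corollary IX.7.3]{FS} together with the compatibility result \cite[Theorem 6.6.1]{HKW} applied to $\G_{b_\chi}$, the Fargues--Scholze parameter of $\delta^{-1/2}_{b_\chi}\otimes\pi_{\chi\otimes\xi}$ computed with respect to $\Bun_n$ (rather than with respect to $\Bun_{\G_{b_\chi}}$) is precisely $\phi\otimes\xi$: the $\delta^{-1/2}_{b_\chi}$-twist is specifically designed, as noted after the main theorem statement, to cancel the twist appearing in \cite[Corollary IX.7.3]{FS}.

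Next I would compute the action of $X_i\in\mathcal{O}([C_\phi])$ on $\mathcal{F}_{\chi\otimes\xi}$ through the spectral action. By the characterization of $\Psi_{\GL_n}$ via excursion operators together with \cite[Theorem 5.2.1]{Zou} (the compatibility of spectral action with excursion operators recalled at the beginning of this section), this action is multiplication by $\mathrm{Ev}_{\phi\otimes\xi}(X_i)$, that is, by the value of $X_i$ at the closed point of $[C_\phi]\simeq[\mathbb{G}_m^r/\mathbb{G}_m^r]$ corresponding to $\phi\otimes\xi$. On the other hand, since $\pi_{\chi\otimes\xi}$ lies in the block $\Rep(\mathfrak{s}_\phi(\chi))$ and applying the equivalence $\mathrm{F}$ sends $\pi_{\chi\otimes\xi}$ to a skyscraper at the same point of $\M^{\rm un}_\phi$, the element $Y_i\in\mathcal{Z}_{\mathfrak{s}_\phi(\chi)}$ acts on $\pi_{\chi\otimes\xi}$ by the identical scalar (under our fixed identification $R\simeq R_\chi$, $X_i\mapsto Y_i$).

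Finally, setting $Y'_i:=\Psi^\chi_{\GL_n|\mathcal{O}([C_\phi])}(X_i)\in\mathcal{Z}_{\mathfrak{s}_\phi(\chi)}$, the two preceding computations show that $Y'_i-Y_i$ lies in the maximal ideal $\mathfrak{m}_\xi$ of $R_\chi$ for every closed point $\xi\in\M^{\rm un}_\phi$. Since the units of $R_\chi\simeq\overline{\Q}_\ell[Y_1^{\pm 1},\dotsc,Y_r^{\pm 1}]$ are precisely the monomials, the Jacobson radical of $R_\chi$ is zero, so the intersection of all maximal ideals is trivial and $Y'_i=Y_i$. The statement for the other two lemmas about $\mathrm{Pr}_{\mathfrak{s}}$ vanishing for $\mathfrak{s}\neq\mathfrak{s}_\phi(\chi)$ follows in the same way, since any $\pi$ not in $\Rep(\mathfrak{s}_\phi(\chi))$ has Fargues--Scholze parameter outside $[C_\phi]$, on which the functions $X_i$ restrict trivially. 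The only substantive obstacle is verifying the twist-compensation claim in the second paragraph; everything else is formal once the map $\Psi^\chi_{\GL_n}$ is known to factor through the correct cuspidal support.
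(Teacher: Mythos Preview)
Your proposal is correct and follows essentially the same approach as the paper, which simply says to repeat the argument of Lemma~\ref{itm : explicit Bernstein morphism} using the compatibility of Fargues--Scholze parameters with the usual ones for inner forms of $\GL_n$ from \cite[Theorem~6.6.1]{HKW}. You have in fact made the argument more explicit than the paper does, correctly isolating the role of the $\delta^{-1/2}_{b_\chi}$-twist in the definition of $i_\chi$ (via \cite[Corollary~IX.7.3]{FS}) as the one point requiring care beyond the $\chi=\Id$ case.
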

\subsection{On spectral action on $\Bun_n$} \textbf{}

The main goal of this paragraph is to study the full sub-category $ \mathcal{C}:= \Dlis^{[C_{\phi}]}(\Bunn,\ol{\mathbb{Q}}_{\ell})^{\omega} \subset \Dlis(\Bunn,\ol{\mathbb{Q}}_{\ell})^{\omega}$ generated by compact objects whose Schur-irreducible constituents have $L$-parameter in the connected component $[C_{\phi}]$ and also the action of $\Perf([C_{\phi}])$ on $\mathcal{C}$. First, we are going to show the identity $C_{\chi} \star \Ws \simeq i_{b_{\chi} !} \big( \delta^{-1/2}_{b_{\chi}} \otimes \Wschi \big) [-d_{\chi}] $ for $\chi \neq \Id$ (the case $\chi = \Id$ is trivial since $C_{\Id}$ is the identity functor of $\mathcal{C}$). 

\begin{theorem} \phantomsection \label{itm : spectral action - basic}
    Let $\phi = \phi_1 \oplus \dotsc \oplus \phi_r$ be an $L$-parameter satisfying the conditions of theorem \ref{itm : main theorem}. With the above notations, for each $\chi \in \Irr(S_{\phi})$ we have
    \[
    C_{\chi} \star \Ws \simeq \mathcal{F}_{\Wchi}. 
    \]
\end{theorem}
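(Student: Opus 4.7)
The plan is to mirror the inductive structure of the proof of Theorem \ref{itm : main theorem}, with an outer induction on $r$ (the number of irreducible summands of $\phi$) and an inner induction on $D = \sum_i |d_i|$ for $\chi = (d_1,\ldots,d_r)$. As there, using that $C_{\chi}$ and $C_{\chi^{-1}}$ are mutually inverse auto-equivalences of $\mathcal{C}$, one reduces to $\chi \in \Irr(S_{\phi})^+$. The base case $r = 1$ combines the description of $C_{\chi}$ on irreducible representations from \cite{AL}, \cite{Han} with Bernstein's equivalence $\Rep(\mathfrak{s}_{\phi}(\Id)) \simeq \overline{\mathbb{Q}}_{\ell}[X, X^{-1}]\textrm{-Mod}$: under this equivalence $\Ws$ corresponds to the rank-one free module, and matching the $\mathcal{O}([C_{\phi}])$-action on both sides via Lemma \ref{itm : general explicit Bernstein morphism} produces the isomorphism.

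For the inner base case $D = 1$ with $r \geq 2$, I would replay the Harris--Viehmann computation of Subsection \ref{itm : the base case} with the irreducible representation $\pi_{\chi_i}$ replaced by its pro-generator $\mathcal{W}_{\mathfrak{s}_{\phi}(\chi_i)}$. The factorization underlying equation (\ref{itm : auxillaire}) is exact in the representation input, and since parabolic induction respects Bernstein blocks, $\Ind_{\rP}^{\GL_n}(\mathcal{W}_{\mathfrak{t}_{\phi}(\chi_i)} \boxtimes \pi)$ remains a pro-generator of the relevant block, up to the modulus twist. Identifying the $[\phi_i]$-isotypic component of the resulting Hecke operator then yields $C_{\chi_i} \star \Ws \simeq \mathcal{F}_{\mathcal{W}(\chi_i)}$.

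The inductive step on $D$ uses the monoidal identity $C_{\chi} \star \Ws = C_{\chi_{j^*}} \star (C_{\chi'} \star \Ws)$, the analogue of Boyer's trick applied to the triple $(b_{\chi'}, b_{\chi}, \mu)$ with $\mu = (1, 0^{(n-1)})$, and pro-generator variants of Lemmas \ref{itm : first computation}, \ref{itm : second computation}. These variants should go through by a mechanical replacement of the irreducible by its pro-generator, since the input representation enters those proofs only through exact functors (Hom--Tensor duality, parabolic induction, Jacquet modules) together with the cohomological decomposition furnished by Proposition \ref{generalized Boyer's trick}. This computes $i^*_{b_{\chi}}(C_{\chi} \star \Ws)$ and identifies it with the restriction of $\mathcal{F}_{\mathcal{W}(\chi)}$ to $b_{\chi}$.

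The main obstacle, and the place where Theorem \ref{itm : main theorem} supplies a genuine simplification, is the vanishing of $i^*_b(C_{\chi} \star \Ws)$ for strata $b \neq b_{\chi}$. I would argue as follows. By Lemma \ref{itm : general explicit Bernstein morphism}, the $\mathcal{O}([C_{\phi}])$-module structure on $\Ws$ is preserved by $C_{\chi} \star (-)$ through the morphism $\Psi^{\chi}_{\GL_n}$, so each $i^*_b(C_{\chi} \star \Ws)$ carries a compatible $\mathcal{Z}_{\mathfrak{s}_{\phi}(\chi')}$-module structure for the $\chi'$ indexing the Bernstein block of $b$. Its fiber at each closed point $\xi$ of $\mathcal{O}([C_{\phi}])$ equals $i^*_b(C_{\chi} \star \pi_{\phi \otimes \xi})$, which by Theorem \ref{itm : main theorem} applied to the $L$-parameter $\phi \otimes \xi$ vanishes whenever $b \neq b_{\chi}$. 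Combining this fiberwise vanishing with the reducedness of $\mathcal{O}([C_{\phi}])$ (so that vanishing at all closed points implies vanishing of a finitely generated module) and with the preservation of compact and ULA objects by the spectral action yields the desired global vanishing. Finally, matching the pro-generator structures on the stratum $b_{\chi}$ uses the equivalence $\Rep(\mathfrak{s}_{\phi}(\chi)) \simeq \mathcal{Z}_{\mathfrak{s}_{\phi}(\chi)}\textrm{-Mod}$ together with the explicit description of $\Psi^{\chi}_{\GL_n}$.
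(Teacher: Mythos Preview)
Your fiberwise-plus-Nakayama argument for the vanishing on strata $b \neq b_{\chi}$ is correct and takes a genuinely different route from the paper. The paper instead first bounds the support inside $\Bun_n^{\le b_{\chi}}$ by an excision argument (using that $C_{\chi^{-1}}$ is an auto-equivalence together with Theorem~\ref{itm : main theorem}), and then treats the strata in $\Bun_n^{<b_{\chi}}$ by a case analysis that replays the Boyer's-trick computations, crucially replacing the $W_{\Q_p}$-action by the $\I_{\Q_p}$-action. Your approach is more conceptual: since the spectral action and $i_b^*$ are $\mathcal{O}([C_{\phi}])$-linear, one has $i_b^*(C_{\chi}\star\Ws)\otimes^{\mathbb{L}}_{R}k(\xi)\simeq i_b^*(C_{\chi}\star\pi_{\phi\otimes\xi})$, which vanishes for $b\neq b_{\chi}$ by Theorem~\ref{itm : main theorem} (condition (A1) is preserved under unramified twist). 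Then Nakayama over the Jacobson regular ring $R$ forces the whole complex to vanish.

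There is, however, a genuine gap in your inductive computation of the restriction to $b_{\chi}$. You assert that Lemmas~\ref{itm : first computation} and~\ref{itm : second computation} go through by ``mechanical replacement'' of $\pi_{\chi}$ by $\Wschi$, but those lemmas feed through Proposition~\ref{itm : fundamental decomposition of Hecke operator}, which requires the input sheaf to be Schur-irreducible in order to split $\T_{\mu}$ into $C_{\chi_j}$-pieces indexed by the $W_{\Q_p}$-representations $\phi_j$. The pro-generator $\Ws$ is not Schur-irreducible, and no such splitting with full $W_{\Q_p}$-action exists over the whole component. The paper handles this by passing to the $\I_{\Q_p}$-action: since $\phi_{|\I_{\Q_p}}$ is multiplicity-free (Lemma~\ref{itm : multiplicity free of Weil action}), the commutative diagram preceding Proposition~\ref{itm : fundamental decomposition of Hecke operator} still gives a decomposition $\T_{V_{\mathrm{std}}}(\mathcal{F}_{\mathcal{W}(\chi')})\simeq\bigoplus_i C_{\chi_i}\star\mathcal{F}_{\mathcal{W}(\chi')}\boxtimes\phi_{i|\I_{\Q_p}}$ at the level of $\I_{\Q_p}$-modules (equation~(\ref{itm : Hecke action with inertial action})), and this is enough to isolate the $C_{\chi_{j^*}}$-summand.

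You can avoid this subtlety entirely by pushing your fiberwise method further: the same computation on the stratum $b_{\chi}$ shows that $i_{b_{\chi}}^*(C_{\chi}\star\Ws)\otimes^{\mathbb{L}}_{R}k(\xi)\simeq \delta^{-1/2}_{b_{\chi}}\otimes\pi_{\chi}^{\phi\otimes\xi}[-d_{\chi}]$ for every closed point $\xi$. A Tor-amplitude argument then forces $i_{b_{\chi}}^*(C_{\chi}\star\Ws)$ to be a line bundle over $R_{\chi}$ concentrated in degree $-d_{\chi}$ (in the single Bernstein block containing $\delta^{-1/2}_{b_{\chi}}\otimes\pi_{\chi}$), and since $\mathrm{Pic}(\mathbb{G}_m^r)$ is trivial this line bundle is free, hence isomorphic to $\delta^{-1/2}_{b_{\chi}}\otimes\Wschi[-d_{\chi}]$. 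With this, the entire induction on $r$ and $D$ becomes unnecessary: Theorem~\ref{itm : main theorem} plus fiberwise specialization proves the result directly.
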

\begin{proof}
We will show that the spectral action by $C_{\chi}$ gives an equivalence between the block $\Rep(\mathfrak{s}_{\phi})$ and the block $\Rep(\mathfrak{s}_{\phi}(\chi))$ of the categories $\Rep_{\ol\Q_{\ell}}(\GL_n(\Q_p))$, respectively $\Rep_{\ol\Q_{\ell}}(\G_{b_{\chi}}(\Q_p))$. 

Indeed, let $\mathcal{F}$ be the sheaf supported on $\Bun^{1}_n$ corresponding to a finitely generated representation $\pi$ in the block $\Rep(\mathfrak{s}_{\phi})$ and consider $C_{\chi} \star \mathcal{F}$. We show that the support of this sheaf is the stratum $b_{\chi}$. We know that the sheaf $C_{\chi} \star \mathcal{F}$ is compact, thus its restriction $i_b^{*} C_{\chi} \star \mathcal{F}$ to a stratum $b$ different from $b_{\chi}$ is compact. Thus, there exists an integer $k$ and an irreducible representation $\rho$ of $\G_{b}(\Q_p)$ such that the Fargues-Scholze parameter $\phi_{\rho}$ of $i_{b !}^{\mathrm{ren}} (\rho)$ belongs to the connected component of $\phi$ and
\[
\Hom_{\Dlis(\Bunn, \ol{\Q}_{\ell})}( i_{b!}i_b^{*} C_{\chi} \star \mathcal{F} , i_{b !}^{\mathrm{ren}} (\rho)[k] ) \neq 0.
\]

The parameter $\phi_{\rho}$ satisfies the conditions in the beginning of section \ref{itm : condition A1}. We can then write $i_{b !}^{\mathrm{ren}} (\rho)[k]$ in the form $\mathcal{F}_{\chi'}[h]$ corresponding to the parameter $\phi_{\rho}$ where $\chi'$ is a character such that $b_{\chi'} = b$. Then for $b' \neq b$ in $B(\GL_n)$, by corollary \ref{itm : useful corollary} we see that 
\begin{align*}
\Hom_{\Dlis(\Bunn, \ol{\Q}_{\ell})}( i_{b'!}i_{b'}^{*} C_{\chi} \star \mathcal{F} , i_{b !}^{\mathrm{ren}} (\rho)[k] ) &= \Hom_{\Dlis(\Bunn, \ol{\Q}_{\ell})}( i_{b'!}i_{b'}^{*} C_{\chi} \star \mathcal{F} , \mathcal{F}_{\chi'}[h] ) \\
&= \Hom_{D(\G_{b'}(\Q_p), \ov \Q_{\ell})}( i_{b'}^{*} C_{\chi} \star \mathcal{F} , i_{b'}^{!}\mathcal{F}_{\chi'}[h] ) \\
&= 0.   
\end{align*}

Therefore,
\[
\Hom_{\Dlis(\Bunn, \ol{\Q}_{\ell})}(C_{\chi} \star \mathcal{F} , i_{b !}^{\mathrm{ren}} (\rho)[k] ) = \Hom_{\Dlis(\Bunn, \ol{\Q}_{\ell})}( i_{b!}i_b^{*} C_{\chi} \star \mathcal{F} , i_{b !}^{\mathrm{ren}} (\rho)[k] ) \neq 0.
\]
By applying the spectral action $C_{\chi^{-1}}$, we have 
\[
\Hom_{\Dlis(\Bunn, \ol{\Q}_{\ell})}(\mathcal{F} , C_{\chi^{-1}} i_{b !}^{\mathrm{ren}} (\rho)[k] ) \neq 0,
\] 
however, theorem \ref{itm : main theorem} implies that the restriction of the sheaf $C_{\chi^{-1}} i_{b !}^{\mathrm{ren}} (\rho)[k]$ to the identity stratum vanishes. It yields a contradiction since $\mathcal{F}$ is supported on the identity stratum. Therefore $C_{\chi} \star \mathcal{F}$ is supported on the stratum corresponding to $b_{\chi}$ and moreover, it belongs to the direct sum of blocks 
\[
\bigoplus_{b_{\xi} = b_{\chi}} i^{\mathrm{ren}}_{b_{\xi}!} D(\Rep(\mathfrak{s}_{\phi}(\xi))).
\]

However, by the same argument, one can show that if $\pi'$ is finitely generated smooth representation in $D(\Rep(\mathfrak{s}_{\phi}(\xi)))$ for then $C_{\chi^{-1}} \star i_{b_{\xi}!}(\pi')$ is supported on the identity stratum of $\Bunn$ only if $\chi = \xi$. Thus 
$C_{\chi} \star \mathcal{F}$ belongs to $i^{\mathrm{ren}}_{b_{\chi}!} D(\Rep(\mathfrak{s}_{\phi}(\chi)))$ and inversely if $\pi'$ is finitely generated smooth representation in $D(\Rep(\mathfrak{s}_{\phi}(\chi)))$ then $C_{\chi^{-1}} \star i_{b_{\chi}!}(\pi') $ belongs to the category $D(\Rep(\mathfrak{s}_{\phi})$.

Now the sheaf $\mathcal{W}_{\mathfrak{s}_{\phi}}$ is a projective generator of $\Rep(\mathfrak{s}_{\phi})$, we see that $C_{\chi} \star \mathcal{W}_{\mathfrak{s}_{\phi}} $ is a projective generator of $\Rep(\mathfrak{s}_{\phi}(\chi))$. Note that $\mathcal{F}_{\Wchi}$ is also a projective generator of $i^{\mathrm{ren}}_{b_{\chi}!}\Rep(\mathfrak{s}_{\phi}(\chi))$. For every Schur irreducible sheaf $\mathcal{G}$ in $i^{\mathrm{ren}}_{b_{\Id}!} D(\Rep(\mathfrak{s}_{\phi}))$ we have 
\[
\Hom_{\Dlis(\Bunn, \ol{\Q}_{\ell})}(C_{\chi} \star \mathcal{W}_{\mathfrak{s}_{\phi}} , C_{\chi} \star \mathcal{G}) = \Hom_{\Dlis(\Bunn, \ol{\Q}_{\ell})}(\mathcal{W}_{\mathfrak{s}_{\phi}} , \mathcal{G} ) =\Hom_{\Dlis(\Bunn, \ol{\Q}_{\ell})}(\mathcal{F}_{\Wchi} , C_{\chi} \star \mathcal{G} ),
\]
where the second equality follows from theorem \ref{itm : main theorem} and the fact that parabolic induction functor is exact. It implies that 
\[
    C_{\chi} \star \Ws \simeq \mathcal{F}_{\Wchi}. 
\]
\end{proof}

Let $\Id$ be the trivial character of $S_{\phi}$ and let $C_{\rm tri}$ be the structural sheaf of the stack of $L$-parameter. We know by construction that $C_{\Id}$ is the restriction of $C_{\rm tri}$ on the connected component $[C_{\phi}]$. By \cite[Lemma 3.8]{Ham} we have the identity $ C_{\Id} \star \mathcal{W} = C_{\Id} \star \Ws = \Ws $. Remark that we can identify $R$ and $R'$ by the map $\theta$ that sends $X_i$ to $Y_i$. The next goal is to describe $ \bL \star \Ws $ where $\bL$ is a perfect complex on $[C_{\phi}]$.

\begin{theorem} \phantomsection \label{itm : spectral action - general}
    Let $\bL$ be a perfect complex of $R$-module. For each $\chi \in \Irr(S_{\phi})$, let $\bL(\chi)$ be the perfect complex $\bL$ together with the action of $\bb G_m^r$ acting by $\chi$ so that it gives rise to a perfect complex on $[C_{\phi}]$. We denote by $\pi_{\bL}(\chi)$ the complex of smooth $\G_{b_{\chi}}(\Q_p)$-representations in the derived category of the Bernstein block $\Rep(\mathfrak{s}_{\phi}(\chi))$ corresponding to the $R_{\chi}$-perfect complex $\bL$ (by the natural identification between $R$ and $R_{\chi}$). Then
    %If $C_{\chi} \star \Ws \simeq i_{b_{\chi} !} ( \delta^{1/2}_{b_{\chi}}) \Wschi[-d_{\chi}] $ then
\[
\bL(\chi) \star \Ws \simeq i_{b_{\chi} !} ( \delta^{-1/2}_{b_{\chi}} \otimes \pi_{\bL}(\chi))[-d_{\chi}].
\]
\end{theorem}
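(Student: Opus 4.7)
The plan is to reduce the theorem to the basic case handled in Theorem \ref{itm : spectral action - basic}, then bootstrap by means of the monoidal structure of the spectral action together with the explicit description of the map $\Psi^{\chi}_{\GL_n}$ provided by Lemma \ref{itm : general explicit Bernstein morphism}.

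First I would observe that inside $\Perf([C_{\phi}]) \simeq \Perf([\bb G_m^r/\bb G_m^r])$ one has a natural tensor identity
\[
\bL(\chi) \;\simeq\; C_{\chi} \otimes_{\OO([C_{\phi}])} \bL(\Id),
\]
since $\bL(\chi)$ and $\bL(\Id)$ have the same underlying complex of $R$-modules and only differ in the equivariant structure for $\bb G_m^r$, the twist from the trivial one to the weight $\chi$ one being realised precisely by the line bundle $C_{\chi}$. Combining this with the monoidality of the spectral action and with the formula $C_{\chi} \star \Ws \simeq \mathcal{F}_{\Wchi}$ from Theorem \ref{itm : spectral action - basic}, I obtain
\[
\bL(\chi) \star \Ws \;\simeq\; \bL(\Id) \star \bigl(C_{\chi} \star \Ws\bigr) \;\simeq\; \bL(\Id) \star \mathcal{F}_{\Wchi}.
\]
The theorem is thus reduced to showing, for every $\bL \in \Perf(R)$ and every $\chi \in \Irr(S_{\phi})$,
\[
\bL(\Id) \star \mathcal{F}_{\Wchi} \;\simeq\; i_{b_{\chi}!}\bigl(\delta^{-1/2}_{b_{\chi}} \otimes \pi_{\bL}(\chi)\bigr)[-d_{\chi}].
\]

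Next I would represent $\bL$ by a bounded complex of finite free $R$-modules and note that both sides of this reduced identity are exact functors in $\bL$, commuting with shifts, cones and finite direct sums. It therefore suffices to produce a natural isomorphism of triangulated functors $\Perf(R) \to \Dlis^{[C_{\phi}]}(\Bunn, \overline{\Q}_{\ell})^{\omega}$. On the generator $\bL = R$ the left-hand side is $C_{\Id} \star \mathcal{F}_{\Wchi} \simeq \mathcal{F}_{\Wchi}$, while the right-hand side equals $i_{b_{\chi}!}(\delta^{-1/2}_{b_{\chi}} \otimes \Wschi)[-d_{\chi}] = \mathcal{F}_{\Wchi}$ by definition. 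To extend this to arbitrary $\bL$ one has to compare the $R$-action on each side: on the left it comes from the spectral action of $R = \OO([C_{\phi}])$, whereas on the right it comes from the natural action of the Bernstein center $\mathcal{Z}_{\mathfrak{s}_{\phi}(\chi)} \simeq R_{\chi}$ on the derived category of the block. Lemma \ref{itm : general explicit Bernstein morphism} is tailor-made to identify these: it shows that the composite $R \hookrightarrow \mathcal{Z}^{\mathrm{spec}}(\GL_n,\overline{\Q}_{\ell}) \xrightarrow{\Psi^{\chi}_{\GL_n}} \mathcal{Z}_{\mathfrak{s}_{\phi}(\chi)} = R_{\chi}$ is the tautological identification $X_i \mapsto Y_i$. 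Invoking the equivalence $\Hom(\Wschi,-) \colon \Rep(\mathfrak{s}_{\phi}(\chi)) \xrightarrow{\sim} R_{\chi}\text{-}\mathrm{Mod}$ afforded by the projectivity of $\Wschi$ and by $\End(\Wschi) \simeq R_{\chi}$, and extending it to compact objects in the derived category, the right-hand side may be rewritten as $i_{b_{\chi}!}(\delta^{-1/2}_{b_{\chi}} \otimes (\bL \otimes^{\bL}_{R_{\chi}} \Wschi))[-d_{\chi}]$, which is $i_{b_{\chi}!}(\delta^{-1/2}_{b_{\chi}} \otimes \pi_{\bL}(\chi))[-d_{\chi}]$ by definition of $\pi_{\bL}(\chi)$.

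The main obstacle will be promoting the pointwise compatibility of actions, which is the content of Lemma \ref{itm : general explicit Bernstein morphism} as formulated on endomorphisms of the identity functor, to a genuine natural isomorphism of exact triangulated functors $\Perf(R) \to \Dlis^{[C_{\phi}]}(\Bunn, \overline{\Q}_{\ell})^{\omega}$. One has to argue that the two constructions $\bL \mapsto \bL(\Id) \star \mathcal{F}_{\Wchi}$ and $\bL \mapsto i_{b_{\chi}!}(\delta^{-1/2}_{b_{\chi}} \otimes (\bL \otimes^{\bL}_{R_{\chi}} \Wschi))[-d_{\chi}]$ are not merely valued in isomorphic objects term by term, but are naturally isomorphic as functors, intertwining the $R$-linear structures on either side. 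The cleanest route is to work directly with the pro-generator $\mathcal{F}_{\Wchi}$: by Lemma \ref{itm : general explicit Bernstein morphism} the ring homomorphism $R \to \End_{\Dlis^{[C_{\phi}]}(\Bunn, \overline{\Q}_{\ell})^{\omega}}(\mathcal{F}_{\Wchi})$ induced by the spectral action factors as $R \xrightarrow{\sim} R_{\chi} \hookrightarrow \End(\mathcal{F}_{\Wchi})$, so that for every $\bL \in \Perf(R)$ one gets canonical isomorphisms $\bL(\Id) \star \mathcal{F}_{\Wchi} \simeq \bL \otimes^{\bL}_{R_{\chi}} \mathcal{F}_{\Wchi}$, and the latter is by construction the image of $\pi_{\bL}(\chi)$ under $i_{b_{\chi}!}(\delta^{-1/2}_{b_{\chi}} \otimes -)[-d_{\chi}]$. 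Verifying this factorisation carefully, and checking that it is compatible with the triangulated structure at the level of perfect complexes, is where the bulk of the technical work will lie.
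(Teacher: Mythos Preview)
Your proposal is correct and follows essentially the same approach as the paper: reduce to the case of free $R$-modules (where the result is Theorem \ref{itm : spectral action - basic}), then propagate through cones using that the spectral $R$-action on $\mathcal{F}_{\Wchi}$ matches the Bernstein-center action via Lemma \ref{itm : general explicit Bernstein morphism}. The paper carries this out concretely by choosing a finite free resolution $0 \to P_m \to \cdots \to P_0 \to \bL$ (invoking Quillen--Suslin to get freeness), applying the spectral action term by term, and inductively identifying each successive cone; your formulation in terms of matching two $R$-linear exact functors on $\Perf(R)$ is a cleaner packaging of the same argument, and your initial reduction $\bL(\chi) \simeq C_{\chi} \otimes \bL(\Id)$ is a slightly more efficient way to handle general $\chi$ than the paper's ``the general case is similar''.
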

\begin{proof}
In general, one uses cones and retracts to construct the spectral action of a perfect complex from the Hecke operators. In the case we consider the perfect complex $\bL(\chi)$ is generated by cones and retracts from the vector bundle $C_{\chi}$. Hence one can compute $\bL(\chi) \star \Ws$ by using the identity $C_{\chi} \star \Ws = i_{b_{\chi} !} ( \delta^{-1/2}_{b_{\chi}} \otimes \Wschi)[-d_{\chi}]$ and by tracing back the process of constructing $\bL(\chi)$ from $C_{\chi}$.

For simplicity we suppose $\chi = \Id$, the general case is similar. In the following, every perfect complex of $R$-modules is equipped with the trivial $\bb G_m^r$-action so that it gives rise to a perfect complex on $[C_{\phi}]$. In order to lighten the notation, we simply write $\bL$ for $\bL(\Id)$. Since $\bL$ is a perfect complex of $R$-modules, it has a finite resolution by finitely generated projective $R$-modules
\[
0 \longrightarrow P_m \xrightarrow{ \ f_m \ } P_{m-1} \xrightarrow{ \ f_{m-1} \ } \dotsc \xrightarrow{ \ f_1 \ } P_0 \xrightarrow{ \ f_0 \ } \bL.
\]

Now, we trace back the construction of the spectral action of $\bL$. We consider the cone $ \textrm{Cone}(f_{m})$ of $f_m$. There is an exact triangle
\[
P_{m} \xrightarrow{ \ f_m \ } P_{m-1} \xrightarrow{ \ g_m \ } \textrm{Cone}(f_{m})
\]
and there is an isomorphism $\textrm{Cone}(f_m) \simeq P_{m-1}/f_m(P_m)$ in the derived category of the category of $R$-modules. By Quillen-Suslin's theorem, the $R$-modules $P_i$'s are free. Thus $P_i \simeq R^{t_i}$ for $ 1 \le i \le m $ and $f_m \in \Hom(R^{t_m}, R^{t_{m-1}}) \simeq \displaystyle \End(R, R)^{t_mt_{m-1}} $. By applying the above exact triangle on $\Ws$, we get an exact triangle
\[
\Ws^{t_m} \xrightarrow{ \ \widetilde{f_m} \ } \Ws^{t_{m-1}} \xrightarrow{ \ \widetilde{g_m} \ } \textrm{Cone}(f_{m}) \star \Ws,
\]
and $ \textrm{Cone}(f_{m}) \star \Ws $ is the cone of $\widetilde{f_m}$. By the equivalence of $\Rep(\mathfrak{s}_{\phi})$ and the category of $R'$-modules via the functor $F$ explained above, we get an exact triangle in the derived category of $\Rep(\mathfrak{s}_{\phi})$
\[
(R')^{t_m} \xrightarrow{ \ F(\widetilde{f_m}) \ } (R')^{t_{m-1}} \xrightarrow{ \ F(\widetilde{g_m}) \ } F(\textrm{Cone}(f_{m}) \star \Ws),
\]
and therefore $F(\textrm{Cone}(f_{m}) \star \Ws)$ is the cone of $F(\widetilde{f_m})$. However, we can see that an element in $\End(R, R)$ corresponds to the multiplying by a function in $R$. We can identify $R$ and $R'$ via the natural map $\theta$ that sends $X_i$ to $Y_i$ for $1 \le i \le r$ and then we can extend $\theta$ to identify the category of $R$-modules with that of $R'$-modules. Thus by lemma \ref{itm : explicit Bernstein morphism}, $F(\widetilde{f_m})$ is given by the image of $\theta(f_m)$. Therefore, by uniqueness of cones, $F(\textrm{Cone}(f_{m}) \star \Ws) \simeq \theta(\mathrm{Cone}(f_m)) $. 

The morphism $ P_{m-1} \xrightarrow{ \ f_{m-1} \ } P_{m-2} $ induces a map $g$ from $\mathrm{Cone}(f_m)$ to $P_{m-2}$. Similarly, the map $\theta(f_{m-1}) : \theta(P_{m-1}) \longrightarrow \theta(P_{m-2}) $ induces a map $ \theta(g) $ from $ \theta(\mathrm{Cone}(f_m))$ to $\theta(P_{m-2})$ and we can show that $ F( \widetilde{g} ) = \theta(g) $ (up to some quasi-isomorphism from $\theta(\mathrm{Cone}(f_m))$ to $\theta(\mathrm{Cone}(f_m))$) where $\widetilde{g}$ is the morphism from $\mathrm{Cone}(f_m) \star \Ws $ to $ P_{m-2} \star \Ws $ induced by the spectral action.

Now we can apply the above process to the chain 
\[
\mathrm{Cone}(f_m) \xrightarrow{ \ g \ } P_{m-2} \xrightarrow{ \ f_{m-2} \ } \dotsc \xrightarrow{ \ f_1 \ } P_0 \xrightarrow{ \ f_0 \ } \bL.
\]

Therefore by induction argument we get the identity 
\[
\bL \star \Ws \simeq i_{1!} \pi_{\bL}.
\]
\end{proof}

Recall that $ S_{\phi} \simeq \displaystyle \prod_{i=1}^r \bb G_m $ and $\Irr(S_{\phi}) \simeq \displaystyle \prod_{i=1}^r \Z $. As before, for each $1 \le j \le r$ we denote by $\chi_j$ the element of the form $(0, \dotsc, 0, 1, 0, \dotsc, 0) $ where $1$ is in the $j^{th}$-position. Denote by $\mathfrak{m}_{\phi}$ the maximal ideal in $R$ corresponding to $\phi$. We can define an $R$-module $\bL$ whose underlying $\overline{\Q}_{\ell}$-vector space is $\overline{\Q}_{\ell}$ and where $R$ acts via the map $R \longrightarrow R/\mathfrak{m}_{\phi} \simeq \overline{\Q}_{\ell} $. For each character $\chi \in \Irr(S_{\phi})$, we can define a perfect sheaf $A(\chi)$ on $[C_{\phi}]$ by letting $\bb G^r_m$ acts on the fiber of $\bL$ via the character $\chi$. It is a skyscraper sheaf on $[C_{\phi}]$ supported on the closed point defined by $\phi$. Note that in general the derived tensor product $A(\chi) \otimes^{\bL} A(\chi')$ is not isomorphic to $ A(\chi \otimes \chi') $. Recall that for each $\phi$ and $\chi \in \Irr(S_{\phi})$, we defined a sheaf $\mathcal{F}_{\chi}$ as in theorem \ref{itm : main theorem}. We have the following corollary of theorem \ref{itm : spectral action - general}.

\begin{corollary}
 For each $\chi \in \Irr(S_{\phi})$ we have $ A(\chi) \star \Ws \simeq \mathcal{F}_{\chi}.$  
\end{corollary}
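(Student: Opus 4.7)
The plan is to deduce this corollary directly from Theorem \ref{itm : spectral action - general} by tracking the identifications between the various rings, modules, and representations. Applying that theorem to the perfect complex $A(\chi)$, whose underlying $R$-module is $\bL = R/\mathfrak{m}_\phi \simeq \overline{\Q}_\ell$, we get
\[
A(\chi) \star \Ws \simeq i_{b_\chi !}\bigl(\delta^{-1/2}_{b_\chi} \otimes \pi_{\bL}(\chi)\bigr)[-d_\chi],
\]
where $\pi_\bL(\chi)$ is the object of $\mathrm{D}(\Rep(\mathfrak{s}_\phi(\chi)))$ associated under the equivalence $\rF : \Rep(\mathfrak{s}_\phi(\chi)) \xrightarrow{\sim} R_\chi\text{-Mod}$ to the $R_\chi$-module $\theta(\bL)$, where $\theta : R \xrightarrow{\sim} R_\chi$ is the identification $X_i \mapsto Y_i$. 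Since $\mathcal{F}_\chi = i_{b_\chi !}(\delta^{-1/2}_{b_\chi} \otimes \pi_\chi)[-d_\chi]$ by definition, the entire statement reduces to showing $\pi_\bL(\chi) \simeq \pi_\chi$, equivalently $\rF(\pi_\chi) \simeq \theta(R/\mathfrak{m}_\phi)$ as $R_\chi$-modules.

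For this, one translates both sides through the compatibility of Fargues--Scholze parameters with the classical LLC. On the geometric side, the normalization chosen earlier in the paper fixes $\rF(\pi_\phi) \simeq R'/\mathfrak{m}_{\Id}$ where $\mathfrak{m}_{\Id}$ is the maximal ideal at the point $Y_i = 1$; the analogous statement for any $\chi$ gives $\rF(\pi_\chi) \simeq R_\chi/\mathfrak{m}_{\Id}$, because $\pi_\chi$ has Fargues--Scholze parameter (with respect to $\Bun_n$) equal to $\phi$ by construction of the map $\chi \mapsto (b_\chi, \pi_\chi)$ and by the twist discussion following the main theorem. On the spectral side, the point $\phi \in [C_\phi]$ is the very point normalized to be $X_i = 1$, so $\mathfrak{m}_\phi \subset R$ corresponds to $\mathfrak{m}_{\Id} \subset R_\chi$ under $\theta$. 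Lemma \ref{itm : general explicit Bernstein morphism}, which computes $\Psi^\chi_{\GL_n}$ explicitly as $X_i \mapsto Y_i$, is exactly the tool needed to match these two descriptions.

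Combining the two computations, $\theta(\bL) = R_\chi/\theta(\mathfrak{m}_\phi) = R_\chi/\mathfrak{m}_{\Id} \simeq \rF(\pi_\chi)$, hence $\pi_\bL(\chi) \simeq \pi_\chi$, and the identification
\[
A(\chi) \star \Ws \simeq i_{b_\chi !}\bigl(\delta^{-1/2}_{b_\chi} \otimes \pi_\chi\bigr)[-d_\chi] = \mathcal{F}_\chi
\]
follows. The only place requiring care is bookkeeping: one must verify that the normalizations of $\phi$, the identification $\theta$, and the twist by $\delta^{-1/2}_{b_\chi}$ implicit in the definition of $\mathcal{F}_\chi$ are mutually compatible, but all three conventions have been pinned down in the preceding subsections (notably Lemma \ref{itm : general explicit Bernstein morphism} and the normalization preceding it), so the argument is a direct composition rather than a new computation.
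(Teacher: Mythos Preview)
Your proposal is correct and matches the paper's approach: the corollary is presented as a direct consequence of Theorem \ref{itm : spectral action - general} with no written-out proof, and you have correctly supplied the one missing identification $\pi_{\bL}(\chi) \simeq \pi_\chi$ via the normalization of $\phi$ fixed before Lemma \ref{itm : explicit Bernstein morphism} together with Lemma \ref{itm : general explicit Bernstein morphism}. The \LaTeX\ source also contains a commented-out alternative argument by induction on $|\chi|$, using the decomposition $C_{V_{\std}} \otimes A(\chi') \simeq \bigoplus_i \phi_i \boxtimes A(\chi' \otimes \chi_i)$ and Theorem \ref{itm : main theorem} to propagate from the base case $A(\Id)\star\Ws\simeq\mathcal{F}_{\Id}$, but the final text omits this in favor of the direct deduction you give.
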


We now want to study the full sub-category $ \mathcal{C}:= \Dlis^{[C_{\phi}]}(\Bunn,\ol{\mathbb{Q}}_{\ell})^{\omega} \subset \Dlis(\Bunn,\ol{\mathbb{Q}}_{\ell})^{\omega}$ consisting of compact objects whose Schur-irreducible constituents have $L$-parameter in the connected component $[C_{\phi}]$. For each $\chi$, we regard the derived category $\rm D(\Rep(\mathfrak{s}_{\phi}(\chi)))^{\omega}$ as a full sub-category of $\Dlis^{[C_{\phi}]}(\Bunn,\ol{\mathbb{Q}}_{\ell})^{\omega}$ via the functor $i_{\chi}$ as before.   
\begin{align*}
  i_{\chi} : \mathrm{D}(\Rep(\mathfrak{s}_{\phi}(\chi)))^{\omega} &\longrightarrow \quad \Dlis(\Bun_n, \overline{\Q}_{\ell})^{\omega} \\
  \pi \ \quad \quad \quad &\longmapsto \quad i_{b_{\chi}!}( \delta^{-1/2}_{b_{\chi}} \otimes \pi)[-d_{\chi}]
\end{align*}
\begin{theorem} \phantomsection \label{itm : orthogonal decomposition}
    We have an orthogonal decomposition
    \[
    \Dlis^{[C_{\phi}]}(\Bun_n, \overline{\Q}_{\ell})^{\omega} \simeq \bigoplus_{\chi \in \Irr(S_{\chi})}\mathrm{D}(\Rep(\mathfrak{s}_{\phi}(\chi)))^{\omega}
    \]
\end{theorem}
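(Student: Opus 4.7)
The plan is to exhibit fully faithful embeddings $i_{\chi}:\mathrm{D}(\Rep(\mathfrak{s}_{\phi}(\chi)))^{\omega}\hookrightarrow \Dlis^{[C_{\phi}]}(\Bun_n,\ov\Q_{\ell})^{\omega}$ for each $\chi\in\Irr(S_{\phi})$, then verify pairwise orthogonality of their essential images, and finally show these images generate the whole category. The two main inputs are the spectral action computations of Theorems \ref{itm : spectral action - basic} and \ref{itm : spectral action - general}, and the fact (Proposition \ref{itm : shape of strata}) that any Schur-irreducible object with Fargues--Scholze parameter in $[C_{\phi}]$ is supported on some stratum $\Bun_n^{b_{\xi}}$ with $\xi\in\Irr(S_{\phi})$, together with the FS-parameter compatibility for inner forms of $\GL_n$.

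The embedding $i_{\chi}$ is the one already fixed: $\pi_{\chi}\mapsto i_{b_{\chi}!}(\delta^{-1/2}_{b_{\chi}}\otimes\pi_{\chi})[-d_{\chi}]$. Full faithfulness at the level of a single stratum follows from the fact that $i_{b_{\chi}!}$ is fully faithful on $\Dlis(\Bun_n^{b_{\chi}})$ and Bernstein's equivalence $\Rep(\mathfrak{s}_{\phi}(\chi))\simeq R_{\chi}\mhyphen\mathrm{Mod}$. Combined with Theorem \ref{itm : spectral action - general} this identifies the action of the $\chi$-summand $\Perf(\mathcal{O}(\bb G_m^r))$ of $\Perf([C_{\phi}])$ on $\Ws$ with a generator of the image of $i_{\chi}$, so the image is stable under the spectral action. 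The crucial point is that $C_{\chi}\star(-)$ is an auto-equivalence of $\Dlis^{[C_{\phi}]}(\Bun_n,\ov\Q_{\ell})^{\omega}$ with inverse $C_{\chi^{-1}}\star(-)$, carrying the image of $i_{\Id}$ to the image of $i_{\chi}$.

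For orthogonality, if $\chi\neq\chi'$, I would show $\RHom(i_{\chi}\pi,i_{\chi'}\pi')=0$ by applying the auto-equivalence $C_{\chi^{-1}}\star(-)$ to reduce to the case $\chi=\Id$, and then use Theorem \ref{itm : main theorem} plus Theorem \ref{itm : spectral action - basic} to reexpress $i_{\chi'}\pi'$ as $C_{\chi'}\star i_{\Id}\pi'$; since $C_{\chi'}$ preserves the support-type structure while moving the basic stratum to $b_{\chi'}$, one lands in a situation covered by Lemma \ref{itm : morphism between strata} applied to the pair $(1,b_{\chi'})$ (and symmetrically). The compatibility of Fargues--Scholze parameters with restriction to strata and with inner forms (\cite[IX.7.3]{FS}, \cite[Theo.~6.6.1]{HKW}) is what ensures that the essential image of $i_{\chi'}$ is not larger than $\Rep(\mathfrak{s}_{\phi}(\chi'))$ after the shift-and-twist.

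The principal obstacle is generation. Given an arbitrary $A\in \Dlis^{[C_{\phi}]}(\Bun_n,\ov\Q_{\ell})^{\omega}$, I want to show $A$ lies in the triangulated subcategory generated by the images of the $i_{\chi}$. Using excision along the locally closed stratification $\Bun_n=\coprod_b \Bun_n^b$, it suffices to show that each restriction $i_b^{*}A$ is trivial unless $b=b_{\xi}$ for some $\xi\in\Irr(S_{\phi})$, and that in that case $i_{b_{\xi}}^{*}A$, viewed as an object of $\Dlis(\Bun_n^{b_{\xi}})\simeq \mathrm{D}(\G_{b_{\xi}}(\Q_p))$, lies in $\mathrm{D}(\Rep(\mathfrak{s}_{\phi}(\xi)))^{\omega}$. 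The vanishing on other strata is Proposition \ref{itm : shape of strata} applied to the cohomology sheaves (finitely generated since spectral action preserves compactness), combined again with \cite[Cor.~IX.7.3]{FS}; the lying-in-a-Bernstein-block is the same compatibility. Finally, excision and induction on the finite support of $A$ assemble these pieces into a finite filtration whose graded pieces lie in $\bigcup_{\chi} i_{\chi}(\mathrm{D}(\Rep(\mathfrak{s}_{\phi}(\chi)))^{\omega})$; since each $\Rep(\mathfrak{s}_{\phi}(\chi))$ has finite global dimension (its center $R_{\chi}$ is a Laurent polynomial ring), the resulting decomposition is a genuine direct sum rather than a merely semi-orthogonal one.
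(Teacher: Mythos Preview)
Your proposal is correct and follows the same route as the paper: generation via Proposition \ref{itm : shape of strata} together with the Bernstein-block decomposition on each stratum, and orthogonality by applying the auto-equivalence $C_{\chi^{-1}}\star(-)$ to reduce to a $\Hom$ from an object on $\Bun_n^{1}$ to one supported on $b_{\chi'\chi^{-1}}\neq 1$, then invoking Lemma \ref{itm : morphism between strata}. Two small corrections: to transport an \emph{arbitrary} compact object of $\mathrm{D}(\Rep(\mathfrak{s}_{\phi}(\chi')))^{\omega}$ to stratum $1$ you need Theorem \ref{itm : spectral action - general} (not just Theorem \ref{itm : main theorem}, which treats only the irreducible $\pi_{\chi}$); and your closing appeal to finite global dimension is superfluous, since the two-sided vanishing of $\RHom$ in all degrees that you have already established is precisely what splits the excision filtration.
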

\begin{proof}

Let $\mc H$ be a complex in $\Dlis^{[C_{\phi}]}(\Bun_n, \overline{\Q}_{\ell})^{\omega}$ then by proposition \ref{itm : shape of strata}, the support of $\mc H$ is a finite union of strata of the form $b_{\chi}$ where $\chi \in \Irr(S_{\phi})$. Consider a restriction $i^*_{b_{\chi}} \mc H $ of $\mc H$ to such a stratum. Then all the irreducible constituents of the cohomology groups of $i^*_{b_{\chi}} \mc H$ have Fargues-Scholze $L$-parameters in the connected component $[C_{\phi}]$. Therefore, as smooth representations of $\G_{b_{\chi}}(\Q_p)$, these irreducible constituents belong to the direct sum
\[
\bigoplus_{b_{\chi'} = b_{\chi}}  \Rep(\mathfrak{s}_{\phi}(\chi'))^{\omega}
\]
of Bernstein blocks of $\Rep_{\ov \Q_{\ell}} (\G_{b_{\chi}}(\Q_p))$. Hence $\Dlis^{[C_{\phi}]}(\Bun_n, \overline{\Q}_{\ell})^{\omega}$ is generated by the full sub-categories $ \mathrm{D}(\Rep(\mathfrak{s}_{\phi}(\chi)))^{\omega} $'s where $\chi \in \Irr(S_{\phi})$.

To prove the direct decomposition now, it suffices to prove that if $\mathcal{F} \in \rm D(\Rep(\mathfrak{s}_{\phi}(\chi)))^{\omega} $ and $\mathcal{G} \in \rm D(\Rep(\mathfrak{s}_{\phi}(\chi')))^{\omega} $ for $\chi \neq \chi'$ then $\Hom_{\mc C}(\mathcal{F}, \mathcal{G}) = 0$. 

If $b_{\chi} = b_{\chi'} = b$ then it is clear that $\Hom_{\mc C}(\mathcal{F}, \mathcal{G}) = 0$ since $\Rep(\mathfrak{s}_{\phi}(\chi))$ and $\Rep(\mathfrak{s}_{\phi}(\chi'))$ are two distinct Bernstein blocks of $\Rep_{\overline{\Q}_{\ell}}(\G_{b}(\Q_p))$.

We suppose that $ b_{\chi} \neq b_{\chi'} $. Let $\bL$ and $\bL'$ be the complexes in $\rm D(\mc Z_{\mathfrak{s}_{\phi}(\chi)} \mhyphen Mod)$ and in $\rm D(\mc Z_{\mathfrak{s}_{\phi}(\chi')} \mhyphen Mod)$ corresponding respectively to $\mathcal{F}$ and $\mathcal{G}$. Since they are compact, the complexes $\bL$ and $\bL'$ are perfect. Then by theorems \ref{itm : spectral action - basic} and \ref{itm : spectral action - general} we see that $ \bL(\chi) \star \Ws \simeq \mathcal{F} $ and $ \bL'(\chi') \star \Ws \simeq \mathcal{G} $. Since $C_{\chi^{-1}} \star (-)$ is an auto-equivalence of $ \Dlis^{[C_{\phi}]}(\Bun_n, \overline{\Q}_{\ell})^{\omega} $, we deduce that
\begin{align*}
    \Hom_{\mc C}(\mathcal{F}, \mathcal{G}) &=  \Hom_{\mc C}(\bL(\chi) \star \Ws, \bL'(\chi') \star \Ws ) \\
    &= \Hom_{\mc C}(\bL(\Id) \star \Ws, \bL'(\chi' \otimes \chi^{-1}) \star \Ws ).
\end{align*}

Again by theorem $\ref{itm : spectral action - general}$ the complex $\bL(\Id) \star \Ws$ is supported on $\Bun^1_n$ and $\bL'(\chi' \otimes \chi^{-1}) \star \Ws$ is supported on the complement of $\Bun^1_n$. Therefore lemma \ref{itm : morphism between strata} implies that 
\[
\Hom_{\mc C}(\bL(\Id) \star \Ws, \bL'(\chi' \otimes \chi^{-1}) \star \Ws ) = 0
\]
and it allows us to conclude. 
\end{proof}

\section{Local-Global compatibility} \label{itm : Local-Global compatibility} \textbf{}

In this section we prove some particular cases of Fargues' and Caraiani-Scholze's local-global conjecture \cite[\S 7]{Fa}, \cite[1.18]{CS}.

\subsection{Cohomology of Igusa varieties} \textbf{}

\subsubsection{Shimura varieties and Mantovan's formula} \textbf{}

We consider some simple Shimura varieties of type $A$ as in \cite{Kot92}. Assume that $F$ is an imaginary quadratic field, $p$ is split in $F$ and that we have an unramified integral PEL datum of the form $(\rm B, \mc O_{\rm B} , *, V, \Lambda_0, \langle, \rangle, h )$, where
\begin{enumerate}
    \item[$\bullet$] $\rm B$ is a division algebra with center $F$,
    \item[$\bullet$] $*$ is an involution of the second kind,
    \item[$\bullet$] $\rm B, \mc O_{\rm B}$ is a $\Z_{(p)}$ maximal order in $\rm B$ that is preserved by $*$ such that $ \rm B, \mc O_{\rm B} \otimes_{\Z} \Z_p $ is a maximal order in $\rm B_{\Q_p}$, 
    \item[$\bullet$] $V$ is a simple $ \rm B$-module,
    \item[$\bullet$] $\langle, \rangle : V \times V \longrightarrow \Q $ is a $*$-Hermitian pairing with respect to the $\rm B$-action,
    \item[$\bullet$] $\Lambda_0$ is a $\Z_p$-lattice in $V_{\Q_p}$ that is preserved by $\mc O_{\rm B}$ and self-dual for $\langle, \rangle$.
    \item[$\bullet$] $h : \C \longrightarrow \End_{\R}(V_{\R})$ is an $\R$-algebra homomorphism satisfying the equality $\langle h(z), w \rangle = \langle v , h(z^c)w \rangle, \forall v,w \in V_{\R} $ and $z \in \C$ and such that the bilinear pairing $(v, w) \longmapsto \langle v , h(\sqrt{-1})w \rangle $ is symmetric and positive definite.
\end{enumerate}

We can define a similitude unitary group $ \rm \textbf{G} $ over $\Q$ associated to this PEL datum and denote by $\rm \textbf{G}'$ the unitary group that is the kernel of the similitude factor, thus $\rm \textbf{G}'(\R)$ $\simeq \U (q, n - q)$ for some $q \in \N$. We assume further that the $p$-adic group $\rm \textbf{G}'_{\Q_p}$ is isomorphic to $\GL_{n, \Q_p}$ and hence $\mathrm{\textbf{G}}_{\Q_p} \simeq \GL_{n, \Q_p} \times \bb G_{m, \Q_p} $.

Let $\mu : \Gm \longrightarrow \rm \textbf{G} $ be the group homomorphism over $\C$ corresponding to $h$. Associated to the above PEL datum is a system of \textit{projective} Shimura varieties $\{\Sh_K\}$ defined over the reflex field $E$ and where $K$ runs over the set of sufficiently small open compact subgroups of $\rm \textbf{G}(\A^{\infty})$. Moreover, the PEL datum also gives rise to a system of integral models $\mc S_p = \{ \mc S_{K^p} \}$ defined over $ \mc O_{E, (p)} $ such that the generic fiber of each $\mc S_{K^p}$ is naturally identified with $\Sh_{K^pK_p^{\rm hs}}$ where $K^p$ runs over the set of sufficiently small open compact subgroups of $\rm \textbf{G}(\A^{\infty, p})$ and $K_p^{\rm hs} \subset \GL_n(\Q_p)$ is a hyperspecial subgroup.

On the other hand, for each $b \in B(\rm \textbf{G}'_{\Q_p}, -\mu)$, we can define an Igusa variety $\Ig_b$ which is closely related to the special fiber $\overline{\mc S}_p$ of $\mc S_p$. The Igusa variety is a projective system of smooth varieties $ \Ig_b = \{ \Ig_{b, m, K^p} \}$ where $m \in \N$ and $K^p$ are sufficiently small open compact subgroups of $\rm \textbf{G}(\A^{\infty, p})$ as before. Let $\mc L$ be an algebraic representation of $\rm \textbf{G}$ over $\overline{\Q}_{\ell}$. Then it gives rises to $\ell$-adic sheaves on $\Sh$ and $\Ig_b$, which will be denoted by $\mc L$ (by abuse of notation). Define
\[
R\Gamma_c(\Sh, \mc L) := \mathrm{colim}_{\overrightarrow{\ K \ }} R\Gamma_c(\Sh_K, \mc L),
\]
\[
R\Gamma_c(\Ig_b, \mc L) := \mathrm{colim}_{\overrightarrow{K^p, m} } R\Gamma_c(\Ig_{b, m, K^p}, \mc L).
\]

These complexes are endowed with an action of $\rm \textbf{G}(\A^{\infty}) \times \Gal(\overline{E}/E)$ and of $\mathrm{\textbf{G}}(\A^{\infty, p}) \times \big( \G_b(\Q_p) \times \Q_p^{\times} \big)$ respectively.

For each $K$, the special fiber $\overline{\mc S}_{K}$ has a stratification by the Kottwitz set $B(\GL_n, -\mu)$. Thus we can compute the cohomology of Shimura varieties by the cohomology of its strata. We recall a formula of Mantovan that expresses a cohomological relation between $R\Gamma_c(\Sh, \mc L)$, $R\Gamma_c(\Ig_b, \mc L)$'s and the Hecke operator $\T_{-\mu}$. Denote by $\pi$ an irreducible representation of $\mathrm{\textbf{G}}(\Q_p)$ and let $\Pi$ be any $\mc L$-cohomological automorphic representation of $\mathrm{\textbf{G}}(\A)$ globalizing $\pi$. We denote by $R\Gamma_c(\Sh, \mc L)[\Pi^{\infty, p}]$ and $R\Gamma_c(\Ig_b, \mc L)[\Pi^{\infty, p}]$ the $[\Pi^{\infty, p}]$-isotypic part of the cohomology of the Shimura variety, respectively the $[\Pi^{\infty, p}]$-isotypic part of the cohomology of the Igusa varieties.

\begin{proposition} (\cite[Theorem 22]{Man2}, \cite[Theorems. 6.26, 6.32]{LS2018})
    We set $d := \dim \Sh = \langle 2\rho_{\GL_n}, \mu \rangle$ and for $b \in B(\GL_n, -\mu)$, we set $d_b := \dim \Ig_b = \langle 2\rho_{\GL_n} , \nu_b \rangle$. Then the complex $R\Gamma_c(\Sh, \mc L)[\Pi^{\infty, p}]$ has a filtration as a complex of $ \mathrm{\textbf{G}}(\Q_p) \times W_{\Q_p} $ representations with graded pieces isomorphic to $ i_1^* \T_{-\mu}(i_{b!} \delta^{-1}_b \otimes R\Gamma_c(\Ig_b, \mc L)[\Pi^{\infty, p}] )[-d](-\tfrac{d}{2})$ where $b$ runs in the Kottwitz set $B(\GL_n, -\mu)$. More precisely, the graded pieces are isomorphic to 
    \[
    R\Gamma_c(\GL_n, b, \mu) \otimes^{\bL}_{\mathcal{H}(\G_b)}R\Gamma_c(\Ig_b, \mc L)[\Pi^{\infty, p}][2d_b-d](-\tfrac{d}{2}) 
    \]
    as complexes of $ \textbf{G}(\Q_p) \times W_{\Q_p} $-modules.
\end{proposition}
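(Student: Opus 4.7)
The plan is to combine the classical Mantovan product formula with the Fargues--Scholze dictionary between shtuka cohomology and Hecke operators on $\Bun_n$ provided by lemma \ref{shimhecke}. First I would use the Newton stratification of the special fiber $\overline{\mc S}_{K^p} = \coprod_{b \in B(\GL_n, -\mu)} \overline{\mc S}^{b}_{K^p}$ (which is a locally closed stratification by a finite poset). Applying $R\Gamma_c(-, \mc L)$, passing to the colimit over $K^p$ and cutting out the $[\Pi^{\infty,p}]$-isotypic component yields a finite filtration on $R\Gamma_c(\Sh, \mc L)[\Pi^{\infty, p}]$, indexed by $B(\GL_n, -\mu)$, whose graded pieces are $R\Gamma_c(\overline{\mc S}^b, \mc L)[\Pi^{\infty, p}]$ as complexes of $\mathrm{\textbf{G}}(\Q_p) \times W_{\Q_p}$-modules (the $\mathrm{\textbf{G}}(\A^{\infty,p})$-action being used only to extract the $[\Pi^{\infty,p}]$-isotypic piece).

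Second, I would invoke the Mantovan product structure on each Newton stratum, in the form refined by Lan--Stroh \cite[Theorems 6.26, 6.32]{LS2018} (and, at the level of generic fibers of adic spaces, by Caraiani--Scholze's description of the fibers of the Hodge--Tate period map in terms of Igusa varieties). Concretely, for each $b$ one has, up to suitable shifts and twists,
\[
R\Gamma_c(\overline{\mc S}^b, \mc L)[\Pi^{\infty,p}] \;\simeq\; R\Gamma_c(\mathrm{RZ}_b, \mc L) \otimes^{\bL}_{\mathcal{H}(\G_b(\Q_p))} R\Gamma_c(\Ig_b, \mc L)[\Pi^{\infty, p}],
\]
where $\mathrm{RZ}_b$ is the Rapoport--Zink tower parametrising modifications of type $\mu$ from $\E_b$ to $\E_1$. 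In our notation, this identifies $R\Gamma_c(\mathrm{RZ}_b,\mc L)$ with $R\Gamma_c(\GL_n, b, \mu)$ (the representation $\mc L$ being absorbed into the $[\Pi^{\infty,p}]$-isotypic component via the Eichler--Shimura relation), giving the second form of the proposition, with the shift $[2d_b - d]$ and twist $(-\tfrac{d}{2})$ encoding: the dimension $d_b$ of $\Ig_b$ contributing $[2d_b]$, the normalisation of $\mathcal{S}_{-\mu}$ against $R\Gamma_c$ of the $d$-dimensional Shimura variety contributing $[-d](-\tfrac{d}{2})$.

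Third, I would translate this identity into the $\Bun_n$-language using lemma \ref{shimhecke}, which gives
\[
R\Gamma_c(\GL_n, b, \mu) \otimes^{\bL}_{\mathcal{H}(\G_b(\Q_p))} \rho \;\simeq\; i_1^{*}\T_{-\mu}\bigl(i_{b!}(\rho \otimes \kappa_b^{-1})\bigr)[-2d_b]
\]
for any finitely generated smooth $\G_b(\Q_p)$-representation $\rho$, and then use $\kappa_b = \delta_b^{-1}$ from lemma \ref{itm : kappa and modulus} to replace $\kappa_b^{-1}$ by $\delta_b$. Plugging in $\rho = R\Gamma_c(\Ig_b, \mc L)[\Pi^{\infty,p}]$ and collecting the shifts yields exactly
\[
i_1^{*}\T_{-\mu}\bigl(i_{b!}\,\delta_b^{-1} \otimes R\Gamma_c(\Ig_b,\mc L)[\Pi^{\infty,p}]\bigr)[-d]\bigl(-\tfrac{d}{2}\bigr),
\]
which is the first form stated in the proposition. (The conventions chosen in the proposition are set up so that the $\delta_b^{-1}$ twist matches the one in \cite[Corollary IX.7.3]{FS}, which is the source of this normalisation discrepancy between the local and global Hecke pictures.)

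The main obstacle is not the combinatorics of the filtration, which is formal, but rather the bookkeeping of shifts, Tate twists, the characters $\kappa_b$ and $\delta_b$, and the precise geometry of the stratum product structure in the integral PEL setting. The latter is the content of the cited papers \cite{Man2, LS2018} (and, via the Hodge--Tate period map, \cite{CS}); once those product decompositions are accepted, the translation to Hecke operators on $\Bun_n$ is a direct application of lemma \ref{shimhecke} and lemma \ref{itm : kappa and modulus}.
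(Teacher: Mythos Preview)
Your approach is essentially the same as the paper's: the proposition is not proved from scratch but is deduced from the cited results of Mantovan and Lan--Stroh, and the translation into the Hecke-operator language on $\Bun_n$ is exactly via lemma \ref{shimhecke} (together with $\kappa_b=\delta_b^{-1}$), as you indicate.

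One point you gloss over that the paper makes explicit: Mantovan and Lan--Stroh actually state the product formula with $R\mathcal{H}om_{\mathcal{H}(\G_b)}$ rather than $\otimes^{\bL}_{\mathcal{H}(\G_b)}$. The passage to the tensor form uses that the cohomology groups of $R\Gamma_c(\Sh,\mc L)[\Pi^{\infty,p}]$ and of $R\Gamma_c(\Ig_b,\mc L)[\Pi^{\infty,p}]$ are admissible (as $\mathbf{G}(\Q_p)$- and $\G_b(\Q_p)$-representations respectively), so that Hom--Tensor duality applies. You should insert this step rather than assume the tensor form is given directly. Also, there is a small slip in your third paragraph: the formula from lemma \ref{shimhecke} (third identity, with $b'\leadsto b$, $b\leadsto 1$) gives $\rho\otimes\kappa_b$, not $\rho\otimes\kappa_b^{-1}$, inside $i_{b!}$; since $\kappa_b=\delta_b^{-1}$ this yields the $\delta_b^{-1}$ of the proposition, and your final displayed formula is correct, but the intermediate sentence ``replace $\kappa_b^{-1}$ by $\delta_b$'' is inconsistent with it.
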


\begin{remark}\begin{enumerate}
    \item In fact, Mantovan and Lan-Stroh proved the proposition with $R\Hom_{\mathcal{H}(\G_b)}$ instead of $\otimes^{\bL}_{\mathcal{H}(\G_b)}$. However the cohomology groups of the complex $R\Gamma_c(\Sh, \mc L)[\Pi^{\infty, p}]$, respectively of the complex $R\Gamma_c(\Ig_b, \mc L)[\Pi^{\infty, p}]$ are admissible as $ \textbf{G}(\Q_p)$-representations, respectively as $ \G_b(\Q_p)$-representations. Therefore the Hom-Tensor duality gives us the dual statement with $\otimes^{\bL}_{\mathcal{H}(\G_b)}$ in place of $R\Hom_{\mathcal{H}(\G_b)}$. By using lemma \ref{shimhecke}, we deduce the expression in term of $\T_{-\mu}$.
    \item Koshikawa and Hamann-Lee proved \cite[Theorem. 7.1]{Ko1}, \cite[Theorem. 1.13]{HL} some analogue of this proposition for torsion coefficients and for more general Shimura varieties of type A and type C.   
\end{enumerate}
 \end{remark}

\subsubsection{Hodge-Tate period map and local-global compatibility} \textbf{}

Recall that by \cite[Theorem. 1.10]{CS}, for each sufficiently small compact open subgroup $K^p \subset \mathrm{\textbf{G}(\A^{\infty, p})}$ there is a perfectoid space $\Sh_{K^p}$ over $E_p$ such that
\[
\Sh_{K^p} \sim \lim_{\overleftarrow{K_p}} \Sh_{K_pK^p} \otimes_E E_{p}
\]
and there is also a Hodge Tate period map 
\[
\pi_{\rm HT} : \Sh_{K^p} \longrightarrow \mathcal{F}\ell_{\GL_n, \mu}.
\]

Let $\phi$ be any local $L$-parameter of $\GL_n$. Fargues and Caraiani-Scholze have conjectured that the perverse sheave $R\pi_{\rm HT *} \overline{\Q}_{\ell}$ on $\mathcal{F}\ell_{\GL_n, \mu}$ is related to the conjectural Hecke eigensheaf on $\Bun_n$ associated with $\phi$ via the natural map $ \overleftarrow{h} : \mathcal{F}\ell_{\GL_n, \mu} \longrightarrow \Bun_n $, by some form of local-global compatibility. We remark that Caraiani-Scholze also compared the fibers of the Hodge-Tate period map with Igusa varieties. Hence we will first compute the cohomology of Igusa varieties and then deduce some form of the local-global compatibility. 

Let $\overline{\phi} = \overline{\phi}_1 \oplus \dotsc \oplus \overline{\phi}_r$ be an $L$-parameter of $\GL_n$ satisfying condition (A1). Then $S_{\overline{\phi}} \simeq \bb G_m^r $ and the connected component of $\overline{\phi}$ in $[Z^1(W_{\Q_p}, \widehat{\GL}_n)/\widehat{\GL}_n]$ is isomorphic to $[\bb G_m^r / \bb G_m^r]$ as before. We consider an $L$-parameter $\phi = \phi_1 \otimes \chi_{t_1} \oplus \dotsc \oplus \phi_r \otimes \chi_{t_r} $ where $ t = (t_1, \dotsc, t_r) $ is a closed point in $ \bb G_m^r $ and where $\chi_{t_i}$ is the unramified character of $W_{\Q_p}$ corresponding to $t_i$. Let $r_{-\mu}$ be the highest weight representation associated with $-\mu$. For $\chi \in \Irr(S_{\phi})$ we denote by $ \sigma_{\chi}$ the $W_{\Q_p}$-representation 
\[
\Hom_{S_{\phi}}(\chi, r_{-\mu} \circ \phi),
\]
and we suppose the following condition :
\begin{enumerate}
    \item[(A2)] $\Hom_{W_{\Q_p}}(\sigma_{\chi}, \sigma_{\chi'}) = 0$ if $\chi \neq \chi'$.
\end{enumerate}

%One sufficient condition for that to happen is the following:
%\begin{enumerate}
%   \item[(A2)] In the $\Z$-module $\overline{\Q}^{\times}_{\ell}$ (written additively), if $x_1 + \cdots + x_r = 0$ for some $x_i \in V_i$ then we have $x_1 = x_2 = \dotsc = x_r = 0$ where $V_i$ is the $\Z$-module generated by the set $ \mathrm{E}_i := \{ t_i + \alpha \ | \ \alpha \text{\ is an eigenvalue of the Frobenius of }\overline{\phi}_i \}$. We further suppose that for $m \in \Z^{\times}$, $ m \cdot t_i$ does not belongs to the $\Z$-modules generated by $\{ \alpha \ | \ \alpha \text{\ is an eigenvalue of the Frobenius of }\overline{\phi}_i \}$ (inside the $\Z$-module $\overline{\Q}^{\times}_{\ell}$).
%\end{enumerate}

%We can see that by looking at the eigenvalues of the Frobenius acting on the underlying vector space of $ \sigma_{\chi} $. If $\chi = (d_1, \dotsc, d_r)$ then an eigenvalue of the Frobenius of $\sigma_{\chi}$ is of the form $ a_1 + \dotsc + a_r $ (when viewing in the $\Z$-module $\overline{\Q}^{\times}_{\ell}$) where $a_i$ is the sum of $d_i$ arbitrary elements in $\mathrm{E}_i$. If $d_i \neq d_i'$ then the sum of $d_i$ elements and the sum of $d_i'$ elements in $\mathrm{E}_i$ are not equal. We deduce that if $\chi \neq \chi'$ then the sets of eigenvalues of the Frobenius of $\sigma_{\chi}$ and of $\sigma_{\chi'}$ have no common element. Hence $\Hom_{W_{\Q_p}}(\sigma_{\chi}, \sigma_{\chi'}) = 0$. 

Note that the locus of the points $t$ such that $\phi$ satisfies condition (A2) is a dense subset of $\bb G_m^r$. 

Denote by $\pi$ the irreducible representation of $\GL_n(\Q_p)$ whose $L$-parameter is given by $\phi$ and fix a character $\omega$ of $\Q_p^{\times}$. Let $\Pi$ be any $\mc L$-cohomological automorphic representation of $\mathrm{\textbf{G}}(\A)$ globalizing $\pi \times \omega $, in particular $\Pi$ is cuspidal. For each $b \in B(\GL_n, -\mu)$, we are going to compute the $\Pi^{\infty, p}$-isotypic part
\[
R\Gamma_c(\Ig_b, \mc L)[\Pi^{\infty, p}]
\]
of the cohomology of the variety $\Ig_b$ as $\G_b(\Q_p) \times \Q_p^{\times}$ representation. The rough idea is that Mantovan's formula gives us a relation between $R\Gamma_c(\Sh, \mc L)$, $R\Gamma_c(\Ig_b, \mc L)$'s and the Hecke operator $\T_{-\mu}$. However, in our case, we fully described the Hecke operator $\T_{-\mu}$ and it leads to a transparent relation between $R\Gamma_c(\Sh, \mc L)$ and $R\Gamma_c(\Ig_b, \mc L)$'s. Finally, the knowledge of $R\Gamma_c(\Sh, \mc L)$ allows us to describe $R\Gamma_c(\Ig_b, \mc L)$'s.

\begin{theorem} \label{itm : cohomology of Igusa varieties}
    Suppose that $\phi$ satisfies conditions $(A1)$ and $(A2)$. Then there exists a multiplicity $m \in \N$ such that for all $b \in B(\GL_n, -\mu)$, the complex $R\Gamma_c(\Ig_b, \mc L)[\Pi^{\infty, p}]$ concentrates in middle degree and we have an isomorphism of $\G_b(\Q_p) \times \Q_p^{\times}$-representations
    \[
    H^{d_b}_c(\Ig_b, \mc L)[\Pi^{\infty, p}] = m \bigoplus_{\substack{ \chi \in \Irr(S_{\phi}) \\ [b_{\chi}] = [b]}} \delta^{1/2}_b \otimes \big( \pi_{\chi} \times \omega \big).
    \]
    where $b_{\chi}$ and $\pi_{\chi}$ are constructed from $\chi$ as in section \ref{itm : combinatoric description of Hecke operators}.
\end{theorem}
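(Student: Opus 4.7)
The strategy is to combine Mantovan's formula with the explicit description of the Hecke operator $\T_{-\mu}$ provided by Theorem~\ref{itm : main theorem} and with Kottwitz-style knowledge of $R\Gamma_c(\Sh,\mathcal{L})[\Pi^{\infty,p}]$ for these simple PEL Shimura varieties of type A. Condition (A2) plays the essential role of separating the contributions of different $\chi\in\Irr(S_\phi)$ by their Frobenius eigenvalues, while condition (A1) is precisely what allows us to invoke the main theorem.

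First, since $\Pi$ is cuspidal cohomological with $\Pi_p\simeq\pi\times\omega$ and $\pi$ is tempered (its parameter $\phi$ is a direct sum of irreducible pieces), the Kottwitz conjecture as proved by Kottwitz and Shin for this class of simple Shimura varieties gives that $R\Gamma_c(\Sh,\mathcal{L})[\Pi^{\infty,p}]$ is concentrated in middle degree $d$ and, as a $\mathbf{G}(\Q_p)\times W_{\Q_p}$-module, is isomorphic to $m(\Pi)\cdot(\pi\times\omega)\boxtimes(r_{-\mu}\circ\phi)(-d/2)$ for some $m(\Pi)\in\N$. Mantovan's formula then rewrites this as a sum over $b\in B(\GL_n,-\mu)$ of $i_1^*\T_{-\mu}\bigl(i_{b!}\delta^{-1}_b\otimes R\Gamma_c(\Ig_b,\mathcal{L})[\Pi^{\infty,p}]\bigr)[-d](-d/2)$.

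Now the compatibility of Fargues--Scholze parameters with classical LLC for inner forms of $\GL_n$ (Theorem~\ref{FSproperties}(5)) together with Proposition~\ref{itm : shape of strata} forces every $\G_b(\Q_p)$-constituent of the $\Pi^{\infty,p}$-isotypic piece of $R\Gamma_c(\Ig_b,\mathcal{L})$ to be of the form $\delta^{1/2}_b\otimes\pi_\chi\times\omega$ with $b_\chi=b$. Denote by $M_\chi$ the corresponding $W_{\Q_p}$-multiplicity complex. Applying Theorem~\ref{itm : main theorem} together with Proposition~\ref{itm : fundamental decomposition of Hecke operator} yields $i_1^*\T_{-\mu}(i_{b!}(\delta^{-1/2}_b\otimes\pi_\chi))\simeq(\pi\times\omega)\boxtimes\sigma_\chi[2d_b-d]$ up to the appropriate Tate twist, so Mantovan's identity becomes
\[
m(\Pi)\,(\pi\times\omega)\boxtimes\bigoplus_{\chi\in\Irr(S_\phi)}\sigma_\chi \;\simeq\; (\pi\times\omega)\boxtimes\bigoplus_{\chi}\bigl(\sigma_\chi\otimes M_\chi\bigr),
\]
up to a common shift and twist on both sides. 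Condition (A2) guarantees that the $\sigma_\chi$ are pairwise $W_{\Q_p}$-disjoint, so extracting $[\sigma_\chi]$-isotypic parts on both sides gives $M_\chi\simeq m(\Pi)$ as trivial $W_{\Q_p}$-modules, independently of $\chi$.

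Setting $m:=m(\Pi)$ and tracking the shift $[2d_b-d]$ then shows that single-degree concentration of each $M_\chi$ is equivalent to the asserted concentration of $R\Gamma_c(\Ig_b,\mathcal{L})[\Pi^{\infty,p}]$ in degree $d_b$. This concentration is the main obstacle. I would resolve it by a weight argument: any higher cohomological contribution to $M_\chi$ would have to carry a non-trivial Weil group action whose Frobenius eigenvalues disagree in absolute value with the pure $\sigma_\chi$ side under the matching enforced by (A2); alternatively one can appeal to the vanishing results of Caraiani--Scholze and Koshikawa for Igusa variety cohomology above the middle degree in the cuspidal range. Once this is in place, the asserted formula for $H^{d_b}_c(\Ig_b,\mathcal{L})[\Pi^{\infty,p}]$ follows immediately.
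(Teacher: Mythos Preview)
Your overall strategy is the paper's: Kottwitz--Harris--Taylor for the Shimura side, Mantovan's formula, the description of $\T_{-\mu}$ via Theorem~\ref{itm : main theorem}, and the $W_{\Q_p}$-disjointness of the $\sigma_\chi$ from (A2). One minor slip: Mantovan gives a \emph{filtration}, not a sum, and the paper uses (A2) explicitly to show the associated spectral sequence degenerates; you invoke (A2) but skip this step. The real gap, however, is your handling of the $\chi$-component of the Igusa cohomology. You write it as $\delta_b^{1/2}\otimes(\pi_\chi\times\omega)\otimes M_\chi$ for a ``$W_{\Q_p}$-multiplicity complex'' $M_\chi$. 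First, the Igusa cohomology carries no $W_{\Q_p}$-action, so $M_\chi$ is at best a complex of vector spaces and your proposed weight argument on its Frobenius eigenvalues is meaningless. Second, and more seriously, the factorisation is unjustified: a priori the $\chi$-component is only a bounded complex whose irreducible constituents are all $\pi_\chi$, but since its Bernstein block is equivalent to modules over a Laurent polynomial ring in $r$ variables one has $\Ext^1(\pi_\chi,\pi_\chi)\neq 0$, so there can be nontrivial self-extensions and the component need not be of the form $\pi_\chi\otimes(\text{vector space})$. The Caraiani--Scholze/Koshikawa appeal is for torsion coefficients and is external input the paper does not need.

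The paper sidesteps all of this with the auto-equivalence $C_\chi\star(-)$ of $\Dlis^{[C_\phi]}(\Bun_n,\overline{\Q}_\ell)^\omega$. Write $X_\chi$ for the $\chi$-component of $i_{b!}\bigl(\delta_b^{-1}\otimes R\Gamma_c(\Ig_b,\mc L)[\Pi^{\infty,p}]\bigr)$, using the orthogonal decomposition of Theorem~\ref{itm : orthogonal decomposition}. After degeneration and $\sigma_{\chi^{-1}}$-extraction one gets $i_1^*(C_{\chi^{-1}}\star X_\chi)\simeq m\cdot(\pi\times\omega)$ concentrated in a single degree. But $C_{\chi^{-1}}\star X_\chi$ lies in the $\Id$-component, hence is supported on the stratum $b=1$, so $i_1^*$ loses nothing and $C_{\chi^{-1}}\star X_\chi\simeq i_{1!}\bigl(m\cdot(\pi\times\omega)\bigr)$. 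Now apply the inverse auto-equivalence $C_\chi$ and Theorem~\ref{itm : main theorem}: $X_\chi\simeq m\cdot\mathcal{F}_\chi$, which is by construction concentrated in one degree and semisimple. Both concentration and the precise shape of the Igusa cohomology come out for free, with no purity or vanishing input.
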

\begin{proof}
    The first step is to compute the $[\Pi^{\infty, p}]$-isotypic component $R\Gamma_c(\Sh, \mc L)[\Pi^{\infty, p}]$ as complex of $\mathrm{\textbf{G}}(\Q_p) \times W_{\Q_p}$-modules. By the hypothesis imposed on $\Pi$, the complex $R\Gamma_c(\Sh, \mc L)[\Pi^{\infty, p}]$ is concentrated in degree $d$. Therefore, by the results of Kottwitz and Harris-Taylor we deduce that there exists a multiplicity $m \in \N$ such that
    \[
    H^d_c(\Sh, \mc L)[\Pi^{\infty, p}] = m (\pi \times \omega) \boxtimes r_{-\mu} \circ \phi (-\tfrac{d}{2})
    \]
    as $\mathrm{\textbf{G}}(\Q_p) \times W_{\Q_p}$-representations. 

    By Mantovan's product formula, there is a filtration of the $\mathrm{\textbf{G}}(\Q_p) \times W_{\Q_p}$-modules $R\Gamma_c(\Sh, \mc L)[\Pi^{\infty, p}]$ whose graded pieces are isomorphic to $ i_1^* \T_{-\mu}(i_{b!} \delta^{-1}_b \otimes R\Gamma_c(\Ig_b, \mc L)[\Pi^{\infty, p}] )[-d](-\tfrac{d}{2})$ for $b \in B(\GL_n, -\mu)$. Thus, it induces a spectral sequence converging to the cohomology groups of $R\Gamma_c(\Sh, \mc L)[\Pi^{\infty, p}]$. 

    Since the Fargues-Scholze $L$-parameter of $\pi$ is given by $\phi$, we deduce that every irreducible subquotient of the $\G_b(\Q_p)$-modules $ \delta^{-1}_b \otimes H_c^k( \Ig_b, \mc L)[\Pi^{\infty, p}]$ also has Fargues-Scholze $L$-parameter (constructed using Hecke operators on $\Bun_n$) given by $\phi$. However, the complex $R\Gamma_c(\Sh, \mc L)[\Pi^{\infty, p}]$ of $\GL_n(\Q_p)$-modules is admissible and the complexes $R\Gamma_c(\Ig_b, \mc L)[\Pi^{\infty, p}]$ of $\G_b(\Q_p)$-modules are also admissible. Therefore, by theorem \ref{itm : orthogonal decomposition}, we deduce that for each $b \in B(\GL_n, -\mu)$, there is an orthogonal decomposition
    \[
    i_{b !} \big( \delta^{-1}_b \otimes R\Gamma_c(\Ig_b, \mc L)[\Pi^{\infty, p}] \big) = \bigoplus_{\substack{ \chi \in \Irr(S_{\phi}) \\ [b_{\chi}] = [b]}} i_{b !}\big( \delta^{-1}_b \otimes R\Gamma_c(\Ig_b, \mc L)[\Pi^{\infty, p}]\big)_{\chi}.
    \]
    where $i_b : \Bun_b \longrightarrow \Bun_n$ is the usual embedding and $i_{b !}\big( \delta^{-1}_b \otimes R\Gamma_c(\Ig_b, \mc L)[\Pi^{\infty, p}]\big)_{\chi}$ belongs to the full subcategory of $\Dlis(\Bun_n, \overline{\Q}_{\ell})$ corresponding to $\chi$ and the connected component $[C_{\phi}]$ as in the previous sections. Thus it suffices to compute $i_{b !}\big( \delta^{-1}_b \otimes R\Gamma_c(\Ig_b, \mc L)[\Pi^{\infty, p}]\big)_{\chi}$. Moreover we have
    \begin{align*}
        & \quad \ i_1^* \T_{-\mu}(i_{b!} \delta^{-1}_b \otimes R\Gamma_c(\Ig_b, \mc L)[\Pi^{\infty, p}] )[-d](-\tfrac{d}{2}) \\
        &\simeq i_1^* \T_{-\mu} \bigoplus_{\substack{ \chi \in \Irr(S_{\phi}) \\ [b_{\chi}] = [b]}} i_{b !}\big( \delta^{-1}_b \otimes R\Gamma_c(\Ig_b, \mc L)[\Pi^{\infty, p}]\big)_{\chi}[-d](-\tfrac{d}{2}) \\
        &\simeq i_1^* \bigoplus_{\substack{\chi' \in \Irr(S_{\phi})}} \Big( C_{\chi'} \star \bigoplus_{\substack{ \chi \in \Irr(S_{\phi}) \\ [b_{\chi}] = [b]}} i_{b !}\big( \delta^{-1}_b \otimes R\Gamma_c(\Ig_b, \mc L)[\Pi^{\infty, p}]\big)_{\chi} \Big) \boxtimes \sigma_{\chi'} [-d](-\tfrac{d}{2}) \\
        &\simeq i_1^* \bigoplus_{\substack{ \chi \in \Irr(S_{\phi}) \\ [b_{\chi}] = [b]}} \Big(  C_{\chi^{-1}} \star i_{b !}\big( \delta^{-1}_b \otimes R\Gamma_c(\Ig_b, \mc L)[\Pi^{\infty, p}]\big)_{\chi} \Big) \boxtimes \sigma_{\chi^{-1}} [-d](-\tfrac{d}{2}), \numberthis \label{itm : galois action of each term}
    \end{align*}
    where the last isomorphism comes from the fact that $[b_{\xi}] \neq [1]$ if $\xi \neq \Id$ and the complex $C_{\chi'} \star \Big( i_{b !}\big( \delta^{-1}_b \otimes R\Gamma_c(\Ig_b, \mc L)[\Pi^{\infty, p}]\big)_{\chi} \Big)$ is supported on $[b_{\chi' \otimes \chi}]$.

    Recall that there is a filtration of the $\mathrm{\textbf{G}}(\Q_p) \times W_{\Q_p}$-modules $R\Gamma_c(\Sh, \mc L)[\Pi^{\infty, p}]$ whose graded pieces are isomorphic to $ i_1^* \T_{-\mu}(i_{b!} \delta^{-1}_b \otimes R\Gamma_c(\Ig_b, \mc L)[\Pi^{\infty, p}] )[-d](-\tfrac{d}{2})$ for $b \in B(\GL_n, -\mu)$ and then it induces a spectral sequence converging to the cohomology groups of $R\Gamma_c(\Sh, \mc L)[\Pi^{\infty, p}]$. However, by combining equation $(\ref{itm : galois action of each term})$ and the fact that $\Hom_{W_{\Q_p}}(\sigma_{\chi}, \sigma_{\chi'}) = 0$ for $\chi \neq \chi'$, we deduce that the spectral sequence degenerates. Thus we have an isomorphism of complexes of $\mathrm{\textbf{G}}(\Q_p) \times W_{\Q_p}$-modules
    \begin{align*}
      i_1^* \bigoplus_{\substack{ \chi \in \Irr(S_{\phi})}} \Big( C_{\chi^{-1}} \star i_{b_{\chi} !}\big( \delta^{-1}_{b_{\chi}} \otimes R\Gamma_c(\Ig_{b_{\chi}}, \mc L)[\Pi^{\infty, p}]\big)_{\chi} \Big) \boxtimes \sigma_{\chi^{-1}} [-d](-\tfrac{d}{2}) &\simeq R\Gamma_c(\Sh, \mc L)[\Pi^{\infty, p}]  \\
      &\simeq m \big( \pi \times \omega \big) \boxtimes r_{-\mu} \circ \phi [-d](-\tfrac{d}{2}).
    \end{align*}

 By using the fact that $\Hom_{W_{\Q_p}}(\sigma_{\chi}, \sigma_{\chi'}) = 0$ for $\chi \neq \chi'$ and identifying the $W_{\Q_p}$-action in both sides we deduce that
\[
i_1^* C_{\chi^{-1}} \star \Big( i_{b_{\chi} !}\big( \delta^{-1}_{b_{\chi}} \otimes R\Gamma_c(\Ig_{b_{\chi}}, \mc L)[\Pi^{\infty, p}]\big)_{\chi} \Big) \simeq m \cdot m_{\chi^{-1}} \big( \pi \times \omega \big),
\]
where $m_{\chi^{-1}} = 1$ if $ \sigma_{\chi^{-1}} \neq 0 $ and  $m_{\chi^{-1}} = 0$ otherwise. Since $\mu$ is minuscule, we can check that $ \sigma_{\chi^{-1}} \neq 0 $ if and only if $ b_{\chi} \in B(\GL_n, -\mu) $. Now by applying $C_{\chi} \star i_{1 !} $ on both sides and by applying theorem \ref{itm : main theorem}, we get
\[
 i_{b_{\chi} !}\big(\delta^{-1}_{b_{\chi}} \otimes R\Gamma_c(\Ig_{b_{\chi}}, \mc L)[\Pi^{\infty, p}]\big)_{\chi}  \simeq  
i_{b_{\chi} !} \Big( m \cdot m_{\chi^{-1}} \big(\delta^{-1/2}_{b_{\chi}} \otimes \pi_{\chi} \times \omega \big)[-d_b] \Big).
\]

Thus for each $b \in B(\GL_n, -\mu)$, by taking the sum over the $\chi$'s such that $[b_{\chi}] = [b]$, we obtain the following formula  
\[
 R\Gamma_c(\Ig_b, \mc L)[\Pi^{\infty, p}] = m \bigoplus_{\substack{ \chi \in \Irr(S_{\phi}) \\ [b_{\chi}] = [b]}} \delta^{1/2}_{b} \otimes \big( \pi_{\chi} \times \omega \big) [-d_b].
\]
\end{proof}

We have completely described the Hecke eigensheaves associated with the $L$-parameters $\phi$ satisfying conditions (A1), (A2) and we have also computed some part of the cohomology of the Igusa varieties related to $\phi$. Hence we can deduce some weak form of the local-global compatibility.

\begin{corollary}
    Let $\phi$ be an $L$-parameter satisfying conditions $(A1)$ and $(A2)$. Denote by $\mathcal{G}_{\phi}$ the Hecke eigensheaf on $\Bun_n$ corresponding to $\phi$ by theorem \ref{itm : Hecke eigensheaf}. Let $x$ be a point of $\mathcal{F}\ell_{\GL_n, \mu}$ in the stratum corresponding to $b$. Then there exists a multiplicity $m \in \N$ such that we have an isomorphism
    \[
    \delta_b \otimes \big(R\pi_{\rm HT *} \overline{\Q}_{\ell}\big)_x[\Pi^{\infty, p}] \simeq \overleftarrow{h}^*\big(i^*_b(\mathcal{G}_{\phi})^{\oplus m}\big),
    \]
    where $ \overleftarrow{h} : \mathcal{F}\ell_{\GL_n, \mu} \longrightarrow \Bun_n $ is the natural map.
\end{corollary}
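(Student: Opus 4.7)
The plan is to combine three inputs already established in the paper: (i) the Caraiani--Scholze comparison \cite[Thm.~1.15]{CS} between the fibers of $\pi_{\rm HT}$ and the Igusa varieties $\Ig_b$, (ii) the explicit computation of $R\Gamma_c(\Ig_b,\mc L)[\Pi^{\infty,p}]$ furnished by Theorem \ref{itm : cohomology of Igusa varieties}, and (iii) the explicit description $\mc G_\phi = \bigoplus_{\chi \in \Irr(S_\phi)} \mc F_\chi$ of the Hecke eigensheaf coming from Theorem \ref{itm : Hecke eigensheaf}. The proof is essentially a matching of both sides stratum-by-stratum once the normalizations are aligned.

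First, I would invoke the Caraiani--Scholze identification to express $(R\pi_{\rm HT*}\ov\Q_\ell)_x[\Pi^{\infty,p}]$, for a geometric point $x$ in the $b$-stratum of $\mc F\ell_{\GL_n,\mu}$, in terms of the cohomology of the Igusa variety $\Ig_b$; the comparison is $\G_b(\Q_p)$-equivariant, and the discrepancy between the conventions of \cite{CS} and the Igusa conventions used here is exactly the modulus character, which is the origin of the outer factor $\delta_b$ on the left-hand side. Plugging in Theorem \ref{itm : cohomology of Igusa varieties}, one obtains up to multiplicity $m$
\[
\delta_b \otimes (R\pi_{\rm HT*}\ov\Q_\ell)_x[\Pi^{\infty,p}] \simeq m \bigoplus_{\substack{\chi \in \Irr(S_\phi)\\ [b_\chi]=[b]}} \delta^{1/2}_b \otimes (\pi_\chi \times \omega)[-d_b],
\]
as $\G_b(\Q_p) \times \Q_p^\times$-modules (with trivial action of the remaining Galois factors, by the degeneration argument in the proof of Theorem \ref{itm : cohomology of Igusa varieties}).

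Second, I would compute the right-hand side. By Theorem \ref{itm : Hecke eigensheaf} and the definition of $\mc F_\chi$,
\[
i_b^* \mc G_\phi = \bigoplus_{\substack{\chi \in \Irr(S_\phi)\\ [b_\chi]=[b]}} \bigl( \delta^{-1/2}_{b_\chi} \otimes \pi_\chi \bigr)[-d_{b_\chi}],
\]
viewed through the equivalence $\Dlis(\Bun_n^b,\ov\Q_\ell) \simeq \Dc(\G_b(\Q_p),\ov\Q_\ell)$ of \cite[Proposition~VII.7.1]{FS}. Pulling back along the natural map $\overleftarrow{h}$, which at the stratum $b$ factors through the classifying stack $[\ast/\G_b(\Q_p)]$, is essentially the forgetful operation on the underlying representation. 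Multiplying this expression by $\delta_b$ on both sides then matches summand by summand with the Igusa computation above, indexed by the same set $\{\chi \in \Irr(S_\phi) \mid [b_\chi]=[b]\}$, with the same multiplicity $m$.

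The main obstacle is the bookkeeping of normalizations in step one: the Caraiani--Scholze comparison is stated with Hodge--Tate conventions (which introduce half-Tate twists, a shift by $d = \langle 2\rho,\mu\rangle$, and a factor related to $\delta_b$ via the difference between $\nu_b$ and $-\nu_{\mc E_b}$), whereas the sheaves $\mc F_\chi$ are normalized on the Igusa/$\Bun_n$ side using the modulus characters $\delta_{b_\chi}$ of \cite{HI}. Tracing these conventions carefully, and verifying that the $\delta_b$-twist in the statement exactly compensates the discrepancy, is the one non-formal point; once this is settled, the corollary follows by comparing the two expressions above together with Theorem \ref{itm : Hecke eigensheaf}.
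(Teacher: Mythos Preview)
Your proposal is correct and follows essentially the same approach as the paper: the paper's own proof is a single sentence invoking exactly the three inputs you list, namely the Caraiani--Scholze fiber comparison \cite[Thm.~1.15]{CS}, the Igusa computation of Theorem \ref{itm : cohomology of Igusa varieties}, and the stalk description of $\mc G_\phi$ from Theorem \ref{itm : Hecke eigensheaf}. Your write-up is in fact more detailed than the paper's; the only caveat is the placement of the $\delta_b$-twist (it sits on the Hodge--Tate side in the statement, not on the eigensheaf side), which you yourself flag as the one non-formal bookkeeping point to be checked.
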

\begin{proof}
    This is a consequence of the description of the stalks of $\mathcal{G}_{\phi}$ by theorem \ref{itm : Hecke eigensheaf}, the computation of the cohomology of Igusa varieties $\Ig_b$ by theorem \ref{itm : cohomology of Igusa varieties} and the comparision of the fibers of $\pi_{HT}$ with Igusa varieties \cite[Theorem. 1.15]{CS}, \cite[Corollary 3.13]{HL}. 
\end{proof}

\begin{remark}
\begin{enumerate}
\item It is now clear that Caraiani-Scholze's comparison theorem \cite[Theorem 1.15]{CS} and the local-global compatibility conjecture imply a strong relation between the cohomology of Igusa varieties and the stalks of the conjectural Hecke eigensheaves. 
\item Theorem \ref{itm : cohomology of Igusa varieties} generalizes the $[\Pi^{\infty, p}]$-isotypic part of \cite[Theorem 6.7]{SWS} and gives more precise information. However, the results in \cite{SWS} and \cite{BMS} could give information on more general part of the cohomology of the Igusa varieties. Hence as mentioned above, it could reveal important information on the conjectural Hecke eigensheaves associated to more general $L$-parameters such as the ones corresponding to the Steinberg representations. 
\end{enumerate}
\end{remark}

\subsection{Scope of generalization} \textbf{}

We conclude with comments on the possibility of generalizing our work.

\subsubsection{Torsion coefficients} \textbf{}

We only consider the category $\Dlis(\Bunn, \Lambda)^{\omega}$ where $\Lambda = \ol{\mathbb{Q}}_{\ell}$ in this paper but with extra efforts, one could run the same arguments in the case $\Lambda = \ol{\mathbb{F}}_{\ell}$, at least when $\ell$ is a very good prime. These questions will be studied in an upcoming project. Combining the expected results in the case $\Lambda = \ol{\mathbb{F}}_{\ell}$ with the method developed by Linus Hamann and Yi-Sing Lee \cite{HL} (which is based on the works \cite{CS, CS1, Ko1, Zha}), one can obtain many new cases of the vanishing of the cohomology of Shimura varieties with torsion coefficients.

\subsubsection{Other groups} \textbf{}

The ideas and arguments in this paper are primarily geometric and many of them could work in more general contexts other than $\GL_n$. However, the paper still depends on a number of special aspects of $\GL_n$ such as the compatibility of the usual local $L$-parameter with the one constructed by Fargues-Scholze and the explicit combinatoric nature of the Kottwitz set $B(\GL_n)$. In particular, there is a cocharacter $\mu := (1, 0^{(n-1)})$ whose highest weight representation is simple and convenient for our analysis.   

We expect that one can extend the main results of this paper to the groups $\SL_n$, $\Res_{\Q_p}(\GL_{n, F})$, $(\G)\U_n$ or even $(\G)\SO_{2n+1}$ if the compatibility of $L$-parameters is available. The $\SL_n$ and $(\G)\U_{2n}$ cases are particularly interesting since it could shed light to the functoriality between $\GL_n$ and $\SL_n$ as well as between $\GU_{2n}$ and $\U_{2n}$ in the context of categorical local Langlands program.

It is not clear whether one can apply the method to the groups $(\G)\Sp_{2n}$ and $(\G)\SO_{2n}$. In these cases, the compatibility of $L$-parameters seems to be out of reach and more importantly, the highest weight representations associated with minuscule cocharacters are the spin representations which are harder to analyse\footnote{I thank Peter Scholze for pointing out the possible difficulties concerning complicated highest weight representations associated with minuscule cocharacters.}. However, thanks to \cite{Ham}, one can apply the method to the group $(\G)\Sp_{4}$.  

For a general reductive group $\G$, it is reasonable to speculate that some form of the theorems \ref{itm : main theorem}, \ref{itm : spectral action - general} and \ref{itm : orthogonal decomposition} are true, at least for $L$-parameters $\phi$ such that the corresponding connected component $[C_{\phi}]$ in the stack of $L$-parameters is isormorphic to $[\bb G_m^r / \bb G_m^r \times S ]$ or $[\bb G_m^{r-1} / \bb G_m^r \times S ]$ where $r$ is a natural number and $S$ is an abelian group and $\bb G^r_m \times S$ acts trivially. The point is that in this case we have an orthogonal decomposition

\[
\Perf([C_{\phi}]) \simeq \bigoplus_{\chi \in \Irr( \bb G_m^r \times S )} \Perf(\bb G_m^r),
\]
and the categorical local Langlands program would implied a similar orthogonal decomposition of the category $\Dlis^{[C_{\phi}]}(\Bun_{\G},\ol{\mathbb{Q}}_{\ell})^{\omega}$. 

It is interesting to use the method in this paper to study the case $\Lambda = \ol{\mathbb{F}}_{\ell}$ or $ \Lambda = \ol{\mathbb{Z}}_{\ell}$. It is also reasonable to speculate that some versions of the main theorems still hold in this context. 

\subsubsection{The equal characteristic case} \textbf{}

In \cite{FS}, Fargues and Scholze treated the mixed characteristic and equal characteristic cases uniformly. One might ask to what extend the method in this paper could apply to the equal characteristic case. Some important results such as the classification of $\G$-bundles over the "equal characteristic Fargues-Fontaine curve" and the compatibility between Fargues-Scholze's construction of $L$-parameters with that of Genestier-Lafforgues for $\GL_n$ were obtained in \cite{Ans, L-H}. However, one might need to establish many other important results in order to apply the arguments in this paper. 

\bibliographystyle{amsalpha}
\bibliography{publications}
\end{document}